\DeclareMathOperator{\Ab}{\mathbf{Ab}} 
\DeclareMathOperator{\Ban}{\mathbf{Ban}} 
\DeclareMathOperator{\Fre}{\mathbf{Fre}}
\DeclareMathOperator{\Mod}{\mathbf{Mod}}
\DeclareMathOperator{\Pol}{\mathbf{Pol}}
\DeclareMathOperator{\Sheaves}{Sheaves}
\DeclareMathOperator{\id}{id}
\DeclareMathOperator{\Hom}{Hom}
\DeclareMathOperator{\Ext}{Ext}
\DeclareMathOperator{\Ex}{Ex}
\DeclareMathOperator{\Tor}{Tor}
\DeclareMathOperator{\mono}{\rightarrowtail}
\DeclareMathOperator{\epi}{\twoheadrightarrow}
\DeclareMathOperator{\Ker}{Ker}
\DeclareMathOperator{\Coim}{Coim}
\DeclareMathOperator{\imhelp}{Im} 
\renewcommand{\Im}{\imhelp} 
\DeclareMathOperator{\Coker}{Coker}
\DeclareMathOperator{\coim}{coim}
\DeclareMathOperator{\im}{im}
\DeclareMathOperator{\coker}{coker}
\DeclareMathOperator{\Ac}{\mathbf{Ac}}
\DeclareMathOperator{\Ch}{\mathbf{Ch}}
\DeclareMathOperator{\K}{\mathbf{K}}
\DeclareMathOperator{\D}{\mathbf{D}}
\DeclareMathOperator{\cone}{cone} 
\DeclareMathOperator{\prtens}{\widehat{\otimes}}
\DeclareMathOperator{\opp}{op}
\newcommand{\mat}[1]{\left[\begin{smallmatrix} #1 \end{smallmatrix}\right]}
\DeclareMathOperator{\scrA}{\mathscr{A}}
\DeclareMathOperator{\scrB}{\mathscr{B}}
\DeclareMathOperator{\scrC}{\mathscr{C}}
\DeclareMathOperator{\scrD}{\mathscr{D}}
\DeclareMathOperator{\scrE}{\mathscr{E}}
\DeclareMathOperator{\scrF}{\mathscr{F}}
\DeclareMathOperator{\scrI}{\mathscr{I}}
\DeclareMathOperator{\scrL}{\mathscr{L}}
\DeclareMathOperator{\scrM}{\mathscr{M}}
\DeclareMathOperator{\scrO}{\mathscr{O}}
\DeclareMathOperator{\scrP}{\mathscr{P}}
\DeclareMathOperator{\scrS}{\mathscr{S}}
\DeclareMathOperator{\scrT}{\mathscr{T}}
\DeclareMathOperator{\scrY}{\mathscr{Y}}
\DeclareMathOperator{\calS}{\mathcal{S}}
\DeclareMathOperator{\bfK}{\mathbf{K}}
\DeclareMathOperator{\bfL}{\mathbf{L}}
\DeclareMathOperator{\bfR}{\mathbf{R}}
\DeclareMathOperator{\bfr}{\mathbf{r}}
\def\@strippedMR{}
\def\@scanforMR#1#2#3\endscan{%
  \ifx#1M\ifx#2R\def\@strippedMR{#3}%
  \else\def\@strippedMR{#1#2#3}%
  \fi\fi}
\renewcommand\MR[1]{\relax\ifhmode\unskip\spacefactor3000 \space\fi
  \@scanforMR#1\endscan
  MR\MRhref{\@strippedMR}{\@strippedMR}}
\newcommand\@dotsep{4.5}
\def\@tocline#1#2#3#4#5#6#7{\relax
  \ifnum #1>\c@tocdepth 
  \else
    \par \addpenalty\@secpenalty\addvspace{#2}%
    \begingroup \hyphenpenalty\@M
    \@ifempty{#4}{%
      \@tempdima\csname r@tocindent\number#1\endcsname\relax
    }{%
      \@tempdima#4\relax
    }%
    \parindent\z@ \leftskip#3\relax \advance\leftskip\@tempdima\relax
    \rightskip\@pnumwidth plus1em \parfillskip-\@pnumwidth
    #5\leavevmode\hskip-\@tempdima #6\relax
    \leaders\hbox{$\m@th
      \mkern \@dotsep mu\hbox{.}\mkern \@dotsep mu$}\hfill
    \hbox to\@pnumwidth{\@tocpagenum{#7}}\par
    \nobreak
    \endgroup
  \fi}
\newtheoremstyle{mythm}
 {6pt}
 {6pt}
 {\itshape}
 {}
 {\scshape}
 {.}
 {.5em}
 {}%
\newtheoremstyle{mydef}
 {6pt}
 {6pt}
 {}
 {}
 {\scshape}
 {.}
 {.5em}
 {}%
\renewenvironment{proof}[1][\proofname]{\par
  \pushQED{\qed}%
  \normalfont \topsep6\p@\@plus6\p@\relax
  \trivlist
  \item[\hskip\labelsep
        \scshape
    #1\@addpunct{.}]\ignorespaces
}{%
  \popQED\endtrivlist\@endpefalse
}
\theoremstyle{mythm}
\newtheorem{Thm}{Theorem}[section]
\newtheorem{Lem}[Thm]{Lemma}
\newtheorem{Cor}[Thm]{Corollary}
\newtheorem{Prop}[Thm]{Proposition}
\theoremstyle{mydef}
\newtheorem{Exer}[Thm]{Exercise}
\newtheorem{Exm}[Thm]{Example}
\newtheorem{Rem}[Thm]{Remark}
\newtheorem{Def}[Thm]{Definition}
\newtheorem*{Histrem}{Historical Note}
\newtheorem*{Disclaimer}{Disclaimer}
\newtheorem*{Prerequisites}{Prerequisites}
\title{Exact Categories}
\author{Theo B\"uhler}
\ead{theo@math.ethz.ch}
\address{FIM, HG G39.5, R\"amistrasse~101, 8092 ETH Z\"urich,
  Switzerland}
\date{\today}
\begin{document}


\begin{abstract}
  We survey the basics of homological algebra in exact categories
  in the sense of Quillen. All diagram lemmas are proved
  directly from the axioms, notably the five lemma, the 
  $3\times 3$-lemma and the snake lemma. We briefly discuss exact
  functors, idempotent completion and weak idempotent completeness.
  We then show that it is possible
  to construct the derived category of an exact category without
  any embedding into abelian categories and we sketch Deligne's approach
  to derived functors.
  The construction of classical derived functors with 
  values in an abelian category painlessly translates to exact
  categories, i.e., we give proofs of the comparison theorem for
  projective resolutions and the horseshoe lemma. 
  After discussing some examples
  we elaborate on Thomason's proof of the Gabriel-Quillen
  embedding theorem in an appendix.
\end{abstract}

\begin{keyword}
  Exact Categories \sep Diagram Lemmas \sep 
  Homological Algebra \sep Derived Functors \sep
  Derived Categories \sep Embedding Theorems
  
  \MSC Primary: 18-02 \sep Secondary: 18E10, 18E30
\end{keyword}
\maketitle

\tableofcontents

\section{Introduction}

There are several notions of exact categories. On the one hand, there
is the notion in the context of additive categories commonly
attributed to Quillen~\cite{MR0338129} with which the present article
is concerned; on the other hand, there is the non-additive notion due
to Barr~\cite{barr-exact}, to mention but the two most prominent
ones. While Barr's definition is intrinsic and an additive category is 
exact in his sense if and only if it is abelian, Quillen's
definition is extrinsic in that one has to specify a distinguished 
class of short exact sequences (an exact structure) in order to
obtain an exact category.

From now on we shall only deal with additive categories, so functors
are tacitly assumed to be additive. On every additive category
$\scrA$ the class of all split exact sequences provides the smallest
exact structure, i.e., every other exact structure must
contain it. In general, an exact structure consists of kernel-cokernel
pairs subject to some closure requirements, so the class of all
kernel-cokernel pairs is a candidate for the largest exact
structure. It is quite often the case that the class of all
kernel-cokernel pairs is an exact structure, but this fails in
general: Rump~\cite{rump-counterexample} constructs an example of an
additive category with kernels and cokernels whose kernel-cokernel
pairs fail to be an exact structure.

It is commonplace that basic homological algebra in categories of
modules over a ring extends to abelian
categories. By using the Freyd-Mitchell full embedding
theorem (\cite{MR0166240} and \cite{MR0167511}), diagram lemmas
can be transferred from module categories to general abelian categories, 
i.e., one may argue by chasing elements around in diagrams. 
There is a point in proving the fundamental diagram lemmas
directly, and be it only to familiarize oneself
with the axioms. A careful study of what is actually needed 
reveals that in most situations
the axioms of exact categories are sufficient. An \emph{a posteriori}
reason is provided by the Gabriel-Quillen embedding theorem which
reduces homological algebra in exact categories to the case of abelian
categories, the slogan is ``relative homological algebra made
absolute'' (Freyd~\cite{MR0146234}). More specifically, the embedding
theorem asserts that the Yoneda functor embeds a small exact category
$\scrA$ fully faithfully into the \emph{abelian} category $\scrB$ 
of left exact functors $\scrA^{\opp} \to \Ab$
in such a way that the essential image is closed under
extensions and that a short sequence in $\scrA$ is short exact if and
only if it is short exact in $\scrB$. 
Conversely, it is not hard to see that an extension-closed
subcategory of an abelian category is exact---this is \emph{the} basic
recognition principle for exact categories.
In appendix~\ref{sec:the-embedding-theorem} we present Thomason's
proof of the Gabriel-Quillen embedding theorem for the
sake of completeness, but we will not apply it in these notes. 
The author is convinced that the
embedding theorem should be used to transfer the intuition from
abelian categories to exact categories rather than to prove (simple) theorems
with it. A direct proof from the axioms provides much more insight
than a reduction to abelian categories.

The interest of exact categories is manifold. First of all they are a
natural generalization of abelian categories and there is no need to
argue that abelian categories are both useful and important. There are
several reasons for going beyond abelian categories. The
fact that one may \emph{choose} an exact structure gives more
flexibility which turns out to be essential in many contexts.
Even if one is working with abelian categories one soon
finds the need to consider other exact structures than the canonical
one, for instance in relative homological algebra~\cite{MR0080654}. 
Beyond this, there are quite a few ``cohomology theories'' which
involve functional analytic categories like locally convex modules
over a topological group~\cite{MR0147577,MR1721403},
locally compact abelian groups~\cite{MR2329311} or Banach
modules over a Banach algebra~\cite{MR0417787,MR1093462}
where there is no obvious abelian category around to which one could
resort. In more advanced topics of algebra and representation theory,
(e.g. filtered objects, tilting theory), 
exact categories arise naturally, while the theory of abelian
categories simply does not fit. 
It is an observation due to Happel~\cite{MR935124} that in guise of 
\emph{Frobenius categories,} exact categories give rise to
triangulated categories by passing to the associated stable
categories, see section~\ref{sec:frobenius-cats}.
Further fields of application are
algebraic geometry (certain categories of vector bundles), algebraic
analysis ($\scrD$-modules) and, of course, algebraic $K$-theory
(Quillen's $Q$-construction~\cite{MR0338129}, 
Balmer's Witt groups~\cite{MR2181829} and Schlichting's
Grothendieck-Witt groups~\cite{schlichting}).
The reader will find a slightly more extensive
discussion of some of the topics mentioned above in
section~\ref{sec:examples}.

The author hopes to convince the reader that the axioms of exact
categories are quite convenient for giving relatively painless proofs
of the basic results in homological algebra and that the gain in
generality comes with almost no effort. Indeed, it even seems that the
axioms of exact categories are more adequate for proving the
fundamental diagram lemmas than Grothendieck's axioms for abelian
categories. For instance, it is quite a challenge to find a complete
proof (directly from the axioms) of the snake lemma for abelian
categories in the literature.

That being said, we turn to a short description of the contents of
this paper.

In section~\ref{sec:def-basic-properties} we state and discuss the axioms
and draw the basic consequences, in particular we give the
characterization of pull-back squares and Keller's proof of the
obscure axiom.

In section~\ref{sec:diagram-lemmas} we prove the (short) five lemma,
the Noether isomorphism theorem and the $3\times 3$-lemma.

Section~\ref{sec:quasi-ab-cats} briefly discusses quasi-abelian
categories, a source of many examples of exact categories. Contrary to
the notion of an exact category, the property of being quasi-abelian
is intrinsic.

Exact functors are briefly touched upon in
section~\ref{sec:exact-functors} and after that we treat the
idempotent completion and the property of weak idempotent completeness
in sections~\ref{sec:idempotent-completion}
and~\ref{sec:weak-idempotent-compl}.

We come closer to the heart of homological algebra when discussing
admissible morphisms, long exact sequences, the five lemma and the
snake lemma in section~\ref{sec:adm-morph-snake-lemma}. In order for
the snake lemma to hold, the assumption of
weak idempotent completeness is necessary.

After that we briefly remind the reader of the notions of chain
complexes and chain homotopy in
section~\ref{sec:ch-cxes-ch-htpy}, before we turn to acyclic complexes
and quasi-i\-so\-mor\-phisms in section~\ref{sec:ac-cxes-qis}. Notably, we
give an elementary proof of Neeman's crucial result that the category
of acyclic complexes is triangulated. We do not indulge in the details
of the construction of the derived category of an exact category because
this is well treated in the literature. We give a brief summary of the
derived category of fully exact subcategories and then sketch the
main points of Deligne's approach to total derived functors on the
level of the derived category as expounded by Keller~\cite{MR1421815}.

On a more leisurely level, projective and injective objects are
discussed in section~\ref{sec:proj-inj-obj} preparing the grounds for a
treatment of classical derived functors (with values in an abelian
category) in section~\ref{sec:resolutions}, where we state and prove
the resolution lemma, the comparison theorem and the horseshoe lemma,
i.e., the three basic ingredients for the classical construction.

We end with a short list of examples and applications in
section~\ref{sec:examples}.

In appendix~\ref{sec:the-embedding-theorem} we give Thomason's
proof of the Gabriel-Quillen
embedding theorem of an exact category into an abelian one. 
Finally, in appendix~\ref{sec:hellers-axioms}
we give a proof of the folklore fact that under the
assumption of weak idempotent completeness Heller's axioms for an
``abelian'' category are equivalent to Quillen's axioms for an exact category.

\begin{Histrem}
  Quillen's notion of an exact category has its predecessors e.g. in
  Heller~\cite{MR0100622}, 
  Buchsbaum~\cite{MR0140556}, 
  Yoneda~\cite{MR0225854}, 
  Butler-Horrocks~\cite{MR0188267} 
  and Mac~Lane~\cite[XII.4]{MR0156879}. 
  It should be noted that Buchsbaum, Butler-Horrocks and Mac~Lane
  assume the existence of an ambient abelian category and miss the
  crucial push-out and pull-back axioms, while Heller and Yoneda
  anticipate Quillen's definition. According to
  Quillen~\cite[p.~``92/16/100'']{MR0338129}, assuming idempotent
  completeness, Heller's notion of an ``abelian
  category''~\cite[\S~3]{MR0100622}, 
  i.e., an additive category
  equipped with an ``abelian class of short exact
  sequences''
  coincides
  with the present definition of an exact category. We give a proof of
  this assertion in appendix~\ref{sec:hellers-axioms}.  Yoneda's
  quasi-abelian $\calS$-categories are nothing but Quillen's exact
  categories and it is a remarkable fact that Yoneda proves that
  Quillen's ``obscure axiom'' follows from his definition,
  see~\cite[p.~525, Corollary]{MR0225854}, a fact rediscovered thirty
  years later by Keller in \cite[A.1]{MR1052551}.
\end{Histrem}

\begin{Prerequisites} 
  The prerequisites are kept at a minimum. The
  reader should know what an additive category is and be familiar with
  fundamental categorical concepts such as kernels, pull-backs,
  products and duality. Acquaintance with basic category theory as
  presented in Hilton-Stammbach~\cite[Chapter~II]{MR1438546}
  or Weibel~\cite[Appendix~A]{MR1269324} should amply suffice for a complete
  understanding of the text, up to section~\ref{sec:ac-cxes-qis} where
  we assume some familiarity with the theory of
  triangulated categories.
\end{Prerequisites}

\begin{Disclaimer}
  This article is written for the reader who \emph{wants}
  to learn about exact categories and knows \emph{why}. 
  Very few motivating examples are given in this text.
  
  The author makes no claim to originality. All the results
  are well-known in some form and they are scattered around in the
  literature. The \emph{raison d'\^{e}tre} of this article is the lack
  of a systematic \emph{elementary} exposition of the theory. 
  The works of Heller~\cite{MR0100622},
  Keller~\cite{MR1052551,MR1421815} and Thomason~\cite{MR1106918}
  heavily influenced the present paper and many proofs given here
  can be found in their papers.
\end{Disclaimer}

\section{Definition and Basic Properties}
\label{sec:def-basic-properties}

In this section we introduce the notion of an exact category and draw
the basic consequences of the axioms. We do not use the minimal
axiomatics as provided by Keller~\cite[Appendix~A]{MR1052551} but
prefer to use a convenient self-dual presentation of the axioms due to
Yoneda~\cite[\S~2]{MR0225854} (modulo some of Yoneda's numerous 
$3 \times 2$-lemmas and our Proposition~\ref{prop:pushout-exact}). The
author hopes that the Bourbakists among the readers will 
pardon this \emph{faux pas}. We will discuss that the present axioms are
equivalent to Quillen's~\cite[\S~2]{MR0338129} in the course of
events. The main points of this section are a characterization of
push-out squares (Proposition~\ref{prop:pushout-exact}) and the
obscure axiom (Proposition~\ref{prop:obscure-axiom}).

\begin{Def}
  Let $\scrA$ be an additive category. A \emph{kernel-cokernel pair}
  $(i,p)$ in $\scrA$ is a pair of composable morphisms
  \[
  A' \xrightarrow{i} A \xrightarrow{p} A''
  \]
  such that $i$ is a kernel of $p$ and $p$ is a cokernel of $i$. If a
  class $\scrE$ of kernel-cokernel pairs on $\scrA$ is fixed, an
  \emph{admissible monic} is a morphism $i$ for which there exists a
  morphism $p$ such that $(i,p) \in \scrE$. \emph{Admissible epics}
  are defined dually. We depict admissible monics by $\mono$
  and admissible epics by $\epi$ in diagrams.
  
  An \emph{exact structure} on $\scrA$ is a
  class $\scrE$ of kernel-cokernel pairs which is closed under
  isomorphisms and satisfies the following
  axioms:
  \begin{itemize}
    \item[{[E0]}]
      For all objects $A \in \scrA$, the identity morphism
      $1_{A}$ is an admissible monic.

    \item[{[E0$^{\opp}$]}]
      For all objects $A \in \scrA$, the identity morphism
      $1_{A}$ is an admissible epic.

    \item[{[E1]}]
      The class of admissible monics is closed under composition.

    \item[{[E1$^{\opp}$]}]
      The class of admissible epics is closed under composition.

    \item[{[E2]}]
      The push-out of an admissible monic along an arbitrary morphism
      exists and yields an admissible monic.

    \item[{[E2$^{\opp}$]}]
      The pull-back of an admissible epic along an arbitrary morphism
      exists and yields an admissible epic.
  \end{itemize}
  Axioms [E2] and [E2$^{\opp}$] are subsumed in the diagrams
  \[
  \vcenter{
    \xymatrix{
      A \ar[d] \ar@{ >->}[r] \ar@{}[dr]|{\text{PO}} & B \ar@{.>}[d] \\
      A' \ar@{ >.>}[r] & B'
    }
  } 
  \qquad \text{and} \qquad
  \vcenter{
    \xymatrix{
      A' \ar@{.>}[d] \ar@{.>>}[r] \ar@{}[dr]|{\text{PB}} & B' \ar[d]
      \\
      A \ar@{->>}[r] & B
    }
  }
  \]
  respectively.
  
  An \emph{exact category} is a pair $(\scrA,\scrE)$ consisting of an
  additive category $\scrA$ and an exact structure $\scrE$ on
  $\scrA$. Elements of $\scrE$ are called \emph{short exact sequences}.
\end{Def}

\begin{Rem}
  Note that $\scrE$ is an exact structure on
  $\scrA$ if and only if $\scrE^{\opp}$ is an exact structure on
  $\scrA^{\opp}$. This allows for reasoning by dualization.
\end{Rem}
\if{0}
\begin{Rem}
  The reader who ignored the disclaimer in the introduction and
  insists on seeing examples should consult
  section~\ref{sec:examples}.
\end{Rem}

\begin{Exm}
  Every abelian category $\scrA$ is exact with respect to the class
  $\scrE$ of all short exact sequences. 
\end{Exm}

\begin{Exm}
  Let $\scrA$ be a full subcategory of an abelian category $\scrB$ and
  assume that $\scrA$ is \emph{closed under extensions} in $\scrB$,
  i.e., for all short exact sequences $A' \mono B \epi A''$ in $\scrB$
  with $A',A'' \in \scrA$ we have that $B \in \scrA$. Then $\scrA$ is
  an exact category with respect to the class $\scrE$ of sequences in
  $\scrA$ which are exact in $\scrB$. Indeed, axiom~[E$0$] is obvious and
  axiom [E$1$] follows from the Noether isomorphism theorem in
  $\scrB$ (see Lemma~\ref{lem:c/b=(c/a)/(b/a)}) together with the hypothesis
  that $\scrA$ is closed under extensions. To verify axiom [E$2$], let
  $i: A' \mono A$ be an admissible monic and let $f':A' \to B'$ be an
  arbitrary morphism. Form the push-out under $i$ and $f$ in
  $\scrB$ to obtain the diagram with exact rows in $\scrB$
  \[
  \xymatrix{
    A' \ar[d]_{f'} \ar@{}[dr]|{\text{PO}} \ar@{ >->}[r]^{i} &
    A \ar[d]^{f} \ar@{->>}[r] & 
    A'' \ar@{=}[d] \\
   B' \ar@{ >->}[r]^{j} & B \ar@{->>}[r]^{q} & A'' 
  }
  \]
  (see Proposition~\ref{prop:pushout-exact}). Since $B'$ and $A''$ are
  in $\scrA$, we conclude that $B$ is in $\scrA$
  by appealing to 
  consequence of Proposition~\ref{prop:pushout-exact} in $\scrB$
\end{Exm}

\fi

\begin{Rem}
  \label{rem:isos-adm-mono-adm-epic}
  Isomorphisms are admissible monics and admissible epics. 
  Indeed, this follows from the commutative diagram
  \[
  \xymatrix{
    A \ar[d]^{\cong}_{1_{A}} \ar[r]^{f}_{\cong} & B \ar[r]
    \ar[d]^{\cong}_{f^{-1}} & 0 \ar[d]_{\cong} \\
    A \ar@{ >->}[r]^{1_{A}} &
    A \ar@{->>}[r] & 0,
  }
  \]
  the fact that exact structures are assumed to be closed under
  isomorphisms and that the axioms are self-dual.
\end{Rem}

\begin{Rem}[Keller~{\cite[App.~A]{MR1052551}}]
  \label{rem:weakening-the-axioms}
  The axioms are somewhat redundant and can be weakened. For instance,
  let us assume instead of [E0] and [E0$^{\opp}$]
  that $1_{0}$, the identity of the zero object, is an admissible epic.
  For any object $A$ there is the pull-back diagram
  \[
  \xymatrix{
    A \ar[d] \ar[r]^{1_{A}} \ar@{}[dr]|{\text{PB}} & A \ar[d] \\
    0 \ar[r]^{1_{0}} & 0
  }
  \]
  so [E2$^{\opp}$] together with our assumption on $1_{0}$ 
  shows that [E0$^{\opp}$] holds. Since
  $1_{0}$ is a kernel of itself, it is also an admissible monic, so
  we conclude by [E2] that [E0] holds as well.
  More importantly, Keller proves in \emph{loc. cit.}
  (A.1, proof of the proposition, step~3), that
  one can also dispose of one of [E1] or [E1$^{\opp}$]. Moreover, he
  mentions (A.2, Remark), that one may
  also weaken one of~[E2] or~[E2$^{\opp}$]---this is a straightforward
  consequence of (the proof of)
  Proposition~\ref{prop:decompose-morphisms-of-exact-sequences}.
\end{Rem}

\begin{Rem}
  Keller \cite{MR1052551, MR1421815}
  uses \emph{conflation}, \emph{inflation} and \emph{deflation}
  for what we call short exact sequence, admissible monic and
  admissible epic. This terminology stems from
  Gabriel-Ro{\u\i}ter~\cite[Ch.~9]{MR1239447}
  who give a list of axioms for exact categories whose
  underlying additive category is weakly idempotent complete in the
  sense of section~\ref{sec:weak-idempotent-compl}, see Keller's
  appendix to \cite{MR1608305} for a thorough comparison of the 
  axioms. 
  A variant of the Gabriel-Ro{\u\i}ter-axioms
  appear in Freyd's book on abelian 
  categories~\cite[Ch.~7, Exercise~G, p.~153]{MR0166240} (the
  Gabriel-Ro{\u\i}ter-axioms are obtained from Freyd's axioms by
  adding the dual of Freyd's condition~(2)).
\end{Rem}

\begin{Exer}
  \label{exer:adm-epic+monic=iso}
  An admissible epic which is additionally monic is an isomorphism.
\end{Exer}

\begin{Lem}
  \label{lem:split-sequences-exact}
  The sequence
  \[
  \xymatrix{
    A \ar@{ >->}[r]^-{\mat{1 \\ 0}} &  
    A \oplus B \ar@{->>}[r]^-{\mat{0 & 1}} &
    B
  }
  \]
  is short exact.
\end{Lem}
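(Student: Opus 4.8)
The plan is to exhibit the given sequence as a push-out of an evident short exact sequence and to invoke~[E2], reading off the cokernel at the end by hand so as not to presuppose the later characterization of push-out squares.

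First I would check that $0 \mono B$ is an admissible monic. By~[E0$^{\opp}$] the identity $1_{B}$ is an admissible epic, so there is a kernel-cokernel pair $(i, 1_{B}) \in \scrE$; since $i$ is forced to be a kernel of the isomorphism $1_{B}$, it is isomorphic to the canonical morphism $0 \to B$, and closure of $\scrE$ under isomorphisms yields $(0 \to B,\, 1_{B}) \in \scrE$. Thus $0 \mono B$ is an admissible monic whose cokernel is $1_{B}$.

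Next I would form the push-out of this admissible monic along the unique morphism $0 \to A$. In an additive category the push-out of $B \leftarrow 0 \to A$ is the biproduct $A \oplus B$ together with its canonical inclusions, so the push-out square reads
\[
\xymatrix{
  0 \ar[d] \ar@{ >->}[r] \ar@{}[dr]|{\text{PO}} & B \ar[d]^{\mat{0 \\ 1}} \\
  A \ar@{ >->}[r]^-{\mat{1 \\ 0}} & A \oplus B.
}
\]
By~[E2] the lower horizontal arrow $\mat{1 \\ 0}\colon A \to A \oplus B$ is therefore an admissible monic. It remains only to identify its cokernel: being an admissible monic, $\mat{1 \\ 0}$ fits into a short exact sequence of $\scrE$ whose deflation is a cokernel of $\mat{1 \\ 0}$, hence determined up to isomorphism. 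But the biproduct relations give $\mat{0 & 1}\,\mat{1 \\ 0} = 0$, and a routine check of the universal property shows that $\mat{0 & 1}\colon A \oplus B \to B$ is \emph{already} a cokernel of $\mat{1 \\ 0}$ in the underlying additive category. Since $\scrE$ is closed under isomorphisms, the pair $\bigl(\mat{1 \\ 0},\, \mat{0 & 1}\bigr)$ belongs to $\scrE$, as claimed.

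The argument is essentially forced, and there is no serious obstacle. The one point requiring care is to avoid invoking the characterization of push-out squares (Proposition~\ref{prop:pushout-exact}), which comes later and rests on statements of this very kind, and instead to extract the cokernel directly from the uniqueness of cokernels together with closure under isomorphisms. One should also make sure that the push-out of the two morphisms out of the zero object is genuinely the biproduct with its canonical inclusions, so that the admissible monic produced by~[E2] is indeed $\mat{1 \\ 0}$ and not some other map.
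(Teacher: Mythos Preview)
Your argument is correct and follows essentially the same route as the paper: both use~[E0$^{\opp}$] to see that $0 \to B$ is an admissible monic, form the push-out along $0 \to A$ to obtain the biproduct, invoke~[E2] to make $\mat{1 \\ 0}$ an admissible monic, and then finish by identifying the cokernel and appealing to closure of $\scrE$ under isomorphisms. Your explicit care to avoid forward references and to spell out why $\mat{0 & 1}$ is the cokernel is a bit more detailed than the paper's one-line conclusion, but the substance is the same.
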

\begin{proof}
  The following diagram is a push-out square
  \[
  \xymatrix{
    0 \ar@{ >->}[r] \ar@{ >->}[d] \ar@{}[dr]|{\text{PO}}
    & B \ar@{ >->}[d]^{\mat{0 \\ 1}} \\
    A \ar@{ >->}[r]^<<<<<{\mat{1 \\ 0}} 
    & A \oplus B.
   }
   \]
   The top arrow and the left hand arrow are admissible monics
   by~[E0$^{\opp}$] while the bottom arrow and the right hand arrow
   are admissible monics by~[E2]. The lemma now follows from the
   facts that the sequence in question is a kernel-cokernel pair and
   that $\scrE$ is closed under isomorphisms.
\end{proof}

\begin{Rem}
  Lemma~\ref{lem:split-sequences-exact} shows that Quillen's axiom~a)
  \cite[\S~2]{MR0338129} stating that split exact sequences belong to
  $\scrE$ follows from our axioms. Conversely,
  Quillen's axiom~a) obviously implies [E0] and [E0$^{\opp}$].
  Quillen's axiom~b) coincides with our axioms [E1], [E1$^{\opp}$], [E2] and
  [E2$^{\opp}$]. We will prove that Quillen's axiom~c) follows from
  our axioms in Proposition~\ref{prop:obscure-axiom}.
\end{Rem}

\begin{Prop}
  \label{prop:sum-exact}
  The direct sum of two short exact sequences is short exact.
\end{Prop}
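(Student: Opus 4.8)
The plan is to show that $i \oplus j$ is an admissible monic and that $p \oplus q$ is a cokernel of it; since $\scrE$ is closed under isomorphisms and cokernels are unique up to canonical isomorphism, this forces the pair to lie in $\scrE$. Write the two given short exact sequences as $A' \mono A \epi A''$ with $(i,p) \in \scrE$ and $B' \mono B \epi B''$ with $(j,q) \in \scrE$. The key observation is the factorization
\[
i \oplus j = (1_A \oplus j) \circ (i \oplus 1_{B'}),
\]
so that, by [E1], it suffices to prove that each of the two factors is an admissible monic.

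For the first factor I would exhibit it as the push-out of the admissible monic $i$ along the split monic $\mat{1 \\ 0} \colon A' \to A' \oplus B'$. Concretely, the square
\[
\xymatrix{
A' \ar@{ >->}[r]^-{i} \ar[d]_-{\mat{1 \\ 0}} & A \ar[d]^-{\mat{1 \\ 0}} \\
A' \oplus B' \ar[r]^-{i \oplus 1_{B'}} & A \oplus B'
}
\]
is readily checked to be a push-out (its universal property is immediate from that of the direct sum on the second summand), so [E2] applies and yields that $i \oplus 1_{B'}$ is an admissible monic. Symmetrically, the analogous square with $j$ on top and $\mat{0 \\ 1}$ on the sides exhibits $1_A \oplus j$ as a push-out of $j$, whence it too is an admissible monic by [E2]. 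By [E1] the composite $i \oplus j$ is an admissible monic.

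It remains to identify the cokernel. In any additive category the direct sum of two cokernel diagrams is again a cokernel diagram, so $p \oplus q$ is a cokernel of $i \oplus j$. Being an admissible monic, $i \oplus j$ admits some $p''$ with $(i \oplus j, p'') \in \scrE$, and such a $p''$ is a cokernel of $i \oplus j$; hence $p''$ and $p \oplus q$ differ by an isomorphism of their targets. Closure of $\scrE$ under isomorphisms then gives $(i \oplus j, p \oplus q) \in \scrE$, as desired.

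The routine part is the verification that the two squares are genuine push-outs and that $p \oplus q$ is a cokernel; the only real point is to set up the push-out squares so that [E2] delivers exactly the maps $i \oplus 1_{B'}$ and $1_A \oplus j$ rather than the already-known split monics. Everything else is formal manipulation in an additive category, and no appeal to weak idempotent completeness or to the obscure axiom is needed.
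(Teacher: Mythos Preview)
Your proof is correct and uses the same factorization $i \oplus j = (1_A \oplus j)(i \oplus 1_{B'})$ as the paper. The only difference is in how each factor is shown to be an admissible monic: you exhibit $i \oplus 1_{B'}$ as the push-out of $i$ along the split inclusion $A' \to A' \oplus B'$ and invoke~[E2], whereas the paper observes that the composite $A \oplus B' \epi A \epi A''$ is an admissible epic by Lemma~\ref{lem:split-sequences-exact} and~[E1$^{\opp}$], and then recognizes $i \oplus 1_{B'}$ as its kernel. Both arguments are equally short; yours appeals to~[E2] where the paper appeals to~[E1$^{\opp}$] and the split-sequence lemma, so neither is more elementary than the other. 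The concluding steps (that $p \oplus q$ is a cokernel of $i \oplus j$, and closure under isomorphisms) are identical.
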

\begin{proof}
  Let $A' \mono A \epi A''$ and $B' \mono B \epi B''$ be two short exact
  sequences. First observe that for every object $C$ the sequence
  \[
  A' \oplus C \mono A \oplus C \epi A''
  \]
  is exact---the second morphism is an admissible epic because it is
  the composition of the admissible epics
  $\mat{1 & 0}: A \oplus C \epi A$ and $A \epi A''$;
  the first morphism in the sequence is a kernel of the
  second one, hence it is an admissible monic. 
  Now it follows from~[E1] that
  \[
  A' \oplus B' \mono A \oplus B
  \]
  is an admissible monic because it is the composition of the two
  admissible monics $A' \oplus B' \mono A \oplus B'$ and
  $A \oplus B' \mono A \oplus B$. It is obvious that
  \[
  A' \oplus B' \mono A \oplus B \epi A'' \oplus B''
  \]
  is a kernel-cokernel pair, hence the proposition is proved.
\end{proof}

\begin{Cor}
  \label{cor:exact-structure-additive}
  The exact structure $\scrE$ is an additive subcategory of
  the additive category $\scrA^{\to\to}$ of composable morphisms
  of $\scrA$. \qed
\end{Cor}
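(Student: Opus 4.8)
The plan is to recognize the statement as an essentially formal consequence of Proposition~\ref{prop:sum-exact}. First I would make the ambient category explicit: an object of $\scrA^{\to\to}$ is a pair of composable morphisms $A' \to A \to A''$ and a morphism is a commutative ladder, so $\scrA^{\to\to}$ is the category of functors from the ordered set $\bullet \to \bullet \to \bullet$ into $\scrA$. Being a diagram category over an additive category, it is itself additive, with biproducts, addition of morphisms, and the zero object $0 \to 0 \to 0$ all formed componentwise.

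Next I would regard $\scrE$ as the full subcategory of $\scrA^{\to\to}$ whose objects are the short exact sequences $A' \mono A \epi A''$; the morphisms of this subcategory are then, by definition of fullness, all commutative ladders between short exact sequences. To check that a full subcategory of an additive category is additive it suffices to verify that it contains a zero object and is closed under finite biproducts: the hom-groups are automatically subgroups (in fact equal), and composition and identities are inherited, so the inclusion is an additive functor.

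For the zero object I would observe that $0 \mono 0 \epi 0$ belongs to $\scrE$; this is the split exact sequence of Lemma~\ref{lem:split-sequences-exact} with $A = B = 0$, or equivalently one uses that $1_{0}$ is simultaneously an admissible monic and an admissible epic by [E0] and [E0$^{\opp}$]. For closure under biproducts I would note that the biproduct of two objects of $\scrA^{\to\to}$ is computed componentwise and hence coincides with the termwise direct sum of the two sequences; Proposition~\ref{prop:sum-exact} asserts precisely that this direct sum of two short exact sequences is again short exact, i.e.\ lies in $\scrE$. These two closure properties establish the claim.

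I do not expect any genuine obstacle here, since the whole mathematical content is already contained in Proposition~\ref{prop:sum-exact}. The only points requiring a moment's care are the identification of the biproduct in $\scrA^{\to\to}$ with the termwise direct sum of sequences, and the bookkeeping that a full subcategory of an additive category which is closed under the zero object and finite biproducts is itself additive.
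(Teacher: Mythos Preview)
Your proposal is correct and matches the paper's intent: the corollary is marked with a bare \qed\ immediately after Proposition~\ref{prop:sum-exact}, signalling that it is meant as a direct consequence of that proposition. Your expansion---identifying the additive structure on $\scrA^{\to\to}$, noting that $0 \to 0 \to 0$ lies in $\scrE$, and invoking Proposition~\ref{prop:sum-exact} for closure under biproducts---spells out exactly the routine verification the paper leaves implicit.
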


\begin{Rem}
  In Exercise~\ref{exer:ses-exact} the reader is asked to show
  that $\scrE$ is exact with respect to a natural exact
  structure.
\end{Rem}

\begin{Prop}
    \label{prop:pushout-exact}
  Consider a commutative square
  \[
  \xymatrix{
    A \ar@{ >->}[r]^{i} \ar[d]_{f} &
    B \ar[d]^{f'} \\
    A' \ar@{ >->}[r]^{i'} & B'
  }
  \]
  in which the horizontal arrows are admissible monics.
  The following assertions are equivalent:
  \begin{enumerate}[(i)]
    \item
      The square is a push-out.

    \item
      The sequence
      $\xymatrix@1{A \ar@{ >->}[r]^-{\mat{i \\ -f}} & B \oplus A'
      \ar@{->>}[r]^-{\mat{f' & i'}} & B'}$ is short exact.

    \item
      The square is bicartesian, i.e., both a push-out and a
      pull-back.

    \item
      The square is part of a commutative diagram
      \[
      \xymatrix{
        A \ar@{ >->}[r]^{i} \ar[d]_{f} &
        B \ar[d]^{f'} \ar@{->>}[r]^{p} & C \ar@{=}[d] \\
        A' \ar@{ >->}[r]^{i'} & B' \ar@{->>}[r]^{p'} & C
      }
      \]
      with exact rows.
  \end{enumerate}
\end{Prop}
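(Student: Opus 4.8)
The plan is to run the cycle $\text{(i)}\Rightarrow\text{(ii)}\Rightarrow\text{(iii)}\Rightarrow\text{(i)}$ and then to attach~(iv) via $\text{(i)}\Rightarrow\text{(iv)}\Rightarrow\text{(i)}$. Everything rests on one observation that makes no reference to the square being cartesian: \emph{the morphism $\mat{i\\-f}\colon A\to B\oplus A'$ is an admissible monic.} To prove it I would factor
\[
A\xrightarrow{\ \mat{1\\-f}\ }A\oplus A'\xrightarrow{\ \mat{i&0\\0&1}\ }B\oplus A'.
\]
The left-hand arrow is the image of the admissible monic $\mat{1\\0}$ of Lemma~\ref{lem:split-sequences-exact} under the automorphism $\mat{1&0\\-f&1}$ of $A\oplus A'$, hence is itself an admissible monic because $\scrE$ is closed under isomorphism; the right-hand arrow is an admissible monic by Proposition~\ref{prop:sum-exact}, being the monic part of the direct sum of the sequence extending $i$ with the identity sequence on $A'$. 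Axiom~[E1] then makes the composite $\mat{i\\-f}$ an admissible monic, so it has a cokernel and fits into a short exact sequence.

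Granting this, $\text{(i)}\Rightarrow\text{(ii)}$ is immediate from the purely additive fact that the push-out of $A'\xleftarrow{f}A\xrightarrow{i}B$ is the cokernel of $\mat{i\\-f}$, with comparison map $\mat{f'&i'}$. Thus a push-out square exhibits $\mat{f'&i'}$ as \emph{a} cokernel of the admissible monic $\mat{i\\-f}$; since that admissible monic already has a cokernel lying in $\scrE$, uniqueness of cokernels together with closure of $\scrE$ under isomorphism give $\bigl(\mat{i\\-f},\mat{f'&i'}\bigr)\in\scrE$, which is~(ii). Conversely, a kernel--cokernel pair makes $\mat{f'&i'}$ a cokernel of $\mat{i\\-f}$ (so the square is a push-out) and makes $\mat{i\\-f}$ a kernel of $\mat{f'&i'}$; conjugating the latter by the automorphism $(b,a')\mapsto(b,-a')$ of $B\oplus A'$ rewrites it as the statement that $\mat{i\\f}$ is a kernel of $\mat{f'&-i'}$, i.e. that the square is a pull-back. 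This gives $\text{(ii)}\Rightarrow\text{(iii)}$, and $\text{(iii)}\Rightarrow\text{(i)}$ is trivial.

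For $\text{(i)}\Rightarrow\text{(iv)}$ I would fix a cokernel $p\colon B\epi C$ of $i$. Since $\mat{p&0}\colon B\oplus A'\to C$ annihilates $\mat{i\\-f}$, the cokernel property of $\mat{f'&i'}$ supplied by~(ii) produces a unique $p'\colon B'\to C$ with $p'f'=p$ and $p'i'=0$. The one thing to check is that $(i',p')$ is short exact: here $i'$ is an admissible monic (it is the push-out of $i$), so it has a cokernel $\bar p\colon B'\epi\bar C$, and $p'i'=0$ yields a factorization $p'=\phi\bar p$. Writing also $\bar pf'=\psi p$ (legitimate since $\bar pf'i=\bar pi'f=0$), the relations $p=\phi\psi p$ and $\bar pf'=\psi\phi\,\bar pf'$ together with the facts that $p$ and $\bar pf'$ are epic (the latter being a consequence of $\mat{f'&i'}$ being an admissible epic by~(ii)) force $\phi\psi=1_{C}$ and $\psi\phi=1_{\bar C}$, so $\phi$ is invertible and $(i',p')\cong(i',\bar p)\in\scrE$.

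Finally, for $\text{(iv)}\Rightarrow\text{(i)}$ I would use~[E2] to form the genuine push-out, with third object $Q$ and structure maps $g\colon B\to Q$ and $j\colon A'\mono Q$; by the implication just proved its bottom row $A'\xrightarrow{j}Q\xrightarrow{q}C$ is exact, and the comparison $\theta\colon Q\to B'$ induced by the commuting square is a morphism of short exact sequences restricting to the identity on the sub-object $A'$ and on the quotient $C$. Such a $\theta$ is an isomorphism---one checks directly that it is both monic and epic from the kernel--cokernel properties of $j,q,i',p'$---whence the given square, being isomorphic to the push-out square built on $Q$, is itself a push-out. The conceptual crux of the whole proposition is the admissibility of $\mat{i\\-f}$ in the first paragraph; after that the only genuinely delicate bookkeeping is the exactness of the bottom row in $\text{(i)}\Rightarrow\text{(iv)}$ and the invertibility of the comparison $\theta$ in $\text{(iv)}\Rightarrow\text{(i)}$.
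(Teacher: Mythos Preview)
Your cycle $\text{(i)}\Rightarrow\text{(ii)}\Rightarrow\text{(iii)}\Rightarrow\text{(i)}$ and the step $\text{(i)}\Rightarrow\text{(iv)}$ are fine and match the paper's argument closely (your handling of $\text{(i)}\Rightarrow\text{(iv)}$ via comparing $p'$ to the genuine cokernel $\bar p$ of $i'$ is a minor variant of the paper's direct verification that $p'$ is a cokernel, and your use of the epicness of $\bar p f'$ via~(ii) is legitimate).

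The gap is in $\text{(iv)}\Rightarrow\text{(i)}$. You form the comparison $\theta\colon Q\to B'$ between the genuine push-out and the given square, obtain a morphism of short exact sequences with identities on the outer terms, and then assert that $\theta$ is an isomorphism because ``it is both monic and epic from the kernel--cokernel properties of $j,q,i',p'$''. The monic-and-epic check is correct, but in an exact category that is not abelian, monic~$+$~epic does \emph{not} imply invertible. This is precisely the content of the short five lemma (Corollary~\ref{cor:five-lemma}), which in the paper is proved \emph{using} Proposition~\ref{prop:pushout-exact}, so invoking it here would be circular. Nor can you rescue the step with the obscure axiom (to upgrade $\theta$ to an admissible epic from $p'\theta=q$), since that is Proposition~\ref{prop:obscure-axiom} and also comes afterwards.

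This is exactly why the paper takes the longer route $\text{(iv)}\Rightarrow\text{(ii)}$: it forms the pull-back $D$ over $p$ and $p'$, and then \emph{constructs by hand} mutually inverse morphisms $\mat{k&j'}\colon B\oplus A'\to D$ and $\mat{q'\\l}\colon D\to B\oplus A'$, exhibiting the sequence $A\to B\oplus A'\to B'$ as isomorphic to the exact sequence $A\xrightarrow{j}D\xrightarrow{q}B'$. The explicit construction of the inverse is the substitute for the unavailable five lemma. If you want to keep your overall scheme, you need to replace the ``monic and epic, hence iso'' sentence by an explicit construction of $\theta^{-1}$---and once you try to do that, you will find yourself reproducing the paper's pull-back argument.
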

\begin{proof}
  (i) $\Rightarrow$ (ii):
  The push-out property is equivalent to the assertion that
  $\mat{f' & i'}$ is a cokernel of $\mat{i \\ -f}$, so it suffices to
  prove that the latter is an admissible monic. But this follows from
  [E1] since $\mat{i \\ -f}$ is equal to the composition of the morphisms
  \[
  \xymatrix{
    A \ar@{ >->}[r]^>>>>>{\mat{1 \\ 0}} &
    A \oplus A' \ar[r]^{\mat{1 & 0 \\ -f & 1}}_{\cong} &
    A \oplus A' \ar@{ >->}[r]^{\mat{i & 0 \\ 0 & 1}} &
    B \oplus A'
  }
  \]
  which are all admissible monics by 
  Lemma~\ref{lem:split-sequences-exact}, 
  Remark~\ref{rem:isos-adm-mono-adm-epic} and
  Proposition~\ref{prop:sum-exact}, respectively.
  
  (ii) $\Rightarrow$ (iii) and (iii) $\Rightarrow$ (i): obvious.

  (i) $\Rightarrow$ (iv): Let $p: B \epi C$ be a cokernel of $i$. The
  push-out property of the square yields that there is a unique
  morphism $p':B' \to C$ such that $p'f' = p$ and $p'i' = 0$. Observe
  that $p'f' = p$ implies that $p'$ is epic. In order to see that $p'$
  is a cokernel of $i'$, let $g: B' \to X$ be such that $gi' = 0$. 
  Then $gf'i = gi'f = 0$, so $gf'$ factors uniquely over a  morphism
  $h: C \to X$ such that $gf' = hp$. We claim that $hp' = g$:
  this follows from the push-out property of the square because $hp'f'
  = hp = gf'$ and $hp'i' = 0 = gi'$. Since $p'$ is epic, the
  factorization $h$ of $g$ is unique, so $p'$ is a cokernel of $i'$.

  (iv) $\Rightarrow$ (ii): Form the pull-back over $p$ and $p'$ in
  order to obtain the commutative diagram
  \[
  \xymatrix{
    &
    A \ar@{=}[r] \ar@{ >->}[d]^{j} & 
    A \ar@{ >->}[d]^{i} \\
    A' \ar@{ >->}[r]^{j'} \ar@{=}[d] &
    D \ar@{}[dr]|{\text{PB}} \ar@{->>}[r]^{q'} \ar@{->>}[d]^{q} &
    B \ar@{->>}[d]^{p} \\
    A' \ar@{ >->}[r]^{i'} & B' \ar@{->>}[r]^{p'} & C
  }
  \]
  with exact rows and columns using the dual of the implication 
  (i)~$\Rightarrow$~(iv). Since the square
  \[
  \xymatrix{
    B \ar@{=}[r] \ar[d]^{f'} & B \ar@{->>}[d]^{p} \\
    B' \ar@{->>}[r]^{p'} & C
  }
  \]
  is commutative, there is a unique morphism $k: B \to D$ such that
  $q'k = 1_{B}$ and $qk = f'$. Since $q'(1_{D} - kq') = 0$, there is a
  unique morphism $l: D \to A'$ such that $j'l = 1_{D} - kq'$. Note
  that $lk = 0$ because $j'lk = (1_{D} - kq')k = 0$ and $j'$ is
  monic, while the calculation
  $j'lj' = (1_{D}-kq')j' = j'$ implies $lj' = 1_{A'}$,
  again because $j'$ is monic.
  Furthermore
  \[
  i'lj = (qj')lj = q (1_{D} - kq')j = - (qk)(q'j) = - f'i = - i'f
  \]
  implies $lj = -f$ since $i'$ is monic. 

  The morphisms
  \[
  \mat{k & j' }: B \oplus A' \to D \qquad
  \text{and} \qquad
  \mat{q' \\ l}: D \to B \oplus A'
  \]
  are mutually inverse since
  \[
  \mat{k & j' } \mat{q' \\ l} = kq' + j'l = 1_{D} 
  \qquad \text{and} \qquad
  \mat{q' \\ l} \mat{k & j'} = \mat{q'k & q'j' \\ lk & lj'} = 
  \mat{1_{B} & 0 \\ 0 & 1_{A'}}.
  \]
  Now
  \[
  \mat{f' & i'}  = q \mat{k & j'} \qquad \text{and} \qquad
  \mat{i \\ -f} = \mat{q' \\ l} j
  \]
  show that 
  $A \xrightarrow{\mat{i \\ -f}} B \oplus A' \xrightarrow{\mat{f' & i'}} B'$
  is isomorphic to
  $A \xrightarrow{j} D \xrightarrow{q} B'$.
\end{proof}

\begin{Rem}
  \label{rem:cokernels-in-push-outs}
  Consider the push-out diagram
  \[
  \xymatrix{
    A' \ar@{ >->}[r]^{i'} \ar[d]^{a} \ar@{}[dr]|{\text{PO}} & 
    B' \ar[d]^{b} \\
    A \ar@{ >->}[r]^{i} &
    B.
  }
  \]
  If $j': B' \epi C'$ is a cokernel of $i'$ then the unique morphism
  $j: B \to C'$ such that $ji = 0$ and $jb = j'$ is a cokernel of $i$. 
  If $j: B \epi C$ is a cokernel of $i$ then $j' = jb$ is a cokernel
  of~$i'$.
  
  The first statement was established in the proof of the implication 
  (i)~$\Rightarrow$~(iv) of Proposition~\ref{prop:pushout-exact} and
  we leave the easy verification of the second statement as an
  exercise for the reader.
\end{Rem}

The following simple observation will only be used in the proof of
Lemma~\ref{lem:cone-of-acyclics-is-acyclic}. We state it here for ease
of reference.

\begin{Cor}
  \label{cor:gluing-pb-po}
  The surrounding rectangle in a diagram of the form
  \[
  \xymatrix{
    A \ar[d]^{a} \ar@{}[dr]|{\text{\emph{PB}}} \ar@{->>}[r]^{f} &
    B \ar[d]^{b} \ar@{}[dr]|{\text{\emph{PO}}} \ar@{ >->}[r]^{g} & 
    C \ar[d]^{c} \\
    A' \ar@{->>}[r]^{f'} & B' \ar@{ >->}[r]^{g'} & C'
  }
  \]
  is bicartesian and
  $\xymatrix{
    A \ar@{ >->}[r]^-{\mat{-a \\ gf}} &
    A' \oplus C \ar@{->>}[r]^-{\mat{g' \! f' & c}} & C'
  }$
  is short exact.
\end{Cor}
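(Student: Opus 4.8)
The plan is to establish that the outer rectangle is bicartesian and then to recognise the displayed sequence as the short exact sequence attached to it. For the bicartesian claim I would first observe that each inner square is already bicartesian: the left-hand square is a pull-back whose horizontal arrows are admissible epics, so by the dual of the equivalence (i)$\Leftrightarrow$(iii) of Proposition~\ref{prop:pushout-exact} it is also a push-out; dually, the right-hand square is a push-out with horizontal admissible monics, hence a pull-back by Proposition~\ref{prop:pushout-exact} itself. Since the surrounding rectangle is the horizontal composite of two pull-back squares it is a pull-back, and as the composite of two push-out squares it is a push-out, so it is bicartesian.

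To produce the short exact sequence I would next show that $\mat{-a \\ gf}$ is an admissible monic by factoring it as $\mat{-a \\ gf} = (1_{A'} \oplus g)\,\mat{-a \\ f}$. The dual of Proposition~\ref{prop:pushout-exact}~(ii), applied to the left-hand pull-back square, shows that $\mat{-a \\ f}\colon A \to A' \oplus B$ is an admissible monic (the sign is immaterial, as $\scrE$ is closed under isomorphisms), while $1_{A'} \oplus g\colon A' \oplus B \to A' \oplus C$ is an admissible monic by Proposition~\ref{prop:sum-exact}; hence the composite is an admissible monic by~[E1]. A direct computation using the commutativities $f'a = bf$ and $cg = g'b$ gives $\mat{g'f' & c}\,\mat{-a \\ gf} = -g'f'a + c\,gf = 0$, so the displayed pair is at least a complex.

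It then remains to identify $\mat{g'f' & c}$ as a cokernel of $\mat{-a \\ gf}$. Here I would exploit that cokernels of the difference morphism $\mat{-a \\ gf}\colon A \to A' \oplus C$ are precisely the push-outs of the span $A' \xleftarrow{a} A \xrightarrow{gf} C$: a morphism $\mat{u & v}\colon A' \oplus C \to X$ annihilates $\mat{-a \\ gf}$ if and only if $ua = v\,gf$, which is exactly the compatibility condition for a cocone on this span. Since the rectangle is a push-out with structure maps $g'f'$ and $c$, the morphism $\mat{g'f' & c}$ is a cokernel of the admissible monic $\mat{-a \\ gf}$. As $\scrE$ is closed under isomorphisms and cokernels are unique up to isomorphism, the pair $\bigl(\mat{-a \\ gf},\ \mat{g'f' & c}\bigr)$ lies in $\scrE$, which is the assertion.

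The main subtlety is that the top composite $gf$ is not itself an admissible monic but only an admissible morphism, so Proposition~\ref{prop:pushout-exact} cannot be applied to the rectangle directly; it is precisely the factorization of $\mat{-a \\ gf}$ together with the identification of push-outs as cokernels of difference morphisms that circumvents this difficulty.
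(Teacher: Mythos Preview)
Your proof is correct and follows essentially the same line as the paper's: both first argue that each inner square is bicartesian (via Proposition~\ref{prop:pushout-exact} and its dual), then glue to obtain a bicartesian rectangle, and finally factor one of the two maps in the sequence as a composite of admissible morphisms. The only difference is which side of the kernel-cokernel pair is handled directly: the paper factors the epic as $\mat{g'f' & c} = \mat{g' & c}\mat{f' & 0 \\ 0 & 1_{C}}$ and appeals to~[E1$^{\opp}$], whereas you factor the monic as $\mat{-a \\ gf} = (1_{A'}\oplus g)\mat{-a \\ f}$ and appeal to~[E1]; these are dual versions of the same idea.
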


\begin{proof}
  It follows from Proposition~\ref{prop:pushout-exact} and its dual
  that both squares are bicartesian. Gluing two bicartesian
  squares along a common arrow yields another bicartesian square,
  which entails the first part and the fact that the sequence of the
  second part is a kernel-cokernel pair. The equation
  $\mat{g' \! f' & & c} = \mat{g' & c} \mat{f' & 0 \\ 0 & 1_{C}}$
  exhibits $\mat{g'f' && c}$ as a composition of admissible epics by
  Proposition~\ref{prop:sum-exact} and
  Proposition~\ref{prop:pushout-exact}.
\end{proof}

\begin{Prop}
  \label{prop:pb-adm-monic-adm-monic}
  The pull-back of an admissible monic along an admissible epic yields
  an admissible monic.
\end{Prop}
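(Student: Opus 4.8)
The plan is to sidestep an abstract construction of the pull-back and instead exhibit it concretely as a kernel, thereby reducing everything to the closure of admissible epics under composition. Write $i: A \mono B$ for the admissible monic and $\pi: B' \epi B$ for the admissible epic along which we pull back. First I would choose a cokernel $p: B \epi C$ of $i$, so that $A \mono B \epi C$ is short exact, and form the composite $p\pi: B' \to C$.

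Since $p\pi$ is a composite of the two admissible epics $\pi$ and $p$, it is itself an admissible epic by [E1$^{\opp}$], and hence admits a kernel $j: P \mono B'$, which is an admissible monic. Because $p(\pi j) = (p\pi)\,j = 0$ and $i$ is a kernel of $p$, there is a unique morphism $g: P \to A$ with $ig = \pi j$. This yields the commutative square
\[
\xymatrix{
  P \ar@{ >->}[r]^{j} \ar[d]_{g} & B' \ar@{->>}[d]^{\pi} \\
  A \ar@{ >->}[r]^{i} & B,
}
\]
in which the leg $j$ parallel to $i$ is an admissible monic by construction.

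The substantive step, and the one I expect to require the most care, is to verify that this square is indeed a pull-back of $i$ along $\pi$. Given an object $X$ together with $u: X \to A$ and $v: X \to B'$ satisfying $iu = \pi v$, I would compute $(p\pi)\,v = p(\pi v) = p(iu) = 0$, so that the kernel property of $j$ furnishes a unique $w: X \to P$ with $jw = v$. Then $i(gw) = \pi jw = \pi v = iu$, and cancelling the monic $i$ gives $gw = u$; the morphism $w$ is unique since $j$ is monic. Thus the square has the universal property of the pull-back.

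Finally, since pull-backs are determined up to a unique compatible isomorphism and $\scrE$ is closed under isomorphisms, any realization of the pull-back of $i$ along $\pi$ is isomorphic to $j$ and is therefore again an admissible monic. (The existence of the pull-back is in any case guaranteed by [E2$^{\opp}$] applied to $\pi$ and $i$, which moreover shows $g$ to be an admissible epic; the genuinely new content is precisely the admissibility of the parallel leg $j$, which the construction above supplies directly.)
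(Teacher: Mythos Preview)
Your argument is correct and is essentially the same as the paper's: both identify the desired arrow as a kernel of the composite admissible epic $p\pi$ (the paper's $pe$), using [E1$^{\opp}$] for that composite. The only difference is cosmetic order---the paper first invokes [E2$^{\opp}$] to produce the pull-back square and then shows the resulting $i'$ is a kernel of $pe$, whereas you build the kernel $j$ first and then verify its square has the pull-back property---but the mathematical content is identical.
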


\begin{proof}
  Consider the diagram
  \[
  \xymatrix{
    A' \ar@{->>}[d]^{e'} \ar@{}[dr]|{\text{PB}} \ar[r]^{i'} &
    B' \ar@{->>}[d]^{e} \ar@{->>}[r]^{pe} &
    C \ar@{=}[d] \\
    A \ar@{ >->}[r]^{i} &
    B \ar@{->>}[r]^{p} &
    C.
  }
  \]
  The pull-back square exists by axiom~[E$2^{\opp}$].
  Let $p$ be a cokernel of
  $i$, so it is an admissible epic and $pe$ is an 
  admissible epic by axiom~[E$1^{\opp}$]. In any
  category, the pull-back of a monic is a monic (if it exists). In order
  to see that $i'$ is an admissible monic, it suffices  to prove that
  $i'$ is a kernel of $pe$. Suppose that 
  $g':X \to B'$ is such that $peg' = 0$. Since $i$ is a kernel of $p$,
  there exists a unique $f: X \to A$ such that $eg' = if$. Applying the
  universal property of the pull-back square, we find a unique 
  $f': X \to A'$ such that $e'f' = f$ and $i'f' = g'$. Since $i'$ is
  monic, $f'$ is the unique morphism such that $i'f' = g'$ and we are
  done.
\end{proof}

\begin{Prop}[Obscure Axiom]
  \label{prop:obscure-axiom}
  Suppose that $i: A \to B$ is a morphism in $\scrA$ admitting a cokernel.
  If there exists a morphism $j: B \to C$ in $\scrA$ such that the composite
  $ji : A \mono C$ is an admissible monic then $i$ is an admissible monic.
\end{Prop}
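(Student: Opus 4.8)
The plan is to realize $i$, together with a chosen cokernel, as a short exact sequence by comparing it with the short exact sequence furnished by $ji$. First I would record the cheap reductions: since $ji$ is an admissible monic it is in particular monic, and because $ji=j\circ i$ the map $i$ is monic as well. Fix a cokernel $p\colon B\to D$ of $i$, and let $q\colon C\epi E$ be a cokernel of the admissible monic $ji$, so that $A\xrightarrow{ji}C\xrightarrow{q}E$ is short exact. It then suffices to prove that $A\xrightarrow{i}B\xrightarrow{p}D$ is short exact, i.e. that $i$ is an admissible monic with cokernel $p$.

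Next I would build the comparison. From $q(ji)=0$ and $p=\operatorname{coker}(i)$ the morphism $qj\colon B\to E$ kills $i$, hence factors uniquely as $qj=dp$ for some $d\colon D\to E$; this exhibits a morphism from the (not yet known to be exact) sequence $A\xrightarrow{i}B\xrightarrow{p}D$ to the short exact sequence $A\xrightarrow{ji}C\xrightarrow{q}E$ with components $1_A,\,j,\,d$. Now form the pull-back $Q$ of the admissible epic $q$ along $d$; by [E2$^{\opp}$] the projection $p'\colon Q\epi D$ is an admissible epic, and a direct check shows that its kernel is $\kappa\colon A\mono Q$ (the map induced by $ji$ and $0$), so that $A\xrightarrow{\kappa}Q\xrightarrow{p'}D$ is short exact, with $c'\kappa=ji$ for the other projection $c'\colon Q\to C$. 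Since $(j,p)$ satisfies $qj=dp$, the universal property of the pull-back yields a unique $v\colon B\to Q$ with $c'v=j$, $p'v=p$, and one checks that $vi=\kappa$. The whole problem now reduces to showing that $v$ is an isomorphism: for then $i=v^{-1}\kappa$ is an admissible monic.

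One half of this is easy and is exactly where the cokernel hypothesis pays off: $v$ is epic. Indeed, if $ev=0$ then $e\kappa=e(vi)=0$, so $e$ factors through $p'=\operatorname{coker}(\kappa)$, say $e=\bar e\,p'$; then $\bar e\,p=\bar e\,p'v=ev=0$, and since $p$ is epic we obtain $\bar e=0$, whence $e=0$. The hard part---and the real content of the statement---is to show that $v$ is (split) monic, equivalently that $v$ admits a retraction. This cannot be achieved by formal manipulation of universal properties alone, since it amounts to the non-formal assertion that $i$ is a kernel of its own cokernel; the exactness axioms must be fed back in. Concretely, I expect to construct an inverse to $v$ from the pull-back description of $Q$ by exhibiting mutually inverse morphisms, in the same spirit as the explicit matrices produced in the proof of the implication (iv)$\Rightarrow$(ii) of Proposition~\ref{prop:pushout-exact}; checking that these are genuinely inverse to $v$ is the step I expect to be the main obstacle.
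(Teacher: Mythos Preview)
Your setup is correct and the reduction to showing that $v$ is an isomorphism is valid, but the gap you flag is the entire content of the proposition, not a residual technicality. Indeed, $v$ is monic if and only if $i$ is a kernel of $p$: given $i=\ker p$, from $vx=0$ one gets $px=p'vx=0$, hence $x=iy$, hence $\kappa y=viy=0$ and $x=0$; the converse is immediate once $v$ is an isomorphism. So you have only reformulated the goal. The analogy with the matrix argument in Proposition~\ref{prop:pushout-exact}~(iv)$\Rightarrow$(ii) fails precisely because there \emph{both} rows are known kernel--cokernel pairs, and that is what allows one to build the inverse; here only the row through $Q$ is. The concrete obstruction is the implication ``$jz=0$ and $pz=0\Rightarrow z=0$'' for morphisms $z$ into $B$, and nothing in your data supplies it.

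The paper's proof (Keller's) circumvents this by enlarging the target so that $j$ can be absorbed into an isomorphism. Forming the push-out of the admissible monic $ji$ along $i$ and invoking Proposition~\ref{prop:pushout-exact} shows that $\bigl[\begin{smallmatrix} i\\ ji\end{smallmatrix}\bigr]\colon A\to B\oplus C$ is an admissible monic; composing with the automorphism $\bigl[\begin{smallmatrix}1&0\\ -j&1\end{smallmatrix}\bigr]$ of $B\oplus C$ yields that $\bigl[\begin{smallmatrix} i\\ 0\end{smallmatrix}\bigr]$ is an admissible monic as well. Its cokernel is visibly $\bigl[\begin{smallmatrix} p&0\\ 0&1_C\end{smallmatrix}\bigr]$, hence an admissible epic, and the evident pull-back square with vertical maps $\bigl[\begin{smallmatrix}1\\0\end{smallmatrix}\bigr]$ then exhibits $p$ itself as an admissible epic with kernel $i$. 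The passage to $B\oplus C$ is exactly what makes the cokernel explicit enough to finish; your pull-back $Q$ does not carry that extra room.
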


\begin{Rem}
  The statement of the previous proposition is given as axiom~c) in
  Quillen's definition of an exact category~\cite[\S~2]{MR0338129}. At
  that time, it was already proved to be a consequence of the other axioms
  by Yoneda~\cite[Corollary, p.~525]{MR0225854}. The redundancy of the
  obscure axiom was rediscovered by
  Keller~\cite[A.1]{MR1052551}. Thomason baptized axiom~c) the
  ``obscure axiom'' in~\cite[A.1.1]{MR1106918}.

  A convenient and quite powerful strengthening of the obscure axiom
  holds under the rather mild additional hypothesis of weak idempotent
  completeness, see 
  Proposition~\ref{prop:weakly-split-obscure-axiom}.
\end{Rem}

\begin{proof}[Proof of Proposition~\ref{prop:obscure-axiom} (Keller)]
  Let $k: B \to D$ be a cokernel of $i$. From the push-out diagram
  \[
  \xymatrix{
    A \ar@{ >->}[r]^{ji} \ar[d]_{i} \ar@{}[dr]|{\text{PO}} & C \ar[d] \\
    B \ar@{ >->}[r] & E
  }
  \]
  and Proposition~\ref{prop:pushout-exact} we conclude
  that $\mat{i \\ ji}: A \mono A \oplus B$
  is an admissible monic. Because 
  $\mat{1_{B} & 0 \\ - j & 1_{C}}: B \oplus C \to B \oplus C$
  is an isomorphism it is in particular an admissible monic, hence
  $\mat{i \\ 0} = \mat{1_{B} & 0 \\ - j & 1_{C}} \mat{i \\ ji}$
  is an admissible monic as well. Because $\mat{k & 0 \\ 0 & 1_{C}}$
  is a cokernel of~$\mat{i \\ 0}$, it is an admissible epic.
  Consider the following diagram
  \[
  \xymatrix{
    A \ar@{=}[d] \ar[r]^{i} & 
    B \ar[d]^{\mat{1 \\ 0}} \ar[r]^{k} \ar@{}[dr]|{\text{PB}} &
    D \ar[d]^{\mat{1 \\ 0}} \\
    A \ar@{ >->}[r]_>>>>{\mat{i \\ 0}} &
    {B \oplus C} \ar@{->>}[r]_{\mat{k & 0 \\ 0 & 1_{C}}} &
    {D \oplus C.}
  }
  \]
  Since the right hand square is a pull-back, it follows that $k$ is an
  admissible epic and that $i$ is a kernel of $k$, so $i$ is an admissible 
  monic.
\end{proof}

\begin{Cor}
  \label{cor:summands-exact}
  Let $(i,p)$ and $(i',p')$ be two pairs of composable morphisms. If
  the direct sum $(i \oplus i', p \oplus p')$ is exact then $(i,p)$
  and $(i',p')$ are both exact.
\end{Cor}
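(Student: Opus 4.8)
The plan is to prove that $i$ is an admissible monic and that $p$ is a cokernel of $i$; since the statement is symmetric in the two pairs, the argument for $(i',p')$ is then identical. Write the pairs as $A' \xrightarrow{i} A \xrightarrow{p} A''$ and $B' \xrightarrow{i'} B \xrightarrow{p'} B''$, and let $\iota_{A'},\iota_A,\iota_{A''}$ and $\pi_{A'},\pi_A,\pi_{A''}$ denote the canonical inclusions and projections of the three direct sums. The first easy observations are the block-diagonal identity $(p\oplus p')(i\oplus i')=(pi)\oplus(p'i')=0$, which forces $pi=0$, and the fact that $p$ is epic: from $\pi_{A''}(p\oplus p')=p\,\pi_A$ and the epimorphism $p\oplus p'$, any $u,v$ with $up=vp$ satisfy $u\pi_{A''}=v\pi_{A''}$, hence $u=v$ after composing with $\iota_{A''}$.

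Next I would verify that $p$ is a cokernel of $i$, which in particular shows that $i$ admits a cokernel. Given $g\colon A\to X$ with $gi=0$, the map $g\pi_A\colon A\oplus B\to X$ annihilates $i\oplus i'$, because $\pi_A(i\oplus i')=i\,\pi_{A'}$; as $p\oplus p'$ is a cokernel of $i\oplus i'$, there is a unique $\tilde h$ with $\tilde h(p\oplus p')=g\pi_A$, and I would then check that $h:=\tilde h\,\iota_{A''}$ satisfies $hp=g$ using $\iota_{A''}p=(p\oplus p')\iota_A$. Uniqueness of $h$ follows from $p$ being epic, so $p$ has the universal property of the cokernel of $i$.

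The decisive step is to produce $i$ as an admissible monic via the obscure axiom (Proposition~\ref{prop:obscure-axiom}). The key identity is $\iota_A i=(i\oplus i')\iota_{A'}$: here $\iota_{A'}$ is an admissible monic by Lemma~\ref{lem:split-sequences-exact} and $i\oplus i'$ is an admissible monic by hypothesis, so axiom~[E1] makes the composite $\iota_A i$ an admissible monic. Since $i$ admits a cokernel by the previous paragraph, Proposition~\ref{prop:obscure-axiom} applies with $j=\iota_A$ and yields that $i$ is an admissible monic. To finish, I would pick $\bar p$ with $(i,\bar p)\in\scrE$; then $p$ and $\bar p$ are both cokernels of $i$, hence differ by a unique isomorphism, and this isomorphism identifies $(i,p)$ with $(i,\bar p)$ as objects of $\scrA^{\to\to}$, so $(i,p)\in\scrE$ because $\scrE$ is closed under isomorphisms.

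I expect the main obstacle to be the middle step, namely unwinding by hand that $p$ is a cokernel of $i$ from the universal property of the direct-sum cokernel; this is the one place where a genuine factorization argument (rather than a matrix identity or a direct invocation of the obscure axiom) is required. Everything after it reduces to recognizing $\iota_A i$ as a composite of admissible monics and feeding it into Proposition~\ref{prop:obscure-axiom}.
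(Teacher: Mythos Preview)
Your proof is correct and follows essentially the same route as the paper: both arguments hinge on the identity $\iota_A\, i = (i\oplus i')\,\iota_{A'}$, recognize the right-hand side as a composite of admissible monics, and then invoke the obscure axiom (Proposition~\ref{prop:obscure-axiom}) once $i$ is known to admit a cokernel. The only difference is expository: the paper compresses your middle paragraph into the sentence ``It is clear that $(i,p)$ and $(i',p')$ are kernel--cokernel pairs,'' whereas you unwind the cokernel universal property of $p$ by hand; your final closure-under-isomorphism step is likewise implicit in the paper's formulation.
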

\begin{proof}
  It is clear that $(i,p)$ and $(i',p')$ are kernel-cokernel
  pairs. Since $i$ has $p$ as a cokernel and since
  \[
  \mat{1 \\ 0} i = \mat{i & 0 \\ 0 & i'} \mat{1 \\ 0}
  \]
  is an admissible monic, the obscure axiom implies that $i$ is an
  admissible monic.
\end{proof}

\begin{Exer}
  \label{exer:push-out-to-mono-is-mono}
  Suppose that the commutative square 
  \[
  \xymatrix{
    A' \ar@{ >->}[r]^{f'} \ar[d]^{a} \ar@{}[dr]|{\text{PO}} &
    B' \ar@{ >->}[d]^{b} \\
    A \ar@{ >->}[r]^{f} & B
  }
  \]
  is a push-out. Prove that $a$ is an admissible monic.

  \emph{Hint:}
  Let $b': B \epi B''$ be a cokernel of $b: B' \mono B$. Prove that
  $a' = b'f: A \to B''$ is a cokernel of $a$, then apply the obscure
  axiom. 
\end{Exer}

\section{Some Diagram Lemmas}
\label{sec:diagram-lemmas}

In this section we will prove variants of diagram lemmas which are 
well-known in the context of abelian categories, in particular we will
prove the five lemma and the $3 \times 3$-lemma. Further familiar
diagram lemmas will be proved in
section~\ref{sec:adm-morph-snake-lemma}. The proofs will 
be based on the following simple observation:

\begin{Prop}
  \label{prop:decompose-morphisms-of-exact-sequences} 
  Let $(\scrA,\scrE)$ be an exact category.
  A morphism from a short exact sequence $A' \mono B' \epi C'$ to
  another short exact sequence $A \mono B \epi C$ factors over
  a short exact sequence $A \mono D \epi C'$
  \[
  \xymatrix{
    A' \ar@{ >->}[r]^{f'} \ar[d]^{a} \ar@{}[dr]|{\text{\emph{BC}}} &
    B' \ar@{->>}[r]^{g'} \ar[d]^{b'} &
    C' \ar@{=}[d] \\
    A \ar@{ >->}[r]^{m} \ar@{=}[d] &
    D \ar@{->>}[r]^{e} \ar[d]_{b''}  \ar@{}[dr]|{\text{\emph{BC}}} &
    C' \ar[d]^{c} \\
    A \ar@{ >->}[r]^{f} &
    B \ar@{->>}[r]^{g} & C
  }
  \]
  in such a way that the two squares marked
  $\text{\emph{BC}}$ are bicartesian. In particular
  there is a canonical isomorphism of the
  push-out $A \cup_{A'} B'$ with the pull-back $B \times_{C} C'$.
\end{Prop}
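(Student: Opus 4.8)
The plan is to build the two bicartesian squares one at a time, using the push-out/pull-back axioms and the characterization from Proposition~\ref{prop:pushout-exact}, and then glue them.

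First I would form the push-out of the admissible monic $f':A'\mono B'$ along $a:A'\to A$. By axiom [E2] this push-out exists and the induced map $m:A\to D$ is an admissible monic, giving the top square marked BC; by Proposition~\ref{prop:pushout-exact} (implication (i)~$\Rightarrow$~(iii)) that square is automatically bicartesian. Moreover, by Remark~\ref{rem:cokernels-in-push-outs}, the cokernel structure transports across the push-out: since $g':B'\epi C'$ is a cokernel of $f'$, there is a unique $e:D\to C'$ with $e\,m=0$ and $e\circ(\text{co-projection of }B'\text{ into }D)=g'$, and this $e$ is a cokernel of $m$. Hence $A\mono D\epi C'$ is short exact with the displayed row, which is exactly what the middle row of the diagram asserts.

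Next I would construct the lower square symmetrically by \emph{dualizing}: form the pull-back of the admissible epic $g:B\epi C$ along $c:C'\to C$. By axiom [E2$^{\opp}$] the induced map is an admissible epic, and by the dual of Proposition~\ref{prop:pushout-exact} the resulting square is bicartesian with a short exact middle row $A\mono D'\epi C'$. The one thing to check is that the object $D'$ produced this way agrees with the object $D$ from the push-out step, and that the maps $b',b''$ splice correctly. The cleanest way to see this is to verify that the outer morphism of short exact sequences factors through \emph{a} short exact sequence $A\mono D\epi C'$ whose $D$ simultaneously enjoys the push-out property (over $A'\xleftarrow{a}\cdots$) and the pull-back property (over $C\xleftarrow{c}\cdots$); the maps $b',b''$ are then forced by these two universal properties, and $b=b''b'$ recovers the original middle map of the morphism of sequences.

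I expect the main obstacle to be precisely this compatibility: showing that the push-out construction and the pull-back construction land on the \emph{same} middle object, so that the two BC squares can be stacked to form the commuting $3\times3$ diagram. Concretely, having built $D$ as the push-out, I would check that the square in the lower half is a pull-back directly: the map $e:D\epi C'$ is an admissible epic (it is a cokernel of the admissible monic $m$), and one verifies its pull-back along $c$ reproduces $B$ by matching universal properties, or equivalently one invokes Corollary~\ref{cor:gluing-pb-po} on the surrounding rectangle to confirm that gluing the (bicartesian) push-out square to the lower square yields a bicartesian rectangle, forcing the lower square to be a pull-back. The final assertion—the canonical isomorphism $A\cup_{A'}B'\cong B\times_C C'$—is then immediate, since $D$ is by construction the push-out $A\cup_{A'}B'$ (top square) and simultaneously the pull-back $B\times_C C'$ (bottom square).
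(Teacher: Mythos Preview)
Your approach is essentially the paper's: form $D$ as the push-out $A\cup_{A'}B'$, obtain the middle exact row via Remark~\ref{rem:cokernels-in-push-outs}, and then verify the lower square is bicartesian. For that last step the paper is more direct than your gluing/matching argument: construct $b''$ from the push-out property of the top square applied to the pair $(f,b)$ (which agree on $A'$ since $bf'=fa$), check that the lower-right square commutes (immediate, as $c$ is uniquely determined by $a$ and $b=b''b'$), and then the implication (iv)$\Rightarrow$(iii) of Proposition~\ref{prop:pushout-exact} applied to the lower two rows gives bicartesianness---no separate pull-back construction or appeal to Corollary~\ref{cor:gluing-pb-po} is needed.
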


\begin{proof}
  Form the push-out under $f'$ and $a$ in order to obtain the object
  $D$ and the morphisms $m$ and $b'$. Let $e:D \to C'$ be the unique
  morphism such that $eb' = g'$ and $em = 0$ and let $b'': D \to B$ be
  the unique morphism $D \to B$ such that $b''b' = b: B' \to B$ and
  $b''m = f$. It is easy to see that $e$ is a cokernel of $m$ 
  (Remark~\ref{rem:cokernels-in-push-outs}) and hence 
  the result follows from Proposition~\ref{prop:pushout-exact} 
  since the square $DC'BC$ is commutative 
  [this is because $a$ and $b''b'$ determine $c$ uniquely].
\end{proof}

\begin{Cor}[Five Lemma, I]
  \label{cor:five-lemma}
  Consider a morphism of short exact sequences
  \[
  \xymatrix{
    A' \ar@{ >->}[r] \ar[d]^{a} &
    B' \ar@{->>}[r] \ar[d]^{b} &
    C' \ar[d]^{c} \\
    A \ar@{ >->}[r] & B \ar@{->>}[r] & C.
  }
  \]
  If $a$ and $c$ are isomorphisms (or admissible monics, 
  or admissible epics) then so is~$b$.
\end{Cor}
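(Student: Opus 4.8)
The plan is to reduce everything to the two bicartesian squares produced by Proposition~\ref{prop:decompose-morphisms-of-exact-sequences}. Applying that proposition to the given morphism of short exact sequences factors the middle map as $b = b'' b'$, where $b' \colon B' \to D$ and $b'' \colon D \to B$ are the vertical arrows of the two squares marked BC. The virtue of this factorization is that each of the three classes under consideration---isomorphisms, admissible monics (closed under composition by~[E1]) and admissible epics (closed under composition by~[E1$^{\opp}$])---is stable under composition, so it will suffice to prove separately that $b'$ and $b''$ lie in the relevant class, and then recombine.

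Next I would identify the two maps concretely. The upper square is the push-out of the span $A \xleftarrow{\,a\,} A' \xrightarrow{\,f'\,} B'$, so $b'$ is the cobase-change of $a$ along the admissible monic $f'$. Dually, the lower square is the pull-back of the cospan $B \xrightarrow{\,g\,} C \xleftarrow{\,c\,} C'$, so $b''$ is the base-change of $c$ along the admissible epic $g$. These two situations are interchanged by passing to $\scrA^{\opp}$, so in principle one only needs to treat one of them and dualize.

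The case analysis then runs as follows. If $a$ and $c$ are isomorphisms, then $b'$ is an isomorphism as a push-out of an isomorphism and $b''$ is an isomorphism as a pull-back of an isomorphism, whence $b$ is an isomorphism. If $a$ and $c$ are admissible monics, then $b'$ is an admissible monic directly by~[E2] (cobase-change of the admissible monic $a$ along $f'$), while $b''$ is the pull-back of the admissible monic $c$ along the admissible epic $g$ and hence an admissible monic by Proposition~\ref{prop:pb-adm-monic-adm-monic}; thus $b = b''b'$ is an admissible monic by~[E1]. The admissible epic case is exactly dual: $b''$ is an admissible epic by~[E2$^{\opp}$] (base-change of the admissible epic $c$ along $g$) and $b'$ is an admissible epic by the dual of Proposition~\ref{prop:pb-adm-monic-adm-monic} (push-out of the admissible epic $a$ along the admissible monic $f'$), so $b = b''b'$ is an admissible epic by~[E1$^{\opp}$].

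The only non-formal ingredient is Proposition~\ref{prop:pb-adm-monic-adm-monic} together with its dual, which is precisely what handles the two ``mixed'' corners, where one parallel leg is a monic (resp. epic) but the square is glued along an epic (resp. monic) rather than along the same type of map. The rest is bookkeeping, and the main thing to get right is the orientation: one must check that $b'$ really is a push-out while $b''$ really is a pull-back, and that in each mixed case the map along which one pushes out (resp. pulls back) is exactly the admissible monic $f'$ (resp. admissible epic $g$) supplied by the factorization, so that Proposition~\ref{prop:pb-adm-monic-adm-monic} and its dual genuinely apply.
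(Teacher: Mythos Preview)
Your proof is correct and follows essentially the same approach as the paper: factor $b$ through the intermediate object $D$ provided by Proposition~\ref{prop:decompose-morphisms-of-exact-sequences}, then use that push-outs and pull-backs preserve isomorphisms, invoke~[E2] and Proposition~\ref{prop:pb-adm-monic-adm-monic} for the admissible monic case, and dualize for admissible epics. Your write-up is in fact slightly more explicit than the paper's about which axiom handles which square.
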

\begin{proof}
  Assume first that $a$ and $c$ are isomorphisms.
  Because isomorphisms are preserved by push-outs and pull-backs, it
  follows from the diagram of
  Proposition~\ref{prop:decompose-morphisms-of-exact-sequences} 
  that $b$ is the composition of two isomorphisms $B' \to D \to B$.
  If $a$ and $c$ are both admissible monics, it follows
  from the diagram of
  Proposition~\ref{prop:decompose-morphisms-of-exact-sequences}  
  together with~[E$2$] and
  Proposition~\ref{prop:pb-adm-monic-adm-monic} 
  that $b$ is the composition of two admissible monics. 
  The case of admissible epics is dual.
\end{proof}

\begin{Exer}
  \label{exer:two-out-of-three-five-lemma}
  If in a morphism   
  \[
  \xymatrix{
    A' \ar@{ >->}[r] \ar[d]^{a} &
    B' \ar@{->>}[r] \ar[d]^{b} &
    C' \ar[d]^{c} \\
    A \ar@{ >->}[r] & B \ar@{->>}[r] & C.
  }
  \]
  of short exact sequences as in the five lemma~\ref{cor:five-lemma}
  two out of $a,b,c$ are isomorphisms then so is the third.
  
  \emph{Hint:}
  Use e.g. that $c$ is uniquely determined by $a$ and $b$.
\end{Exer}

\begin{Rem}
  The reader insisting that Corollary~\ref{cor:five-lemma}
  should be called ``three lemma'' rather than ``five lemma'' 
  is cordially invited to give the details of the proof of
  Lemma~\ref{lem:long-five-lemma} and to solve
  Exercise~\ref{exer:sharp-four-five-lemma}. 
  We will however use the more customary name five lemma.
\end{Rem}

\begin{Lem}[``Noether Isomorphism $C/B \cong (C/A) / (B / A)$'']
  \label{lem:c/b=(c/a)/(b/a)}
  Consider the diagram
  \[
  \xymatrix{
    A \ar@{=}[d] \ar@{ >->}[r] &
    B \ar@{ >->}[d] \ar@{->>}[r] & 
    X \ar@{ >.>}[d] \\
    A \ar@{ >->}[r] &
    C \ar@{->>}[r] \ar@{->>}[d] & 
    Y \ar@{.>>}[d] \\
     & Z \ar@{=}[r] & Z
  }
  \]
  in which the first two horizontal rows and the middle column are
  short exact. Then the third column exists, is short exact, and is
  uniquely determined by the requirement that it makes the diagram
  commutative.
  Moreover, the upper right hand square is bicartesian.
\end{Lem}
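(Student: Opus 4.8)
The plan is to reduce the whole statement to the factorisation result Proposition~\ref{prop:decompose-morphisms-of-exact-sequences}. Write the top row as $A \xrightarrow{i} B \xrightarrow{p} X$, the middle row as $A \xrightarrow{i'} C \xrightarrow{p'} Y$ and the middle column as $B \xrightarrow{j} C \xrightarrow{q} Z$; commutativity of the top left square means $ji = i'$. I first construct the third column together with the properties that force its uniqueness. Since $p$ is a cokernel of $i$ and $p'ji = p'i' = 0$, there is a unique $x\colon X \to Y$ with $xp = p'j$, and this is the \emph{only} arrow making the top right square commute because $p$ is epic. Dually $qi' = qji = 0$, so since $p'$ is a cokernel of $i'$ there is a unique $y\colon Y \to Z$ with $yp' = q$, the only arrow making the lower right square commute because $p'$ is epic. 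These two equations are exactly the statement that $x$ and $y$ are determined by the requirement that they make the diagram commute.

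Next I identify the upper right square as bicartesian by feeding the morphism of short exact sequences $(1_A, j, x)$ from the top row to the middle row into Proposition~\ref{prop:decompose-morphisms-of-exact-sequences}. The crucial point is that the left-hand vertical map is the identity, so the push-out of $i$ along $1_A$ is just $B$ itself; hence the intermediate object produced by that proposition is (canonically) $B$ and the \emph{lower} of its two bicartesian squares is literally the upper right square of the present diagram. This establishes the ``moreover'' clause.

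Finally I read off exactness of the third column from this push-out square. Viewing it as the push-out of the admissible monic $j$ along the morphism $p$, axiom~[E2] shows that $x\colon X \mono Y$ is an admissible monic. By Remark~\ref{rem:cokernels-in-push-outs} a cokernel of $x$ is obtained from a cokernel of $j$: the unique map $Y \to Z$ annihilating $x$ and restricting to $q$ along $p'$ is a cokernel of $x$. Since $y$ satisfies $yp' = q$ and $yxp = yp'j = qj = 0$, hence $yx = 0$ as $p$ is epic, this cokernel is precisely $y$. Thus $x$ is an admissible monic whose cokernel is $y$, so (using closure of $\scrE$ under isomorphisms) the pair $X \xrightarrow{x} Y \xrightarrow{y} Z$ is a short exact sequence, as required.

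The one step demanding genuine care is the middle paragraph: one must verify that in the degenerate case $a = 1_A$ the object and arrows produced by Proposition~\ref{prop:decompose-morphisms-of-exact-sequences} really do collapse onto $B$, $p$, $j$ and $x$, so that its lower bicartesian square \emph{coincides} with the square at hand rather than being merely isomorphic to it. Once this identification is made, the rest is a routine chase forced by the universal properties of the two cokernels together with [E2] and Remark~\ref{rem:cokernels-in-push-outs}.
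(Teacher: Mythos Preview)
Your argument is correct. The construction of $x$ and $y$ via the cokernel properties of $p$ and $p'$, the identification of the upper right square as bicartesian, and the deduction of exactness of the third column via [E2] and Remark~\ref{rem:cokernels-in-push-outs} all go through as you describe.

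The paper's own proof is the same in spirit but more direct: instead of routing through Proposition~\ref{prop:decompose-morphisms-of-exact-sequences} and then arguing that in the degenerate case $a=1_A$ the intermediate object collapses to $B$, the paper simply observes that the first two rows of the Noether diagram already form exactly the input to the implication (iv)$\Rightarrow$(iii) of (the dual of) Proposition~\ref{prop:pushout-exact}: two short exact sequences with a common kernel $A$, connected by $(1_A,j,x)$. That immediately gives bicartesianness of the $BXCY$ square without any intermediate factorisation. Since Proposition~\ref{prop:decompose-morphisms-of-exact-sequences} is itself proved by invoking Proposition~\ref{prop:pushout-exact}, your detour through it adds a layer that then has to be unwound (your ``degenerate case'' verification). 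Going straight to Proposition~\ref{prop:pushout-exact} avoids this and is what the paper does. Both routes are valid; the direct one is shorter.
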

\begin{proof}
  The morphism $X \to Y$ exists since the first row is exact and the
  composition $A \to C \to Y$ is zero while the
  morphism $Y \to Z$ exists since the second row is exact and the
  composition $B \to C \to Z$ vanishes. By 
  Proposition~\ref{prop:pushout-exact} the square
  containing $X \to Y$ is bicartesian. It follows that $X \to Y$ is
  an admissible monic and that $Y \to Z$ is its cokernel. The
  uniqueness assertion is obvious.
\end{proof}

\begin{Cor}[$3 \times 3$-Lemma]
  \label{cor:3x3-lemma}
  Consider a commutative diagram
  \[
  \xymatrix{
    A' \ar[r]^{f'} \ar@{ >->}[d]^{a} &
    B' \ar[r]^{g'} \ar@{ >->}[d]^{b} &
    C' \ar@{ >->}[d]^{c} \\
    A \ar[r]^{f} \ar@{->>}[d]^{a'} &
    B \ar[r]^{g} \ar@{->>}[d]^{b'} &
    C \ar@{->>}[d]^{c'} \\
    A'' \ar[r]^{f''} &
    B'' \ar[r]^{g''}  &
    C''
  }
  \]
  in which the columns are exact and assume in addition that
  one of the following conditions holds:
  \begin{enumerate}[(i)]
    \item 
      the middle row and either one of the outer rows is short
      exact;
    \item
      the two outer rows are short exact and $gf = 0$.
  \end{enumerate}
  Then the remaining row is short exact as well.
\end{Cor}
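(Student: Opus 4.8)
The plan is to reduce everything to three tools already available: the factorization of a morphism of short exact sequences through a bicartesian ``hourglass'' (Proposition~\ref{prop:decompose-morphisms-of-exact-sequences}), the Noether lemma~\ref{lem:c/b=(c/a)/(b/a)}, and the fact that the pull-back of an admissible monic along an admissible epic is admissible (Proposition~\ref{prop:pb-adm-monic-adm-monic}). First I would trim the case analysis by duality: in case~(i) the sub-cases ``middle and top exact'' and ``middle and bottom exact'' are interchanged by passing to $\scrA^{\opp}$, so only one needs treating; and case~(ii) is self-dual, so ``$f$ is an admissible monic'' and ``$g$ is an admissible epic'' are dual assertions.

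For case~(i), assume the top and middle rows are short exact. Then $(a,b,c)$ is a morphism from the top row to the middle row, and Proposition~\ref{prop:decompose-morphisms-of-exact-sequences} factors it through an object $D\cong A\cup_{A'}B'\cong B\times_{C}C'$ sitting in two short exact sequences and two bicartesian squares. The push-out square makes the induced $B'\mono D$ an admissible monic whose cokernel is an admissible epic $D\epi A''$ (using [E2] and Remark~\ref{rem:cokernels-in-push-outs}), while the pull-back square exhibits the comparison map $D\to B$ as the pull-back of the admissible monic $c\colon C'\mono C$ along the admissible epic $g\colon B\epi C$, so Proposition~\ref{prop:pb-adm-monic-adm-monic} makes it an admissible monic. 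The two short exact sequences $B'\mono D\epi A''$ and $B'\mono B\epi B''$ together with this admissible monic $D\mono B$ and its cokernel are exactly the data of the Noether lemma~\ref{lem:c/b=(c/a)/(b/a)} (with $B'$ as the degenerate left column), whose output is a short exact sequence $A''\mono B''\epi Z$. It then remains to identify this with the bottom row: the induced monic agrees with $f''$ after composing with the epic $D\epi A''$ (one checks equality on the two jointly epic structure maps of the push-out $D$, using Proposition~\ref{prop:pushout-exact}), the object $Z$ is canonically $C''$ because $c'g$ is also a cokernel of $D\mono B$, and under this identification the induced epic becomes $g''$.

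Case~(ii) proceeds along the same lines but starting from the columns. The commutativity of the top-left and bottom-left squares makes $(f',f,f'')$ a morphism from the first column, read as the short exact sequence $A'\mono A\epi A''$, to the second column $B'\mono B\epi B''$; factoring it through $D$ by Proposition~\ref{prop:decompose-morphisms-of-exact-sequences}, the push-out part gives an admissible monic $\mu\colon A\mono D$ with cokernel $D\epi C'$, and the bottom bicartesian square now exhibits $\nu\colon D\to B$ as the pull-back of the admissible monic $f''\colon A''\mono B''$ along the admissible epic $b'\colon B\epi B''$, so $\nu$ is an admissible monic by Proposition~\ref{prop:pb-adm-monic-adm-monic}. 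Hence $f=\nu\mu$ is an admissible monic by [E1]. Feeding $A\mono D\epi C'$ and $D\mono B\epi W$ (with $W$ the cokernel of $\nu$) into the Noether lemma produces a short exact sequence $A\mono B\epi Y$ with $Y$ a cokernel of $f$. To finish I would show $g$ is this cokernel: using $gf=0$ one gets a comparison $u\colon Y\to C$ with $u$ times the cokernel of $f$ equal to $g$, fitting into a morphism of short exact sequences from $C'\mono Y\epi W$ to the third column $C'\mono C\epi C''$ that is the identity on $C'$. Remark~\ref{rem:cokernels-in-push-outs} identifies the cokernel of $\nu$ as $g''b'$, and the given relation $c'g=g''b'$ forces the right-hand comparison $W\to C''$ to be an isomorphism; the five lemma (Exercise~\ref{exer:two-out-of-three-five-lemma}) then promotes $u$ to an isomorphism, so $g$ is a cokernel of the admissible monic $f$ and the middle row is short exact.

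The hard part will not be the kernel--cokernel bookkeeping, which is forced by universal properties, but establishing that the maps manufactured by the factorization are \emph{admissible}; the whole argument hinges on reading each auxiliary bicartesian square as a pull-back of an admissible monic along an admissible epic so that Proposition~\ref{prop:pb-adm-monic-adm-monic} applies. The second delicate point, most visible in case~(ii), is matching the Noether output with the actual row: recognizing the relevant cokernel through Remark~\ref{rem:cokernels-in-push-outs} and then invoking the five lemma is precisely what upgrades a formal comparison map into an isomorphism.
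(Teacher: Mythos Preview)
Your treatment of case~(i) is essentially the paper's proof: factor the top-to-middle morphism via Proposition~\ref{prop:decompose-morphisms-of-exact-sequences}, observe that both structure maps into and out of $D$ are admissible monics by~[E2] and Proposition~\ref{prop:pb-adm-monic-adm-monic}, and feed the resulting data into the Noether isomorphism to produce the bottom row. The commutativity check you describe (testing on the two push-out legs of $D$) is exactly what the paper does.

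Your case~(ii), however, takes a genuinely different route. The paper forms the push-out under $g'$ and $b$, obtains a comparison map $d':D\to C$ from the push-out property, reads the square $DCB''C''$ as a pull-back to make $d'$ (hence $g=d'j$) an admissible epic, and then recognises a further bicartesian square $ABA''D$ to conclude $f=\ker g$. You instead apply Proposition~\ref{prop:decompose-morphisms-of-exact-sequences} to the \emph{columns}, exploiting that in case~(ii) both $a$ and $f'$ are admissible monics so that \emph{both} legs of the push-out square are admissible; this gives $f=\nu\mu$ as a composite of admissible monics directly, and the identification $\Coker f\cong C$ is then finished off with the five lemma. Your approach is more uniform (it reuses the case~(i) machinery verbatim, just rotated) and makes the role of the hypothesis $gf=0$ very transparent---it is exactly what produces the comparison $u:Y\to C$. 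The paper's approach is slightly more hands-on but avoids the final appeal to the five lemma. Both are correct; your argument for case~(ii) is a pleasant alternative.
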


\begin{proof}
  Let us prove~(i). The two possibilities are dual to each other, 
  so we need only consider the case that the first two rows are
  exact. Apply 
  Proposition~\ref{prop:decompose-morphisms-of-exact-sequences} to 
  the first two rows so as to obtain the commutative diagram
  \[
  \xymatrix{
    A' \ar@{ >->}[r]^{f'} \ar@{ >->}[d]_{a} \ar@{}[dr]|{\text{BC}} & 
    B' \ar@{->>}[r]^{g'} \ar@{ >->}[d]^{i} &
    C' \ar@{=}[d] \\
    A \ar@{ >->}[r]^{\bar{f}} \ar@{=}[d] &
    D \ar@{->>}[r]^{\bar{g}} \ar@{ >->}[d]_{j}
    \ar@{}[dr]|{\text{BC}} &
    C' \ar@{ >->}[d]^{c} \\
    A \ar@{ >->}[r]^{f} & B \ar@{->>}[r]^{g} & C
  }
  \]
  where $ji = b$---notice that $i$ and $j$ are admissible monics by
  axiom [E$2$] and Proposition~\ref{prop:pb-adm-monic-adm-monic},
  respectively. By Remark~\ref{rem:cokernels-in-push-outs} the
  morphism $i': D \to A''$ determined by
  $i'i = 0$ and $i'\bar{f} = a'$ is a cokernel of $i$ and the
  morphism $j': B \epi C''$ given by $j' = c'g = g'' b'$ is a cokernel
  of $j$.

  If we knew that the diagram  
  \[
  \xymatrix{
    B' \ar@{ >->}[r]^{i} \ar@{=}[d] &
    D \ar@{->>}[r]^{i'} \ar@{ >->}[d]^{j} &
    A'' \ar[d]^{f''} \\
    B' \ar@{ >->}[r]^{b} & B \ar@{->>}[r]^{b'} \ar@{->>}[d]^{j'} & 
    B'' \ar[d]^{g''} \\
    & C'' \ar@{=}[r] & C''
  }
  \]
  is commutative then we would conclude from the Noether
  isomorphism~\ref{lem:c/b=(c/a)/(b/a)} that $(f'',g'')$ is a short
  exact sequence. It therefore remains to prove that $f'' i' = b' j$ since
  the other commutativity relations $b = ji$ and
  $g''b' = j'$ hold by construction. We are going to
  apply the push-out property of the square $A'B'AD$. We have
  \[
  (f'' i') i = 0 = b'b = (b'j) i
  \qquad \text{and} \qquad
  (b'j)\bar{f} = b'f = f'' a' = (f''i') \bar{f}
  \]
  which together with
  \[
  (f''i'\bar{f})a  = (f''i'i)f' = 0 \qquad \text{and} \qquad
  (b'j\bar{f})a = f''a'a = 0 = b'bf' = (b'ji)f'
  \]
  show that both $f''i'$ and $b'j$ are solutions to the same push-out
  problem, hence they are equal. This settles case~(i).
  
  In order to prove~(ii) we start by forming the push-out under $g'$
  and $b$ so that we have the following commutative diagram with exact
  rows and columns
  \[
  \xymatrix{
    A' \ar@{ >->}[r]^{f'} \ar@{=}[d] & 
    B' \ar@{->>}[r]^{g'} \ar@{ >->}[d]^{b} \ar@{}[dr]|{\text{PO}} &
    C' \ar@{ >->}[d]^{k} \\
    A \ar@{ >->}[r]^{i} & 
    B \ar@{->>}[r]^{j} \ar@{->>}[d]^{b'} &
    D \ar@{->>}[d]^{k'} \\
    & B'' \ar@{=}[r] & B''
  }
  \]
  in which the cokernel $k'$ of $k$ is determined by $k'j = b'$ 
  and $k'k = 0$, while $i = bf'$ is a kernel of the admissible 
  epic $j$, see Remark~\ref{rem:cokernels-in-push-outs} and
  Proposition~\ref{prop:pb-adm-monic-adm-monic}.
  The push-out property
  of the square $B'C'BD$ applied to the square $B'C'BC$ yields a
  unique morphism $d': D \to C$ such that $d'k = c$ and $d'j = g$. 
  The
  diagram 
  \[
  \xymatrix{
    C' \ar@{=}[d] \ar@{ >->}[r]^{k} &
    D \ar[d]^{d'} \ar@{->>}[r]^{k'} &
    B'' \ar@{->>}[d]^{g''} \\
    C' \ar@{ >->}[r]^{c} &
    C \ar@{->>}[r]^{c'} &
    C''
  }
  \]
  has exact rows and it is commutative: Indeed, 
  $c = d'k$ holds by construction of~$d'$, while $c'd' = g''k'$ follows
  from $c'd'j = c'g = g''b' = g''k'j$ and the fact that $j$ is
  epic. We conclude from Proposition~\ref{prop:pushout-exact} that
  $DCB''C''$ is a pull-back, so $d'$ is an admissible epic and 
  so is $g = d'j$. The
  unique morphism $d: A'' \to D$ such that $k'd = f''$ and $d'd = 0$
  is a kernel of $d'$. By the pull-back property of $DCB''C''$ the diagram
  \[
  \xymatrix{
    A' \ar@{=}[r] \ar@{ >->}[d]^{a} & 
    A' \ar@{ >->}[d]^{i} \\
    A \ar[r]^{f} \ar@{->>}[d]^{a'} &
    B \ar@{->>}[r]^{g} \ar@{->>}[d]^{j} & 
    C \ar@{=}[d] \\
    A'' \ar@{ >->}[r]^{d} &
    D \ar@{->>}[r]^{d'} &
    C
  }
  \]
  is commutative as $k'(da') = f''a' = b'f = k'(jf)$ and 
  $d'(da') = 0 = gf = d'(jf)$. Notice that the hypothesis 
  that $gf = 0$ enters at this point of the argument. 
  It follows from the dual of
  Proposition~\ref{prop:pushout-exact} that $ABA''D$ is bicartesian,
  so $f$ is a kernel of $g$ by 
  Proposition~\ref{prop:pb-adm-monic-adm-monic}.    
\end{proof}

\begin{Exer}
  Consider the solid arrow diagram
  \[
  \xymatrix{
    A' \ar@{ >->}[r] \ar@{ >->}[d] &
    B' \ar@{ >->}[d] \ar@{->>}[r] &
    C' \ar@{ >.>}[d] \\
    A \ar@{ >->}[r] \ar@{->>}[d] &
    B \ar@{->>}[r] \ar@{->>}[d] &
    C \ar@{.>>}[d] \\
    A'' \ar@{ >->}[r] & 
    B'' \ar@{->>}[r] & 
    C''     
  }
  \]
  with exact rows and columns. Strengthen the Noether
  isomorphism~\ref{lem:c/b=(c/a)/(b/a)} to the statement that there
  exist unique maps $C' \to C$ and $C \to C''$ making the diagram
  commutative and the sequence $C' \mono C \epi C''$ is short exact.
\end{Exer}

\begin{Exer}
  In the situation of the $3 \times 3$-lemma prove that there are two
  exact sequences
  $A' \mono A \oplus B' \to B \epi C''$ and 
  $A' \mono B \to B'' \oplus C \epi C''$
  in the sense that the morphisms $\to$ factor as $\epi \mono$ in such
  a way that consecutive $\mono \epi$ are short exact [compare also
  with Definition~\ref{def:ex-seq-adm-mor}].
  
  \emph{Hint:}
  Apply Proposition~\ref{prop:decompose-morphisms-of-exact-sequences}
  to the first two rows in order to obtain a short exact sequence
  $A' \mono A \oplus B' \epi D$ using
  Proposition~\ref{prop:pushout-exact}. 
  Conclude from the push-out property of $DC'BC$ that
  $D \mono B$ has $C''$ as cokernel.
\end{Exer}
\newpage
\begin{Exer}[{Heller~\cite[6.2]{MR0100622}}]
  \label{exer:ses-exact}
  Let $(\scrA, \scrE)$ be an exact category and consider $\scrE$ as a
  full subcategory of $\scrA^{\to\to}$. We have shown that $\scrE$ is
  additive in Corollary~\ref{cor:exact-structure-additive}. Let
  $\scrF$ be the class of short sequences 
  \[
  \xymatrix{
    (A' \ar@{ >->}[r] \ar@{ >->}[d] &
    B' \ar@{->>}[r] \ar@{ >->}[d] &
    C') \ar@{ >->}[d] \\
    (A \ar@{ >->}[r] \ar@{->>}[d] &
    B \ar@{->>}[r] \ar@{->>}[d] &
    C) \ar@{->>}[d] \\
    (A'' \ar@{ >->}[r] &
    B'' \ar@{->>}[r]  &
    C'')
  }
  \]
  over $\scrE$
  with short exact columns [we write $(A \mono B \epi C)$ to
  indicate that we think of the sequence as an object of $\scrE$]. 
  Prove that $(\scrE,\scrF)$ is an exact category.
%
\end{Exer}

\begin{Rem}
  The category of short exact sequences in a nonzero abelian category is
  \emph{not} abelian, see \cite[XII.6, p.~375]{MR0156879}.
\end{Rem}

\begin{Exer}[{K\"unzer's Axiom, cf. e.g.~\cite{MR2366951}}]
  \leavevmode
  \begin{enumerate}[(i)]
    \item
      If $f: A \to B$ is a morphism and $g: B \epi C$ is an admissible
      epic such that $h = gf: A \mono C$ is an admissible monic then $f$ is an
      admissible monic and the morphism $\Ker{g} \to \Coker{f}$ 
      is an admissible monic as well.
   
      \emph{Hint:}
      Form the pull-back $P$ over $h$ and $g$, use 
      Proposition~\ref{prop:pb-adm-monic-adm-monic} and factor $f$
      over $P$ to see the first part 
      (see also Remark~\ref{rem:split-special-idempotents}). 
      For the second part use the
      Noether isomorphism~\ref{lem:c/b=(c/a)/(b/a)}.

    \item 
      Let $\scrE$ be a class of kernel-cokernel pairs in the additive
      category $\scrA$. Assume that $\scrE$ is closed under
      isomorphisms and contains the split exact sequences. If 
      $\scrE$ enjoys the property of point~(i) and its dual 
      then it is an exact structure.

      \emph{Hint:}
      Let $f:A \mono B$ be an admissible monic and let $a: A \to A'$ be
      arbitrary. The push-out axiom follows from the commutative diagram
      \[
      \xymatrix{
        A \ar@{ >->}[rr]^-{f} \ar@{ >->}[dr]_-{\mat{a \\ f}} & &
        B \\
        &
        A' \oplus B \ar@{->>}[ur]_-{\mat{0 & 1}} 
        \ar@{->>}[dr]^-{\mat{f' & -b}} \\
        A' \ar@{ >->}[rr]^-{f'} \ar@{ >->}[ur]^-{\mat{1 \\ 0}} & & B'
      }
      \]
      in which $\mat{a \\ f}$ and $f'$ are admissible monics by~(i) and 
      $\mat{f' & -b}$ is a cokernel of $\mat{a \\ f}$. Next observe
      that the dual of~(i) implies that if in
      addition $a$ is an admissible epic then so is $b$. In order to
      prove the composition axiom, let $f$ and $g$ be admissible
      monics and choose a cokernel $f'$ of $f$. Form the push-out
      under $g$ and $f'$ and verify that $gf$ is a kernel of the
      push-out of $f'$.
  \end{enumerate}
\end{Exer}

\section{Quasi-Abelian Categories}
\label{sec:quasi-ab-cats}

\begin{Def}
  An additive category $\scrA$ is called \emph{quasi-abelian} if
  \begin{enumerate}[(i)]
    \item
      Every morphism has a kernel and a cokernel.
    \item
      The class of kernels is stable under push-out along arbitrary
      morphisms and the class of cokernels is stable under pull-back
      along arbitrary morphisms.
  \end{enumerate}
\end{Def}

\begin{Rem}
  The concept of a quasi-abelian category is self-dual, that is to say
  $\scrA$ is quasi-abelian if and only if $\scrA^{\opp}$ is quasi-abelian.
\end{Rem}

\begin{Exer}
  Let $\scrA$ be an additive category with kernels.
  Prove that every pull-back of a kernel is a kernel.
\end{Exer}

\begin{Prop}[{Schneiders~\cite[1.1.7]{MR1779315}}]
  The class $\scrE_{\max}$ of all kernel-cokernel pairs in a quasi-abelian
  category is an exact structure.
\end{Prop}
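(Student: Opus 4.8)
The plan is to check the six axioms [E0]--[E2$^{\opp}$] for $\scrE_{\max}$, having first identified the admissible morphisms. The basic observation is that in any additive category with kernels and cokernels a morphism $i$ is a kernel (of \emph{some} morphism) if and only if it is a kernel of its own cokernel: if $i = \ker f$ and $c$ denotes a cokernel of $i$, then $f$ factors through $c$, so that $\ker c$ factors through $\ker f = i$, while $i$ visibly factors through $\ker c$; since $i$ and $\ker c$ are monic the two comparison maps are mutually inverse. Thus the admissible monics of $\scrE_{\max}$ are exactly the kernels, the admissible epics exactly the cokernels, and $(i,c) \in \scrE_{\max}$ as soon as $i$ is a kernel and $c$ a cokernel of $i$. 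Axioms [E0] and [E0$^{\opp}$] are then immediate, since $1_{A}$ is a kernel of $A \to 0$ and a cokernel of $0 \to A$; closure of $\scrE_{\max}$ under isomorphism is automatic from the universal properties.

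For [E2] and [E2$^{\opp}$] I would appeal directly to the quasi-abelian axioms. As $\scrA$ is additive with cokernels, the push-out of an admissible monic $i$ along an arbitrary morphism $f$ exists (it is the cokernel of the morphism $A \to B \oplus A'$ with components $i$ and $-f$), and the stipulation that kernels are stable under push-out says exactly that this push-out of $i$ is again a kernel, i.e.\ an admissible monic; this is [E2]. Dually, pull-backs exist because $\scrA$ has kernels, and the stability of cokernels under pull-back yields [E2$^{\opp}$].

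The substance lies in [E1], that a composite of two kernels is a kernel (whence [E1$^{\opp}$] follows by duality). Given admissible monics $i \colon A \mono B$ and $j \colon B \mono C$, pick a cokernel $p \colon B \epi D_{1}$ of $i$ and form the push-out of $p$ along $j$:
\[
\xymatrix{
  B \ar@{->>}[d]_{p} \ar@{}[dr]|{\text{PO}} \ar@{ >->}[r]^{j} &
  C \ar[d]^{p'} \\
  D_{1} \ar@{ >->}[r]^{j'} & D.
}
\]
Here $j'$ is again a kernel (stability under push-out applied to $j$), hence monic, and $p'$ is an epimorphism as a push-out of the epimorphism $p$. A short diagram chase identifies $p'$ as a cokernel of $ji$: it annihilates $ji$, and any morphism killing $ji$ factors through $p$ and then, by the push-out property, uniquely through $p'$. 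It remains to show $ji = \ker p'$, for then $(ji,p')$ is a kernel-cokernel pair. Let $q$ be a cokernel of $j$; since $q(ji)=0$ it factors as $q = s p'$. Now if $p'g = 0$ then $qg = s p' g = 0$, so $g = ju$ factors through $j = \ker q$; from $j'pu = p'ju = p'g = 0$ and $j'$ monic we get $pu = 0$, so $u = iv$ factors through $i = \ker p$; hence $g = jiv$, uniquely since $ji$ is monic.

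The one genuinely non-formal step is this verification that $ji$ is a kernel, and its engine is the quasi-abelian stability axiom: it is precisely the conclusion that the push-out $j'$ of $j$ remains monic that licenses the cancellation $j'pu = 0 \Rightarrow pu = 0$. Everything else is bookkeeping with universal properties, and since the entire argument is self-dual, the opposite axioms [E0$^{\opp}$], [E1$^{\opp}$] and [E2$^{\opp}$] require no separate work.
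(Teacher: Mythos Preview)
Your proof is correct. It is essentially the dual of the paper's argument: the paper verifies [E1$^{\opp}$] (cokernels compose) by examining the diagram of kernels of $f$, $g$, and $gf$ and using pull-back stability of cokernels, whereas you verify [E1] (kernels compose) by forming the push-out of $\coker i$ along $j$ and using push-out stability of kernels. Both invoke the quasi-abelian stability axiom at the one non-formal step, and both dispose of the opposite axiom by self-duality. Your hands-on verification that $ji = \ker p'$ via the factorization $q = s p'$ is a bit more explicit than the paper's, which packages the same content as the observation that the relevant square is bicartesian (this is what underlies the Noether-isomorphism remark following the proof).
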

\begin{proof}
  It is clear that $\scrE_{\max}$ is closed under isomorphisms and that the
  classes of kernels and cokernels contain the identity
  morphisms. The pull-back and push-out axioms are part of the
  definition of quasi-abelian categories. By duality it only remains
  to show that the class of cokernels is closed under composition.
  So let $f: A \epi B$ and $g: B \epi C$ be
  cokernels and put $h = gf$.
  In the diagram
  \[
  \xymatrix{
    \Ker{f} \ar@{.>}[r]^-{u} \ar@{=}[d] &
    \Ker{h} \ar@{.>}[r]^-{v} \ar@{ >->}[d]^-{\ker{h}} &
    \Ker{g} \ar@{ >->}[d]^-{\ker{g}} \\
    \Ker{f} \ar@{ >->}[r]^-{\ker{f}} & 
    A \ar@{->>}[r]^-{f} \ar[d]^-{h} &
    B \ar@{->>}[d]^-{g} \\
    & C \ar@{=}[r] & C
  }
  \]
  there exist unique morphisms $u$ and $v$ making it
  commutative. The upper right hand square is a pull-back, so $v$ is a
  cokernel and $u$ is its kernel. But then it follows by duality that
  the upper right hand square is also a push-out and this together with the
  fact that $h$ is epic implies that $h$ is a cokernel of $\ker{h}$.
\end{proof}

\begin{Rem}
  Note that we have just re-proved the Noether
  isomorphism~\ref{lem:c/b=(c/a)/(b/a)} in the special case of
  quasi-abelian categories.
\end{Rem}

\begin{Def}
  The \emph{coimage} of a morphism $f$ in a category with kernels and
  cokernels is $\Coker{(\ker{f})}$, while the \emph{image} is defined to
  be $\Ker{(\coker{f})}$. The \emph{analysis}
  (cf. \cite[IX.2]{MR0156879}) of $f$ is the commutative diagram
  \[
  \xymatrix@R=0.5pc{
    & A \ar[rrr]^{f} \ar@{->>}[dr]_-{\coim{f}} & & & B
    \ar@{->>}[dr]^-{\coker{f}} \\
    \Ker{f} \ar@{ >->}[ur]^-{\ker{f}} & & \Coim{f} \ar[r]^-{\hat{f}} & 
    \Im{f} \ar@{ >->}[ur]_-{\im{f}} & & \Coker{f}
  }
  \]
  in which $\hat{f}$ is uniquely determined by requiring the
  diagram to be commutative.
\end{Def}

\begin{Rem}
  The difference between quasi-abelian categories and abelian
  categories is that in the quasi-abelian case 
  the canonical morphism $\hat{f}$ in the
  analysis $f$ is not in general an isomorphism. Indeed, it is easy to
  see that a quasi-abelian category is abelian \emph{provided} that
  $\hat{f}$ is always an isomorphism. Equivalently, not every
  monic is a kernel and not every epic is a cokernel.
\end{Rem}

\begin{Prop}[{\cite[1.1.5]{MR1779315}}]
  Let $f$ be a morphism in the quasi-abelian category $\scrA$. The
  canonical morphism $\hat{f}: \Coim{f} \to \Im{f}$ is monic and epic.
\end{Prop}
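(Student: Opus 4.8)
The plan is to prove that $\hat{f}$ is monic and then to obtain that it is epic by duality. Indeed, the notion of a quasi-abelian category is self-dual, and passing to $\scrA^{\opp}$ interchanges image and coimage while turning $\hat{f}$ into the canonical morphism attached to $f$ regarded as a morphism of $\scrA^{\opp}$. Hence it suffices to prove the monic assertion and then to apply it in $\scrA^{\opp}$ in order to get the epic assertion for free.

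To show that $\hat{f}$ is monic I would prove that its kernel vanishes. Recall that $\coim{f}$ is by definition a cokernel (of $\ker{f}$) and hence an admissible epic for $\scrE_{\max}$, while $\im{f}$ is monic. Writing $f = (\im{f})\,\hat{f}\,(\coim{f})$ and cancelling the monic $\im{f}$ shows that $\hat{f}\,\coim{f}$ and $f$ have the same kernel, namely $\ker{f}$. Now let $\ell\colon L \mono \Coim{f}$ be the kernel of $\hat{f}$ and form the pull-back of the admissible epic $\coim{f}$ along $\ell$, with projections $\pi\colon P \to A$ and $\bar{\ell}\colon P \to L$ satisfying $\coim{f}\,\pi = \ell\,\bar{\ell}$. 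Since the class of cokernels is stable under pull-back, $\bar{\ell}$ is again a cokernel, in particular epic. From $\hat{f}\,\coim{f}\,\pi = \hat{f}\,\ell\,\bar{\ell} = 0$ I conclude that $\pi$ factors through the kernel $\ker{f}$ of $\hat{f}\,\coim{f}$; composing such a factorization with $\coim{f}$ yields $\coim{f}\,\pi = 0$, because $\coim{f}\,\ker{f} = 0$. Therefore $\ell\,\bar{\ell} = \coim{f}\,\pi = 0$, and as $\bar{\ell}$ is epic we get $\ell = 0$; since $\ell$ is monic this forces $L = 0$. Thus the kernel of $\hat{f}$ is zero and $\hat{f}$ is monic.

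The main obstacle, and the only point where quasi-abelianness is genuinely used, is the pull-back step: one needs both the existence of the pull-back of $\coim{f}$ along $\ell$---guaranteed by~[E2$^{\opp}$] because $\coim{f}$ is an admissible epic---and the fact that the resulting projection $\bar{\ell}$ is still a cokernel, which is exactly the stability of cokernels under pull-back demanded by the definition. Everything else is formal manipulation with kernels and cokernels in an additive category, and the epic half costs nothing thanks to duality.
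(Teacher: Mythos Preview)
Your proof is correct and follows essentially the same approach as the paper's: reduce by duality to the monic assertion, take a test morphism into $\Coim{f}$ killed by $\hat{f}$ (you use the kernel $\ell$, the paper uses an arbitrary such $x$), pull it back along the admissible epic $\coim{f}$, factor the resulting map through $\Ker{f}$, and conclude the test morphism vanishes because the pulled-back projection is epic. The only cosmetic difference is that the paper works directly with $\bar{f} = (\im{f})\hat{f}\colon \Coim{f}\to B$ and checks monicity against arbitrary $x$, whereas you pass to the kernel of $\hat{f}$; since quasi-abelian categories have kernels and a morphism is monic iff its kernel is zero, this is the same argument.
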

\begin{proof}
  By duality it suffices to check that the morphism
  $\bar{f}$ in the diagram
  \[
  \xymatrix@R=0.5pc{
    A \ar[rr]^-{f} \ar@{->>}[dr]_-{j} & & B \\
    & \Coim{f} \ar[ur]_-{\bar{f}}
  }
  \]
  is monic. Let $x: X \to \Coim{f}$ be a morphism such that
  $\bar{f}x = 0$. The pull-back $y: Y \to A$ of $x$ along $j$
  satisfies $fy = 0$, so $y$ factors over $\Ker{f}$ and hence 
  $jy = 0$. But then the map $Y \epi X \to \Coim{f}$ is zero as well, 
  so $x = 0$.
\end{proof}

\begin{Rem}
  Every morphism $f$ in a quasi-abelian category $\scrA$
  has two epic-monic factorizations, one over $\Coim{f}$ and one
  over $\Im{f}$. The quasi-abelian category $\scrA$ is abelian if and
  only if the two factorizations coincide for all
  morphisms $f$.
\end{Rem}

\begin{Rem}
  An additive category with kernels and cokernels is called
  \emph{semi-abelian} if the canonical morphism $\Coim{f} \to \Im{f}$
  is always monic and epic. We have just proved that quasi-abelian
  categories are semi-abelian. It may seem obvious that the concept of
  semi-abelian categories is strictly weaker than the concept of a
  quasi-abelian category. However, it is surprisingly delicate to come up
  with an explicit example. This led Ra\u\i{}kov to conjecture that
  every semi-abelian category is quasi-abelian. A counterexample to
  this conjecture was recently found by
  Rump~\cite{rump-counterexample}.
\end{Rem}

\begin{Rem}
  We do not develop the theory of quasi-abelian categories any
  further. The interested reader may consult
  Schneiders~\cite{MR1779315}, Rump~\cite{MR1856638} and the
  references therein.
\end{Rem}

\section{Exact Functors}
\label{sec:exact-functors}

\begin{Def}
  Let $(\scrA,\scrE)$ and $(\scrA',\scrE')$ be exact categories. An
  (additive) functor $F: \scrA \to \scrA'$ is called \emph{exact} if
  $F(\scrE) \subset \scrE'$. The functor $F$ \emph{reflects exactness}
  if $F(\sigma) \in \scrE'$ implies $\sigma \in \scrE$ for all
  $\sigma \in \scrA^{\to\to}$.
\end{Def}

\begin{Prop}
  \label{prop:exact-functors-push-out-pull-back}
  An exact functor preserves push-outs along admissible monics and
  pull-backs along admissible epics.
\end{Prop}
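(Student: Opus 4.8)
The plan is to reduce the whole statement to the characterization of push-out squares obtained in Proposition~\ref{prop:pushout-exact}, so that preservation of push-outs becomes preservation of short exactness, which is exactly what an exact functor provides. First I would isolate two elementary observations about an exact functor $F: \scrA \to \scrA'$. Since $F$ is additive, it preserves finite biproducts, so that the canonical map $F(B \oplus A') \to F(B) \oplus F(A')$ is an isomorphism and $F$ carries a morphism written as a matrix of components to the matrix obtained by applying $F$ entrywise; this is where additivity, rather than mere functoriality, is used. Secondly, $F$ sends admissible monics to admissible monics: if $i$ is an admissible monic, pick $p$ with $(i,p) \in \scrE$, whence $(Fi, Fp) \in \scrE'$ by exactness and $Fi$ is an admissible monic; dually $F$ sends admissible epics to admissible epics.

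Granting these, consider a push-out square along an admissible monic $i: A \mono B$, with the notation of Proposition~\ref{prop:pushout-exact}. By the equivalence (i)~$\Leftrightarrow$~(ii) there, the square is a push-out if and only if
\[
\xymatrix@1{A \ar@{ >->}[r]^-{\mat{i \\ -f}} & B \oplus A' \ar@{->>}[r]^-{\mat{f' & i'}} & B'}
\]
is short exact. Applying $F$ and using the two observations, the image diagram is a commutative square with horizontal admissible monic $Fi$, and the sequence
\[
\xymatrix@1{F(A) \ar@{ >->}[r]^-{\mat{Fi \\ -Ff}} & F(B) \oplus F(A') \ar@{->>}[r]^-{\mat{Ff' & Fi'}} & F(B')}
\]
lies in $\scrE'$ because $F$ is exact. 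This is precisely condition (ii) of Proposition~\ref{prop:pushout-exact} for the image square, so the implication (ii)~$\Rightarrow$~(i) exhibits the image square as a push-out.

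Finally, the assertion about pull-backs along admissible epics is the dual statement and needs no separate work: the same functor, regarded between the opposite exact categories $\scrA^{\opp}$ and $(\scrA')^{\opp}$, is again exact, and a pull-back along an admissible epic in $\scrA$ is a push-out along an admissible monic in $\scrA^{\opp}$, so the first half applies verbatim. I do not anticipate a genuine obstacle here; the only point that requires care is the bookkeeping of the first paragraph, namely that $F$ commutes with biproducts and with matrix notation, without which the reduction to Proposition~\ref{prop:pushout-exact} would not go through.
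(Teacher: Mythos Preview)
Your proof is correct and follows the same approach as the paper: both reduce the claim to Proposition~\ref{prop:pushout-exact} after noting that $F$ preserves admissible monics and epics. The paper's argument is simply a terser version of yours, saying only that the image square has horizontal admissible monics and that ``the result follows immediately from Proposition~\ref{prop:pushout-exact}''; you have spelled out the intended mechanism, namely the equivalence (i)~$\Leftrightarrow$~(ii) together with additivity to handle the biproduct and matrix notation.
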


\begin{proof}
  An exact functor preserves admissible monics and admissible epics,
  in particular it preserves diagrams of type
  \[
  \vcenter{
    \xymatrix{
      \ar[d] \ar@{ >->}[r] & \ar[d] \\
      \ar@{ >->}[r] &
    }
  }
  \qquad \text{and} \qquad
  \vcenter{
    \xymatrix{
      \ar[d] \ar@{->>}[r] & \ar[d] \\
      \ar@{->>}[r] &
    }
  }
  \]
  so the result follows immediately from
  Proposition~\ref{prop:pushout-exact} and its dual.
\end{proof}

The following exercises show how one can induce new exact structures
using functors satisfying certain exactness properties.

\begin{Exer}[{Heller~\cite[7.3]{MR0100622}}]
  \label{exer:pull-back-ex-str}
  Let $F: (\scrA,\scrE) \to (\scrA',\scrE')$ be an exact
  functor and let $\scrF'$ be another exact structure on
  $\scrA'$. Then $\scrF = \{\sigma \in \scrE\,:\,F(\sigma) \in
  \scrF'\}$ is an exact structure on $\scrA$.
\end{Exer}

\begin{Rem}[Heller]
  The prototypical application of the previous exercise is the following: 
  A (unital) ring homomorphism $\varphi: R' \to R$ yields an exact functor
  $\varphi^{\ast}: \leftidx_{{R}}{\Mod} \to \leftidx_{{R'}}{\Mod}$ of the
  associated module categories. Let $\scrF'$ be the class of split
  exact sequences on $\leftidx_{{R'}}{\Mod}$. The induced structure
  $\scrF$ on $\leftidx_{R}{\Mod}$
  consisting of sequences $\sigma$ such that $\varphi^{\ast}(\sigma)$
  is split exact is the \emph{relative exact structure} with respect
  to $\varphi$. This structure is used in particular to define the
  relative derived functors such as the relative $\Tor$ and $\Ext$ functors.
\end{Rem}

\begin{Exer}[K\"unzer]
  \label{exer:largest-ex-structure-fct-ex}
  Let $F: (\scrA, \scrE) \to (\scrA',\scrE')$ be a functor which preserves
  admissible kernels, i.e., for every morphism
  $f:B \to C$ with an admissible monic $k: A \mono B$
  as kernel, the morphism $F(k)$ is an admissible monic and a kernel of
  $F(f)$. Let $\scrF = \{\sigma \in \scrE\,:\,F(\sigma) \in \scrE'\}$
  be the largest subclass of $\scrE$ on which $F$ is exact. Prove that
  $\scrF$ is an exact structure.

  \emph{Hint:} Axioms [E$0$], [E$0^{\opp}$] and [E$1^{\opp}$] are easy.
  To check axiom~[E$1$] use the 
  obscure axiom~\ref{prop:obscure-axiom} and the
  $3 \times 3$-lemma~\ref{cor:3x3-lemma}. 
  Axiom~[E$2$] follows from the obscure axiom~\ref{prop:obscure-axiom} and
  Proposition~\ref{prop:pushout-exact}~(iv),
  while axiom~[E$2^{\opp}$] follows from the fact that $F$ preserves
  certain pull-back squares. 
\end{Exer}

\begin{Exer}
  Let $\scrP$ be a set of objects in the exact category
  $(\scrA,\scrE)$. Consider the class $\scrE_{\scrP}$ consisting of the
  sequences $A' \mono A \epi A''$ in $\scrE$ such that
  \[
  \Hom_{\scrA}{(P,A')} \mono \Hom_{\scrA}{(P,A)} \epi \Hom_{\scrA}{(P,A'')}
  \]
  is an exact sequence of abelian groups for all $P \in \scrP$.
  Prove that $\scrE_{\scrP}$
  is an exact structure on $\scrA$.

  \emph{Hint:} Use Exercise~\ref{exer:largest-ex-structure-fct-ex}.
\end{Exer}

\section{Idempotent Completion}
\label{sec:idempotent-completion}

In this section we discuss Karoubi's construction of `the' idempotent 
completion of an additive category, see~\cite[1.2]{MR0238927} and show
how to extend it to exact categories. 
Admittedly, the constructions and arguments presented here
are quite obvious (once the definitions are given) and thus rather
boring, but as the author is unaware of a reasonably complete
exposition it seems worthwhile to outline the details. A different
account for small categories (not necessarily additive)
is given in Borceux~\cite[Proposition~6.5.9, p.~274]{MR1291599}. It appears
that the latter approach is due to M.~Bunge~\cite{bungesthesis}.

\begin{Def}
  An additive category $\scrA$ is 
  \emph{idempotent complete}~\cite[1.2.1, 1.2.2]{MR0238927} if for every
  idempotent $p: A \to A$, \emph{i.e.} $p^{2} = p$,
  there is a decomposition $A \cong K \oplus I$ of $A$
  such that $p \cong \mat{0 & 0 \\ 0 & 1}$. 
\end{Def}

\begin{Rem}
  \label{rem:ic-iff-p-has-kernel}
  The additive category $\scrA$ is idempotent complete if
  and only if every idempotent has a kernel.
  
  Indeed, suppose that every idempotent has a kernel. Let $k: K \to A$
  be a kernel of the idempotent $p: A \to A$ and let $i:I \to A$ be a
  kernel of the idempotent $1 - p$. Because $p(1-p) = 0$,
  we have $(1-p) = kl$ for a unique $l: A \to K$ and because
  $(1-p)p = 0$ we have $p = ij$ for a unique $j: A \to
  I$. Since $k$ is monic and $kli = (1-p)i = 0$ we have $li = 0$ and
  because $klk = (1-p)k = pk + (1-p)k = k$ we have $lk =
  1_{K}$. Similarly, $jk = 0$ and $ji = 1_{I}$. Therefore
  $\mat{k & i}: K \oplus I \to A$ and $\mat{l \\ j}: A \to K \oplus I$
  are inverse to each other and
  $\mat{l \\ j}p \mat{k & i} = \mat{l \\ j} ij \mat{k & i} = 
  \mat{0 & 0 \\ 0 & 1_{I}}$ as desired. Notice that we have
  constructed an analysis of $p$:
  \[
  \xymatrix@R=0.5pc{
    & A \ar[rr]^{p} \ar@{->>}[dr]_{j} 
    & & A \ar@{->>}[dr]^{l} \\
    K \ar@{ >->}[ur]^{k} & & I \ar@{ >->}[ur]_{i} & & K,
  }
  \]
  in particular $k:K\mono A$ is a kernel of $p$ and 
  $i: I \mono A$ is an image of $p$.
  The converse direction is even more obvious.
\end{Rem}

\begin{Rem}
  \label{rem:constr-idemp-compl}
  Every additive category $\scrA$ can be fully faithfully
  embedded into an idempotent complete
  additive category $\scrA^{\wedge}$.
  
  The objects of $\scrA^{\wedge}$ are the pairs
  $(A, p)$ consisting of an object $A$ of $\scrA$ and an
  idempotent $p: A \to A$ while the sets of morphisms are 
  \[
  \Hom_{\scrA^{\wedge}}{((A,p), (B,q))} = q \circ \Hom_{\scrA} {(A, B)}
  \circ p
  \]
  with the composition induced by the composition in $\scrA$.
  It is easy to see that $\scrA^{\wedge}$ is additive with biproduct
  $(A, p) \oplus (A',p') = (A \oplus A', p \oplus p')$ and obviously the
  functor $i_{\scrA}:\scrA \to \scrA^{\wedge}$ given by
  $i_{\scrA}{(A)} = (A,1_{A})$ and $i_{\scrA}(f) = f$ is fully faithful. 
  In order to see that $\scrA^{\wedge}$ is idempotent 
  complete, suppose $pfp$ is an idempotent of $(A,p)$ in
  $\scrA^{\wedge}$. \emph{A fortiori} $pfp$ is an idempotent of $\scrA$
  and the object $(A,p)$ is isomorphic to the direct sum
  $(A,p - pfp) \oplus (A,pfp)$
  via the morphisms $\mat{p - pfp \\ pfp}$ and $\mat{p-pfp & & pfp}$. 
  The equation $\mat{p - pfp \\ pfp} pfp \mat{p - pfp & & pfp} = 
  \mat{0 & 0 \\ 0 & pfp}$ proves $\scrA^{\wedge}$ to be
  idempotent complete.
\end{Rem}

\begin{Def}
  The pair $(\scrA^{\wedge}, i_{\scrA})$ constructed in
  Remark~\ref{rem:constr-idemp-compl}  
  is called the \emph{idempotent completion} of $\scrA$.
\end{Def}

The next goal is to characterize the pair
$(\scrA^{\wedge}, i_{\scrA})$ by a universal property
(Proposition~\ref{prop:idempotent-completion-additive}).
We first work out some nice properties of the explicit
construction.

\begin{Rem}
  \label{rem:ic-of-ic-cat-is-ic}
  If $\scrA$ is idempotent
  complete then $i_{\scrA}: \scrA \to \scrA^{\wedge}$ is an
  equivalence of categories. In order to construct a quasi-inverse
  functor of $i_{\scrA}: \scrA \to \scrA^{\wedge}$, choose for each
  idempotent $p: A \to A$ a kernel $K_{p}$, an image $I_{p}$ and
  morphisms $i_{p}, j_{p}, k_{p}, l_{p}$ as in 
  Remark~\ref{rem:ic-iff-p-has-kernel} and map the
  object $(A,p)$ of $\scrA^{\wedge}$ to $I_{p}$. 
  A morphism $(A,p) \to (B,q)$ of $\scrA^{\wedge}$ can be written
  as $qfp$ and map it to $j_{q}qfpi_{p} = j_{q}fi_{p}$. Obviously,
  this yields a quasi-inverse functor of 
  $i_{\scrA}: \scrA \to \scrA^{\wedge}$.
\end{Rem}

\begin{Rem}
  \label{rem:functoriality-of-ic}
  A functor $F: \scrA \to \scrB$ yields a functor
  $F^{\wedge}: \scrA^{\wedge} \to \scrB^{\wedge}$, simply by setting
  $F^{\wedge}(A,p) = (F(A),F(p))$ and 
  $F^{\wedge}(qfp)= F(q) F(f) F(p)$. 
  Obviously, $F^{\wedge}i_{\scrA} = i_{\scrB}F$ and
  $(GF)^{\wedge} = G^{\wedge} F^{\wedge}$. 
\end{Rem}

\begin{Rem}
  \label{rem:2-functoriality-of-ic}
  A natural transformation
  $\alpha: F \Rightarrow G$ of functors $\scrA \to \scrB$ yields a
  unique natural transformation $\alpha^{\wedge}: F^{\wedge} \Rightarrow
  G^{\wedge}$.

  Observe first that a natural transformation $\alpha': F'
  \Rightarrow G'$ of functors $\scrA^{\wedge} \to \scrB^{\wedge}$ is
  completely determined by its values on $i_{\scrA}{(\scrA)}$ by the
  following argument.
  Every object $(A,p)$ of $\scrA^{\wedge}$ is canonically a
  retract of $(A,1_{A})$ via the morphisms $s: (A,p) \to (A,1_{A})$ and
  $r: (A, 1_{A}) \to (A,p)$ given by $p \in
  \Hom_{\scrA}{(A,A)}$. Therefore, by naturality, we must have
  \[
  \alpha'_{(A,p)} = \alpha'_{(A,p)} F'(r) F'(s) = G'(r)
  \alpha'_{(A,1_{A})} F'(s),
  \]
  so $\alpha'$ is completely determined by its values on
  $i_{\scrA}{(\scrA)}$. Now given a natural transformation
  $\alpha: F \Rightarrow G$ of functors $\scrA \to \scrB$ put
  \[
  \alpha^{\wedge}_{(A,p)} = 
  G^{\wedge}(r) i_{\scrB}(\alpha_{A}) F^{\wedge}(s)
  \]
  which is simply the element $G(p) \alpha_{A} F(p)$ in
  \[
  \Hom_{\scrB^{\wedge}} {(F^{\wedge}(A,p), G^{\wedge}(A,p))} = 
  G(p)\circ \Hom_{\scrB}(F(A),G(A)) \circ F(p).
  \]
  It is easily checked that this definition of $\alpha^{\wedge}$ indeed yields
  a natural transformation $F^{\wedge} \Rightarrow G^{\wedge}$ as
  desired.
\end{Rem}

\begin{Rem}
  \label{rem:strict-2-functoriality-of-ic}
  The assignment $\alpha \mapsto \alpha^{\wedge}$ is
  compatible with vertical and horizontal composition 
  (see e.g.~\cite[II.5, p.~42f]{MR1712872}): For functors
  $F,G,H: \scrA \to \scrB$ and  natural transformations
  $\alpha:F \Rightarrow G$ and $\beta: G \Rightarrow H$ we have 
  $(\beta \circ \alpha)^{\wedge} = \beta^{\wedge} \circ \alpha^{\wedge}$ 
  while for functors $F,G: \scrA \to \scrB$ and
  $H,K:\scrB \to \scrC$ with natural transformations $\alpha:F
  \Rightarrow G$ and $\beta: H \Rightarrow K$ 
  we have $(\beta \ast \alpha)^{\wedge} = \beta^{\wedge} \ast
  \alpha^{\wedge}$.
\end{Rem}

\begin{Rem}
  \label{rem:uniqueness-of-fctrs-to-ic-cat}
  A functor $F: \scrA^{\wedge} \to \scrI$ to an idempotent complete
  category is determined up to unique isomorphism by its values on
  $i_{\scrA}{(\scrA)}$. A natural transformation $\alpha: F
  \Rightarrow G$ of functors $\scrA^{\wedge} \to \scrI$ is determined
  by its values on $i_{\scrA}{(\scrA)}$.
  
  Indeed, exhibit each $(A,p)$ as a retract of
  $(A,1)$ as in Remark~\ref{rem:2-functoriality-of-ic}. 
  Consider $p$ as an
  idempotent of $(A,1)$, so $F(p)$ is an idempotent of
  $F(A,1)$. Choosing an image $I_{F(p)}$ of $F(p)$ as in
  Remark~\ref{rem:ic-iff-p-has-kernel}, it is clear that
  the functor $F$ must
  map $(A,p)$ to $I_{F(p)}$ and is thus determined up to a
  unique isomorphism. The claim about natural transformations is
  analogous to the argument in Remark~\ref{rem:2-functoriality-of-ic}.
\end{Rem}

\begin{Prop}
  \label{prop:idempotent-completion-additive}
  The functor $i_{\scrA}: \scrA \to \scrA^{\wedge}$ is $2$-universal among
  functors from $\scrA$ to idempotent complete categories:
  \begin{enumerate}[(i)]
    \item
      For every functor $F: \scrA \to \scrI$ to an idempotent complete
      category $\scrI$ there exists a functor
      $\widetilde{F}: \scrA^{\wedge} \to \scrI$ and a
      natural isomorphism 
      $\alpha: F \Rightarrow \widetilde{F}i_{\scrA}$.

    \item
      Given a functor $G: \scrA^{\wedge} \to \scrI$
      and a natural transformation $\gamma: F \Rightarrow Gi_{\scrA}$
      there is a unique natural transformation $\beta:\widetilde{F}
      \Rightarrow G$ such that $\gamma = \beta \ast \alpha$.
  \end{enumerate}
\end{Prop}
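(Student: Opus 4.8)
The plan is to work entirely with the explicit model of $\scrA^{\wedge}$ from Remark~\ref{rem:constr-idemp-compl}, to build $\widetilde{F}$ by splitting the idempotents $F(p)$ inside the idempotent complete category $\scrI$, and to reduce the $2$-categorical bookkeeping in part~(ii) to the rigidity statements of Remark~\ref{rem:uniqueness-of-fctrs-to-ic-cat}.

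For part~(i) I would first fix, for every idempotent $e$ of $\scrI$, an analysis as in Remark~\ref{rem:ic-iff-p-has-kernel}: an image object $I_{e}$ together with morphisms $j_{e}$ and $i_{e}$ satisfying $e = i_{e}j_{e}$ and $j_{e}i_{e} = 1_{I_{e}}$. Then I set $\widetilde{F}(A,p) = I_{F(p)}$ and, for a morphism $m\colon (A,p)\to(B,q)$ of $\scrA^{\wedge}$ (so $m \in q\,\Hom_{\scrA}(A,B)\,p$), I define $\widetilde{F}(m) = j_{F(q)}\,F(m)\,i_{F(p)}$. Functoriality is a short computation with the two splitting relations: for the identity $1_{(A,p)} = p$ one uses $F(p)\,i_{F(p)} = i_{F(p)}$ to obtain $\widetilde{F}(p) = j_{F(p)} i_{F(p)} = 1_{I_{F(p)}}$, and for a composite $nm$ one inserts $i_{F(q)}j_{F(q)} = F(q)$ and absorbs it via $F(n)F(q) = F(n)$, which holds because $nq = n$ in $\scrA$. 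The natural isomorphism $\alpha$ is then forced by the behaviour on identity idempotents: since $i_{\scrA}(A) = (A,1_{A})$ and $F(1_{A}) = 1_{F(A)}$, the object $\widetilde{F}i_{\scrA}(A) = I_{1_{F(A)}}$ is the image of an identity idempotent, so $i_{1_{F(A)}}$ and $j_{1_{F(A)}}$ are mutually inverse isomorphisms. I would put $\alpha_{A} = j_{1_{F(A)}}\colon F(A)\to\widetilde{F}i_{\scrA}(A)$, whose invertibility is immediate and whose naturality in $A$ reduces to $i_{1_{F(A)}} j_{1_{F(A)}} = 1_{F(A)}$.

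For part~(ii), the defining equation $\gamma = \beta \ast \alpha$ reads, on objects of the form $(A,1_{A})$, as $\gamma_{A} = \beta_{(A,1_{A})}\circ\alpha_{A}$; since $\alpha$ is invertible this forces $\beta_{(A,1_{A})} = \gamma_{A}\circ\alpha_{A}^{-1}$, pinning $\beta$ down on $i_{\scrA}(\scrA)$. Uniqueness of $\beta$ on all of $\scrA^{\wedge}$ is then exactly the second assertion of Remark~\ref{rem:uniqueness-of-fctrs-to-ic-cat}. For existence I would exhibit each $(A,p)$ as a retract of $(A,1_{A})$ through the morphisms $s\colon (A,p)\to(A,1_{A})$ and $r\colon (A,1_{A})\to(A,p)$ both given by $p$, with $rs = 1_{(A,p)}$ and $sr$ the idempotent $p$ on $(A,1_{A})$, and I would set $\beta_{(A,p)} = G(r)\,\beta_{(A,1_{A})}\,\widetilde{F}(s)$. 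Naturality of $\beta$ then follows from precisely the retract argument already carried out in Remark~\ref{rem:2-functoriality-of-ic}.

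The genuinely delicate point is not any single verification but the fact that every ingredient depends on the arbitrary choices of analyses $I_{e}$: this is exactly why $\widetilde{F}i_{\scrA}$ is only naturally isomorphic to $F$ rather than equal to it, and why the universal property must be phrased up to \emph{unique} isomorphism. The cleanest way to keep this honest is to note that $\widetilde{F}$, $\alpha$ and $\beta$ are all determined by their restrictions to $i_{\scrA}(\scrA)$ in the strong sense of Remark~\ref{rem:uniqueness-of-fctrs-to-ic-cat}, so that the choice-dependence is absorbed entirely into canonical isomorphisms and no coherence obstruction survives.
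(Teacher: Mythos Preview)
Your proposal is correct and follows essentially the same approach as the paper's own proof, which is explicitly labeled a sketch and simply points to Remarks~\ref{rem:uniqueness-of-fctrs-to-ic-cat} and~\ref{rem:2-functoriality-of-ic} for the construction of $\widetilde{F}$, $\alpha$, and the extension of $\beta$. In fact you fill in more of the verifications (functoriality of $\widetilde{F}$, naturality of $\alpha$) than the paper does, and your remark about choice-dependence being absorbed into the canonical isomorphisms is exactly the spirit of the paper's argument.
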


\begin{proof}[Sketch of the Proof]
  Given a functor $F: \scrA \to \scrI$, the construction outlined in
  Remark~\ref{rem:uniqueness-of-fctrs-to-ic-cat} yields candidates
  for $\widetilde{F}: \scrA^{\wedge} \to \scrI$ and 
  $\alpha: F \Rightarrow \widetilde{F}i_{\scrA}$ and we leave it to the
  reader to check that this works. Once $\widetilde{F}$ and $\alpha$
  are fixed, $\widetilde{\beta} := \gamma \ast \alpha^{-1}$ 
  yields a natural transformation
  $\widetilde{F}i_{\scrA} \Rightarrow G i_{\scrA}$ and the procedure
  in Remark~\ref{rem:uniqueness-of-fctrs-to-ic-cat} shows what an
  extension $\beta: \widetilde{F} \Rightarrow G$ of
  $\widetilde{\beta}$ 
  must look like and
  again we leave it to the reader to check that this works.
\end{proof}

\begin{Cor}
  \label{cor:ic-equiv-fctr-cat}
  Let $\scrA$ be a small additive category. The functor $i_{\scrA}:
  \scrA \to \scrA^{\wedge}$ induces an equivalence of functor
  categories
  \[
  (i_{\scrA})^{\ast}: \Hom{(\scrA^{\wedge},\scrI)} 
  \xrightarrow{\simeq}
  \Hom{(\scrA,\scrI)}
  \]
  for every idempotent complete category $\scrI$.
\end{Cor}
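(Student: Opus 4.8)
The plan is to verify that the restriction functor $(i_{\scrA})^{\ast}$ is essentially surjective, faithful and full; a functor with these three properties is an equivalence. Throughout I would use that, by Remark~\ref{rem:2-functoriality-of-ic}, every object $(A,p)$ of $\scrA^{\wedge}$ is a canonical retract of $(A,1_{A}) = i_{\scrA}(A)$ via morphisms $s_{(A,p)}\colon (A,p) \to (A,1_{A})$ and $r_{(A,p)}\colon (A,1_{A}) \to (A,p)$, both represented by $p$, with $r_{(A,p)}\,s_{(A,p)} = 1_{(A,p)}$, and that the component of $(i_{\scrA})^{\ast}\beta$ at $A$ is $\beta_{(A,1_{A})}$.

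Essential surjectivity is immediate from Proposition~\ref{prop:idempotent-completion-additive}(i): given $F\colon \scrA \to \scrI$, the natural isomorphism $\alpha\colon F \Rightarrow \widetilde{F}\,i_{\scrA}$ exhibits $F$ as isomorphic to $(i_{\scrA})^{\ast}(\widetilde{F})$. Faithfulness is exactly the second assertion of Remark~\ref{rem:uniqueness-of-fctrs-to-ic-cat}: a natural transformation of functors $\scrA^{\wedge} \to \scrI$ is determined by its values on $i_{\scrA}(\scrA)$, so two transformations with the same image under $(i_{\scrA})^{\ast}$ coincide.

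The substance of the proof is fullness. Given functors $F',G'\colon \scrA^{\wedge} \to \scrI$ and a natural transformation $\delta\colon F'\,i_{\scrA} \Rightarrow G'\,i_{\scrA}$, with components $\delta_{A}\colon F'(A,1_{A}) \to G'(A,1_{A})$, I would define a candidate lift $\beta\colon F' \Rightarrow G'$ by $\beta_{(A,p)} = G'(r_{(A,p)}) \circ \delta_{A} \circ F'(s_{(A,p)})$. At $p = 1_{A}$ both $s$ and $r$ are identities, so $\beta_{(A,1_{A})} = \delta_{A}$ and hence $(i_{\scrA})^{\ast}\beta = \delta$; uniqueness of the lift is then just the faithfulness already established. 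It remains to check that $\beta$ is natural, and this is where the only real work lies: for a morphism $qhp\colon (A,p) \to (B,q)$ of $\scrA^{\wedge}$ one rewrites the composites $s_{(B,q)}\circ qhp$ and $qhp \circ r_{(A,p)}$ as one and the same morphism of $i_{\scrA}(\scrA)$ precomposed with $s_{(A,p)}$, resp.\ postcomposed with $r_{(B,q)}$, after which the naturality square for $\beta$ collapses to the naturality square for $\delta$ along that $\scrA$-morphism.

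Assembling the three properties yields the equivalence. The main obstacle is the naturality verification in the fullness step; it is elementary but requires keeping careful track of how composites of the retraction morphisms $s,r$ with an arbitrary morphism of $\scrA^{\wedge}$ reduce to morphisms lying in $i_{\scrA}(\scrA)$, using repeatedly that $p^{2}=p$ and $q^{2}=q$. One could instead deduce fullness formally from Proposition~\ref{prop:idempotent-completion-additive}(ii), by lifting $\delta$ to the canonical extension $\widetilde{F'\,i_{\scrA}}$ and transporting along the isomorphism $\widetilde{F'\,i_{\scrA}} \cong F'$ supplied by the uniqueness in Remark~\ref{rem:uniqueness-of-fctrs-to-ic-cat} (here one notes that a transformation restricting to an isomorphism on $i_{\scrA}(\scrA)$ is an isomorphism, since $\beta_{(A,p)}$ is a retract of $\beta_{(A,1_{A})}$ in the arrow category), but the explicit formula above seems the more transparent route.
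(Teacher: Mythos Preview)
Your proof is correct and follows essentially the same line as the paper's. The paper's two-sentence argument cites Proposition~\ref{prop:idempotent-completion-additive}: part~(i) gives essential surjectivity (exactly as you do), and part~(ii) is invoked for full faithfulness. What you have done is unfold that second step: your explicit formula $\beta_{(A,p)} = G'(r_{(A,p)})\circ\delta_{A}\circ F'(s_{(A,p)})$ is precisely the construction of Remark~\ref{rem:2-functoriality-of-ic} (which in turn is the content behind the proof of Proposition~\ref{prop:idempotent-completion-additive}), and your faithfulness argument via Remark~\ref{rem:uniqueness-of-fctrs-to-ic-cat} is the same ``determined on $i_{\scrA}(\scrA)$'' observation used there. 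The naturality verification you sketch goes through exactly as you indicate, using $qhp\cdot p = qhp = q\cdot qhp$. You even note the alternative of deducing fullness formally from~(ii), which is the route the paper takes; so there is no genuine difference in strategy, only in how much is written out.
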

\begin{proof}
  Point~(i) of Proposition~\ref{prop:idempotent-completion-additive}
  states that $(i_{\scrA})^{\ast}$ is essentially surjective and
  it follows from point~(ii) that it is fully faithful, hence it is an
  equivalence of categories.
\end{proof}
\if{0}
\begin{proof}
  There an obvious way to extend $F$ to a functor
  $F^{\wedge}: \scrA^{\wedge} \to \scrI^{\wedge}$ which is given on
  objects by $F^{\wedge}(A,p) = (F(A),F(p))$.
  Since $\scrI$ is
  idempotent complete, the functor $i_{\scrI}:\scrI \to \scrI^{\wedge}$ is an
  equivalence, so we can choose a quasi-inverse
  $j_{\scrI}:\scrI^{\wedge} \to \scrI$ of $i_{\scrI}$ 
  and we obtain the desired functor
  by setting $\widetilde{F} = j_{\scrI} F^{\wedge}$.
  
  Next observe that given two functors $G,G': \scrA^{\wedge} \to
  \scrI$, a natural transformation $\gamma: G \Rightarrow G'$ is
  completely determined by its restriction to
  $i_{\scrA}{(\scrA)}$. Indeed, every object $(A,p)$ in
  $\scrA^{\wedge}$ is a retract of $(A, 1_{A})$ via the morphisms
  $s:(A,p) \to (A,1_{A})$ and $r:(A,1_{A}) \to (A,p)$ given by 
  $p \in \Hom_{\scrA}{(A,A)}$. Therefore, by naturality, we must have
  \[
  \gamma_{(A,p)} = \gamma_{(A,p)} G(r) G(s) = G'(r) \gamma_{(A,1_{A})} G(s).
  \]
  Conversely, given a natural transformation $\gamma: Gi_{\scrA}
  \Rightarrow G'i_{\scrA}$ the expression
  \[
  \gamma_{(A,p)} := G'(r) \gamma_{A} G(s)
  \]
  is easily checked to give a natural transformation $G \Rightarrow G'$.
  isomorphism $j_{\scrI} i_{\scrI} \Rightarrow \id_{\scrI}$ yields the
  natural isomorphism 
  $\widetilde{\alpha}: F  \Rightarrow  \widetilde{F}i_{\scrA}$. 
  This settles point~(i), we leave point~(ii) as an exercise for the reader.
\end{proof}

\begin{Rem}
  \label{rem:idemp-2-adjunction}
  Another way of phrasing the proposition is: Let $\scrA$ be a small
  additive category and let  $\scrI$ be an idempotent complete
  category. The inclusion functor $i_{\scrA}: \scrA \to
  \scrA^{\wedge}$ induces an equivalence of functor categories 
  \[
  (i_{\scrA})^{\ast} : \Hom{(\scrA^{\wedge}, \scrI)} \xrightarrow{\simeq}
  \Hom{(\scrA, \scrI)}.
  \]
\end{Rem}
\fi

\begin{Exm}
  Let $\scrF$ be the category of (finitely generated) 
  free modules over a ring $R$. Its
  idempotent completion $\scrF^{\wedge}$ is equivalent to the category of
  (finitely generated) projective modules over $R$.
\end{Exm}

Let now $(\scrA, \scrE)$ be an exact category. Call a sequence in
$\scrA^{\wedge}$ short exact if it is a direct summand in
$(\scrA^{\wedge})^{\to\to}$ of a sequence in $\scrE$ and denote the class of
short exact sequences in $\scrA^{\wedge}$ by $\scrE^{\wedge}$.

\begin{Prop}
  \label{prop:idempotent-completion-exact}
  The class $\scrE^{\wedge}$ is an exact structure on $\scrA^{\wedge}$.
  The inclusion functor
  $i_{\scrA}:(\scrA, \scrE) \to (\scrA^{\wedge}, \scrE^{\wedge})$ preserves
  and reflects exactness and is $2$-universal among exact functors
  to idempotent complete exact categories:
  \begin{enumerate}[(i)]
    \item
      Let $F: \scrA \to \scrI$ be an exact functor to an idempotent
      complete exact category $\scrI$. There exists an exact functor
      $\widetilde{F}:\scrA^{\wedge} \to \scrI$ together with a natural
      isomorphism
      $\alpha: F \Rightarrow \widetilde{F} i_{\scrA}$.

    \item
      Given another exact functor $G: \scrA \to \scrI$ together with a
      natural transformation $\gamma: F \Rightarrow G i_{\scrA}$,
      there is a unique natural transformation 
      $\beta: \widetilde{F} \Rightarrow G$ 
      such that $\gamma = \beta \ast \alpha$.
  \end{enumerate}
\end{Prop}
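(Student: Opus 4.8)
The plan is to transport everything back to $(\scrA,\scrE)$ through the fully faithful embedding $i_{\scrA}$, supported by four elementary tools. (1) Since a direct summand of a direct summand of an $\scrE$-sequence is again one, $\scrE^{\wedge}$ is closed under direct summands and under direct sums in $(\scrA^{\wedge})^{\to\to}$; a retract chase shows that a summand of a kernel-cokernel pair is a kernel-cokernel pair, so every member of $\scrE^{\wedge}$ is a kernel-cokernel pair, $\scrE^{\wedge}$ is closed under isomorphism, and it contains the split exact sequences (each being a summand of an honest split sequence, which lies in $\scrE$ by Lemma~\ref{lem:split-sequences-exact}). (2) Every $\sigma\in\scrE^{\wedge}$ has a \emph{complement}: writing $\sigma$ as the image of an idempotent $e$ of some $i_{\scrA}(\tau)$, $\tau\in\scrE$, full faithfulness forces $e=i_{\scrA}(\tilde e)$ for an idempotent $\tilde e$ of $\tau$ in $\scrA^{\to\to}$, and the complementary image $\sigma'=\operatorname{im}(1-e)\in\scrE^{\wedge}$ satisfies $\sigma\oplus\sigma'\cong i_{\scrA}(\tau)$; moreover the complement at any prescribed term can be taken to be the canonical summand $(-,1-p)$ of $i_{\scrA}$ applied to the underlying object. (3) The embedding $i_{\scrA}$ preserves every push-out and pull-back existing in $\scrA$: testing against $(W,w)$, a cocone factors uniquely in $\scrA$ and the comparison arrow $h$ satisfies $wh=h$ by uniqueness. (4) An arrow-summand of an $\scrE^{\wedge}$-admissible monic (resp.\ epic) is again one, because the splitting idempotent of the monic preserves its image and hence descends along the cokernel, yielding an idempotent of the whole $\scrE^{\wedge}$-sequence whose image is the desired one.

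Granting this backbone, [E$0$], [E$0^{\opp}$] and closure under isomorphism are immediate, and the push-out axiom [E$2$] is reduced to $\scrA$ as follows. Given an admissible monic $i$ and an arbitrary $f$, I use tool~(2) to write $i\oplus i'=i_{\scrA}(k)$ with $k$ an admissible monic of $\scrE$, extend $f$ over the complement, and compute the push-out as the direct sum of the push-out of $i$ along $f$ with a trivial one; the honest part exists by [E$2$] in $\scrA$ and is carried into $\scrA^{\wedge}$ by tool~(3), and the resulting monic leg is an arrow-summand of an honest admissible monic, hence admissible by tool~(4). I expect this to be \emph{the main obstacle}, for two reasons: the target of $f$ need not be honest, which forces an auxiliary push-out along a retraction onto it (legitimate because $\scrA^{\wedge}$ is idempotent complete), and the bookkeeping must be arranged so as not to invoke the very axiom under construction. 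The pull-back axiom [E$2^{\opp}$] is dual.

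The composition axioms are subtle because, as Keller's redundancy (Remark~\ref{rem:weakening-the-axioms}) only trades one of [E$1$],[E$1^{\opp}$] for the other, at least one must be proved from scratch rather than deduced from the push-out axioms. I would establish [E$1$] using [E$2$] already in hand: given admissible monics $f\colon X\mono Y$ and $g\colon Y\mono Z$, push $g$ out along a cokernel $\mathbf p$ of $f$. The resulting square is bicartesian and its leg $p'\colon Z\to D$ is a cokernel of $gf$ with $gf=\ker p'$—all by the push-out property alone (cf.\ Remark~\ref{rem:cokernels-in-push-outs} and Proposition~\ref{prop:pushout-exact}\,(i)$\Rightarrow$(iv)), using nothing from [E$1$]—so $(gf,p')$ is automatically a kernel-cokernel pair. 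It then remains only to see that $p'$, a push-out of the admissible epic $\mathbf p$ along the admissible monic $g$, is an admissible epic, and this I would obtain by decomposing $g$ and $\mathbf p$ along a \emph{common} complement at the shared object $Y$ (tool~(2) in canonical form), whereupon $p'$ becomes an arrow-summand of $i_{\scrA}$ of a genuine push-out in $\scrA$; the latter has an admissible epic leg by the dual of Proposition~\ref{prop:pb-adm-monic-adm-monic} in $\scrA$, and tool~(4) finishes. Then [E$1^{\opp}$] follows by Remark~\ref{rem:weakening-the-axioms}, or dually.

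Finally, $i_{\scrA}$ preserves exactness trivially, since each $\tau\in\scrE$ is a summand of itself; it reflects exactness because if $i_{\scrA}(\sigma)\in\scrE^{\wedge}$ with $\sigma\in\scrA^{\to\to}$, then the components of the splitting idempotent $\tilde e$ are honest, so $\tilde e$ and $1-\tilde e$ split in $\scrA$, producing an honest complement $\sigma''$ with $\sigma\oplus\sigma''\cong\tau\in\scrE$, whence $\sigma\in\scrE$ by Corollary~\ref{cor:summands-exact}. For the universal property, the functor $\widetilde F$, the isomorphism $\alpha$, and the uniqueness in~(ii) are furnished verbatim by the additive Proposition~\ref{prop:idempotent-completion-additive}; the only new point is that $\widetilde F$ is exact. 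This follows from tool~(2): an $\scrE^{\wedge}$-sequence $\sigma$ is a summand of some $i_{\scrA}(\tau)$, so $\widetilde F(\sigma)$ is a summand of $\widetilde F i_{\scrA}(\tau)\cong F(\tau)$, which is short exact in $\scrI$, and in the \emph{idempotent complete} exact category $\scrI$ a summand of a short exact sequence is short exact—split the idempotent and apply Corollary~\ref{cor:summands-exact} in $\scrI$.
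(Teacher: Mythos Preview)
Your overall strategy is sound and in fact goes well beyond the paper, which simply declares the verification that $\scrE^{\wedge}$ is an exact structure to be ``straightforward but rather tedious'' and skips it entirely (mentioning in a footnote that it can be done either elementarily or via Thomason's embedding argument). Your complementation-and-transport approach is precisely the elementary route the paper alludes to, and your treatment of the universal property matches the paper's brief argument exactly: the additive Proposition~\ref{prop:idempotent-completion-additive} supplies $\widetilde F$, and exactness of $\widetilde F$ follows because $\scrE^{\wedge}$-sequences are summands of $\scrE$-sequences and summands of exact sequences are exact in the idempotent complete category $\scrI$ (split the idempotent, apply Corollary~\ref{cor:summands-exact}).

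There is, however, a genuine gap in your argument that $i_{\scrA}$ \emph{reflects} exactness. You write that since the components of the idempotent $\tilde e$ on $\tau$ are honest, ``$\tilde e$ and $1-\tilde e$ split in $\scrA$''. It is true that $\tilde e$ splits in $\scrA^{\to\to}$---you have $s\colon\sigma\to\tau$ and $r\colon\tau\to\sigma$ in $\scrA^{\to\to}$ with $rs=1_\sigma$ and $sr=\tilde e$. But concluding that $1-\tilde e$ splits in $\scrA$ amounts to saying that the coretraction $s'$ (or the retraction $r'$) has a cokernel (resp.\ kernel) in $\scrA$, and this is exactly weak idempotent completeness of $\scrA$, which is not assumed. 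So your ``honest complement $\sigma''$'' need not exist in $\scrA$.

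A correct argument goes through the obscure axiom instead. With $\sigma=(A'\xrightarrow{m}A\xrightarrow{p}A'')$ a retract of $\tau=(B'\xrightarrow{n}B\xrightarrow{q}B'')\in\scrE$ via $s,r$ in $\scrA^{\to\to}$, first observe that $(m,p)$ is a kernel--cokernel pair in $\scrA$ by full faithfulness. Now form the pull-back $P$ of $q$ along $s''\colon A''\to B''$ in $\scrA$; by [E$2^{\opp}$] the map $P\epi A''$ is an admissible epic. The morphism $r\circ(P\to B)\colon P\to A$ then satisfies $p\circ r\circ(P\to B)=r''\circ q\circ(P\to B)=r''\circ s''\circ(P\to A'')=(P\epi A'')$, so $p$ precomposed with $P\to A$ is an admissible epic. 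Since $p$ has a kernel, the dual of the obscure axiom (Proposition~\ref{prop:obscure-axiom}) gives that $p$ is an admissible epic, whence $\sigma\in\scrE$.
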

\begin{proof}
  To prove that $\scrE^{\wedge}$ is an exact structure is
  straightforward but rather tedious, so we skip it.\footnote{%
    Thomason~\cite[A.9.1~(b)]{MR1106918} gives a short argument
    relying on the embedding into an abelian category, but it can be
    done by completely elementary means as well.} Given this, it is
  clear that the functor $\scrA \to \scrA^{\wedge}$ is exact and
  reflects exactness.
  If $F: \scrA \to \scrI$ is an exact functor to an idempotent
  complete exact category then, as every sequence in $\scrE^{\wedge}$
  is a direct summand of a sequence in $\scrE$, an extension
  $\widetilde{F}$ of $F$ as provided by
  Proposition~\ref{prop:idempotent-completion-additive} 
  must carry exact sequences in $\scrA^{\wedge}$ to exact sequences in
  $\scrI$. Thus statements~(i) and~(ii) follow from the corresponding
  statements in Proposition~\ref{prop:idempotent-completion-additive}.
\end{proof}

\begin{Cor}
  For a small exact category $(\scrA, \scrE)$, the exact functor
  $i_{\scrA}$ induces an equivalence of the
  categories of exact functors
  \[
  (i_{\scrA})^{\ast}: \Hom_{\Ex}{((\scrA^{\wedge}, \scrE^{\wedge}),
    \scrI)} \xrightarrow{\simeq} \Hom_{\Ex}{((\scrA,\scrE), \scrI)}
  \]
  to every idempotent complete exact category $\scrI$. \qed
\end{Cor}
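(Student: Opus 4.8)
The plan is to deduce this corollary directly from Proposition~\ref{prop:idempotent-completion-exact} by the same formal argument that produced Corollary~\ref{cor:ic-equiv-fctr-cat} from Proposition~\ref{prop:idempotent-completion-additive}. Indeed, the two statements of Proposition~\ref{prop:idempotent-completion-exact} are precisely the $2$-universal property of $i_{\scrA}$ in the world of exact functors, so the functor $(i_{\scrA})^{\ast}$ between the exact-functor categories should be essentially surjective by part~(i) and fully faithful by part~(ii), and an essentially surjective fully faithful functor is an equivalence.

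Concretely, first I would observe that part~(i) of Proposition~\ref{prop:idempotent-completion-exact} says that every exact functor $F: (\scrA,\scrE) \to \scrI$ is naturally isomorphic to $\widetilde{F} \circ i_{\scrA}$ for some exact $\widetilde{F}: (\scrA^{\wedge},\scrE^{\wedge}) \to \scrI$; since $F = (i_{\scrA})^{\ast}(\widetilde{F})$ up to this natural isomorphism, the functor $(i_{\scrA})^{\ast}$ hits every object of $\Hom_{\Ex}{((\scrA,\scrE),\scrI)}$ up to isomorphism, i.e.\ it is essentially surjective. For full faithfulness I would turn to part~(ii): given two exact functors $\widetilde{F}, G: (\scrA^{\wedge},\scrE^{\wedge}) \to \scrI$ and a natural transformation $\gamma: \widetilde{F}\, i_{\scrA} \Rightarrow G\, i_{\scrA}$ between their restrictions, part~(ii) (applied with $F = \widetilde{F}\, i_{\scrA}$ and $\alpha = \id$) furnishes a unique natural transformation $\beta: \widetilde{F} \Rightarrow G$ restricting to $\gamma$. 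The existence of such a $\beta$ gives surjectivity of $(i_{\scrA})^{\ast}$ on Hom-sets, and its uniqueness gives injectivity; hence $(i_{\scrA})^{\ast}$ is fully faithful.

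The one point requiring a little care is that $(i_{\scrA})^{\ast}$ is a priori only defined as a functor between the underlying categories of \emph{functors}, whereas we need it to be a functor between the categories of \emph{exact} functors. Thus before invoking the abstract nonsense I would note that $(i_{\scrA})^{\ast}$ genuinely lands in exact functors: precomposition with the exact functor $i_{\scrA}$ carries an exact functor to an exact functor, so the domain and codomain in the statement are the correct subcategories. Once this is settled the combination ``essentially surjective $+$ fully faithful $\Rightarrow$ equivalence'' is standard, exactly as in the additive case.

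I do not anticipate a serious obstacle here, since the real content has already been placed in Proposition~\ref{prop:idempotent-completion-exact}; the corollary is purely a repackaging of its two clauses into the language of equivalences of functor categories, mirroring verbatim the deduction of Corollary~\ref{cor:ic-equiv-fctr-cat}. The smallness hypothesis on $\scrA$ (hence on $\scrA^{\wedge}$) is what guarantees the relevant functor categories are legitimate categories rather than proper-class-sized gadgets, so I would simply record that this is where smallness is used.
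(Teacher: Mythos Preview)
Your proposal is correct and matches the paper's intent: the corollary is stated with a bare \qed{} precisely because it follows from Proposition~\ref{prop:idempotent-completion-exact} by the same ``essentially surjective from (i), fully faithful from (ii)'' argument used to deduce Corollary~\ref{cor:ic-equiv-fctr-cat} from Proposition~\ref{prop:idempotent-completion-additive}. Your remark that precomposition with the exact functor $i_{\scrA}$ preserves exactness, and your observation about where smallness is used, are both appropriate elaborations of what the paper leaves implicit.
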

\if{0}
\begin{Rem}
  One can interpret Proposition~\ref{prop:idempotent-completion-exact}
  by saying that the equivalence of categories of
  restricts to an equivalence of
  the full subcategories of exact functors
  $(i_{\scrA})^{\ast}: \Hom_{\Ex}{((\scrA^{\wedge}, \scrE^{\wedge}), \scrI)} 
  \xrightarrow{\simeq} \Hom_{\Ex}{((\scrA,\scrE), \scrI)}$.
\end{Rem}
\fi
\section{Weak Idempotent Completeness}
\label{sec:weak-idempotent-compl}

Thomason introduced in~\cite[A.5.1]{MR1106918} the notion of an exact
category with ``weakly split idempotents''.
It turns out that this is a property of the underlying additive
category rather than the exact structure.

Recall that in an arbitrary category a morphism $r: B \to C$ is called
a \emph{retraction} if there exists a \emph{section} $s: C \to B$ of
$r$ in the sense that $rs = 1_{C}$. 
Dually, a morphism $c: A \to B$ is a
\emph{coretraction} if it admits a section $s: B \to A$, i.e.,
$sc = 1_{A}$. Observe that retractions are epics and coretractions
are monics. Moreover, a section of a retraction is a coretraction and
a section of a coretraction is a retraction.

\begin{Lem}
  \label{lem:weakly-idempotent-complete}
  In an additive category $\scrA$ the following are equivalent:
  \begin{enumerate}[(i)]
    \item
      Every coretraction has a cokernel.
    \item
      Every retraction has a kernel.
  \end{enumerate}
\end{Lem}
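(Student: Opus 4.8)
The statement is self-dual: reading (i) and (ii) in $\scrA^{\opp}$ interchanges coretractions with retractions and cokernels with kernels, so property (i) for $\scrA$ is literally property (ii) for $\scrA^{\opp}$ and conversely. Hence the plan is to prove the single implication (i) $\Rightarrow$ (ii) for an arbitrary additive category; applying this to $\scrA^{\opp}$ then yields (ii) $\Rightarrow$ (i) for $\scrA$, giving the equivalence. So I would assume that every coretraction in $\scrA$ has a cokernel and let $r: B \to C$ be a retraction with section $s: C \to B$, so that $rs = 1_{C}$.

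The key idea is to manufacture a kernel of $r$ out of the cokernel of its section. Since $s$ is a coretraction (with retraction $r$), hypothesis (i) supplies a cokernel $q: B \to Q$ of $s$, so $qs = 0$ and $q$ is epic. The morphism $e = sr$ is idempotent, and $(1_{B} - e)s = s - s(rs) = 0$, so $1_{B} - e$ factors through the cokernel $q$: there is a unique $u: Q \to B$ with $uq = 1_{B} - e$. First I would check that $q$ is split by $u$: from $quq = q(1_{B} - e) = q - (qs)r = q$ and the fact that $q$ is epic one gets $qu = 1_{Q}$ at once. In particular $u$ is a coretraction, hence monic.

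Next I would verify that $u: Q \to B$ is a kernel of $r$. Composing $uq = 1_{B} - sr$ with $r$ on the left gives $ruq = r - (rs)r = 0$, and since $q$ is epic this forces $ru = 0$. For the universal property, suppose $x: X \to B$ satisfies $rx = 0$; then $ex = s(rx) = 0$, so $(1_{B} - e)x = x$, that is $u(qx) = x$. Thus $y = qx$ solves $uy = x$, and it is the unique such solution because $u$ is monic. This exhibits $u$ as a kernel of $r$ and completes the implication.

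The argument is entirely formal, so I do not expect a serious obstacle; the only point requiring foresight is the observation that the cokernel of the section $s$ — which hypothesis (i) hands over for free — carries a canonical splitting $u$ that turns out to be exactly the sought kernel of $r$. Once the factorization $uq = 1_{B} - sr$ is in place, every remaining claim collapses to a one-line computation using only that $q$ is epic and $u$ is monic.
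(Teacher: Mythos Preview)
Your proof is correct and follows essentially the same approach as the paper. The paper proves the dual implication (ii) $\Rightarrow$ (i) and appeals to duality; the computation there (packaged in the preceding remark) is exactly the dual of yours: factoring the complementary idempotent $1_{B}-sr$ through the kernel of the retraction, where you factor it through the cokernel of the coretraction.
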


\begin{Def}
  \label{def:weakly-idempotent-complete}
  If the conditions of the previous lemma hold then $\scrA$ is said to
  be \emph{weakly idempotent complete}.
\end{Def}

\begin{Rem}
  Freyd~\cite{MR0206069} uses the more descriptive terminology
  \emph{retracts have complements} for weakly idempotent complete
  categories. He proves in particular that a weakly idempotent
  complete category with countable coproducts is idempotent complete.
\end{Rem}

\begin{Rem}
  \label{rem:split-special-idempotents}
  Assume that $r: B \to C$ is a retraction with section $s: C \to
  B$. Then
  $sr: B \to B$ is an idempotent. Let us prove that this idempotent
  gives rise to a splitting of $B$ if $r$ admits a kernel
  $k: A \to B$.

  Indeed, since $r(1_{B} - sr) = 0$, there is
  a unique morphism $t: B \to A$ such that $kt = 1_{B} - sr$. It
  follows that $k$ is a coretraction because $ktk = (1_{B} - sr)k = k$
  implies that $tk = 1_{A}$. Moreover $kts = 0$, so $ts = 0$, hence
  $\mat{k & s} : A \oplus C \to B$
  is an isomorphism with inverse $\mat{t \\ r}$. In particular, the
  sequences $A \to B \to C$ and $A \to A \oplus C \to C$ are
  isomorphic.
\end{Rem}

\begin{proof}[Proof of Lemma~\ref{lem:weakly-idempotent-complete}]
  By duality it suffices to prove that~(ii) implies~(i).
  
  Let $c: C \to B$ be a coretraction with section $s$. Then $s$ is
  a retraction and, assuming~(ii), it admits a kernel $k: A \to B$. By
  the discussion in Remark~\ref{rem:split-special-idempotents},
  $k$ is a coretraction with 
  section $t: B \to A$ and it is obvious that $t$ is a cokernel of $c$.
\end{proof}

\begin{Cor}
  Let $(\scrA, \scrE)$ be an exact category. The following are
  equivalent:
  \begin{enumerate}[(i)]
    \item
      The additive category $\scrA$ is weakly idempotent complete.

    \item
      Every coretraction is an admissible monic.
      
    \item
      Every retraction is an admissible epic.
  \end{enumerate}
\end{Cor}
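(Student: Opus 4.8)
The plan is to exploit the self-duality of~(i) together with the fact that~(ii) and~(iii) are dual to one another. Weak idempotent completeness is a self-dual condition (Lemma~\ref{lem:weakly-idempotent-complete} gives two mutually dual characterisations), so it suffices to establish the equivalence (i)~$\Leftrightarrow$~(ii) and then to read off (i)~$\Leftrightarrow$~(iii) by passing to $\scrA^{\opp}$.

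For the easy direction (ii)~$\Rightarrow$~(i): if every coretraction is an admissible monic, then in particular every coretraction occurs in a kernel-cokernel pair and hence admits a cokernel. This is precisely condition~(i) of Lemma~\ref{lem:weakly-idempotent-complete}, so $\scrA$ is weakly idempotent complete by Definition~\ref{def:weakly-idempotent-complete}.

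The substantive step is (i)~$\Rightarrow$~(ii). First I would take a coretraction $c: C \to B$ with section $s: B \to C$, so that $s$ is a retraction. Weak idempotent completeness (condition~(ii) of Lemma~\ref{lem:weakly-idempotent-complete}) furnishes a kernel $k: A \to B$ of $s$, and then Remark~\ref{rem:split-special-idempotents}, applied to the retraction $s$ with its section $c$, shows that $\mat{k & c}: A \oplus C \to B$ is an isomorphism. The point is now that $c = \mat{k & c}\mat{0 \\ 1}$ is the second summand inclusion $C \to A \oplus C$ post-composed with an isomorphism. Since $\mat{0 \\ 1}: C \to A \oplus C$ is an admissible monic by Lemma~\ref{lem:split-sequences-exact} and isomorphisms are admissible monics by Remark~\ref{rem:isos-adm-mono-adm-epic}, closure under composition~[E1] shows $c$ to be an admissible monic. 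Equivalently, one transports the split short exact sequence $C \mono A \oplus C \epi A$ along $\mat{k & c}$ and invokes the closure of $\scrE$ under isomorphism.

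The main thing to get right is the bookkeeping in matching the notation of Remark~\ref{rem:split-special-idempotents} — there the retraction is called $r$ and its section $s$, whereas here the retraction is our $s$ and its section is the very coretraction $c$ we wish to recognise as an admissible monic — and, relatedly, noticing that it is $c$ (and not the kernel $k$) that corresponds to a summand inclusion under the splitting isomorphism. Once (i)~$\Rightarrow$~(ii) is in place, dualising the entire argument yields (i)~$\Leftrightarrow$~(iii), completing the proof.
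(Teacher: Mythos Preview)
Your argument is correct and essentially the same as the paper's: both use Remark~\ref{rem:split-special-idempotents} to identify a coretraction (resp.\ retraction) with a summand inclusion (resp.\ projection) under the splitting isomorphism, and both note that admissible monics/epics automatically possess cokernels/kernels for the converse. The only cosmetic difference is that the paper proves (i)~$\Rightarrow$~(iii) directly and obtains (i)~$\Rightarrow$~(ii) by duality, whereas you do it the other way round.
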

\begin{proof}
  It follows from Remark~\ref{rem:split-special-idempotents} that every
  retraction $r: B \to C$ admitting a kernel gives rise to a
  sequence $A \to B \to C$ which is isomorphic to the split exact sequence
  $A \mono A \oplus C \epi C$, hence $r$ is an admissible epic by
  Lemma~\ref{lem:split-sequences-exact}, whence~(i) implies~(iii).
  By duality~(i) implies~(ii) as well.
  Conversely, every admissible monic has a cokernel and every
  admissible epic has a kernel, hence~(ii)
  and~(iii) both imply~(i).
\end{proof}

In a weakly idempotent complete exact category the obscure axiom
(Proposition~\ref{prop:obscure-axiom}) has an easier statement---this
is Heller's cancellation axiom~\cite[(P2), p.~492]{MR0100622}:

\begin{Prop}
  \label{prop:weakly-split-obscure-axiom}
  Let $(\scrA,\scrE)$ be an exact category. The following are
  equivalent:
  \begin{enumerate}[(i)]
    \item
      The additive category $\scrA$ is weakly idempotent complete.
    \item
      Consider two morphisms $g: B \to C$ and $f: A \to B$.
      If $gf: A \epi C$ is an admissible
      epic then $g$ is an admissible epic.
  \end{enumerate}
\end{Prop}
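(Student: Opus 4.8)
The plan is to prove the two implications separately; the direction (ii)~$\Rightarrow$~(i) is essentially formal, while (i)~$\Rightarrow$~(ii) carries all the weight. For (ii)~$\Rightarrow$~(i), recall from the preceding Corollary that weak idempotent completeness is equivalent to the statement that every retraction is an admissible epic. So I would take a retraction $r: B \to C$ with section $s$, so that $rs = 1_{C}$. Since $1_{C}$ is an admissible epic by~[E0$^{\opp}$], applying the hypothesis~(ii) with $g = r$ and $f = s$ immediately yields that $r$ is an admissible epic.

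For (i)~$\Rightarrow$~(ii) the idea is to manufacture a kernel for $g$ out of weak idempotent completeness and then to invoke the dual of the obscure axiom. Assuming $\scrA$ is weakly idempotent complete and that $gf: A \epi C$ is an admissible epic, I would first form the pull-back of $gf$ along $g$,
\[
\xymatrix{
  P \ar@{->>}[r]^{\pi_{B}} \ar[d]_{\pi_{A}} \ar@{}[dr]|{\text{PB}} &
  B \ar[d]^{g} \\
  A \ar@{->>}[r]^{gf} & C,
}
\]
which exists with $\pi_{B}$ an admissible epic by~[E2$^{\opp}$]. The crucial observation is that this pull-back carries a tautological section $\tau: A \to P$ of $\pi_{A}$, namely the unique morphism with $\pi_{A}\tau = 1_{A}$ and $\pi_{B}\tau = f$ (it exists because $gf \cdot 1_{A} = g \cdot f$). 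Thus $\pi_{A}$ is a \emph{retraction}, and weak idempotent completeness then guarantees by Lemma~\ref{lem:weakly-idempotent-complete} that $\pi_{A}$ admits a kernel $\kappa: N \to P$.

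I would then claim that $n := \pi_{B}\kappa: N \to B$ is a kernel of $g$. That $gn = 0$ is immediate from $g\pi_{B} = gf\pi_{A}$ and $\pi_{A}\kappa = 0$. For the universal property, any $x: X \to B$ with $gx = 0$, paired with the zero map $X \to A$, solves the pull-back problem and hence factors as $\kappa y$ through the kernel of $\pi_{A}$; one checks—using that $\kappa$ is monic and the uniqueness built into the pull-back—that $y$ is the unique morphism with $ny = x$. Granting this, $g$ is a morphism admitting a kernel whose precomposition $gf$ with $f$ is an admissible epic, so the dual of the obscure axiom (Proposition~\ref{prop:obscure-axiom}) shows that $g$ is an admissible epic.

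The main obstacle, and the only point where weak idempotent completeness is genuinely used, is the construction of a kernel for $g$: the dual obscure axiom demands this as a hypothesis, yet a priori we have no handle on $g$ beyond the single factorization $gf$. Recognizing $\pi_{A}$ as a retraction is precisely the trick that lets weak idempotent completeness supply the missing kernel; the verification that $\pi_{B}\kappa$ computes $\ker g$ is then a routine chase through the pull-back, and the conclusion reduces cleanly to a result already in hand.
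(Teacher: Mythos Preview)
Your proof is correct and follows essentially the same route as the paper's: both directions match, and for (i)~$\Rightarrow$~(ii) you form the same pull-back, recognize the projection to $A$ as a retraction in order to extract a kernel via weak idempotent completeness, identify its composite with $\pi_{B}$ as a kernel of $g$, and finish with the dual of the obscure axiom. The only difference is that you spell out the verification that $\pi_{B}\kappa = \ker g$, which the paper condenses to a single clause.
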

\begin{proof}
  (i) $\Rightarrow$ (ii):
  Form the pull-back over $g$ and $gf$ and consider the diagram 
  \[ 
  \xymatrix{
    A  \ar@/_1.2pc/[ddr]_-{1_{A}} \ar@/^1.2pc/[drr]^-{f} 
    \ar@{.>}[dr]^{\exists!} \\
    & B' \ar@{->>}[r] \ar[d]_-{g'}
    \ar@{}[dr]|{\text{PB}} & B \ar[d]^{g}
    \\
    & A \ar@{->>}[r]^{gf} & C   
  }
  \]
  which proves $g'$ to be a retraction, so
  $g'$ has a kernel $K' \to B'$. Because the diagram is a pull-back, the
  composite $K' \to B' \to B$ is a kernel of $g$ and now the dual of
  Proposition~\ref{prop:obscure-axiom} applies to yield that
  $g$ is an admissible epic.
  
  For the implication (ii) $\Rightarrow$ (i) simply observe that (ii)
  implies that retractions are admissible epics.
\end{proof}

\begin{Cor}
  \label{cor:cancellation}
  An exact category is weakly idempotent complete if and only if it
  has the following property: all morphisms $g: B \to C$ for which
  there is a commutative diagram
  \[
  \xymatrix{
    A \ar@{->>}[r]^{f} \ar@{->>}[dr]_{gf} & B \ar[d]^{g} \\
    & C
  }
  \]
  are admissible epics.
\end{Cor}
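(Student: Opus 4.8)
The plan is to prove the two implications separately, and to observe that one of them is essentially a restatement of the preceding proposition while the other rests on a single explicit construction. For the forward direction, assume $\scrA$ is weakly idempotent complete. The hypothesis of the stated property provides morphisms $f: A \to B$ and $g: B \to C$ with $gf : A \epi C$ an admissible epic, and the implication (i)$\Rightarrow$(ii) of Proposition~\ref{prop:weakly-split-obscure-axiom} then yields at once that $g$ is an admissible epic. (In fact the extra hypothesis that $f$ itself be an admissible epic is not needed for this direction.)

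For the converse, I would first replace weak idempotent completeness by a single testable condition: according to the corollary preceding Proposition~\ref{prop:weakly-split-obscure-axiom}, an exact category is weakly idempotent complete if and only if every retraction is an admissible epic. Thus it suffices to take an arbitrary retraction $r : B \to C$ with section $s$ (so that $rs = 1_C$) and to manufacture an admissible epic $f$ for which $rf$ is again an admissible epic; the hypothesized property applied to $g = r$ will then force $r$ to be an admissible epic.

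The crux is the choice of $f$, and the section $s$ is exactly what allows one to twist the standard split projections into the required shape. I would take $f = \mat{s & 1_B} : C \oplus B \to B$. Both $f$ and $rf$ then factor as a split projection followed by an automorphism of $C \oplus B$: with $U = \mat{1_C & 0 \\ s & 1_B}$ one has $f = \mat{0 & 1_B}\,U$, while with $V = \mat{1_C & r \\ 0 & 1_B}$ and the relation $rs = 1_C$ one computes $rf = \mat{1_C & r} = \mat{1_C & 0}\,V$. Since $U$ and $V$ are isomorphisms (Remark~\ref{rem:isos-adm-mono-adm-epic}) and the projections $\mat{0 & 1_B}$ and $\mat{1_C & 0}$ are admissible epics (Lemma~\ref{lem:split-sequences-exact}), axiom~[E1$^{\opp}$] shows that both $f$ and $rf$ are admissible epics. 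Applying the assumed property to $g = r$ then gives that $r$ is an admissible epic, which completes the converse.

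I expect the only genuine obstacle to be discovering the morphism $f$; once the twisting by $U$ and $V$ is in hand, every remaining step is a formal appeal to closure of admissible epics under composition (axiom~[E1$^{\opp}$]) and under isomorphism, together with routine bookkeeping with $2 \times 2$ matrices.
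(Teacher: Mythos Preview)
Your proof is correct and follows essentially the same route as the paper's: the forward direction is the same appeal to Proposition~\ref{prop:weakly-split-obscure-axiom}, and for the converse both you and the paper exhibit, for a retraction $r$ with section $s$, the admissible epic $\mat{1 & s}$ (you write it as $\mat{s & 1_B}$ on $C\oplus B$ instead of $B\oplus C$) and check that $r\mat{1 & s}=\mat{r & 1}$ is again an admissible epic. The only cosmetic difference is that the paper verifies these are admissible epics by displaying the corresponding split exact sequences directly, whereas you factor them as a split projection composed with a triangular automorphism; both are immediate.
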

\begin{proof}
  A weakly idempotent complete exact category enjoys the stated
  property by Proposition~\ref{prop:weakly-split-obscure-axiom}.
  
  Conversely, let $r: B \to C$ and $s: C \to B$ be such that $rs =
  1_{C}$. We want to show that $r$ is an admissible epic.
  The sequences
  \[
  \vcenter{
    \xymatrix{
      B \ar@{ >->}[r]^-{\mat{1 \\ -r}} &
      B \oplus C \ar@{->>}[r]^-{\mat{r & 1}} & C
    }
  } 
  \qquad \text{and} \qquad
  \vcenter{
    \xymatrix{
      C \ar@{ >->}[r]^-{\mat{-s \\ 1}} &
      B \oplus C \ar@{->>}[r]^-{\mat{1 & s}} & B
    }
  } 
  \]
  are split exact, so $\mat{r & 1}$ and $\mat{1 & s}$ are admissible 
  epics. But the diagram
  \[
  \xymatrix{
    B \oplus C \ar@{->>}[r]^-{\mat{1 & s}} 
    \ar@{->>}[dr]_{\mat{r & 1}} & 
    B \ar[d]^{r} \\
    & C
  }
  \]
  is commutative, hence $r$ is an admissible epic.
\end{proof}

\begin{Rem}[{\cite[1.12]{MR1080854}}]
  Every small additive category $\scrA$ has a
  \emph{weak idempotent completion}
  $\scrA'$. Objects of $\scrA'$ are the pairs $(A,p)$, 
  where $p: A \to A$ is an idempotent factoring as $p = cr$ for some
  retraction $r: A \to X$ and coretraction $c: X \to A$ with
  $rc = 1_{B}$, while the morphisms are given by
  \[
  \Hom_{\scrA'}{((A,p), (B,q))} = q \circ \Hom_{\scrA} {(A, B)}
  \circ p.
  \]
  It is
  easy to see that the functor $\scrA \to \scrA'$ given on objects by
  $A \mapsto (A, 1_{A})$ is $2$-universal among functors from $\scrA$ to a
  weakly idempotent complete category. Moreover, if $(\scrA,\scrE)$ 
  is exact then so is $(\scrA', \scrE')$, where the sequences in
  $\scrE'$ are the direct summands in $\scrA'$ of sequences in
  $\scrE$, and the functor $\scrA \to \scrA'$ preserves and reflects
  exactness and is $2$-universal among exact functors to weakly
  idempotent complete categories.
\end{Rem}

\begin{Rem}
  Contrary to the construction of the idempotent completion, there is
  the set-theoretic subtlety that the weak idempotent completion might 
  not be well-defined if $\scrA$ is not small:
  it is \emph{not} clear a priori that the
  objects $(A,p)$ form a class---essentially for the same
  reason that the monics in a category need not form a class, see
  e.g. the discussion in Borceux~\cite[p.~373f]{MR1313497}.
\end{Rem}

\section{Admissible Morphisms and the Snake Lemma}
\label{sec:adm-morph-snake-lemma}


\begin{Def}
  \label{def:admissible-morphism}
  A morphism $f:A \to B$ in an exact category 
  \[
  \xymatrix@R=0.5pc{
    A \ar[rr]^{f}|{\object@{o}} \ar@{->>}[dr]_{e} & & B \\
    & I \ar@{ >->}[ur]_{m}
  }
  \]
  is called \emph{admissible} if it factors
  as a composition of an admissible monic with an admissible epic.
  Admissible morphisms will sometimes be displayed as 
  $\xymatrix{ \ar[r]|{\object@{o}} &  }$
  in diagrams.
\end{Def}

\begin{Rem}
  \label{rem:admissible-morphisms-dont-compse}
  Let $f$ be an admissible morphism. If $e'$ is an admissible epic and
  $m'$ is an admissible monic then $m'fe'$ is admissible if the
  composition is defined. However, admissible morphisms are \emph{not} closed
  under composition in general. Indeed, consider a morphism $g: A \to B$ 
  which is not admissible. The morphisms 
  $\mat{1 \\ g}: A \to A \oplus B$ and 
  $\mat{0 & 1}: A \oplus B \to B$ are admissible since they are
  part of split exact sequences. But $g = \mat{0 & 1} \mat{1 \\ g}$ is
  not admissible by hypothesis.
\end{Rem}

\begin{Rem}
  \label{rem:adm-mor=strict-mor}
  We choose the terminology \emph{admissible morphism} even though
  \emph{strict morphism} seems to be more standard
  (see e.g.~\cite{MR1856638, MR1779315}). By
  Exercise~\ref{exer:adm-epic+monic=iso} an admissible monic is the
  same thing as an admissible morphism which happens to be monic.
\end{Rem}

\begin{Lem}[{\cite[3.4]{MR0100622}}]
  \label{lem:adm-mor-factors-uniquely}
  The factorization of an admissible
  morphism is unique up to unique isomorphism. More precisely: In a
  commutative diagram of the form
  \[
  \xymatrix{
    A \ar@{->>}[r]^{e} \ar@{->>}[d]_{e'} & 
    I \ar@{ >->}[d]^{m} \ar@<2pt>@{.>}[dl]^{i} \\
    I' \ar@{ >->}_{m'}[r] \ar@<2pt>@{.>}[ur]^{i'} & B
  }
  \]
  there exist unique morphisms $i$, $i'$ making the diagram
  commutative. In particular, $i$ and $i'$ are mutually inverse 
  isomorphisms.
\end{Lem}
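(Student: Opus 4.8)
The plan is to exploit the two factorizations $f = me = m'e'$ directly via universal properties, which is essentially the standard uniqueness-of-factorization argument adapted to the exact setting. First I would record the defining data of an admissible epic: since $e$ is an admissible epic there is an admissible monic $k\colon K \mono A$ with $(k,e) \in \scrE$, so that $e = \coker{k}$ and in particular $ek = 0$; likewise $e'$ is a cokernel of its kernel.

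To construct $i$, observe that $fk = mek = 0$, and since $f = m'e'$ this reads $m'(e'k) = 0$; because $m'$ is monic we conclude $e'k = 0$. The cokernel property of $e$ then yields a \emph{unique} morphism $i\colon I \to I'$ with $ie = e'$, and uniqueness among such morphisms is automatic from $e$ being epic. To check that the lower triangle commutes, I would compute $m'ie = m'e' = f = me$ and cancel the epimorphism $e$ to obtain $m'i = m$. Interchanging the two factorizations and using that $e'$ is a cokernel gives, by the symmetric argument, a unique $i'\colon I' \to I$ with $i'e' = e$ and $mi' = m'$.

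Finally, the relation $i'ie = i'e' = e$ together with $e$ epic forces $i'i = 1_{I}$, and dually $ii' = 1_{I'}$ follows from $e'$ being epic, so $i$ and $i'$ are mutually inverse isomorphisms as claimed.

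I do not expect a serious obstacle: the entire argument is a formal consequence of the cokernel universal property combined with the facts that admissible monics are monic and admissible epics are epic (Exercise~\ref{exer:adm-epic+monic=iso} and the remarks following it). The only point requiring care is invoking the correct universal property at each step---using that $e$ (resp.\ $e'$) is a cokernel to \emph{produce} the comparison maps, and using that $m,m'$ are monic (resp.\ $e,e'$ are epic) to \emph{verify} the commutativities, uniqueness, and the inverse relations.
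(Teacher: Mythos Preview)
Your argument is correct and matches the paper's proof essentially line for line: both use that $m'$ is monic to deduce $e'k=0$ from $m'e'k=mek=0$, invoke the cokernel property of $e$ to produce the unique $i$ with $ie=e'$, and cancel the epic $e$ to get $m'i=m$. You are slightly more explicit in verifying $i'i=1_I$ and $ii'=1_{I'}$, which the paper leaves implicit in ``Dually for $i'$''.
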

\begin{proof}
  Let $k$ be a kernel of  $e$. Since $m'e'k = mek = 0$
  and $m'$ is monic we have $e'k = 0$, hence there exists a unique
  morphism $i: I \to I'$ such that $e' = ie$. Moreover, $m'ie =
  m'e' = me $ and $e$ epic imply $m'i = m$. Dually for $i'$.
\end{proof}

\begin{Rem}
  \label{rem:adm-mor-analysis}
  An admissible morphism has an \emph{analysis} 
  (cf. \cite[IX.2]{MR0156879})
  \[
  \xymatrix@R=0.5pc{
    & A \ar[rr]^{f}|{\object@{o}} \ar@{->>}[dr]_{e} 
    & & B \ar@{->>}[dr]^{c} \\
    K \ar@{ >->}[ur]^{k} & & I \ar@{ >->}[ur]_{m} & & C
  }
  \]
  where $k$ is a kernel, $c$ is a cokernel, $e$ is a
  coimage and $m$ is an image of $f$ and the isomorphism classes of
  $K$, $I$ and $C$ are well-defined by Lemma~\ref{lem:adm-mor-factors-uniquely}.
\end{Rem}

\begin{Exer}
  If $\scrA$ is an exact category in which every morphism is
  admissible then $\scrA$ is abelian.
  [A solution is given by Freyd in~\cite[Proposition~3.1]{MR0209333}].
\end{Exer}

\begin{Lem}
  \label{lem:adm-mor-stable-under-push-pull}
  Admissible morphisms are stable under push-out along admissible monics
  and pull-back along admissible epics.
\end{Lem}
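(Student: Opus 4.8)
The plan is to establish the push-out statement directly and to deduce the pull-back statement by passing to $\scrA^{\opp}$. So let $f \colon A \to B$ be admissible and let $a \colon A \mono A'$ be an admissible monic; forming the push-out of $f$ along $a$ produces an arrow $f' \colon A' \to B'$, and I want to show that $f'$ is admissible. The idea is to use the factorization $A \xrightarrow{e} I \xrightarrow{m} B$ of $f$ into an admissible epic $e$ followed by an admissible monic $m$ (Definition~\ref{def:admissible-morphism}) and to push out the two factors one after the other.

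First I would push out $a$ along $e$. By~[E2] the resulting arrow $a' \colon I \mono I'$ is again an admissible monic, and by Proposition~\ref{prop:pushout-exact} the square is bicartesian. The essential point is that the remaining arrow $e' \colon A' \to I'$ of this square is an admissible epic: this is exactly the dual of Proposition~\ref{prop:pb-adm-monic-adm-monic}, asserting that the push-out of an admissible epic along an admissible monic is again an admissible epic. Next I would push out $m \colon I \mono B$ along the admissible monic $a'$; since $m$ is an admissible monic, [E2] supplies an admissible monic $m' \colon I' \mono B'$ completing this (again bicartesian) square, together with an admissible monic $b \colon B \mono B'$.

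It remains to glue. The two bicartesian squares share the arrow $a'$, so by the gluing principle used in Corollary~\ref{cor:gluing-pb-po} the surrounding rectangle is again bicartesian; its top edge is the composite $m e = f$, its left edge is $a$, and its bottom edge is the composite $m' e'$. Being in particular a push-out, this rectangle exhibits $f' = m' e'$ as the push-out of $f$ along $a$. Since $e'$ is an admissible epic and $m'$ an admissible monic, $f'$ is admissible by Definition~\ref{def:admissible-morphism}. The pull-back assertion now follows by applying this to the opposite category, using that admissibility of a morphism --- that is, admitting a factorization as an admissible epic followed by an admissible monic --- is a self-dual property.

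The argument involves no genuine computation; its entire weight rests on the single observation that pushing out the admissible-epic factor $e$ of $f$ along $a$ preserves admissibility of that factor, which is nothing but (the dual of) Proposition~\ref{prop:pb-adm-monic-adm-monic}. The only point demanding a little care is bookkeeping: one must check that the intermediate object $I'$ and the arrows $a', e', m', b$ are produced in the right order, so that the two squares are genuinely composable before the gluing law is invoked.
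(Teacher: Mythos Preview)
Your proof is correct and follows essentially the same route as the paper: factor the admissible morphism as $A \epi I \mono B$, push out each factor in turn along the given admissible monic, and invoke the dual of Proposition~\ref{prop:pb-adm-monic-adm-monic} to see that the push-out of the admissible-epic factor remains an admissible epic. The paper compresses this into a single two-square diagram and the sentence ``the rest is clear''; your explicit discussion of gluing the two push-out squares is just a spelled-out version of that (and could cite the ordinary pasting law for push-outs rather than Corollary~\ref{cor:gluing-pb-po}, which is tailored to a PB--PO configuration).
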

\begin{proof}
  Let $A \epi I \mono B$ be an admissible epic-admissible
  monic factorization of an
  admissible morphism. To prove the claim about push-outs construct
  the diagram
  \[
  \xymatrix{
    A \ar@{->>}[r] \ar@{ >->}[d] \ar@{}[dr]|{\text{PO}} &
    I \ar@{ >->}[r] \ar@{ >->}[d] \ar@{}[dr]|{\text{PO}} &
    B \ar@{ >->}[d] \\
    A' \ar@{->>}[r] & I' \ar@{ >->}[r] & B'.
  }
  \]
  Proposition~\ref{prop:pb-adm-monic-adm-monic} yields that $A'
  \to I'$ is an admissible epic and the rest is clear.
\end{proof}

\begin{Def}
  \label{def:ex-seq-adm-mor}
  A sequence of admissible morphisms 
  \[  
  \xymatrix@R=0.5pc{
    A' \ar[rr]^{f}|{\object@{o}} \ar@{->>}[dr]_{e} & & 
    A \ar[rr]^{f'}|{\object@{o}} \ar@{->>}[dr]_{e'} & & A''
    \\
    & I \ar@{ >->}[ur]_{m} & &  I' \ar@{ >->}[ur]_{m'}
  }
  \]
  is \emph{exact} if $I \mono A \epi I'$ is short exact.
  Longer sequences of
  admissible morphisms are exact if the sequence given by any
  two consecutive morphisms is exact. Since the term ``exact'' is
  heavily overloaded, we also use the synonym \emph{``acyclic''}, 
  in particular in connection with chain complexes.
\end{Def}

\begin{Lem}[Five Lemma, II]
  \label{lem:long-five-lemma}
  If the commutative diagram 
  \[
  \xymatrix{
    A_{1} \ar[d]^{\cong} \ar[r]|-{\object@{o}} &
    A_{2} \ar[d]^{\cong} \ar[r]|{\object@{o}} &
    A_{3} \ar[d]^{f} \ar[r]|{\object@{o}} &
    A_{4} \ar[d]^{\cong} \ar[r]|{\object@{o}} &
    A_{5} \ar[d]^{\cong} \\
    B_{1} \ar[r]|{\object@{o}} &
    B_{2} \ar[r]|{\object@{o}} &
    B_{3} \ar[r]|{\object@{o}} &
    B_{4} \ar[r]|{\object@{o}} &
    B_{5}
  }
  \]
  has exact rows then $f$ is an isomorphism.
\end{Lem}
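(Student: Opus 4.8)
The plan is to deduce the statement from a single application of the short five lemma (Corollary~\ref{cor:five-lemma}), after promoting the four outer isomorphisms to isomorphisms between \emph{all} of the image objects occurring in the two rows. Using the analysis of an admissible morphism (Remark~\ref{rem:adm-mor-analysis}), factor each horizontal map of the top row as $A_{k} \epi Z_{k} \mono A_{k+1}$ and each map of the bottom row as $B_{k} \epi W_{k} \mono B_{k+1}$; write $\alpha_{k}\colon A_{k} \xrightarrow{\cong} B_{k}$ for the given isomorphisms ($k \neq 3$) and set $f = \alpha_{3}$. By Definition~\ref{def:ex-seq-adm-mor}, exactness of the rows says precisely that $Z_{k-1} \mono A_{k} \epi Z_{k}$ and $W_{k-1} \mono B_{k} \epi W_{k}$ are short exact for $k = 2, 3, 4$.

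First I would treat the two outer images. Since $\alpha_{1}$ and $\alpha_{2}$ are isomorphisms, and isomorphisms are both admissible monics and admissible epics (Remark~\ref{rem:isos-adm-mono-adm-epic}), the map $A_{1} \to A_{2}$ acquires a second admissible epic--admissible monic factorization $A_{1} \epi W_{1} \mono A_{2}$, obtained by conjugating the factorization of $B_{1} \to B_{2}$ by $\alpha_{1}$ and $\alpha_{2}^{-1}$. The uniqueness of factorizations (Lemma~\ref{lem:adm-mor-factors-uniquely}) then furnishes a canonical isomorphism $\zeta_{1}\colon Z_{1} \xrightarrow{\cong} W_{1}$ compatible with both factorizations; the same argument on columns $4$ and $5$ gives $\zeta_{4}\colon Z_{4} \xrightarrow{\cong} W_{4}$.

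Next I would propagate these inward using the short five lemma on the end short exact sequences. The compatibility of $\zeta_{1}$ makes the left-hand square of the induced morphism from $Z_{1} \mono A_{2} \epi Z_{2}$ to $W_{1} \mono B_{2} \epi W_{2}$ commute, so the cokernel property produces $\zeta_{2}\colon Z_{2} \to W_{2}$, which is an isomorphism by Corollary~\ref{cor:five-lemma} because $\zeta_{1}$ and $\alpha_{2}$ are. Dually, $\zeta_{4}$ makes the right-hand square of the morphism from $Z_{3} \mono A_{4} \epi Z_{4}$ to $W_{3} \mono B_{4} \epi W_{4}$ commute and yields an isomorphism $\zeta_{3}\colon Z_{3} \xrightarrow{\cong} W_{3}$ on kernels. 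The point I expect to need the most care with is precisely this asymmetry: one cannot obtain $\zeta_{2}$ or $\zeta_{3}$ from the squares formed by adjacent columns, since the middle column $f$ is exactly what is unknown; instead $\zeta_{2}$ must be imported from the left end of the diagram and $\zeta_{3}$ from the right end.

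Finally, I would apply the short five lemma at the center. Consider the morphism of short exact sequences from $Z_{2} \mono A_{3} \epi Z_{3}$ to $W_{2} \mono B_{3} \epi W_{3}$ with outer columns $\zeta_{2}, \zeta_{3}$ and middle column $f$. Its left square commutes because the two composites $Z_{2} \to B_{3}$ agree after precomposition with the admissible epic $A_{2} \epi Z_{2}$ (by commutativity of the original diagram in columns $2$ and $3$), and its right square commutes dually after postcomposition with the admissible monic $W_{3} \mono B_{3}$ (using columns $3$ and $4$). Since $\zeta_{2}$ and $\zeta_{3}$ are isomorphisms, Corollary~\ref{cor:five-lemma} gives that $f$ is an isomorphism, as claimed. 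The only genuine labor is this commutativity bookkeeping, and the epic/monic cancellations reduce each verification to one line.
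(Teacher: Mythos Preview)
Your argument is correct and follows essentially the same route as the paper's sketch: factor the admissible morphisms, use Lemma~\ref{lem:adm-mor-factors-uniquely} to get isomorphisms on the outermost images, propagate these inward to $Z_{2}$ and $Z_{3}$, and conclude with Corollary~\ref{cor:five-lemma} at the center. One small citation slip: when you deduce that $\zeta_{2}$ (and dually $\zeta_{3}$) is an isomorphism from $\zeta_{1}$ and $\alpha_{2}$, you are using the two-out-of-three principle of Exercise~\ref{exer:two-out-of-three-five-lemma} (or simply the functoriality of cokernels under isomorphisms), not Corollary~\ref{cor:five-lemma}, which only gives the middle arrow from the two outer ones.
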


\begin{proof}[Sketch of the Proof]
  By hypothesis we may choose factorizations 
  $A_{i} \epi I_{i} \mono A_{i+1}$ of $A_{i} \to
  A_{i+1}$ and $B_{i} \epi J_{i} \mono B_{i+1}$ of 
  $B_{i} \to B_{i+1}$ for $i = 1,\ldots,4$. Using
  Lemma~\ref{lem:adm-mor-factors-uniquely} and
  Exercise~\ref{exer:two-out-of-three-five-lemma} there are
  isomorphisms $I_{1} \cong J_{1}$ and $I_{2} \cong J_{2}$ which
  one may insert into the diagram without destroying 
  its commutativity. Dually for $I_{4} \cong J_{4}$ and $I_{3} \cong
  J_{3}$. The five lemma~\ref{cor:five-lemma} 
  then implies that $f$ is an isomorphism.
\end{proof}

\begin{Exer}
  \label{exer:sharp-four-five-lemma}
  Assume that $\scrA$ is weakly idempotent complete
  (Definition~\ref{def:weakly-idempotent-complete}). 
  \begin{enumerate}[(i)]
    \item (Sharp Four Lemma)
      Consider a
      commutative diagram
      \[
      \xymatrix{
        A_{1} \ar@{->>}[d] \ar[r]|{\object@{o}} &
        A_{2} \ar[d]^{\cong} \ar[r]|{\object@{o}} &
        A_{3} \ar[d]^{f} \ar[r]|{\object@{o}} &
        A_{4} \ar@{ >->}[d] \\
        B_{1} \ar[r]|{\object@{o}} &
        B_{2} \ar[r]|{\object@{o}} &
        B_{3} \ar[r]|{\object@{o}} &
        B_{4}
      }
      \]
      with exact rows. Prove that $f$ is an admissible monic. Dualize.
      
    \item
      (Sharp Five Lemma)
      If the commutative diagram 
      \[
      \xymatrix{
        A_{1} \ar@{->>}[d] \ar[r]|{\object@{o}} &
        A_{2} \ar[d]^{\cong} \ar[r]|{\object@{o}} &
        A_{3} \ar[d]^{f} \ar[r]|{\object@{o}} &
        A_{4} \ar[d]^{\cong} \ar[r]|{\object@{o}} &
        A_{5} \ar@{ >->}[d] \\
        B_{1} \ar[r]|{\object@{o}} &
        B_{2} \ar[r]|{\object@{o}} &
        B_{3} \ar[r]|{\object@{o}} &
        B_{4} \ar[r]|{\object@{o}} &
        B_{5}
      }
      \]
      has exact rows then $f$ is an isomorphism.
  \end{enumerate}
  \emph{Hint:}
  Use Proposition~\ref{prop:weakly-split-obscure-axiom},
  Exercise~\ref{exer:adm-epic+monic=iso},
  Exercise~\ref{exer:two-out-of-three-five-lemma} as well as
  Corollary~\ref{cor:five-lemma}.
\end{Exer}

\begin{Prop}[$\Ker$--$\Coker$--Sequence]
  \label{prop:ker-coker-sequence}
  Assume that $\scrA$ is a weakly idempotent complete exact category.
  Let $f: A \to B$ and $g: B \to C$ be admissible morphisms such that
  $h = gf : A \to C$ is admissible as well. There is an exact sequence
  \[
  \xymatrix{
    \Ker{f} \ar@{ >->}[r] &
    \Ker{h} \ar[r]|-{\object@{o}} &
    \Ker{g} \ar[r]|-{\object@{o}} &
    \Coker{f} \ar[r]|-{\object@{o}} &
    \Coker{h} \ar@{->>}[r] &
    \Coker{g} 
  }
  \]
  depending naturally on the diagram $h = gf$.
\end{Prop}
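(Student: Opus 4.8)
The plan is to build the six maps from the universal properties of kernels and cokernels, and then to reduce the whole statement to the single case of a composite of an admissible monic followed by an admissible epic, which can be settled by one application of the $3\times 3$-lemma (Corollary~\ref{cor:3x3-lemma}). All six objects $\Ker{f},\Ker{h},\Ker{g},\Coker{f},\Coker{h},\Coker{g}$ and the connecting map $\delta=\coker{f}\circ\ker{g}\colon\Ker{g}\to\Coker{f}$ are canonically attached to the data $h=gf$, so naturality will be automatic once the construction is in place; the real work is exactness and admissibility, and it is precisely here that weak idempotent completeness enters.

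First I would pass to the analyses (Remark~\ref{rem:adm-mor-analysis}) $f=m_{f}e_{f}$, $g=m_{g}e_{g}$, $h=m_{h}e_{h}$, with $e_{f}\colon A\epi I_{f}$, $m_{f}\colon I_{f}\mono B$, and likewise for $g$, giving the four short exact sequences $\Ker{f}\mono A\epi I_{f}$, $I_{f}\mono B\epi\Coker{f}$, $\Ker{g}\mono B\epi I_{g}$, $I_{g}\mono C\epi\Coker{g}$. The key auxiliary morphism is $\phi=e_{g}m_{f}\colon I_{f}\to I_{g}$, the composite of the admissible monic $m_{f}$ with the admissible epic $e_{g}$. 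I would check that $\phi$ is admissible: since $h=m_{g}\phi e_{f}$ is admissible and $m_{g}$ is an admissible monic, the dual of Proposition~\ref{prop:weakly-split-obscure-axiom} forces $\phi e_{f}$ to be admissible, and peeling off the admissible epic $e_{f}$ by Proposition~\ref{prop:weakly-split-obscure-axiom} itself then shows $\phi$ is admissible, with $\Im{\phi}\cong I_{h}$. This is exactly the step that cannot be done without weak idempotent completeness.

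Next I would assemble the two outer portions by a pull-back and a push-out. Because $m_{g}$ is monic, $\Ker{h}=\Ker{(\phi e_{f})}$ is the pull-back of $\ker{\phi}\colon\Ker{\phi}\mono I_{f}$ along $e_{f}$, so Proposition~\ref{prop:pb-adm-monic-adm-monic} yields a short exact sequence $\Ker{f}\mono\Ker{h}\epi\Ker{\phi}$; dually one obtains $\Coker{\phi}\mono\Coker{h}\epi\Coker{g}$. It therefore suffices to prove, for the composite $\phi$, the four-term exact sequence $\Ker{\phi}\mono\Ker{g}\xrightarrow{\delta}\Coker{f}\epi\Coker{\phi}$, noting $\Ker{e_{g}}=\Ker{g}$ and $\Coker{m_{f}}=\Coker{f}$. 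Splicing these three pieces and composing the overlapping epic-monic pairs reproduces the desired six-term sequence, and exactness at every node is inherited from the exactness of the spliced pieces.

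The remaining special case --- a composite $\psi=e\,i$ of an admissible monic $i\colon I\mono B$ with an admissible epic $e\colon B\epi J$ --- is the heart of the matter, and I would settle it with the $3\times 3$-lemma. Writing $q=\coker{i}$ and $n=\ker{e}$, the morphism $i\,\ker{\psi}$ factors through $n$ (since $e\,i\,\ker{\psi}=\psi\,\ker{\psi}=0$) to give $\beta\colon\Ker{\psi}\to\Ker{e}$, which the dual of Proposition~\ref{prop:weakly-split-obscure-axiom} shows to be an admissible monic. I would then arrange the commutative diagram with short exact top row $\Ker{\psi}\mono I\epi\Im{\psi}$ (the analysis of $\psi$), short exact middle row $\Ker{e}\mono B\epi J$, short exact middle column $I\mono B\epi\Coker{i}$, and short exact right column $\Im{\psi}\mono J\epi\Coker{\psi}$; taking the cokernel of $\beta$ makes the left column short exact, whereupon Corollary~\ref{cor:3x3-lemma}(i) produces a short exact bottom row $\Coker{\beta}\mono\Coker{i}\epi\Coker{\psi}$. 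Reading off the bottom-left composite exhibits $\delta=\coker{i}\circ\ker{e}$ as the admissible morphism $\Ker{e}\epi\Coker{\beta}\mono\Coker{i}$ with kernel $\Ker{\psi}$ and cokernel $\Coker{\psi}$, which is exactly the four-term exactness required (applied with $\psi=\phi$). I expect the main obstacle to be the bookkeeping for this diagram: checking that the two bottom-row maps exist (each a short factorization argument using that $q$ and $e$ are epic) and that every square commutes, together with the repeated appeals to the cancellation property of Proposition~\ref{prop:weakly-split-obscure-axiom} to secure admissibility. Naturality in $h=gf$ then follows, since each map was defined by a universal property and the operations used (analysis, pull-back, push-out, and the $3\times 3$-lemma) are functorial.
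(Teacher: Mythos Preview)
Your proof is correct and follows essentially the same route as the paper's: both obtain the outer pieces $\Ker f\mono\Ker h\epi(\text{something})$ and its dual, then finish with a single application of the $3\times3$-lemma (Corollary~\ref{cor:3x3-lemma}) and splice. The only organisational difference is that you introduce the auxiliary morphism $\phi=e_{g}m_{f}$, show it admissible, and explicitly reduce to the special case of a monic--epic composite, whereas the paper works directly with $\Im f,\Im g,\Im h$ via the Noether isomorphism (Lemma~\ref{lem:c/b=(c/a)/(b/a)}); your $\Ker\phi$ and $\Coker\phi$ are precisely the paper's $X$ and $Z$, and your $3\times3$ diagram is the transpose of the paper's third diagram (so the paper's $Y$ is your $\Coker\beta$).
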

\begin{proof} 
  Observe that the morphism 
  $\Ker{f} \mono A$ factors over $\Ker{h} \mono A$ via 
  a unique morphism $\Ker{f} \to \Ker{h}$ which is an admissible monic
  by Proposition~\ref{prop:weakly-split-obscure-axiom}.
  Let $\Ker{h} \epi X$ be a
  cokernel of $\Ker{f} \mono \Ker{h}$. Dually, there is an admissible
  epic $\Coker{h} \epi \Coker{g}$ with $Z \mono \Coker{h}$ as
  kernel.
  The Noether isomorphism~\ref{lem:c/b=(c/a)/(b/a)} implies that
  there are two commutative diagrams with exact rows and columns
  \[
  \vcenter{
    \xymatrix{
      \Ker{f} \ar@{=}[d] \ar@{ >->}[r] &
      \Ker{h} \ar@{ >->}[d] \ar@{->>}[r] & 
      X \ar@{ >->}[d] \\
      \Ker{f} \ar@{ >->}[r] &
      A \ar@{->>}[r] \ar@{->>}[d] &
      \Im{f} \ar@{->>}[d] \\
      & \Im{h} \ar@{=}[r] & \Im{h}
    }
  }
  \qquad \text{and} \qquad
  \vcenter{
    \xymatrix{
      \Im{h} \ar@{ >->}[r] \ar@{=}[d] &
      \Im{g} \ar@{->>}[r] \ar@{ >->}[d] &
      Z \ar@{ >->}[d] \\
      \Im{h} \ar@{ >->}[r] &
      C \ar@{->>}[r] \ar@{->>}[d] &
      \Coker{h} \ar@{->>}[d] \\
      & \Coker{g} \ar@{=}[r] & \Coker{g}.
    }
  }
  \]
  It is easy to see that there is an admissible monic $X \mono
  \Ker{g}$ whose cokernel we denote by $\Ker{g} \epi Y$. Therefore the
  $3 \times 3$-lemma yields a commutative diagram with exact rows and columns
  \[
  \vcenter{
    \xymatrix{
      X \ar@{ >->}[d] \ar@{ >->}[r] &
      \Ker{g} \ar@{ >->}[d] \ar@{->>}[r] &
      Y \ar@{ >->}[d] \\
      \Im{f} \ar@{->>}[d] \ar@{ >->}[r] & 
      B \ar@{->>}[d] \ar@{->>}[r] &
      \Coker{f} \ar@{->>}[d] \\
      \Im{h} \ar@{ >->}[r] &
      \Im{g} \ar@{->>}[r] & Z.
    }
  }
  \]
  The desired $\Ker$--$\Coker$--sequence is now obtained by splicing:
  Start with the first row of the first diagram, splice it
  with the first row of the third diagram, and continue with the third row
  of the third diagram and the third row of the second diagram.
  The naturality assertion is obvious.
\end{proof}

\begin{Lem}
  \label{lem:ker-coker-wic-nec}
  Let $\scrA$ be an exact category in which each commutative
  triangle of admissible morphisms yields an exact 
  $\Ker$--$\Coker$--sequence where exactness is understood in the
  sense of admissible morphisms. 
  Then $\scrA$ is weakly idempotent complete.
\end{Lem}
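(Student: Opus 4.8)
The plan is to reduce the assertion to a statement about coretractions and then feed a single cleverly chosen triangle into the hypothesis. By condition~(i) of Lemma~\ref{lem:weakly-idempotent-complete} together with Definition~\ref{def:weakly-idempotent-complete} it suffices to show that every coretraction has a cokernel; I will in fact show the stronger statement that every coretraction is an admissible monic, which certainly supplies a cokernel. So I would fix an arbitrary coretraction $s \colon C \to B$ together with a retraction $r \colon B \to C$, $rs = 1_{C}$, and try to produce a commutative triangle of admissible morphisms whose associated $\Ker$--$\Coker$--sequence pins down $s$.

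The triangle I would use is assembled from split sequences so that admissibility is free. Put
\[
f = \mat{s \\ 1} \colon C \to B \oplus C, \qquad g = \mat{r & -1} \colon B \oplus C \to C .
\]
First I would note that $f$ is an admissible monic and $g$ an admissible epic: $f = \mat{1 & s \\ 0 & 1}\mat{0 \\ 1}$ and $g = -\mat{0 & 1}\mat{1 & 0 \\ -r & 1}$, so each is a split inclusion (resp.\ projection) composed with an elementary automorphism, whence Lemma~\ref{lem:split-sequences-exact} and Remark~\ref{rem:isos-adm-mono-adm-epic} apply. The composite is $h = gf = rs - 1 = 0$, and the zero morphism $C \to C$ is always admissible (it factors as $C \epi 0 \mono C$). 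Thus $f,g,h$ form a commutative triangle of admissible morphisms, and by hypothesis it yields an exact $\Ker$--$\Coker$--sequence.

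The heart of the argument is identifying that sequence. Since $f$ is an admissible monic and $g$ an admissible epic, $\Ker f = 0$ and $\Coker g = 0$, while $h = 0$ gives $\Ker h = C$ and $\Coker h = C$. Transporting the kernel of the split projection $\mat{0 & 1}$ and the cokernel of the split inclusion $\mat{0 \\ 1}$ through the elementary automorphisms above, one finds that $\Ker g = B$ is realised by the admissible monic $\mat{1 \\ r}$ and that $\Coker f = B$ is realised by the cokernel $\mat{1 & -s}$. A direct check of the induced maps then shows that $\Ker h \to \Ker g$ becomes $s \colon C \to B$ and $\Coker f \to \Coker h$ becomes $r \colon B \to C$, so the exact $\Ker$--$\Coker$--sequence reads
\[
0 \longrightarrow C \stackrel{s}{\longrightarrow} B \stackrel{\partial}{\longrightarrow} B \stackrel{r}{\longrightarrow} C \longrightarrow 0 .
\]

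Finally I would read off the conclusion. Exactness at the term $\Ker h = C$ means, in the sense of Definition~\ref{def:ex-seq-adm-mor}, that $0 \mono C \xrightarrow{s} B$ is short exact; hence the coimage epic of $s$ is monic, so by Exercise~\ref{exer:adm-epic+monic=iso} it is an isomorphism, i.e.\ the admissible morphism $s$ is monic and therefore an admissible monic by Remark~\ref{rem:adm-mor=strict-mor}. (Symmetrically, exactness at $\Coker h = C$ forces $r$ to be an admissible epic.) As $s$ was an arbitrary coretraction, every coretraction is an admissible monic and in particular has a cokernel, so $\scrA$ is weakly idempotent complete. The only genuine work is the bookkeeping of the middle paragraph: correctly identifying the four kernel/cokernel objects and verifying that the two connecting maps are precisely $s$ and $r$; once the triangle $f,g$ is chosen, the rest is forced.
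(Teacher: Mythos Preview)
Your argument is correct. The bookkeeping in the middle paragraph checks out: with $\ker g = \mat{1 \\ r}$ and $\coker f = \mat{1 & -s}$, the induced maps $\Ker h \to \Ker g$ and $\Coker f \to \Coker h$ are indeed $s$ and $r$, and exactness at the $\Ker h$ term forces the coimage of $s$ to be an isomorphism, so $s$ is an admissible monic.

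The paper argues differently: instead of directly exhibiting a coretraction as an admissible monic, it verifies the cancellation criterion of Corollary~\ref{cor:cancellation}. Given admissible epics $f\colon A \epi B$ and $h = gf\colon A \epi C$, the paper feeds the triangle $(\ker f,\, h,\, 0)$ into the hypothesis; its $\Ker$--$\Coker$--sequence is $0 \to \Ker f \to \Ker h \to B \xrightarrow{g} C \to 0$, and exactness at the $\Coker h'$ term forces $g$ to be an admissible epic. Your approach and the paper's are dual in flavour---you read off an admissible monic at the left end of the sequence, the paper reads off an admissible epic at the right end---and both rely on a triangle whose composite is zero so that two of the six terms vanish. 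Your route is slightly more self-contained (it does not invoke Corollary~\ref{cor:cancellation}) at the cost of a little more matrix bookkeeping; the paper's route makes the link to the cancellation property explicit, which is conceptually pleasant since Proposition~\ref{prop:weakly-split-obscure-axiom} and Corollary~\ref{cor:cancellation} already package weak idempotent completeness in that form.
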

\begin{proof}
  We check the criterion in Corollary~\ref{cor:cancellation}.
  We need to show that in every commutative diagram of the form
  \[
  \xymatrix{
    A \ar@{->>}[r]^{f} \ar@{->>}[dr]_{h} & B \ar[d]^{g} \\
    & C
  }
  \]
  the morphism $g$ is an admissible epic. Given such a diagram,
  consider the diagram
  \[
  \xymatrix{
    \Ker{f} \ar@{ >->}[r] \ar[dr]|-{\object@{o}}_{0} & 
    A \ar@{->>}[d]^{h} \\
    & C
  }
  \]
  whose associated $\Ker$--$\Coker$--sequence is 
  \[
  \xymatrix{
    0 \ar[r]|-{\object@{o}} & 
    \Ker{f} \ar[r]|-{\object@{o}} & 
    \Ker{h} \ar[r]|-{\object@{o}} &
    B \ar[r]|-{\object@{o}}^-{g} & 
    C \ar[r]|-{\object@{o}} &
    0
  }
  \]
  so that $g$ is an admissible epic.
\end{proof}

The $\Ker$--$\Coker$--sequence immediately yields the following
version of the snake lemma, the neat proof of which was pointed
out to the author by Matthias K\"unzer.

\begin{Cor}[Snake Lemma, I]
  \label{cor:snake-lemma}
  Let $\scrA$ be weakly idempotent complete.
  Consider a morphism of short exact sequences $A' \mono A
  \epi A''$ and $B' \mono B \epi B''$ with admissible components. 
  There is a commutative diagram
  \[
  \xymatrix{
    K'  \ar@{ >->}[d] \ar@{.>}[r]^{k} &
    K   \ar@{ >->}[d] \ar@{.>}[r]^{k'} &
    K'' \ar@{ >->}[d] \\
    A'  \ar@{ >->}[r] \ar[d]|{\object@{o}} &
    A   \ar@{->>}[r] \ar[d]|{\object@{o}} &
    A'' \ar[d]|{\object@{o}} \\
    B'  \ar@{ >->}[r] \ar@{->>}[d] &
    B   \ar@{->>}[r] \ar@{->>}[d] &
    B'' \ar@{->>}[d] \\
    C' \ar@{.>}[r]^{c} & C \ar@{.>}[r]^{c'}  & C''
  }
  \]
  with exact rows and columns and there is a connecting morphism
  $\delta:K'' \to C'$ fitting into an exact sequence
  \[
  \xymatrix{
    K' \ar@{ >->}[r]^{k} &
    K  \ar[r]|{\object@{o}}^{k'} &
    K'' \ar[r]^{\delta}|{\object@{o}} &
    C' \ar[r]|{\object@{o}}^{c} &
    C  \ar@{->>}[r]^{c'} &
    C''
  }
  \]
  depending naturally on the morphism of short exact sequences.
\end{Cor}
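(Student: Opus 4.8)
The plan is to deduce the entire statement from the $\Ker$--$\Coker$--sequence (Proposition~\ref{prop:ker-coker-sequence}) applied to a suitable factorization of the middle vertical morphism $a: A \to B$; this is what makes the snake lemma ``immediate''. First I would read the given morphism of short exact sequences as a morphism from $A' \mono A \epi A''$ to $B' \mono B \epi B''$ with vertical components $a',a,a''$ and feed it into Proposition~\ref{prop:decompose-morphisms-of-exact-sequences}. This produces the object $D = A \cup_{A'} B' \cong B \times_{B''} A''$ together with admissible morphisms fitting into a commutative diagram
\[
\xymatrix{
  A' \ar@{ >->}[r] \ar[d]_{a'} \ar@{}[dr]|{\text{BC}} &
  A \ar@{->>}[r] \ar[d]^{f} &
  A'' \ar@{=}[d] \\
  B' \ar@{ >->}[r] \ar@{=}[d] &
  D \ar@{->>}[r] \ar[d]^{g} \ar@{}[dr]|{\text{BC}} &
  A'' \ar[d]^{a''} \\
  B' \ar@{ >->}[r] &
  B \ar@{->>}[r] &
  B''
}
\]
with the two marked squares bicartesian and with $gf = a$. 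Thus $f: A \to D$ is the push-out of $a'$ along the admissible monic $A' \mono A$, while $g: D \to B$ is the pull-back of $a''$ along the admissible epic $B \epi B''$.

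The second step is to read off the kernels and cokernels of $f$, $g$ and $gf = a$. By Lemma~\ref{lem:adm-mor-stable-under-push-pull} both $f$ and $g$ are admissible, and $a = gf$ is admissible by hypothesis, so all three morphisms of the triangle are admissible. Reading the upper bicartesian square as a push-out gives $\Coker f \cong \Coker a' = C'$ (push-out preserves the cokernel of the pushed-out map), and reading it as a pull-back gives $\Ker f \cong \Ker a' = K'$ (base change preserves the kernel). Dually, the lower bicartesian square yields $\Ker g \cong \Ker a'' = K''$ and $\Coker g \cong \Coker a'' = C''$, while of course $\Ker a = K$ and $\Coker a = C$. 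The isomorphism classes here are well defined by Lemma~\ref{lem:adm-mor-factors-uniquely}.

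The third step is to invoke Proposition~\ref{prop:ker-coker-sequence} on the commutative triangle $A \xrightarrow{f} D \xrightarrow{g} B$ with $gf = a$; here the standing hypothesis of weak idempotent completeness is exactly what is needed. It produces an exact sequence
\[
\Ker f \mono \Ker a \to \Ker g \to \Coker f \to \Coker a \epi \Coker g,
\]
which under the identifications just made is precisely
\[
K' \mono K \to K'' \xrightarrow{\delta} C' \to C \epi C'',
\]
with $\delta$ the connecting morphism $\Ker g \to \Coker f$. I would then assemble the large diagram of the statement by placing the analyses (Remark~\ref{rem:adm-mor-analysis}) of $a'$, $a$, $a''$ as the three exact columns and noting that the maps $k,k'$ furnished by the $\Ker$--$\Coker$--sequence are, by their very construction, the morphisms induced on kernels by $A' \mono A$ and $A \epi A''$, and dually for $c,c'$ on cokernels.

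Naturality should come for free: a morphism of the given data induces, by the naturality clause of Proposition~\ref{prop:decompose-morphisms-of-exact-sequences}, a morphism of the triangles $A \to D \to B$, and the naturality of Proposition~\ref{prop:ker-coker-sequence} then transports to the snake sequence. I expect the only genuinely fiddly point to be the bookkeeping in the third step: identifying the abstractly-defined maps of the $\Ker$--$\Coker$--sequence with the geometric maps $k,k',c,c'$ and checking that the assembled diagram commutes with exact rows and columns. This is forced rather than hard—one unwinds the construction of the connecting morphism and compares factorizations by Lemma~\ref{lem:adm-mor-factors-uniquely}—but it is where the care lies, since the conceptual content is entirely contained in the two bicartesian squares and the single application of the $\Ker$--$\Coker$--sequence.
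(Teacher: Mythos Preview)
Your proof is correct and follows essentially the same route as the paper: factor the morphism of short exact sequences through $D$ via Proposition~\ref{prop:decompose-morphisms-of-exact-sequences}, note that $f$ and $g$ are admissible by Lemma~\ref{lem:adm-mor-stable-under-push-pull}, and apply the $\Ker$--$\Coker$--sequence to the triangle $A \to D \to B$. The paper cites Remark~\ref{rem:cokernels-in-push-outs} for the identification of the kernels and cokernels where you argue directly from the bicartesian squares, but this is the same content.
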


\begin{Rem}
  Using the notations of the proof of
  Lemma~\ref{lem:ker-coker-wic-nec}
  consider the diagram
  \[
  \xymatrix{
    \Ker{f} \ar[d]^{0}|-{\object@{o}} \ar@{ >->}[r] &
    A \ar[d]^{h}|-{\object@{o}}  \ar@{->>}[r] &
    B \ar[d]|-{\object@{o}} \\
    C \ar@{=}[r] & C \ar[r] & 0.
  }
  \]
  The sequence
  $\xymatrix{
    \Ker{f} \ar@{ >->}[r] & 
    \Ker{h} \ar[r]|-{\object@{o}} &
    B \ar[r]|-{\object@{o}}^-{g} & 
    C \ar[r]|-{\object@{o}} &
    0 \ar@{->>}[r] & 0
  }$
  provided by the snake lemma shows that $g$ is an admissible epic. It
  follows that the snake lemma can only hold if the category is weakly
  idempotent complete.
\end{Rem}

\begin{proof}[Proof of Corollary~\ref{cor:snake-lemma}]
  By Proposition~\ref{prop:decompose-morphisms-of-exact-sequences}
  and Lemma~\ref{lem:adm-mor-stable-under-push-pull} we get the
  commutative diagram
  \[
  \xymatrix{
    A'  \ar[d]|{\object@{o}} \ar@{ >->}[r] \ar@{}[dr]|{\text{BC}} &
    A   \ar[d]|{\object@{o}} \ar@{->>}[r] &
    A'' \ar@{=}[d] \\
    B'  \ar@{=}[d] \ar@{ >->}[r] &
    D   \ar[d]|{\object@{o}} \ar@{->>}[r] \ar@{}[dr]|{\text{BC}} &
    A'' \ar[d]|{\object@{o}} \\
    B' \ar@{ >->}[r] & B \ar@{->>}[r] & B''
  }
  \]
  [more explicitly, the diagram is obtained by forming the push-out 
  $A'AB'D$].
  The $\Ker$--$\Coker$--sequence of the commutative
  triangle of admissible morphisms 
  \[
  \xymatrix{
    A \ar[r]|{\object@{o}}  \ar[dr]|{\object@{o}} & 
    D \ar[d]|{\object@{o}} \\
    & B
  }
  \]
  yields the desired result by Remark~\ref{rem:cokernels-in-push-outs}.
\end{proof}

\begin{Exer}[{Snake Lemma, II, cf. \cite[4.3]{MR0100622}}]
  Formulate and prove a snake lemma for a diagram of the form
  \[
  \xymatrix{
    &
    A'  \ar[r]|{\object@{o}} \ar[d]|{\object@{o}} &
    A   \ar[r]|{\object@{o}} \ar[d]|{\object@{o}} &
    A'' \ar[r] \ar[d]|{\object@{o}} &
    0 & \\
    0  \ar[r] &
    B'  \ar[r]|{\object@{o}}  &
    B   \ar[r]|{\object@{o}}  &
    B''
  }
  \]
  in weakly idempotent complete categories. Prove 
  $\Ker{(A' \to A)} = \Ker{(K' \to K)}$ and
  $\Coker{(B \to B'')} = \Coker{(C \to C'')}$.

  \emph{Hint:}
  Reduce to Corollary~\ref{cor:snake-lemma} by using 
  Proposition~\ref{prop:weakly-split-obscure-axiom} and
  the Noether isomorphism~\ref{lem:c/b=(c/a)/(b/a)}.
\end{Exer}

\begin{Rem}
  Heller \cite[4.3]{MR0100622} gives a direct proof of the snake
  lemma starting from his
  axioms. Using the Noether isomorphism~\ref{lem:c/b=(c/a)/(b/a)} and
  the $3 \times 3$-lemma~\ref{cor:3x3-lemma} as well as
  Proposition~\ref{prop:weakly-split-obscure-axiom}, 
  Heller's proof is easily
  adapted to a proof from Quillen's axioms.
\end{Rem}

\section{Chain Complexes and Chain Homotopy}
\label{sec:ch-cxes-ch-htpy}

The notion of chain complexes makes sense in every additive category 
$\scrA$. A \emph{(chain) complex} is a diagram
$(A^{\bullet}, d_{A}^{\bullet})$ 
\[
\cdots \xrightarrow{} A^{n-1} \xrightarrow{d_{A}^{n-1}} A^{n}
\xrightarrow{d_{A}^{n}} A^{n+1} \xrightarrow{} \cdots
\]
subject to the condition that $d^{n} d^{n-1} = 0$ for all $n$ and a
\emph{chain map} is a morphism of such diagrams. The category of
complexes and chain maps is denoted by $\Ch{(\scrA)}$. 
Obviously, the category $\Ch{(\scrA)}$ is additive.

\begin{Lem}
  \label{lem:cxes-exact}
  If $(\scrA,\scrE)$ is an exact category then $\Ch{(\scrA)}$ is an
  exact category with respect to the class $\Ch{(\scrE)}$
  of short sequences of chain maps which are exact in each degree. 
  If $\scrA$ is abelian then so is $\Ch{(\scrA)}$.
\end{Lem}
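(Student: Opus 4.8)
The plan is to reduce every axiom for $\Ch(\scrE)$ to the corresponding axiom for $\scrE$ by working one degree at a time, the point being that all the relevant universal constructions in $\Ch(\scrA)$ are computed degreewise. First I would record the degreewise principle: if a commutative square of chain maps is a push-out (resp.\ pull-back) in each degree $n$, then it is a push-out (resp.\ pull-back) in $\Ch(\scrA)$. For push-outs one forms the candidate cofactor degreewise, notes that the two structure maps $B^{n}\to B'^{n}$ and $A'^{n}\to B'^{n}$ are jointly epic (any map out of a degreewise push-out is determined by its two restrictions), and uses this together with the fact that the two comparison morphisms agree after restriction to conclude that the induced family is a chain map; the pull-back statement is dual. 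The same bookkeeping shows that a degreewise admissible monic (resp.\ epic) chain map admits a cokernel (resp.\ kernel) in $\Ch(\scrA)$, again computed degreewise: the differential descends to the degreewise cokernels because $p^{n+1}d_{B}^{n}i^{n}=0$, and $d^{2}=0$ on the cokernels follows from the epimorphy of the projections.

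Granting this, I would first check that $\Ch(\scrE)$ is a class of kernel--cokernel pairs closed under isomorphism. Closure under isomorphism is immediate, the defining condition being degreewise and $\scrE$ being closed under isomorphism. For the kernel--cokernel property, given a degreewise short exact sequence $A^{\bullet}\xrightarrow{i}B^{\bullet}\xrightarrow{p}C^{\bullet}$ and a chain map $f\colon X^{\bullet}\to B^{\bullet}$ with $pf=0$, the degreewise factorizations $f^{n}=i^{n}g^{n}$ assemble into a chain map: since $i^{n+1}$ is monic, the relation $i^{n+1}g^{n+1}d_{X}^{n}=d_{B}^{n}f^{n}=i^{n+1}d_{A}^{n}g^{n}$ forces $g^{n+1}d_{X}^{n}=d_{A}^{n}g^{n}$. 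Hence $i$ is a kernel of $p$, and dually $p$ is a cokernel of $i$.

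The axioms now follow degreewise. [E0] and [E0$^{\opp}$] hold because $1_{A^{\bullet}}$ is in each degree the identity, an admissible monic and epic in $\scrA$, with the zero complex serving as (co)kernel. For [E1] and [E1$^{\opp}$], a composite of degreewise admissible monics is degreewise an admissible monic by [E1] in $\scrA$, and by the degreewise principle it has a cokernel complex, so the composite is an admissible monic of $\Ch(\scrE)$; the epic case is dual. For [E2] and [E2$^{\opp}$], the push-out of a degreewise admissible monic along an arbitrary chain map is formed degreewise using [E2] in $\scrA$, the degreewise push-outs assemble into a chain complex with induced differential, and by the degreewise principle this is the push-out in $\Ch(\scrA)$; each component $i'^{n}$ being an admissible monic, the resulting $i'$ is an admissible monic of $\Ch(\scrE)$. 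Pull-backs along admissible epics are dual.

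Finally, for the abelian case I would invoke the classical facts that, when $\scrA$ is abelian, kernels and cokernels in $\Ch(\scrA)$ exist and are computed degreewise, that a chain map is monic (resp.\ epic) if and only if it is so in each degree, and that the canonical morphism from coimage to image is an isomorphism degreewise because it is one in $\scrA$; hence $\Ch(\scrA)$ is abelian, and $\Ch(\scrE)$ for the maximal structure $\scrE$ is exactly its class of short exact sequences. The main obstacle throughout is not conceptual but is the systematic verification that the degreewise differentials descend to the constructed (co)kernels and (co)limits and that the morphisms produced degreewise are genuine chain maps; both are handled uniformly by monic/epic cancellation and the joint epi/mono property of (co)limit cocones.
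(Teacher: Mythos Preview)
Your proof is correct and follows exactly the same approach as the paper: reduce everything to degreewise verification using that (co)limits in $\Ch(\scrA)$ are computed pointwise. The paper's proof is a terse one-liner (``limits and colimits \ldots are obtained by taking the limits and colimits pointwise \ldots in particular push-outs under admissible monics and pull-backs over admissible epics exist and yield admissible monics and epics; the rest is obvious''), so you have simply spelled out in full the details that the paper declares obvious.
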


\begin{proof}
  The point is that (as in every functor category)
  limits and colimits of diagrams in $\Ch{(\scrA)}$
  are obtained by taking the limits and colimits pointwise 
  (in each degree), in
  particular push-outs under admissible monics and pull-backs over
  admissible epics exist and yield admissible monics and epics. The
  rest is obvious.
\end{proof}

\begin{Def}
  The \emph{mapping cone} of a chain map $f: A \to B$ is
  the complex
  \[
  \cone{(f)}^{n} = A^{n+1} \oplus B^{n}
  \qquad \text{with differential} \qquad 
  d_{f}^{n} = \mat{-d_{A}^{n+1} & 0 \\ f^{n+1} & d_{B}^{n}}.
  \]
  Notice that $d_{f}^{n+1} d_{f}^{n} = 0$ precisely because $f$ is a chain
  map. It is plain that the mapping cone 
  defines a functor from the category of
  morphisms in $\Ch{(\scrA)}$ to $\Ch{(\scrA)}$.

  The \emph{translation functor} on $\Ch{(\scrA)}$ is defined to be
  $\Sigma A = \cone{(A \to 0)}$. 
  More explicitly, $\Sigma A$ is the complex with components
  $(\Sigma A)^{n} = A^{n+1}$ and differentials
  $d_{\Sigma A}^{n} = - d_{A}^{n+1}$. If $f$ is a chain map, its
  translate is given by $(\Sigma f)^{n} = f^{n+1}$. Clearly, $\Sigma$
  is an additive automorphism of $\Ch{(\scrA)}$.

  The \emph{strict triangle} over the chain map $f: A \to B$ is the
  $3$-periodic (or rather $3$-helicoidal, if you insist) sequence
  \[
  A \xrightarrow{f} B \xrightarrow{i_{f}} \cone{(f)} \xrightarrow{j_{f}}
  \Sigma A \xrightarrow{\Sigma f} \Sigma B \xrightarrow{\Sigma i_{f}}
  \Sigma \cone{(f)} \xrightarrow{\Sigma j_{f}} \cdots,
  \]
  where the chain map $i_{f}$ has components $\mat{0 \\ 1}$ and $j_{f}$ has
  components $\mat{1 & 0}$.
\end{Def}

\begin{Rem}
  \label{rem:strict-triangle}
  Let $f: A \to B$ be a chain map.
  Observe that the sequence of chain maps
  \[
  B \xrightarrow{i_{f}} \cone{(f)} \xrightarrow{j_{f}} \Sigma A
  \]
  splits in each degree, however it need not be a split exact sequence
  in $\Ch{(\scrA)}$, because the degreewise 
  splitting maps need not assemble to chain maps. In fact, it is 
  straightforward to verify that the above sequence is split exact in
  $\Ch{(\scrA)}$ if and only if $f$ is chain homotopic to zero in the
  sense of Definition~\ref{def:chain-homotopy}.
\end{Rem}

\begin{Exer}
  \label{exer:str-tr-long-ex-seq}
  Assume that $\scrA$ is an abelian category. Prove that the strict
  triangle over the chain map $f:A \to B$ 
  gives rise to a long exact homology sequence
  \[
  \cdots  \xrightarrow{} 
  H^{n}(A) \xrightarrow{H^{n}(f)} 
  H^{n}(B) \xrightarrow{H^{n}(i_{f})}
  H^{n}{(\cone{(f)})} \xrightarrow{H^{n}{(j_{f})}} H^{n+1}{(A)}
  \xrightarrow{}\cdots.
  \]
  Deduce that $f$ induces an isomorphism of $H^{\ast}(A)$ with
  $H^{\ast}(B)$ if and only if $\cone{(f)}$ is acyclic.
\end{Exer}

\begin{Def}
  \label{def:chain-homotopy}
  A chain map $f:A \to B$ is \emph{chain homotopic to zero} if there exist
  morphisms $h^{n}:A^{n} \to B^{n-1}$ such that 
  $f^{n} = d_{B}^{n-1}h^{n} + h^{n+1}d_{A}^{n}$. A chain complex $A$ is
  called \emph{null-homotopic} if $1_{A}$ is chain homotopic to zero.
\end{Def}

\begin{Rem}
The maps which are chain homotopic to zero form an ideal in
$\Ch{(\scrA)}$, that is to say if $h: B \to C$ is chain homotopic to zero then
so are $hf$ and $gh$ for all morphisms $f: A \to B$ and $g: C \to D$,
if $h_{1}$ and $h_{2}$ are chain homotopic to zero then so is 
$h_{1} \oplus h_{2}$. The set $N{(A,B)}$ 
of chain maps $A \to B$ which are chain homotopic to zero is a
subgroup of the abelian group $\Hom_{\Ch{(\scrA)}}{(A,B)}$. 
\end{Rem}

\begin{Def}
  The \emph{homotopy category} $\K{(\scrA)}$ is the category with the
  chain complexes over $\scrA$ as  objects and 
  $\Hom_{\K{(\scrA)}}{(A,B)} := \Hom_{\Ch{(\scrA)}}{(A,B)} /
  N{(A,B)}$ as morphisms.
\end{Def}

\begin{Rem}
  \label{rem:htpy-cat-triangulated}
  Notice that every null-homotopic complex is isomorphic 
  to the zero object in $\K{(\scrA)}$.
  It turns out that $\K{(\scrA)}$ is additive, but it is very rarely
  abelian or exact with respect to a
  non-trivial exact structure (see Verdier~\cite[Ch.II,
  1.3.6]{MR1453167}). However, $\K{(\scrA)}$
  has the structure of a \emph{triangulated category} induced by the
  \emph{strict triangles} in $\Ch{(\scrA)}$, see
  e.g. Verdier~\cite{MR1453167},
  Be\u\i{}linson-Bernstein-Deligne~\cite{MR751966}, 
  Gelfand-Manin~\cite{MR1950475}, 
  Grivel~\cite[Chapter~I]{MR882000},
  Kashiwara-Schapira~\cite{MR2182076}, 
  Keller~\cite{MR1421815},
  Neeman~\cite{MR1812507} or Weibel~\cite{MR1269324}.
\end{Rem}

\begin{Rem}
  For each object $A \in \scrA$, define $\cone{(A)} =
  \cone{(1_{A})}$. Notice that $\cone{(A)}$ is null-homotopic with
  $\mat{0 & 1 \\ 0 & 0}$ as contracting homotopy.
\end{Rem}

\begin{Rem}
  If $f$ and $g$ are chain homotopy equivalent, i.e., $f-g$ is chain
  homotopic to zero, then $\cone{(f)}$ and $\cone{(g)}$ are 
  isomorphic in $\Ch{(\scrA)}$ but the isomorphism and its homotopy
  class will generally depend on the
  choice of a chain homotopy. In particular, the mapping
  cone construction does not yield a functor defined on morphisms of
  $\K{(\scrA)}$. 
\end{Rem}

\begin{Rem}
  \label{rem:cone-factors}
  A chain map $f: A \to B$ is chain homotopic to zero if and only if
  it factors  as $h i_{A} = f$ over $h: \cone{(A)} \to
  B$, where $i_{A} = i_{1_{A}}: A \to
  \cone{(A)}$. Moreover, $h$ has components $\mat{h^{n+1} & f^{n}}$,
  where the family of morphisms $\{h^{n}\}$ is a chain homotopy of $f$
  to zero. Similarly, $f$ is chain
  homotopic to zero if and only if $f$ factors through $j_{\Sigma^{-1}B} =
  j_{1_{\Sigma^{-1}B}}: \cone{(\Sigma^{-1}B)} \to B$.
\end{Rem}

\begin{Rem}
  \label{rem:cone-weak-cokernel}
  The mapping cone construction yields the push-out diagram
  \[
  \xymatrix{
    A \ar[r]^{f} \ar[d]_{i_A} \ar@{}[dr]|{\text{PO}} &
    B \ar[d]^{i_{f}} \\
    \cone{(A)} \ar[r]_{\mat{1 & 0 \\ 0 & f}} &
    \cone{(f)}
  }
  \]
  in $\Ch{(\scrA)}$. Now suppose that $g: B \to C$ is a chain map such
  that $gf$ is chain homotopic to zero. By
  Remark~\ref{rem:cone-factors}, $gf$ factors over $i_{A}$ and using
  the push-out property of the above diagram it follows that $g$
  factors over $i_{f}$. This construction will depend on the choice of
  an explicit chain homotopy $gf \simeq 0$ in general. 
  In particular, $\cone(f)$ is a \emph{weak cokernel} in $\K{(\scrA)}$
  of the homotopy class of $f$ in that it has the
  factorization property of a cokernel but without uniqueness.
  Similarly, $\Sigma^{-1}\cone{(f)}$ is a \emph{weak kernel}
  of $f$ in $\K{(\scrA)}$.
\end{Rem}

\section{Acyclic Complexes and Quasi-Isomorphisms}
\label{sec:ac-cxes-qis}

The present section is probably only of interest to readers acquainted
with triangulated categories or at least with the construction of
the derived category of an abelian category. After giving the
fundamental definition of acyclicity of a complex over an
exact category, we may formulate the intimately connected 
notion of quasi-isomorphisms.

We will give an elementary proof of the fact that the homotopy
category $\Ac{(\scrA)}$ of acyclic complexes over an
exact category $\scrA$ is a triangulated category. It turns out that
$\Ac{(\scrA)}$ is a strictly full subcategory of the homotopy category
of chain complexes $\K{(\scrA)}$ if and
only if $\scrA$ is idempotent complete, and in this case
$\Ac{(\scrA)}$ is even thick in $\K{(\scrA)}$. Since thick
subcategories are strictly full by definition, 
$\Ac{(\scrA)}$ is thick if and only if $\scrA$ is idempotent
complete.

By \cite[Chapter~2]{MR1812507}, the Verdier quotient $\K/\scrT$ is
defined for any (strictly full)
triangulated subcategory $\scrT$ of a triangulated
category $\K$ and it coincides with the Verdier quotient 
$\K / \bar{\scrT}$, where $\bar{\scrT}$ is the \emph{thick closure} of
$\scrT$. The case we are interested in is $\K = \K{(\scrA)}$
and $\scrT = \Ac{(\scrA)}$. The Verdier quotient 
$\D{(\scrA)} = \K{(\scrA)} / \Ac{(\scrA)}$ is the
\emph{derived category} of $\scrA$. If $\scrA$ is idempotent complete then
$\overline{\Ac{(\scrA)}} = \Ac{(\scrA)}$ and it is clear that
quasi-isomorphisms are then precisely the chain maps with acyclic
mapping cone. If $\scrA$ fails to be idempotent complete, it turns out
that the thick closure $\overline{\Ac{(\scrA)}}$ of $\Ac{(\scrA)}$ is
the same as the closure of $\Ac{(\scrA)}$ under isomorphisms in
$\K{(\scrA)}$, so a chain map $f$ is a
quasi-isomorphism if and only if $\cone{(f)}$ is homotopy equivalent
to an acyclic complex.

Similarly, the derived categories of bounded, left bounded or right bounded
complexes are constructed as in the abelian setting. It is useful to
notice that for $\ast \in \{+,-,b\}$ the category
$\Ac^{\ast}{(\scrA)}$ is thick in $\K^{\ast}{(\scrA)}$ if
and only if $\scrA$ is weakly idempotent complete, which leads to an easier
description of quasi-isomorphisms.

If $\scrB$ is a fully exact subcategory of $\scrA$, the inclusion
$\scrB \to \scrA$ yields a canonical functor $\D{(\scrB)} \to
\D{(\scrA)}$ and we state conditions which ensure that this functor is
essentially surjective or fully faithful.

We end the section with a short discussion of Deligne's approach to
total derived functors.

\subsection{The Homotopy Category of Acyclic Complexes}

\begin{Def}
  A chain complex $A$ over an exact category
  is called \emph{acyclic} (or \emph{exact}) if each differential
  factors as $A^{n} \epi Z^{n+1}A \mono  A^{n+1}$ in such a way 
  that each sequence
  $Z^{n}A \mono A^{n} \epi Z^{n+1}A$ is exact.
\end{Def}

\begin{Rem}
  An acyclic complex is a complex with admissible differentials
  (Definition~\ref{def:admissible-morphism}) which is exact in the
  sense of Definition~\ref{def:ex-seq-adm-mor}. In particular,
  $Z^{n}A$ is a kernel of $A^{n} \to A^{n+1}$, an image and coimage
  of $A^{n-1} \to A^{n}$ and a cokernel of $A^{n-2} \to A^{n-1}$. 
\end{Rem}


\begin{Lem}[{Neeman~\cite[1.1]{MR1080854}}]
  \label{lem:cone-of-acyclics-is-acyclic}
  The mapping cone of a chain map $f: A \to B$ between acyclic
  complexes is acyclic.
\end{Lem}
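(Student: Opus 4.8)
The plan is to exhibit the cycle objects of $\cone{(f)}$ explicitly and to recognise each of them as the bicartesian ``middle object'' attached by Proposition~\ref{prop:decompose-morphisms-of-exact-sequences} to a morphism of short exact sequences induced by $f$; the acyclicity sequences of the cone then drop out of Corollary~\ref{cor:gluing-pb-po}. I would first record the data coming from the acyclicity of $A$ and $B$: short exact sequences $Z^n A \mono A^n \epi Z^{n+1}A$ and $Z^n B \mono B^n \epi Z^{n+1}B$, where I write $m_A^n\colon Z^nA \mono A^n$ and $e_A^n\colon A^n \epi Z^{n+1}A$ for the structure maps, so that $d_A^n = m_A^{n+1}\circ e_A^n$ (similarly for $B$). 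Since $f$ is a chain map, $f^n$ carries $\ker d_A^n$ into $\ker d_B^n$, so there are unique maps $g^n\colon Z^n A \to Z^n B$ with $m_B^n g^n = f^n m_A^n$; composing with the monic $m_B^{n+1}$ and using $d_B^n f^n = f^{n+1}d_A^n$ then yields $e_B^n f^n = g^{n+1}e_A^n$ as well. Hence for every $n$ the triple $(g^{n+1},f^{n+1},g^{n+2})$ is a morphism of short exact sequences from $Z^{n+1}A \mono A^{n+1}\epi Z^{n+2}A$ to $Z^{n+1}B \mono B^{n+1}\epi Z^{n+2}B$.

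Next I would feed each such morphism into Proposition~\ref{prop:decompose-morphisms-of-exact-sequences}. This produces an object $D_{n+1}$ sitting in a short exact sequence $Z^{n+1}B \mono D_{n+1}\epi Z^{n+2}A$, and --- this is the point --- $D_{n+1}$ is \emph{both} the push-out $Z^{n+1}B \cup_{Z^{n+1}A} A^{n+1}$ and the pull-back $B^{n+1}\times_{Z^{n+2}B} Z^{n+2}A$. I would set $Z^n C := D_n$ and take these as the candidate cycle objects of the cone; since $\cone{(f)}^n = A^{n+1}\oplus B^n$, the constituents are the right ones. Note that the epic $D_n \epi Z^{n+1}A$ is the pull-back of $e_B^n$ (admissible by [E2$^{\opp}$]) and the monic $Z^{n+1}B \mono D_{n+1}$ is the push-out of $m_A^{n+1}$.

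Now comes the gluing. Fix $n$ and consider the common arrow $g^{n+1}\colon Z^{n+1}A \to Z^{n+1}B$ (decorated with whatever sign the cone differential dictates). The pull-back description of $Z^n C = D_n$ exhibits the left-hand square --- with bottom $e_B^n$, right edge $g^{n+1}$, and admissible epic $Z^nC \epi Z^{n+1}A$ on top --- as a pull-back, while the push-out description of $Z^{n+1}C = D_{n+1}$ exhibits the right-hand square --- with top $m_A^{n+1}$, left edge $g^{n+1}$, and admissible monic $Z^{n+1}B \mono Z^{n+1}C$ on the bottom --- as a push-out. Corollary~\ref{cor:gluing-pb-po} applied to this PB--PO gluing then yields a short exact sequence $Z^nC \mono B^n\oplus A^{n+1}\epi Z^{n+1}C$, which after identifying $B^n\oplus A^{n+1}$ with $\cone{(f)}^n = A^{n+1}\oplus B^n$ is exactly the degree-$n$ acyclicity sequence. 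To finish I would check that the admissible epic $\cone{(f)}^n \epi Z^{n+1}C$ of degree $n$, followed by the admissible monic $Z^{n+1}C \mono \cone{(f)}^{n+1}$ coming from the degree-$(n+1)$ sequence, recomposes to the cone differential $d_f^n = \mat{-d_A^{n+1} & 0 \\ f^{n+1} & d_B^n}$; since this holds for all $n$, the cone is acyclic.

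The conceptual crux --- and the step needing the most care --- is the observation that a single object $Z^nC = D_n$ must play two roles: its pull-back description supplies the left (PB) square in degree $n$, while its push-out description supplies the right (PO) square in degree $n-1$. It is precisely the bicartesian nature guaranteed by Proposition~\ref{prop:decompose-morphisms-of-exact-sequences} that allows one object to wear both hats, and this is what legitimises the gluing in Corollary~\ref{cor:gluing-pb-po}. The remaining labour is purely a matter of chasing signs --- it is cleanest to replace $A$ by $\Sigma A$ so that the sign in $-d_A$ is built into the differential from the start --- and of matching the matrix entries produced by the two cited results with those of $d_f^n$, which is routine.
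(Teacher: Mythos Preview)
Your proposal is correct and follows essentially the same route as the paper: you construct the induced maps $g^{n}$ on cycle objects, invoke Proposition~\ref{prop:decompose-morphisms-of-exact-sequences} to obtain the bicartesian objects $Z^{n}C$, and then glue the pull-back description of $Z^{n}C$ to the push-out description of $Z^{n+1}C$ via Corollary~\ref{cor:gluing-pb-po} to produce the required short exact sequences. Your observation that the ``conceptual crux'' is the dual role of each $Z^{n}C$ is exactly what the paper exploits, and the remaining sign bookkeeping is handled there by writing out the matrices explicitly rather than by passing to $\Sigma A$.
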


\begin{proof}
  An easy diagram chase shows that the dotted morphisms in the diagram
  \[
  \xymatrix@R=0.8pc{
    A^{n-1} \ar[rr]^-{d_{A}^{n-1}} \ar@{->>}[dr]_-{j_{A}^{n-1}} 
    \ar[ddd]_-{f^{n-1}} & &
    A^{n} \ar[rr]^-{d_{A}^{n}} \ar@{->>}[dr]_-{j_{A}^{n}} \ar[ddd]^-{f^{n}} & & 
    A^{n+1} \ar[ddd]^-{f^{n+1}} \\
    & Z^{n}A \ar@{ >->}[ur]_-{i_{A}^{n}} \ar@{.>}[d]^-{\exists! g^{n}} & 
    & Z^{n+1}A \ar@{ >->}[ur]_-{i_{A}^{n+1}} \ar@{.>}[d]^-{\exists! g^{n+1}} & \\
    & Z^{n}B \ar@{ >->}[dr]^-{i_{B}^{n}} &
    & Z^{n+1}B \ar@{ >->}[dr]^-{i_{B}^{n+1}} & \\
    B^{n-1} \ar[rr]_-{d_{B}^{n-1}} \ar@{->>}[ur]^-{j_{B}^{n-1}} & &
    B^{n} \ar[rr]_-{d_{B}^{n}} \ar@{->>}[ur]^-{j_{B}^{n}}  & &
    B^{n+1} 
  }
  \]
  exist and are the unique morphisms $g^{n}$ making the
  diagram commutative.
  
  By Proposition~\ref{prop:decompose-morphisms-of-exact-sequences}
  we find objects $Z^{n}C$
  fitting into a commutative diagram
  \[
  \xymatrix@R=0.9pc{
    A^{n-1} \ar[rr]^-{d_{A}^{n-1}} \ar@{->>}[dr]_-{j_{A}^{n-1}} 
    \ar[dd]_{f'^{n-1}} & &
    A^{n} \ar[rr]^-{d_{A}^{n}} \ar@{->>}[dr]_-{j_{A}^{n}} 
    \ar[dd]^-{f'^{n}} & & 
    A^{n+1} \ar[dd]^-{f'^{n+1}} \\
    & Z^{n}A \ar@{ >->}[ur]_-{i_{A}^{n}} \ar[dd]^-{g^{n}} 
    \ar@{}[dr]|{\text{BC}} & 
    & Z^{n+1}A \ar@{ >->}[ur]_-{i_{A}^{n+1}} \ar[dd]^-{g^{n+1}}
    \ar@{}[dr]|{\text{BC}} & \\
    Z^{n-1}C \ar@{->>}[ur]_-{h^{n-1}} \ar[dd]_-{f''^{n-1}}
    \ar@{}[dr]|{\text{BC}}  & & 
    Z^{n}C \ar@{->>}[ur]_-{h^{n}} \ar[dd]^-{f''^{n}} 
    \ar@{}[dr]|{\text{BC}}  & & 
    Z^{n+1}C \ar[dd]^-{f''^{n+1}}  \\
    & Z^{n}B \ar@{ >->}[ur]^-{k^{n}} \ar@{ >->}[dr]^-{i_{B}^{n}} &
    & Z^{n+1}B \ar@{ >->}[ur]^-{k^{n+1}} \ar@{ >->}[dr]^-{i_{B}^{n+1}} & \\
    B^{n-1} \ar[rr]_-{d_{B}^{n-1}} \ar@{->>}[ur]^-{j_{B}^{n-1}} & &
    B^{n} \ar[rr]_-{d_{B}^{n}} \ar@{->>}[ur]^{j_{B}^{n}}  & &
    B^{n+1} 
  }
  \]
  where $f^{n} = f''^{n}f'^{n}$ and
  the quadrilaterals marked $\text{BC}$ are bicartesian. Recall 
  that the objects $Z^{n}C$ are obtained by forming the push-outs under
  $i_{A}^{n}$ and $g^{n}$ (or the pull-backs over $j_{B}^{n}$ and
  $g^{n+1}$) and that $Z^{n}B \mono Z^{n}C \epi Z^{n+1}A$ is short exact.


  It follows from Corollary~\ref{cor:gluing-pb-po} that for each $n$
  the sequence
  \[
  \xymatrix{
    Z^{n-1}C \ar@{ >->}[rr]^-{\mat{-i_{A}^{n}h^{n-1} \\ f''^{n-1}}}
    & & 
    A^{n} \oplus B^{n-1} \ar@{->>}[rr]^-{\mat{f'^{n} && k^{n}j_{B}^{n-1}}}
    & & 
    Z^{n}C
  }
  \]
  is short exact and the commutative diagram
  \[
  \xymatrix{
    A^{n} \oplus B^{n-1} 
    \ar[rr]^-{\mat{-d_{A}^{n} & 0 \\ f^{n} & d_{B}^{n-1}}} 
    \ar@{->>}[dr]|-{\mat{f'^{n} & k^{n}j_{B}^{n-1}}}
    & &
    A^{n+1} \oplus B^{n}
    \ar[rr]^-{\mat{-d_{A}^{n+1} & 0 \\ f^{n+1} & d_{B}^{n}}}
    \ar@{->>}[dr]|-{\mat{f'^{n+1} & k^{n+1}j_{B}^{n}}}
    & &
    A^{n+2} \oplus B^{n+1} \\
    &
    Z^{n}C \ar@{ >->}[ur]_-{\;\;\mat{-i_{A}^{n+1}h^{n} \\ f''^{n}}} & &
    Z^{n+1}C \ar@{ >->}[ur]_-{\;\;\;\mat{-i_{A}^{n+2}h^{n+1} \\ f''^{n+1}}} 
  }
  \]
  proves that $\cone{(f)}$ is acyclic.
\end{proof}

\begin{Rem}
  Retaining the notations of the proof we have a short exact sequence 
  \[
  Z^{n}B \mono Z^{n}C \epi Z^{n+1} A.
  \]
  This sequence exhibits $Z^{n}C = 
  \Ker{\mat{-d_{A}^{n+1} & 0 \\ f^{n+1} & d_{B}^{n}}}$ as an extension of
  $Z^{n+1}A = \Ker{d_{A}^{n+1}}$
  by $Z^{n}B = \Ker{d_{B}^{n}}$.
\end{Rem}

Let $\Ac{(\scrA)}$ be the full subcategory of the homotopy
category $\K{(\scrA)}$ consisting of acyclic complexes over the exact
category $\scrA$. It follows from
Proposition~\ref{prop:sum-exact} that the direct sum of two acyclic
complexes is acyclic. Thus $\Ac{(\scrA)}$ is a full additive
subcategory of $\K{(\scrA)}$. The previous lemma implies that even
more is true:

\begin{Cor}
  The homotopy category of acyclic complexes $\Ac{(\scrA)}$
  is a triangulated subcategory of $\K{(\scrA)}$. \qed
\end{Cor}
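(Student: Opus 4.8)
The plan is to verify the three requirements in the definition of a triangulated subcategory (in the sense of Neeman~\cite{MR1812507}): that $\Ac{(\scrA)}$ is a full additive subcategory of $\K{(\scrA)}$, that it is closed under the translation $\Sigma$ and its inverse, and that whenever two of the three vertices of a distinguished triangle of $\K{(\scrA)}$ are acyclic then so is the third. The triangles of the resulting triangulated structure on $\Ac{(\scrA)}$ are then precisely the distinguished triangles of $\K{(\scrA)}$ all of whose vertices are acyclic, so that the inclusion becomes an exact functor.

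The first point has already been recorded: $\Ac{(\scrA)}$ is a full additive subcategory because the direct sum of acyclic complexes is acyclic (Proposition~\ref{prop:sum-exact}). The second point is immediate from the definition of acyclicity. The complex $\Sigma A$ has components $(\Sigma A)^{n} = A^{n+1}$ and differentials $-d_{A}^{n+1}$, so the factorizations $A^{n} \epi Z^{n+1}A \mono A^{n+1}$ witnessing acyclicity of $A$ merely shift to factorizations witnessing acyclicity of $\Sigma A$; the same bookkeeping applies to $\Sigma^{-1}$.

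The third point is where Lemma~\ref{lem:cone-of-acyclics-is-acyclic} does all the work. Every distinguished triangle of $\K{(\scrA)}$ is isomorphic to a strict triangle $A \xrightarrow{f} B \xrightarrow{i_{f}} \cone{(f)} \xrightarrow{j_{f}} \Sigma A$ over some chain map $f$, so it suffices to treat strict triangles. If $A$ and $B$ are acyclic, then $\cone{(f)}$ is acyclic by the lemma, settling the case in which the first two positions are occupied by acyclic complexes. The remaining two cases reduce to this one by rotation: since $\Sigma^{-1}\cone{(f)} \to A \to B \to \cone{(f)}$ and $B \xrightarrow{i_{f}} \cone{(f)} \to \Sigma A \to \Sigma B$ are again distinguished triangles, and using the $\Sigma^{\pm 1}$-closure from the second point, any two acyclic vertices can be brought into the first two positions; the third vertex is then isomorphic in $\K{(\scrA)}$ to the mapping cone of a chain map between acyclic complexes, hence acyclic.

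There is no genuine obstacle remaining: the only substantial input is Lemma~\ref{lem:cone-of-acyclics-is-acyclic}, which we may assume, and everything else is the formal manipulation of rotating triangles and shifting cycle objects. The one point demanding care is to phrase closure as \emph{``the third vertex is isomorphic in $\K{(\scrA)}$ to an acyclic complex''} rather than \emph{``is itself acyclic''}, since $\Ac{(\scrA)}$ is not being claimed to be strictly full—indeed, as the surrounding discussion notes, it is strictly full exactly when $\scrA$ is idempotent complete.
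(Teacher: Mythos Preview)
Your argument is correct and follows the same route as the paper, which simply records the corollary with a \qed\ after noting that $\Ac(\scrA)$ is a full additive subcategory and invoking Lemma~\ref{lem:cone-of-acyclics-is-acyclic}. You have merely spelled out what the paper leaves implicit: the $\Sigma$-closure and the rotation argument reducing the remaining two-out-of-three cases to the cone case. One cosmetic slip: at the end of your third paragraph you write ``hence acyclic'', which overstates the conclusion---the third vertex is only isomorphic in $\K(\scrA)$ to an acyclic complex---but you correct yourself immediately in the final paragraph, so no mathematical harm is done.
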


\begin{Rem}
  For reasons of convenience, many authors assume that triangulated
  subcategories are not only full but \emph{strictly full}. We do not
  do so because $\Ac{(\scrA)}$ is closed under isomorphisms in
  $\K{(\scrA)}$ if and only if $\scrA$ is idempotent complete, see
  Proposition~\ref{prop:ac-cxes-strictly-full-iff-karoubian}.
\end{Rem}

\begin{Lem}
  \label{lem:acyclic-complexes-thick}
  Assume that $(\scrA,\scrE)$ is idempotent complete. Every retract
  in $\K{(\scrA)}$ of an acyclic complex $A$ is acyclic.
\end{Lem}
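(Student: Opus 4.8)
The plan is to reduce this homotopy-theoretic statement to the purely degreewise fact that an \emph{honest} direct summand of an acyclic complex is acyclic, the latter being exactly where idempotent completeness of $\scrA$ enters, through Corollary~\ref{cor:summands-exact}. So I would start with chain maps $r\colon A\to B$ and $s\colon B\to A$ representing the retraction, together with a homotopy $h$ witnessing $rs\simeq 1_{B}$, i.e. $1_{B}-rs=d_{B}h+hd_{B}$. The difficulty is that $sr$ is only idempotent \emph{up to homotopy}, and homotopy idempotents in $\K{(\scrA)}$ cannot be split in general (that would require infinite coproducts in the style of B\"okstedt--Neeman). I must therefore avoid splitting $sr$ directly and instead rectify the retraction to a strict one already at the level of $\Ch{(\scrA)}$.

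The key step---and the main obstacle---is to absorb the homotopy $h$ into a contractible summand. First I would observe that $\nu:=1_{B}-rs$ is a chain map which is null-homotopic, so by Remark~\ref{rem:cone-factors} it factors as $\nu=\beta\,\iota$ through the canonical inclusion $\iota\colon B\to\cone{(1_{B})}$ for a suitable chain map $\beta\colon\cone{(1_{B})}\to B$. Then I set $A':=A\oplus\cone{(1_{B})}$ and define $\sigma:=\mat{s \\ \iota}\colon B\to A'$ and $\rho:=\mat{r & \beta}\colon A'\to B$; both are chain maps, and the whole point of the construction is that now $\rho\sigma=rs+\beta\iota=rs+\nu=1_{B}$ \emph{on the nose}. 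Hence $B$ is a genuine direct summand of $A'$ in $\Ch{(\scrA)}$, with $\sigma\rho$ an honest idempotent.

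It then remains to check that $A'$ is acyclic and that honest summands of acyclic complexes are acyclic. For the first point, $\cone{(1_{B})}$ is contractible, hence acyclic once one splits the idempotent induced by a contracting homotopy (here idempotent completeness is used), so $A'=A\oplus\cone{(1_{B})}$ is a direct sum of two acyclic complexes and therefore acyclic by Proposition~\ref{prop:sum-exact}. For the second point I would write $A'=B\oplus B^{c}$ as complexes; the projection onto $B$ is an \emph{honest} idempotent chain map, so it induces an honest idempotent on each cycle object $Z^{n}A'$ (this time with no homotopy correction). By idempotent completeness these cycle idempotents split, $Z^{n}A'=Z^{n}_{B}\oplus Z^{n}_{c}$, and since every map in the short exact sequence $Z^{n}A'\mono (A')^{n}\epi Z^{n+1}A'$ commutes with the projection, this sequence decomposes as a direct sum; Corollary~\ref{cor:summands-exact} then yields that the $B$-summand $Z^{n}_{B}\mono B^{n}\epi Z^{n+1}_{B}$ is short exact, which is precisely the assertion that $B$ is acyclic. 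I expect the construction of $\beta$ and the verification that $\sigma$ and $\rho$ are chain maps to be the only genuinely delicate points; once the retraction has been rectified, everything is routine bookkeeping with Corollary~\ref{cor:summands-exact}.
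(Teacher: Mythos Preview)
Your proposal is correct and follows essentially the same strategy as the paper: rectify the homotopy retraction to a strict one in $\Ch(\scrA)$ by enlarging $A$ with a contractible summand, then split the resulting honest chain-map idempotent on the cycle objects and conclude via Corollary~\ref{cor:summands-exact}. The only cosmetic difference is that the paper uses the complex $IX$ with components $X^{n}\oplus X^{n+1}$ and differential $\mat{0 & 1 \\ 0 & 0}$ in place of your $\cone(1_{B})$; these are isomorphic (cf.\ Exercise~\ref{exer:comparing-dist-triangles}), but $IX$ has the minor advantage of being visibly acyclic from split exact sequences alone, so no separate argument via a contracting-homotopy idempotent is needed at that step.
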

\begin{proof}[Proof (cf. {\cite[2.3 a)]{MR1052551}})]
  Let the chain map $f: X \to A$ be a coretraction, i.e., 
  there is a chain map $s: A \to X$ such that 
  $s^{n}f^{n} - 1_{X^{n}} = d_{X}^{n-1}h^{n} + h^{n+1}d_{X}^{n}$ for
  some morphisms $h^{n}: X^{n} \to X^{n-1}$. Obviously, the complex
  $IX$ with components
  \[
  (IX)^{n} = X^{n} \oplus X^{n+1} \qquad \text{and differential}
  \qquad
  \mat{0 & 1 \\ 0 & 0}
  \]
  is acyclic. There is a chain map $i_{X}: X \to IX$ given by
  \[
  i_{X}^{n} = \mat{1_{X^{n}} \\ d_{X}^{n}}:
  X^{n} \to X^{n} \oplus X^{n+1}
  \]
  and the chain map
  \[
  \mat{f \\ i_{X}} : X \to A \oplus IX
  \]
  has the chain map
  \[
  \mat{s^{n} & -d_{X}^{n-1}h^{n} & -h^{n+1}}: 
  A^{n} \oplus X^{n} \oplus X^{n+1} \to X^{n}
  \]
  as a left inverse. Hence, on replacing the acyclic complex $A$ by the
  acyclic complex $A \oplus IX$, we may assume that $f: X \to A$ has
  $s$ as a left inverse in $\Ch{(\scrA)}$. But then $e = fs: A \to A$ is
  an idempotent in $\Ch{(\scrA)}$ 
  and it induces an idempotent on the exact sequences
  $Z^{n}A \mono A^{n} \epi Z^{n+1}A$ witnessing that $A$ is acyclic
  as in the first diagram of the proof of 
  Lemma~\ref{lem:cone-of-acyclics-is-acyclic}. This means that
  $Z^{n}A \mono A^{n} \epi Z^{n+1}A$ decomposes as a direct sum of
  two short exact sequences (Corollary~\ref{cor:summands-exact}) since
  $\scrA$ is idempotent complete. Therefore the acyclic complex
  $A = X' \oplus Y'$ is a direct sum of the acyclic complexes $X'$ and
  $Y'$, and $f$ induces an isomorphism from $X$ to $X'$ in
  $\Ch{(\scrA)}$. The details are left to the reader.
\end{proof}

\begin{Exer}
  \label{exer:comparing-dist-triangles}
  Prove that the sequence $X \to \cone{(X)} \to \Sigma X$ from
  Remark~\ref{rem:strict-triangle} is
  isomorphic to a sequence $X \to IX \to \Sigma X$ in
  $\Ch{(\scrA)}$. 
\end{Exer}

\begin{Prop}[{\cite[11.2]{MR1421815}}]
  \label{prop:ac-cxes-strictly-full-iff-karoubian}
  The following are equivalent:
  \begin{enumerate}[(i)]
    \item
      Every null-homotopic complex in $\Ch{(\scrA)}$ is acyclic.
    \item
      The category $\scrA$ is idempotent complete.
      
    \item
      The class of acyclic complexes is closed under isomorphisms in
      $\K{(\scrA)}$.
  \end{enumerate}
\end{Prop}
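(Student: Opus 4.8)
The plan is to close the cycle (ii)~$\Rightarrow$~(iii)~$\Rightarrow$~(i)~$\Rightarrow$~(ii), which proves all three equivalences at once. Two of these implications are formal. For (ii)~$\Rightarrow$~(iii) I would invoke Lemma~\ref{lem:acyclic-complexes-thick}: an isomorphism $f$ in $\K{(\scrA)}$ with inverse $g$ satisfies $gf = 1$, so it exhibits its source as a retract of its target in $\K{(\scrA)}$. Hence, if $\scrA$ is idempotent complete and $A$ is acyclic, every complex isomorphic to $A$ in $\K{(\scrA)}$ is a retract of an acyclic complex and is therefore acyclic by the lemma, which is precisely (iii). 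For (iii)~$\Rightarrow$~(i), recall from Remark~\ref{rem:htpy-cat-triangulated} that a null-homotopic complex $A$ is isomorphic to the zero object in $\K{(\scrA)}$. Since the zero complex is trivially acyclic, (iii) forces $A$ to be acyclic, giving (i).

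The substantive implication is (i)~$\Rightarrow$~(ii). By Remark~\ref{rem:ic-iff-p-has-kernel} it suffices to produce a kernel for an arbitrary idempotent $p: A \to A$, and the idea is to encode $p$ in a null-homotopic complex whose acyclicity, supplied by~(i), manufactures the desired kernel. I would take the $2$-periodic complex
\[
P: \quad \cdots \to A \xrightarrow{\,p\,} A \xrightarrow{\,1-p\,} A \xrightarrow{\,p\,} A \to \cdots
\]
with $A$ in every degree and differentials alternating between $p$ and $1-p$. Since $p(1-p) = (1-p)p = p - p^{2} = 0$, this is a chain complex, and the constant family $h^{n} = 1_{A}$ is a contracting homotopy: at each degree the incoming and outgoing differentials sum to $p + (1-p) = 1_{A}$, which is exactly the relation $1_{A} = d^{n-1} h^{n} + h^{n+1} d^{n}$ of Definition~\ref{def:chain-homotopy}. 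Thus $P$ is null-homotopic.

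By hypothesis~(i), the complex $P$ is then acyclic, so every differential factors as $A \epi Z^{n+1}P \mono A$ with $Z^{n}P \mono A \epi Z^{n+1}P$ short exact. I would fix a degree $n$ at which the differential equals $p$. Then $p$ factors as $A \epi Z^{n+1}P \mono A$; because the second arrow is monic, the kernel of $p$ coincides with the kernel of the admissible epic $A \epi Z^{n+1}P$, namely the admissible monic $Z^{n}P \mono A$. Hence the idempotent $p$ admits a kernel, and $\scrA$ is idempotent complete by Remark~\ref{rem:ic-iff-p-has-kernel}.

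The main obstacle lies entirely in (i)~$\Rightarrow$~(ii), and specifically in choosing the right test complex: one needs a null-homotopic complex whose acyclicity is tantamount to the splitting of a prescribed idempotent. The alternating complex $P$ does exactly this, and once it is written down the remaining verifications are routine. The other two implications carry no real content, resting only on Lemma~\ref{lem:acyclic-complexes-thick} and on the fact that null-homotopic complexes are zero in $\K{(\scrA)}$.
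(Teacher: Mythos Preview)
Your proof is correct and follows essentially the same route as the paper's: the same alternating complex $\cdots \xrightarrow{p} A \xrightarrow{1-p} A \xrightarrow{p} \cdots$ with contracting homotopy $h^{n}=1_{A}$ handles (i)~$\Rightarrow$~(ii), while (ii)~$\Rightarrow$~(iii) and (iii)~$\Rightarrow$~(i) rest on Lemma~\ref{lem:acyclic-complexes-thick} and the triviality of null-homotopic complexes in $\K{(\scrA)}$, exactly as in the paper. Your phrasing of (ii)~$\Rightarrow$~(iii) via the observation that an isomorphism in $\K{(\scrA)}$ exhibits its source as a retract of its target is arguably cleaner than the paper's, which revisits the construction inside the proof of Lemma~\ref{lem:acyclic-complexes-thick} before invoking its statement.
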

\begin{proof}[Proof (Keller)]
  Let us prove that (i) implies (ii).
  Let $e:A \to A$ be an idempotent of $\scrA$. 
  Consider the complex
  \[
  \cdots \xrightarrow{1-e} A \xrightarrow{e} A \xrightarrow{1-e} A
  \xrightarrow{e} \cdots
  \]
  which is null-homotopic. By~(i) this complex is acyclic.
  This means by definition that $e$ has a kernel and hence $\scrA$ is
  idempotent complete.

  Let us prove that (ii) implies (iii). Assume that $X$ is isomorphic
  in $\K{(\scrA)}$ to an acyclic complex $A$. Using the construction
  in the proof of Lemma~\ref{lem:acyclic-complexes-thick} one
  shows that $X$ is a direct summand in $\Ch{(\scrA)}$ 
  of the acyclic complex $A \oplus IX$ and we conclude by
  Lemma~\ref{lem:acyclic-complexes-thick}.

  That (iii) implies (i) follows from the fact that a null-homotopic
  complex $X$ is isomorphic in $\K{(\scrA)}$ to the (acyclic) 
  zero complex and hence $X$ is acyclic.
\end{proof}

\begin{Rem}
  Recall that a subcategory $\scrT$ of a triangulated category $\bfK$ 
  is called \emph{thick} if it is strictly full and $X \oplus Y \in
  \scrT$ implies $X, Y \in \scrT$.
\end{Rem}

\begin{Cor}
  \label{cor:idemp-compl=thick}
  The triangulated subcategory $\Ac{(\scrA)}$ of $\K{(\scrA)}$ is
  thick if and only if $\scrA$ is idempotent complete. \qed
\end{Cor}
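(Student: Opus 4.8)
The plan is to observe that the notion of thickness splits into two independent conditions---strict fullness and closure under direct summands---and that each of these has already been settled by the preceding results. Concretely, $\Ac{(\scrA)}$ is thick precisely when (a) it is closed under isomorphisms in $\K{(\scrA)}$, and (b) whenever $X \oplus Y \in \Ac{(\scrA)}$ both summands $X$ and $Y$ lie in $\Ac{(\scrA)}$. So the entire argument reduces to matching (a) and (b) against Proposition~\ref{prop:ac-cxes-strictly-full-iff-karoubian} and Lemma~\ref{lem:acyclic-complexes-thick}.

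For the implication that thickness forces idempotent completeness, I would simply note that a thick subcategory is by definition strictly full, so condition~(a) holds. But closure of the acyclic complexes under isomorphisms in $\K{(\scrA)}$ is exactly condition~(iii) of Proposition~\ref{prop:ac-cxes-strictly-full-iff-karoubian}, and the implication (iii)~$\Rightarrow$~(ii) of that proposition then yields that $\scrA$ is idempotent complete.

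For the converse, assume $\scrA$ is idempotent complete. The implication (ii)~$\Rightarrow$~(iii) of Proposition~\ref{prop:ac-cxes-strictly-full-iff-karoubian} gives condition~(a), so $\Ac{(\scrA)}$ is strictly full. For condition~(b), I would observe that each summand of a direct sum is a retract of it in $\K{(\scrA)}$ via the evident inclusion and projection; hence if $X \oplus Y$ is acyclic, then $X$ and $Y$ are retracts in $\K{(\scrA)}$ of an acyclic complex, and Lemma~\ref{lem:acyclic-complexes-thick} shows they are themselves acyclic. Together with strict fullness, this establishes that $\Ac{(\scrA)}$ is thick.

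There is no serious obstacle here: the substantive work was already carried out in Lemma~\ref{lem:acyclic-complexes-thick} (the retract argument, which uses idempotent completeness via Corollary~\ref{cor:summands-exact} to split the short exact sequences witnessing acyclicity) and in Proposition~\ref{prop:ac-cxes-strictly-full-iff-karoubian}. The only point worth double-checking is the bookkeeping---that ``thick'' unpacks exactly into the two previously settled statements and that the two directions of the desired biconditional correspond to the two implications (ii)~$\Leftrightarrow$~(iii) of the proposition.
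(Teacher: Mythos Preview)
Your proposal is correct and matches the paper's approach exactly: the corollary is marked \qed in the paper precisely because it follows immediately from Lemma~\ref{lem:acyclic-complexes-thick} and Proposition~\ref{prop:ac-cxes-strictly-full-iff-karoubian} in the way you describe. You have simply made explicit the two-line unpacking of ``thick'' into strict fullness plus closure under summands that the paper leaves to the reader.
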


\subsection{Boundedness Conditions}
A complex $A$ is called \emph{left bounded} if $A^{n} = 0$ for $n \ll
0$, \emph{right bounded} if $A^{n} = 0$ for $n \gg 0$ and
\emph{bounded} if $A^{n} = 0$ for $|n| \gg 0$.

\begin{Def}
  Denote by
  $\K^{+}{(\scrA)}$, $\K^{-}{(\scrA)}$ and $\K^{b}{(\scrA)}$ the full
  subcategories of $\K{(\scrA)}$ generated by the left bounded complexes,
  right bounded complexes and bounded
  complexes over $\scrA$.
\end{Def}

  Observe that $\K^{b}{(\scrA)} = \K^{+}{(\scrA)} \cap
  \K^{-}{(\scrA)}$.   
  Note further that $\K^{\ast}{(\scrA)}$ is
  \emph{not} closed under isomorphisms in $\K{(\scrA)}$ for 
  $\ast \in \{+, -, b\}$ unless $\scrA = 0$.

\begin{Def}
  For $\ast \in \{+, -, b\}$ we define
  $\Ac^{\ast}{(\scrA)} = \K^{\ast}{(\scrA)} \cap \Ac{(\scrA)}$.
\end{Def}

Plainly, $\K^{\ast}{(\scrA)}$ is a full triangulated subcategory of
$\K{(\scrA)}$ and $\Ac^{\ast}{(\scrA)}$ is a full triangulated subcategory of
$\K^{\ast}{(\scrA)}$ by Lemma~\ref{lem:cone-of-acyclics-is-acyclic}.

\begin{Prop}
  \label{prop:weakly-idemp-compl-equiv-thick}
  The following assertions are equivalent:
  \begin{enumerate}[(i)]
    \item
      The subcategories $\Ac^{+}{(\scrA)}$ and $\Ac^{-}{(\scrA)}$ of
      $\K^{+}{(\scrA)}$ and $\K^{-}{(\scrA)}$ are thick.

    \item
      The subcategory $\Ac^{b}{(\scrA)}$ of $\K^{b}{(\scrA)}$ is
      thick.
 
    \item
      The category $\scrA$ is weakly idempotent complete.
  \end{enumerate}
\end{Prop}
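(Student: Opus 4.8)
The plan is to establish the cycle (iii)$\Rightarrow$(i)$\Rightarrow$(ii)$\Rightarrow$(iii). Since $\Ac^{\ast}(\scrA)$ is already a triangulated subcategory of $\K^{\ast}(\scrA)$ (Lemma~\ref{lem:cone-of-acyclics-is-acyclic}), and since a thick subcategory is by definition strictly full and closed under direct summands, what has to be produced is closure under retracts in $\K^{\ast}(\scrA)$: both closure under isomorphism and closure under summands are instances of it. Thus the heart of (iii)$\Rightarrow$(i) is the single claim that, when $\scrA$ is weakly idempotent complete, a retract in $\K^{+}(\scrA)$ of a left bounded acyclic complex is again acyclic (and dually for $\K^{-}$). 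Granting this, (i)$\Rightarrow$(ii) is formal: $\K^{b}=\K^{+}\cap\K^{-}$ and $\Ac^{b}=\Ac^{+}\cap\Ac^{-}$, so a bounded complex that is isomorphic in $\K^{b}$ to, or a bounded summand of, a bounded acyclic complex lands in both $\Ac^{+}$ and $\Ac^{-}$ by (i), hence in $\Ac^{b}$.

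For the converse (ii)$\Rightarrow$(iii) --- and identically (i)$\Rightarrow$(iii) --- I would use only strict fullness. Given a retraction $r\colon B\to C$ with section $s$, the four term complex $C\xrightarrow{s}B\xrightarrow{1-sr}B\xrightarrow{r}C$ really is a complex because $rs=1_{C}$, and it is contractible with homotopy $(r,1_{B},s)$; hence it is isomorphic to the acyclic complex $0$ in $\K^{b}(\scrA)$. Strict fullness of $\Ac^{b}(\scrA)$ then forces it to be acyclic, so its cycle object in the penultimate degree, namely $\Ker{r}$, exists as an admissible monic. Every retraction therefore has a kernel, and $\scrA$ is weakly idempotent complete by Lemma~\ref{lem:weakly-idempotent-complete}.

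The work is in (iii)$\Rightarrow$(i). Let $f\colon X\to A$ be a coretraction in $\K^{+}(\scrA)$ with $A$ left bounded and acyclic. The construction in the proof of Lemma~\ref{lem:acyclic-complexes-thick} (which needs only additivity) replaces $A$ by the still left bounded and still acyclic complex $A\oplus IX$ and upgrades $f$ to a genuine chain coretraction, i.e. a chain map $s$ with $sf=1_{X}$ in $\Ch(\scrA)$. Now each $s^{n}$ is a retraction, so by weak idempotent completeness it has a kernel; by Remark~\ref{rem:split-special-idempotents} this splits $A^{n}\cong X^{n}\oplus Y^{n}$, and because $f$ and $s$ are chain maps these degreewise splittings assemble into a decomposition $A=X\oplus Y$ in $\Ch(\scrA)$ with block diagonal differential. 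It then suffices to prove that a direct summand of a \emph{left bounded} acyclic complex is acyclic.

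This I would prove by induction on the degree, anchored at the bottom degree where $A^{n}=0$ and the cycle objects vanish. The sole axiomatic ingredient is that if $g_{1}\oplus g_{2}$ is an admissible monic then so is each $g_{i}$: indeed $g_{1}=\mat{1 & 0}(g_{1}\oplus g_{2})\mat{1 \\ 0}$ is the composite of the split admissible epic $\mat{1 & 0}$ with the admissible monic $(g_{1}\oplus g_{2})\mat{1 \\ 0}$, hence admissible, and being monic it is an admissible monic by Exercise~\ref{exer:adm-epic+monic=iso}. Assuming $Z^{n}A=Z^{n}X\oplus Z^{n}Y$ with both summands admissible monics into $A^{n}$, Proposition~\ref{prop:sum-exact} identifies the next cycle object $Z^{n+1}A$ with $Z^{n+1}X\oplus Z^{n+1}Y$, block extraction makes $Z^{n+1}X\mono X^{n+1}$ admissible, and a short diagram check gives $Z^{n+1}X=\Ker{d_{X}^{n+1}}$, which propagates the hypothesis; the right bounded case is dual. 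The main obstacle is exactly this step: because only \emph{weak} idempotent completeness is available one cannot split the idempotents induced on the cycle objects $Z^{n}A$ outright (which is what full idempotent completeness does in Lemma~\ref{lem:acyclic-complexes-thick}), so the decomposition must be anchored at a bounded end and transported one degree at a time --- precisely the reason the unbounded statement in Proposition~\ref{prop:ac-cxes-strictly-full-iff-karoubian} needs the stronger hypothesis.
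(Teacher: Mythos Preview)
Your overall route matches the paper's, and your argument for (ii)$\Rightarrow$(iii) via a direct contracting homotopy is actually tidier than the paper's version, which instead exhibits an explicit isomorphism in $\Ch(\scrA)$ between $X\oplus\Sigma X$ and a manifestly acyclic complex and then invokes closure under summands rather than mere strict fullness.

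There is, however, a genuine gap in your inductive step for (iii)$\Rightarrow$(i). Your key lemma---if $g_1\oplus g_2$ is an admissible monic then so is each $g_i$---is true, but your justification is not: writing $g_1=\mat{1&0}(g_1\oplus g_2)\mat{1\\0}$ exhibits $g_1$ as an admissible epic composed \emph{after} an admissible monic, and that is the wrong order for the definition of an admissible morphism. In fact Remark~\ref{rem:admissible-morphisms-dont-compse} shows that such composites need not be admissible even under weak idempotent completeness: any non-admissible $g$ factors as $\mat{0&1}\mat{1\\g}$, and both factors are split, hence admissible monics/epics by Corollary~7.5. Your citation of Exercise~\ref{exer:adm-epic+monic=iso} is also off; that exercise concerns an admissible \emph{epic} which is monic.

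The fix is immediate and is exactly what the paper does: observe instead that $\mat{1\\0}g_1=(g_1\oplus g_2)\mat{1\\0}$ is a composite of admissible monics, hence an admissible monic, and apply the dual of Proposition~\ref{prop:weakly-split-obscure-axiom} to conclude that $g_1$ is an admissible monic. This is precisely where weak idempotent completeness enters, confirming your instinct that it must be used at this point. The paper organises the same induction slightly differently---it never forms the complement $Y$ explicitly but works directly with the chain maps $f$ and $s$, applying Proposition~\ref{prop:weakly-split-obscure-axiom} at each stage to pass from $f^{n+1}d_X^{n}=d_A^{n}f^{n}$ being an admissible monic to $d_X^{n}$ being one---but the content is the same.
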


\begin{proof}
  Since $\Ac^{b}{(\scrA)} = \Ac^{+}{(\scrA)} \cap \Ac^{-}{(\scrA)}$,
  we see that~(i) implies~(ii). Let us prove that~(ii)
  implies~(iii). 
  Let $s: B \to A$ and $t:A \to B$ be morphisms of $\scrA$ such that
  $ts = 1_{B}$. We need to prove that $s$ has a
  cokernel and $t$ has a kernel. The complex $X$ given by
  \[
  \cdots \xrightarrow{} 0 \xrightarrow{} B \xrightarrow{s} A
  \xrightarrow{1-st} A \xrightarrow{t} B \xrightarrow{} 0
  \xrightarrow{} \cdots
  \]
  is a direct summand of
  $X \oplus \Sigma X$ and the latter complex is acyclic since there is
  an isomorphism in $\Ch{(\scrA)}$ 
  \[
  \xymatrix{
    B \ar[d]_{1} \ar[rr]^-{\mat{1 \\ 0}} & &
    B \oplus A 
    \ar[d]_-{\mat{0 & -t \\ s  & 1-st}}
    \ar[rr]^-{\mat{0 & 0 \\ 0 & 1}} & &
    A \oplus A 
    \ar[d]_-{\mat{-1 + st & st \\ st & 1 - st}}
    \ar[rr]^-{\mat{1 & 0 \\ 0 & 0}} & &
    A \oplus B
    \ar[d]_-{\mat{1 - st & -s \\ t & 0}}
    \ar[rr]^-{\mat{0 & 1}} & & B \ar[d]_{1} \\
    B \ar[rr]_-{\mat{0 \\ s}} & &
    B \oplus A \ar[rr]_-{\mat{-s & 0 \\ 0 & 1 - st}} & &
    A \oplus A \ar[rr]_-{\mat{-1 + st & 0 \\ 0 & t}} & &
    A \oplus B \ar[rr]_-{\mat{-t & 0}} & &  B
  }
  \]
  where the upper row is obviously acyclic and the lower row is
  $X \oplus \Sigma X$. Since $\Ac^{b}{(\scrA)}$ is thick, we conclude
  that $X$ is acyclic, so that $s$ has a cokernel and $t$ has a
  kernel. Therefore $\scrA$ is weakly idempotent complete.

  Let us prove that~(iii) implies~(i). Assume that $X$ is a direct
  summand in $\K^{+}{(\scrA)}$ of a complex $A \in
  \Ac^{+}{(\scrA)}$. This means that we are given a
  chain map $f: X \to A$ 
  for which there exists a chain map $s: A \to X$  and morphisms 
  $h^{n}: X^{n} \to X^{n-1}$ 
  such that $s^{n}f^{n} - 1_{X^{n}} = d_{X}^{n-1}h^{n} + h^{n+1}
  d_{X}^{n}$. On replacing $A$ by the acyclic complex $A \oplus IX$ as
  in the proof of
  Lemma~\ref{lem:acyclic-complexes-thick}, we may
  assume that $s$ is a left inverse of $f$ in $\Ch^{+}{(\scrA)}$. In
  particular, since $\scrA$ is assumed to be weakly idempotent
  complete, Proposition~\ref{prop:weakly-split-obscure-axiom} implies
  that each $f^{n}$ is an admissible monic and that each $s^{n}$ is an
  admissible epic.  Moreover, as both complexes $X$ and $A$ are left
  bounded, we may assume that $A^{n} = 0 = X^{n}$ for $n < 0$. It
  follows that $d_{A}^{0}: A^{0} \mono A^{1}$ is an admissible monic
  since $A$ is acyclic. But then $d_{A}^{0} f^{0} = f^{1} d_{X}^{0}$
  is an admissible monic, hence
  Proposition~\ref{prop:weakly-split-obscure-axiom} implies that
  $d_{X}^{0}$ is an admissible monic as well.  Let
  $e^{1}_{X}: X^{1} \epi Z^{2}X$ be a cokernel of $d_{X}^{0}$ 
  and let $e^{1}_{A}: A^{1} \epi Z^{2}A$ be a cokernel of 
  $d_{A}^{0}$. The dotted morphisms in the diagram
  \[
  \xymatrix{
    X^{0} \ar@{ >->}[r]^-{d_{X}^{0}} \ar@{ >->}[d]^-{f^{0}} &
    X^{1} \ar@{->>}[r]^-{e_{X}^{1}} \ar@{ >->}[d]^-{f^{1}} &
    Z^{2}X \ar@{ >.>}[d]^-{g^{2}} \\
    A^{0} \ar@{ >->}[r]^-{d_{A}^{0}} \ar@{->>}[d]^-{s^{0}} &
    A^{1} \ar@{->>}[r]^-{e_{A}^{1}} \ar@{->>}[d]^-{s^{1}} &
    Z^{2}A  \ar@{.>>}[d]^-{t^{2}} \\
    X^{0} \ar@{ >->}[r]^-{d_{X}^{0}} &
    X^{1} \ar@{->>}[r]^-{e_{X}^{1}}  &
    Z^{2}X 
  }
  \]
  are uniquely determined by requiring the resulting diagram to be
  commutative. Since $s^{0}f^{0} = 1_{X^{0}}$ and 
  $s^{1}f^{1} = 1_{X^{1}}$ it follows that $t^{2}g^{2} = 1_{Z^{2}X}$,
  so $t^{2}$ is an admissible epic and $g^{2}$ is an admissible monic by
  Proposition~\ref{prop:weakly-split-obscure-axiom}. 
  
  Now since $A$ and $X$ are complexes, there are unique maps
  $m^{2}_{X}: Z^{2}X \to X^{2}$ and $m^{2}_{A}: Z^{2}A \to A^{2}$
  such that $d^{1}_{X} = m^{2}_{X} e^{1}_{X}$ and
  $d^{1}_{A} = m^{2}_{A} e^{1}_{A}$. Note that $m^{2}_{A}$ is an
  admissible monic since $A$ is acyclic. The upper square in the diagram
  \[
  \xymatrix{
    Z^{2}X \ar[r]^-{m^{2}_{X}} \ar@{ >->}[d]^-{g^{2}} &
    X^{2} \ar@{ >->}[d]^{f^{2}} \\
    Z^{2}A \ar@{ >->}[r]^-{m^{2}_{A}} \ar@{->>}[d]^-{t^{2}} & 
    A^{2} \ar@{->>}[d]^-{s^{2}}  \\
    Z^{2}X \ar[r]^-{m^{2}_{X}} & X^{2}
  }
  \]
  is commutative because $e^{1}_{X}$ is epic and the lower square is
  commutative because $e^{1}_{A}$ is epic. From the commutativity of
  the upper square it follows in particular that $m^{2}_{X}$
  is an admissible monic by
  Proposition~\ref{prop:weakly-split-obscure-axiom}. An easy induction
  now shows that $X$ is acyclic.  The
  assertion about $\Ac^{-}{(\scrA)}$ follows by duality.
\end{proof}

\begin{Rem}
  The isomorphism of complexes in the proof that~(ii) implies~(iii)
  appears in Neeman~\cite[1.9]{MR1080854}.
\end{Rem}

\subsection{Quasi-Isomorphisms}
In abelian categories, quasi-isomorphisms are defined to be chain maps
inducing an isomorphism in homology. Taking the observation in
Exercise~\ref{exer:str-tr-long-ex-seq} and
Proposition~\ref{prop:ac-cxes-strictly-full-iff-karoubian} into
account, one arrives at the following generalization for general exact
categories:

\begin{Def}
  A chain map $f: A \to B$ is called a \emph{quasi-isomorphism} if its
  mapping cone is homotopy equivalent to an acyclic
  complex.
\end{Def}

\begin{Rem}
  Assume that $\scrA$ is idempotent complete. 
  By Proposition~\ref{prop:ac-cxes-strictly-full-iff-karoubian}, a
  chain map $f$ is a quasi-isomorphism if and only if $\cone{(f)}$ is
  acyclic. In particular, for abelian categories, a
  quasi-isomorphism is the same thing as a chain map inducing an
  isomorphism on homology.
\end{Rem}

\begin{Rem}
  If $p: A \to A$ is an idempotent in $\scrA$ which does not
  split, then the complex $C$ given by
  \[
  \cdots \xrightarrow{1-p} A \xrightarrow{p} A \xrightarrow{1-p} A
  \xrightarrow{p} \cdots
  \]
  is null-homotopic but 
  \emph{not} acyclic. However, $f:0 \to C$ is a chain homotopy equivalence,
  hence it should be a quasi-isomorphism, but $\cone{(f)} = C$ fails
  to be acyclic.
\end{Rem}

\subsection{The Definition of the Derived Category}

The \emph{derived category} of the exact category $\scrA$ 
is defined to be the \emph{Verdier quotient}
\[
\D{(\scrA)} = \K{(\scrA)}/ \Ac{(\scrA)}
\]
as described e.g. in Neeman~\cite[Chapter~2]{MR1812507} or 
Keller~\cite[\S\S{} 10, 11]{MR1421815}. For the description of derived
functors given in section~\ref{sec:total-der-fct} it is
useful to recall that the
Verdier quotient can be explicitly described by a calculus of
fractions. A morphism $A \to B$ in $\D{(\scrA)}$ can be represented by a
\emph{fraction} $(f,s)$
\[
A \xrightarrow{f} B' \xleftarrow{s} B
\]
where $f: A \to B$ is a morphism in $\K{(\scrA)}$ and $s: B \to B'$ is
a quasi-isomorphism in $\K{(\scrA)}$. Two fractions $(f,s)$ and
$(g,t)$ are equivalent if there exists a fraction $(h,u)$ and a
commutative diagram
\[
\xymatrix@R=0.6pc{
  & B' \ar[d] \\
  A \ar[ur]^{f} \ar[r]^{h} \ar[dr]_{g} & B''' & 
  B \ar[ul]_{s} \ar[l]_{u} \ar[dl]^{t} \\
  & B'' \ar[u]
}
\]
or, in words, if the fractions $(f,s)$ and $(g,t)$ have a common expansion
$(h,u)$. We refer to Keller~\cite[\S\S{} 9, 10]{MR1421815} for further details.

When dealing with the boundedness condition $\ast \in \{+,-,b\}$ we
define
\[
\D^{\ast}{(\scrA)} = \K^{\ast}{(\scrA)} / \Ac^{\ast}{(\scrA)}.
\]
It is not difficult to prove that
the canonical functor $\D^{\ast}{(\scrA)} \to \D{(\scrA)}$ is an
equivalence between $\D^{\ast}{(\scrA)}$ and the full subcategory of
$\D{(\scrA)}$ generated by the complexes satisfying the boundedness
condition $\ast$, see Keller~\cite[11.7]{MR1421815}.

\begin{Rem}
  If $\scrA$ is idempotent complete then a chain map becomes an isomorphism
  in $\D{(\scrA)}$ if and only if its cone is acyclic by
  Corollary~\ref{cor:idemp-compl=thick}. If $\scrA$ is
  weakly idempotent complete then a chain map in $\Ch^{\ast}{(\scrA)}$
  becomes an isomorphism in $\D^{\ast}{(\scrA)}$ if and only if its
  cone is acyclic by Proposition~\ref{prop:weakly-idemp-compl-equiv-thick}.
\end{Rem}

\subsection{Derived Categories of Fully Exact Subcategories}
\label{sec:der-cats-fully-exact-subcats}
The proof of the following lemma is straightforward and left to the
reader as an exercise. That admissible
monics and epics are closed under composition follows from the
Noether isomorphism~\ref{lem:c/b=(c/a)/(b/a)}.

\begin{Lem}
  \label{lem:fully-exact-subcats-are-exact}
  Let $\scrA$ be an exact category and suppose that $\scrB$ is a full
  additive subcategory of $\scrA$ which is closed under extensions in
  the sense that the existence of a short exact sequence $B' \mono A
  \epi B''$ with $B',B'' \in \scrB$ implies that $A$ is isomorphic to
  an object of $\scrB$. The sequences in $\scrB$ which are exact in
  $\scrA$ form an exact structure on $\scrB$. \qed
\end{Lem}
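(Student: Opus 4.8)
The plan is to set $\scrE_{\scrB}$ to be the class of sequences $B' \mono A \epi B''$ with all three terms in $\scrB$ which are short exact in $\scrA$, and then to check the six axioms one at a time, using fullness of $\scrB$ to transport universal properties across the inclusion and extension-closure to keep middle terms inside $\scrB$. Since $\scrE$ is closed under isomorphisms and $\scrB$ is full, I may harmlessly assume $\scrB$ is closed under isomorphisms in $\scrA$ (passing to its isomorphism-closure alters neither the hypotheses nor $\scrE_{\scrB}$). Three preliminary observations are routine: a kernel-cokernel pair in $\scrA$ all of whose terms lie in $\scrB$ is again a kernel-cokernel pair in $\scrB$, because fullness means the defining universal properties need only be tested against objects of $\scrB$, against which they already hold in $\scrA$; the class $\scrE_{\scrB}$ is patently closed under isomorphisms; and the identity sequences $A \xrightarrow{1_{A}} A \epi 0$ supply [E0] and [E0$^{\opp}$] since $0, A \in \scrB$.

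For the composition axioms [E1] and [E1$^{\opp}$] I would invoke the Noether isomorphism~\ref{lem:c/b=(c/a)/(b/a)} exactly as the hint suggests. Given admissible monics $i\colon A \mono B$ and $j\colon B \mono C$ in $\scrB$, their $\scrB$-admissibility means that the cokernels $B/A$ and $C/B$ (computed in $\scrA$) already lie in $\scrB$. In $\scrA$ the composite $ji$ is an admissible monic by [E1], and Lemma~\ref{lem:c/b=(c/a)/(b/a)} exhibits its cokernel $C/A$ as the middle term of a short exact sequence $B/A \mono C/A \epi C/B$ in $\scrA$. As the two outer terms lie in $\scrB$, extension-closure forces $C/A \in \scrB$, whence $(ji, C \epi C/A) \in \scrE_{\scrB}$ and $ji$ is $\scrB$-admissible. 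The epic case [E1$^{\opp}$] is dual.

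The push-out and pull-back axioms are the crux. For [E2], let $i\colon A \mono B$ be $\scrB$-admissible with cokernel $C = B/A \in \scrB$, and let $f\colon A \to A'$ be a morphism in $\scrB$. Forming the push-out in $\scrA$, axiom [E2] together with Proposition~\ref{prop:pushout-exact}~(iv) places it in a diagram with exact rows
\[
\xymatrix{
  A \ar@{ >->}[r]^{i} \ar[d]_{f} & B \ar[d] \ar@{->>}[r] & C \ar@{=}[d] \\
  A' \ar@{ >->}[r] & B' \ar@{->>}[r] & C
}
\]
whose bottom row is short exact in $\scrA$ with outer terms $A', C \in \scrB$. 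Extension-closure then puts $B'$ in $\scrB$, so the whole square now lives in $\scrB$, and its bottom row witnesses that $A' \mono B'$ is $\scrB$-admissible. Axiom [E2$^{\opp}$] is handled dually: using the dual of Proposition~\ref{prop:pushout-exact} one realizes the middle object of the pull-back as an extension of the kernel of the given admissible epic (which lies in $\scrB$ by $\scrB$-admissibility) by the base of the pull-back, and extension-closure again keeps it in $\scrB$.

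The only point requiring genuine care—and the step I expect to be the main obstacle—is the transfer of the universal property across $\scrB \hookrightarrow \scrA$: one must confirm that a push-out (respectively pull-back) formed in $\scrA$ whose objects happen to lie in $\scrB$ is still a push-out (respectively pull-back) in $\scrB$, so that the axiom's existence clause is met. This is precisely where fullness is indispensable, since the factorizations demanded by the universal property are tested only against $\scrB$-objects and are already provided, uniquely, by the ambient universal property in $\scrA$. Once this is granted, all six axioms have been verified and $(\scrB, \scrE_{\scrB})$ is exact.
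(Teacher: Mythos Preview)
Your proposal is correct and follows precisely the route the paper indicates: the paper leaves the lemma as an exercise, noting only that closure of admissible monics and epics under composition follows from the Noether isomorphism~\ref{lem:c/b=(c/a)/(b/a)}, which is exactly your argument for [E1] and [E1$^{\opp}$]. Your treatment of [E2] and [E2$^{\opp}$] via Proposition~\ref{prop:pushout-exact}~(iv) and extension-closure, together with the observation that fullness transfers the universal property of push-outs and pull-backs from $\scrA$ to $\scrB$, is the intended completion of the exercise.
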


\begin{Def}
  \label{def:fully-exact-subcat}
  A \emph{fully exact subcategory} $\scrB$ 
  of an exact category $\scrA$ is a
  full additive subcategory which is closed under extensions
  and equipped with the exact structure from the previous lemma.
\end{Def}

\begin{Thm}[{\cite[12.1]{MR1421815}}]
  \label{thm:der-cat-fully-exact-subcats}
  Let $\scrB$ be a fully exact subcategory of $\scrA$ and consider the
  functor $\D^{+}{(\scrB)} \to \D^{+}{(\scrA)}$ induced by the
  inclusion $\scrB \subset \scrA$.
  \begin{enumerate}[(i)]
    \item
      Assume that for every object $A \in \scrA$ there exists an admissible
      monic $A \mono B$ with $B \in \scrB$. For every left bounded
      complex $A$ over $\scrA$ there exists a left bounded complex $B$
      over $\scrB$ and a quasi-isomorphism $A \to B$. In particular
      $\D^{+}{(\scrB)} \to \D^{+}{(\scrA)}$ is essentially surjective.
      
    \item
      Assume that for every short exact sequence $B' \mono A \to A''$
      of $\scrA$ 
      with $B' \in \scrB$ there exists a commutative diagram
      with exact rows
      \[
      \xymatrix{
        B' \ar@{ >->}[r] \ar@{=}[d] & A \ar@{->>}[r] \ar[d] & A'' \ar[d]
        \\
        B' \ar@{ >->}[r] & B \ar@{->>}[r] & B''.
      }
      \]
      For every quasi-isomorphism $s:B \to
      A$ in $\K^{+}{(\scrA)}$  with $B$ a complex over $\scrB$ there
      exists a morphism $t:A \to B'$ in $\K^{+}{(\scrA)}$
      such that $ts:B \to B'$ is a quasi-isomorphism. In 
      particular, $\D^{+}{(\scrB)} \to \D^{+}{(\scrA)}$ is fully
      faithful.
  \end{enumerate}
\end{Thm}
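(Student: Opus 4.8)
\emph{Part (i).} The plan is to imitate the construction of an injective resolution, building a quasi-isomorphism $f\colon A\to B$ by forcing its mapping cone $M=\cone{(f)}$, with $M^{n}=A^{n+1}\oplus B^{n}$ and differential $\mat{-d_{A}^{n+1} & 0 \\ f^{n+1} & d_{B}^{n}}$, to be acyclic; an acyclic cone makes $f$ a quasi-isomorphism. After a shift assume $A^{n}=0$ for $n<0$. To start, choose an admissible monic $f^{0}\colon A^{0}\mono B^{0}$ with $B^{0}\in\scrB$; then $\mat{-d_{A}^{0} \\ f^{0}}$ is an admissible monic, being the composite of $\mat{-d_{A}^{0} \\ 1}$ (isomorphic to a summand inclusion, Lemma~\ref{lem:split-sequences-exact}) with $\mat{1 & 0 \\ 0 & f^{0}}$ (Proposition~\ref{prop:sum-exact}) by~[E1]. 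For the inductive step note that the top row $\mat{-d_{A}^{n+1} & 0}$ of the cone differential is \emph{forced} by $A$, so only the maps into $B^{n+1}$ are free: I would set $W^{n}=\Coker{(d_{M}^{n-1})}$, observe that the forced $A$-part descends to a morphism $\bar a\colon W^{n}\to A^{n+2}$ (using $d_A^{n+1}d_A^{n}=0$), embed $W^{n}\mono B^{n+1}$ into a $\scrB$-object by the hypothesis, and take the $B^{n+1}$-component of $d_{M}^{n}$ to be this monic precomposed with $M^{n}\epi W^{n}$. The induced map $W^{n}\to A^{n+2}\oplus B^{n+1}$ is then an admissible monic by the same splitting trick, so the syzygy sequence $Z^{n}M\mono M^{n}\epi Z^{n+1}M$ (with $Z^{n+1}M=W^{n}$) is short exact; reading off $f^{n+1}$ and $d_{B}^{n}$ from the chosen component and using $d_{M}^{n}d_{M}^{n-1}=0$ (automatic, since the component factors through $\Coker{(d_{M}^{n-1})}$) keeps $f$ a chain map. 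The ``in particular'' is then immediate: the quasi-isomorphism $A\to B$ exhibits $A$ as isomorphic in $\D^{+}{(\scrA)}$ to the image of $B\in\D^{+}{(\scrB)}$.

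\emph{Part (ii), reduction of the ``in particular''.} I would separate the categorical bookkeeping from the geometric content. Both essential surjectivity-type conclusions are formal consequences of the calculus of fractions recalled in Section~\ref{sec:ac-cxes-qis}: given $\scrB$-complexes $X,Y$ and a morphism of $\D^{+}{(\scrA)}$ represented by a fraction $X\xrightarrow{g}A\xleftarrow{s}Y$ with $s$ a quasi-isomorphism, the morphism $t\colon A\to B'$ produced by~(ii) rewrites it as the fraction $X\xrightarrow{tg}B'\xleftarrow{ts}Y$ lying entirely over $\scrB$; the same maneuver applied to an expansion witnessing that a $\scrB$-fraction vanishes in $\D^{+}{(\scrA)}$ shows it already vanishes over $\scrB$. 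This yields both fullness and faithfulness of $\D^{+}{(\scrB)}\to\D^{+}{(\scrA)}$, so everything reduces to producing $t$.

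\emph{Part (ii), construction of $t$.} First I would replace $A$ by the mapping cylinder of $s$, which is homotopy equivalent to $A$; this lets me assume $s\colon B\mono A$ is a degreewise split admissible monic, giving a short exact sequence of complexes $B\mono A\epi C$ with $C=\cone{(s)}\cong 0$ in $\D^{+}{(\scrA)}$ (since $s$ is a quasi-isomorphism). The heart of the matter is to construct a morphism of short exact sequences of complexes
\[
\xymatrix{
  B \ar@{ >->}[r]^{s} \ar@{=}[d] & A \ar@{->>}[r] \ar[d]^{t} & C \ar[d]^{u} \\
  B \ar@{ >->}[r]^{\iota} & B' \ar@{->>}[r] & C'
}
\]
with $B'$ a complex over $\scrB$ and $C'$ again acyclic: at each degree the hypothesis of~(ii), applied to $B^{n}\mono A^{n}\epi C^{n}$ with its $\scrB$-subobject $B^{n}$, supplies $B'^{n}\in\scrB$ together with the maps covering $1_{B^{n}}$, and one splices these so that the $d_{B'}$ and $t$ become genuine chain maps with $C'$ acyclic (the $3\times 3$-lemma~\ref{cor:3x3-lemma} controls the syzygies). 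Granting this, Lemma~\ref{lem:cxes-exact} turns both rows into distinguished triangles of $\D^{+}{(\scrA)}$; since $C,C'\cong 0$ there, the lower triangle shows $ts=\iota$ is a quasi-isomorphism, and comparing the two triangles (with $s$ a quasi-isomorphism) shows $t=\iota\circ s^{-1}$ is one as well. When $C$ and $C'$ are genuinely acyclic one may instead note directly that $\cone{(u)}$ is acyclic by Lemma~\ref{lem:cone-of-acyclics-is-acyclic}.

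\emph{The main obstacle.} The difficulty in~(ii) is precisely the inductive assembly: the hypothesis delivers the pushed-out sequences only \emph{degreewise}, with no compatibility with the differentials, so the real work is to organize the induction---pushing out at each stage not the naive $A^{n}$ but the relevant syzygy---so that the chosen $B'^{n}\in\scrB$ fit into a single complex and the $t^{n}$ into a chain map while $C'$ stays acyclic. This is the exact relative analogue of the absolute resolution carried out in~(i), with the embedding hypothesis of~(i) replaced throughout by the pushout hypothesis of~(ii); once it is in place, the triangulated formalism above finishes the proof.
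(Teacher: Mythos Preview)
The paper does not actually prove this theorem; it is quoted from Keller~\cite[12.1]{MR1421815}, and the only hint given is that part~(i) is the dual of the construction in Theorem~\ref{thm:bounded-above-qi-cx-proj} (the commented-out proof fragment says exactly this). Your plan for~(i) is precisely that dual: where Theorem~\ref{thm:bounded-above-qi-cx-proj} builds the resolution by iterated pull-backs and admissible epics from projectives, you build it by iterated cokernels of the partial cone and admissible monics into $\scrB$-objects. The bookkeeping you sketch---that the $A$-row of the cone differential is forced, that $W^{n}=\Coker(d_{M}^{n-1})$ embeds into $A^{n+2}\oplus B^{n+1}$ as an admissible monic via the ``splitting trick'', and that $f^{n+1}$, $d_{B}^{n}$ are then read off from the resulting factorization---matches the dual of that proof step for step, so~(i) is fine.

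For~(ii) your reduction via the calculus of fractions and the mapping-cylinder replacement are standard and correct. The place where you explicitly stop---``the main obstacle'' of organizing the degreewise applications of the hypothesis into a chain map with acyclic $C'$---is genuinely the whole content of~(ii), and your paragraph does not yet contain the idea that makes it work. The point is that one does \emph{not} apply the hypothesis to $B^{n}\mono A^{n}\epi C^{n}$ directly; rather, one runs the same cone-building induction as in~(i), but at each stage the object to be embedded is a push-out of a $\scrB$-object along an admissible monic coming from the previous syzygy, and it is exactly the hypothesis of~(ii) (not the stronger hypothesis of~(i)) that guarantees this push-out can be taken in $\scrB$. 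In other words, the hypothesis of~(ii) is tailored to survive the inductive step once you phrase it correctly; your sketch identifies that something like this must happen but does not yet say what replaces the naive degreewise application. Filling this in is the substance of Keller's argument.
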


\begin{Rem}
  The condition in~(ii) holds if condition~(i) holds and, moreover,
  for all short exact sequences
  $B' \mono B \epi A''$ with $B',B \in \scrB$ it follows that $A''$ is
  isomorphic to an object in $\scrB$.
  To see this, start with a short exact sequence $B' \mono A
  \epi A''$, then choose an admissible monic $A \mono B$, form the
  push-out $AA''BB''$ and apply Proposition~\ref{prop:pushout-exact}
  and Proposition~\ref{prop:pb-adm-monic-adm-monic}.
\end{Rem}

\begin{Exm}
  Let $\scrI$ be the full subcategory spanned by the \emph{injective}
  objects of the exact category $(\scrA,\scrE)$, see
  Definition~\ref{def:projective-injective}. Clearly, $\scrI$ is
  a fully exact subcategory of $\scrA$ (the induced exact structure
  consists of the split exact sequences) and it satisfies 
  condition~(ii) of Theorem~\ref{thm:der-cat-fully-exact-subcats}. If
  $\scrI$ satisfies condition~(i) then there are 
  \emph{enough injectives} in $(\scrA,\scrE)$, see
  Definition~\ref{def:enough-projectives}. A quasi-isomorphism of
  left bounded complexes of injectives is a chain homotopy
  equivalence, hence $\K^{+}{(\scrI)}$ is equivalent to
  $\D^{+}{(\scrI)}$. By Theorem~\ref{thm:der-cat-fully-exact-subcats}
  $\K^{+}{(\scrI)}$ is equivalent to the full subcategory of
  $\D{(\scrA)}$ spanned by the left bounded complexes with injective
  components. Moreover, if $(\scrA,\scrE)$ has enough injectives, then
  the functor $\K^{+}{(\scrI)} \to \D^{+}{(\scrA)}$ is an equivalence
  of triangulated categories.
\end{Exm}
\if{0}
\begin{proof}
  Point~(i) follows from the proof of
  Theorem~\ref{thm:bounded-above-qi-cx-proj} (note that projectivity
  is not used in the argument).

  To see 
\end{proof}
\fi
\subsection{Total Derived Functors}
\label{sec:total-der-fct}

With these constructions at hand one can now introduce (total) derived
functors in the sense of Grothendieck-Verdier and Deligne, see e.g.
Keller~\cite[\S\S {13-15}]{MR1421815} or any one of the references
given in~Remark~\ref{rem:htpy-cat-triangulated}. We follow Keller's
exposition of the Deligne approach.

The problem is the following: An additive functor $F: \scrA \to
\scrB$ from an exact category to another induces functors
$\Ch{(\scrA)} \to \Ch{(\scrB)}$ and $\K{(\scrA)} \to \K{(\scrB)}$ in
an obvious way. By abuse of notation we still denote these functors by
$F$. The next question to ask is whether the functor descends to a
functor of derived categories, i.e., we look for a commutative diagram
\[
\xymatrix{
  \K{(\scrA)} \ar[d]_{Q_{\scrA}} \ar[r]^-{F} & 
  \K{(\scrB)} \ar[d]^{Q_{\scrB}} \\
  \D{(\scrA)} \ar@{.>}[r]^-{\exists ?} & \D{(\scrB)}.
}
\]
If the functor $F: \scrA \to \scrB$
is exact, this problem has a solution by the
universal property of the derived category since then $F(\Ac{(\scrA)})
\subset \Ac{(\scrB)}$.

However, if $F$ fails to be exact, it will not send acyclic complexes to
acyclic complexes, or, in other words, it will not send
quasi-isomorphisms to quasi-isomorphisms and our na\"\i{}ve question
will have a negative answer. Deligne's solution consists in
constructing for each $A \in \D{(\scrA)}$ a functor
\[
\bfr\!{F}({-},A) : (\D{(\scrB)})^{\opp} \to \Ab.
\]
If the functor $\bfr\!{F}({-},A)$ is representable, a representing
object will be denoted by $\bfR\!{F}(A)$ and $\bfR\!{F}$ is said to be 
\emph{defined at $A$}. 
To be a little more specific, for $B \in \D{(\scrB)}$ we
define the abelian group $\bfr\!{F}(B,A)$ by the equivalence classes of diagrams
\[
\xymatrix{
  B \ar[r]^-{f} & F(A') & A' & A \ar[l]_-{s}
}
\]
where $f:B \to F(A')$ is a morphism of $\D{(\scrB)}$ and $s:A \to A'$ is a
quasi-isomorphism in $\K{(\scrA)}$. Observe the analogy to the
description of morphisms in $\D{(\scrA)}$; it is useful to think of
the diagram as ``$F$-fractions''. Accordingly, two $F$-fractions $(f,s)$ and
$(g,t)$ are said to be \emph{equivalent} if there exist commutative diagrams
\[
\xymatrix{
    & F(A') \ar[d]^-{F(v)} & A' \ar[d]_-{v}  \\
  B \ar[ur]^-{f} \ar[r]^-{h} \ar[dr]_-{g} & F(A''') & A''' &
  A \ar[ul]_-{s} \ar[l]_-{u} \ar[dl]^-{t} \\
  & F(A'') \ar[u]_-{F(w)} & A'' \ar[u]^-{w}
}
\]
in $\D{(\scrB)}$ and $\K{(\scrA)}$,
where $(h,u)$ is another $F$-fraction. On morphisms of $\D{(\scrB)}$
define $\bfr\!F{(-,A)}$ by pre-composition. By defining
$\bfr\!F$ on morphisms of $\D{(\scrA)}$ one obtains a \emph{functor} from
$\D{(\scrA)}$ to the category of functors
$(\D{(\scrB)})^{\opp} \to \Ab$. 

Let $\scrT \subset \D{(\scrA)}$
be the full subcategory of objects at which $\bfR\!{F}$ is defined and
choose for each $A \in \scrT$ a representing object $\bfR\!F(A)$ and
an isomorphism 
\[
\Hom_{\D{(\scrB)}}{({-}, \bfR\!F(A))} \xrightarrow{\sim}
\bfr\!F{({-},A)}.
\]
These choices force the definition of $\bfR\!F$ on morphisms and thus
$\bfR\!F: \scrT \to \D{(\scrB)}$ is a functor. Even
more is true:

\begin{Thm}[Deligne]
  Let $F: \K{(\scrA)} \to \K{(\scrB)}$ be a functor and let $\scrT$
  be the full subcategory of $\D{(\scrA)}$ at which $\bfR\!F$ is
  defined. Let $\scrS$ be the full subcategory of $\K{(\scrA)}$ spanned
  by the objects of $\scrT$. Denote by $I: \scrS \to \K{(\scrA)}$ the
  inclusion.
  \begin{enumerate}[(i)]
    \item
      The category $\scrT$ is a triangulated subcategory of
      $\D{(\scrA)}$ and $\scrS$ is a triangulated subcategory of
      $\K{(\scrA)}$.

    \item
      The functor $\bfR\!F: \scrT \to \D{(\scrB)}$ is a triangle
      functor and there is a morphism of triangle functors
      $Q_{\scrB} FI \Rightarrow \bfR\!F Q_{\scrA} I$.
      
    \item
      For every morphism $\mu: F \Rightarrow F'$ of triangle functors
      $\K{(\scrA)} \to \K{(\scrB)}$
      there is an induced morphism of triangle functors $\bfR\!\mu:
      \bfR\!F \Rightarrow \bfR\!F'$ on the intersection of the domains
      of $\bfR\!F$ and $\bfR\!F'$.
  \end{enumerate}
\end{Thm}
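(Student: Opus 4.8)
The plan is to regard the whole statement as formal once two inputs are in place: the expression of $\bfr F(-,A)$ as a \emph{filtered} colimit of representable functors, and the fact that $Q_\scrB F\colon\K{(\scrA)}\to\D{(\scrB)}$ is a triangle functor (which it is, $F$ being one). Write $h_{Y}=\Hom_{\D{(\scrB)}}(-,Y)$ for the presheaf represented by $Y$, and work in the abelian category $\scrP$ of additive functors $(\D{(\scrB)})^{\opp}\to\Ab$, into which $Y\mapsto h_{Y}$ embeds fully faithfully. Unwinding the definition of $F$-fractions gives
\[
\bfr F(-,A)\;\cong\;\varinjlim_{s\colon A\to A'}h_{Q_\scrB F(A')},
\]
the colimit running over quasi-isomorphisms out of $A$ in $\K{(\scrA)}$. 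Since $\Ac{(\scrA)}$ is a triangulated subcategory, the quasi-isomorphisms form a multiplicative system compatible with the triangulation and admitting a calculus of fractions, so this index category is filtered and the fraction-equivalence is exactly the colimit relation. Thus $A\mapsto\bfr F(-,A)$ is a functor $\D{(\scrA)}\to\scrP$, the subcategory $\scrT$ is the replete preimage of the representables, and on $\scrT$ we have a natural isomorphism $h_{\bfR F(A)}\cong\bfr F(-,A)$.

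The technical heart is to show that $\bfr F(-,-)$ carries distinguished triangles of $\D{(\scrA)}$ to long exact sequences in $\scrP$. First I would use the standard fact about Verdier localization that every distinguished triangle of $\D{(\scrA)}$ is the image of a strict triangle of $\K{(\scrA)}$, and that such triangles admit a cofinal family of simultaneous quasi-isomorphism resolutions, i.e. a cofinal system of morphisms of strict triangles $(A\to A',\,B\to B',\,C\to C')$ with quasi-isomorphism components. Applying the triangle functor $Q_\scrB F$ turns each resolved triangle into a distinguished triangle of $\D{(\scrB)}$, to which $\Hom_{\D{(\scrB)}}(B'',-)$ attaches a long exact sequence; since filtered colimits are exact in $\Ab$, the colimit over the resolution system yields the long exact sequence
\[
\cdots\to\bfr F(B'',A)\to\bfr F(B'',B)\to\bfr F(B'',C)\to\bfr F(B'',\Sigma A)\to\cdots,
\]
naturally in $B''$. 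Equivalently, $A\mapsto\bfr F(-,A)$ is a homological functor $\D{(\scrA)}\to\scrP$.

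Granting this, part~(i) and the triangulation statement in~(ii) become soft. Closure of $\scrT$ under $\Sigma^{\pm1}$ follows from $Q_\scrB F\Sigma\cong\Sigma Q_\scrB F$, which gives $\bfr F(-,\Sigma A)\cong h_{\Sigma\bfR F(A)}$ for $A\in\scrT$, hence $\bfR F(\Sigma A)\cong\Sigma\bfR F(A)$. I would then record the canonical transformation $Q_\scrB FI\Rightarrow\bfR FQ_\scrA I$: for $X\in\scrS$ the identity fraction $(1_{Q_\scrB F(X)},1_{X})$ is an element of $\bfr F(Q_\scrB F(X),X)=\Hom_{\D{(\scrB)}}(Q_\scrB F(X),\bfR F(Q_\scrA X))$, giving a map $\eta_{X}\colon Q_\scrB F(X)\to\bfR F(Q_\scrA X)$ natural in $X$. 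For the $2$-out-of-$3$ property, take a triangle $A\to B\to C\to\Sigma A$ with $A,B\in\scrT$, apply the already defined functor $\bfR F$ to $A\to B$, and complete it to a triangle $\bfR F(A)\to\bfR F(B)\to Z\to\Sigma\bfR F(A)$ in $\D{(\scrB)}$. Resolving the original triangle strictly, applying $F$, and using $\eta$ together with the triangle axiom produces a morphism $Q_\scrB F(C)\to Z$, hence via Yoneda a comparison map $h_{Z}\to\bfr F(-,C)$ compatible with the two long exact sequences; the five lemma in $\scrP$ forces it to be an isomorphism. Thus $C\in\scrT$ with $\bfR F(C)\cong Z$, and simultaneously $\bfR F$ sends the triangle to a triangle, completing~(ii). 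Finally $\scrS=Q_\scrA^{-1}(\scrT)$ is triangulated because $Q_\scrA$ is a triangle functor and $\scrT$ is a triangulated subcategory.

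Part~(iii) is purely formal: a morphism of triangle functors $\mu\colon F\Rightarrow F'$ sends a fraction $(f\colon B''\to Q_\scrB F(A'),\,s)$ to $(Q_\scrB\mu_{A'}\circ f,\,s)$, which respects the fraction-equivalence by naturality of $\mu$ and yields a natural transformation $\bfr F(-,A)\Rightarrow\bfr F'(-,A)$. On the intersection of the domains of $\bfR F$ and $\bfR F'$ this is a transformation of representables, so Yoneda produces $\bfR\mu\colon\bfR F\Rightarrow\bfR F'$, its compatibility with $\Sigma$ and triangles being inherited from that of $\mu$. The main obstacle is precisely the homological property of $\bfr F(-,-)$ in the second variable: one must make precise the cofinal, compatible resolution of a distinguished triangle of $\D{(\scrA)}$ by quasi-isomorphisms so that the filtered colimit of long exact sequences reproduces the sequence attached to the original triangle. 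Once this is in place, the representability and five-lemma manipulations are routine, and the unit and comparison transformations are constructed directly on fractions.
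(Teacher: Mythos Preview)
The paper does not give its own proof of this theorem: immediately after the statement it only remarks that ``the only subtle part of the previous theorem is the fact that $\scrT$ is triangulated'' and refers to Keller~[\S{}13] for the details. Your sketch is exactly the standard argument that this reference points to: express $\bfr F(-,A)$ as a filtered colimit of representables indexed by quasi-isomorphisms out of $A$, verify that $A\mapsto\bfr F(-,A)$ is homological by cofinally resolving distinguished triangles by strict triangles, and then run the five-lemma argument in the presheaf category to get the two-out-of-three closure of $\scrT$. You also correctly flag the genuine technical point---the cofinal simultaneous resolution of triangles by quasi-isomorphisms---as the place where real work is needed; this is precisely the Ore-condition manipulation one finds in Keller's account, and once it is done the rest is indeed formal. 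So your proposal is correct and aligns with what the paper defers to; in fact you supply substantially more detail than the paper itself.
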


The only subtle part of the previous theorem is the fact that $\scrT$
is triangulated. The rest is a straightforward but rather tedious
verification. The essential details and references are given in
Keller~\cite[\S{}13]{MR1421815}.

The next question that arises is whether one can get some information
on $\scrT$. A complex $A$ is said to be \emph{$F$-split} if $\bfR\!F$ is
defined at $A$ and the canonical morphism $F(A) \to \bfR\!F(A)$ is
invertible. An object $A$ of $\scrA$ is said to be \emph{$F$-acyclic} if it
is $F$-split when considered as complex concentrated in degree zero.

\begin{Lem}[{\cite[15.1, 15.3]{MR1421815}}]
  \label{lem:cat-of-f-acyclic-obj}
  Let $\scrC$ be a fully exact subcategory of $\scrA$ satisfying
  hypothesis~(ii) of Theorem~\ref{thm:der-cat-fully-exact-subcats}.
  Assume that the restriction of $F: \scrA \to
  \scrB$ to $\scrC$ is exact. Then each object of $\scrC$
  is $F$-acyclic.
  
  Conversely, let $\scrC$ be the full subcategory of $\scrA$
  consisting of the $F$-acyclic objects.
  Then $\scrC$ is a fully exact
  subcategory of $\scrA$, it satisfies condition~(ii) of
  Theorem~\ref{thm:der-cat-fully-exact-subcats} and the restriction of
  $F$ to $\scrC$ is exact.
\end{Lem}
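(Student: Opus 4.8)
The plan is to prove the two directions separately, in each case transporting short exact sequences to distinguished triangles and exploiting that an exact functor between exact categories preserves acyclicity of complexes (an acyclic complex over $\scrC$ has all of its cycle objects in $\scrC$, so the defining short exact sequences are carried by $F|_{\scrC}$ to short exact sequences in $\scrB$).

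For the first assertion, fix $C\in\scrC$ viewed as a complex concentrated in degree zero; I would show that $\bfr\!F({-},C)$ is represented by $F(C)$, so that $\bfR\!F$ is defined at $C$ and the canonical map $F(C)\to\bfR\!F(C)$ is invertible. The identity quasi-isomorphism $1_{C}$ provides a natural transformation $\Hom_{\D(\scrB)}({-},F(C))\to\bfr\!F({-},C)$, and three observations show it is bijective. First, the conclusion of hypothesis~(ii) of Theorem~\ref{thm:der-cat-fully-exact-subcats} applied to the quasi-isomorphisms out of $C$ shows that quasi-isomorphisms $C\to B'$ with $B'$ a left bounded complex over $\scrC$ are cofinal in the colimit defining $\bfr\!F({-},C)$. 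Second, since $\D^{+}(\scrC)\to\D^{+}(\scrA)$ is fully faithful (again by~(ii)) and a fully faithful functor reflects isomorphisms, any such $C\to B'$, being a quasi-isomorphism over $\scrA$, is already a quasi-isomorphism over $\scrC$. Third, as $F|_{\scrC}$ is exact it sends this quasi-isomorphism to an isomorphism $F(C)\xrightarrow{\sim}F(B')$ in $\D(\scrB)$. Hence the cofinal colimit is constant equal to $\Hom_{\D(\scrB)}({-},F(C))$, which yields representability and thus the $F$-acyclicity of $C$.

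For the converse write $\scrC$ for the full subcategory of $F$-acyclic objects. A short exact sequence $C'\mono A\epi C''$ determines a distinguished triangle $C'\to A\to C''\to\Sigma C'$ in $\D(\scrA)$, and since the domain $\scrT$ of $\bfR\!F$ is a triangulated subcategory it is closed under the resulting extensions and cones. Comparing the triangle $F(C')\to F(A)\to F(C'')\to$ with $\bfR\!F(C')\to\bfR\!F(A)\to\bfR\!F(C'')\to$ by means of the morphism of triangle functors $Q_{\scrB}F\Rightarrow\bfR\!F\,Q_{\scrA}$ and applying two-out-of-three, I would conclude that $\scrC$ is closed under extensions (take $C',C''\in\scrC$) and under cokernels of admissible monics between its objects (take $C',A\in\scrC$, so that $C''$ is the cone); together with the evident stability under finite direct sums this makes $\scrC$ a fully exact subcategory of $\scrA$. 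The same comparison, now with all three vertices $F$-acyclic, produces a distinguished triangle $F(C')\to F(C)\to F(C'')\to\Sigma F(C')$ whose first three terms sit in degree zero; such a triangle is exactly a short exact sequence $F(C')\mono F(C)\epi F(C'')$ in $\scrB$, so $F|_{\scrC}$ is exact.

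The remaining point---that $\scrC$ fulfils hypothesis~(ii)---is the main obstacle, and here I would follow Keller~\cite[15.3]{MR1421815}. Given $C'\mono A\epi A''$ with $C'\in\scrC$ one must find a morphism of short exact sequences, the identity on $C'$, into a sequence $C'\mono B\epi B''$ with $B,B''\in\scrC$; equivalently, one must realise the extension class of the given sequence as the pullback along some $A''\to B''$ of a class in $\Ext^{1}(B'',C')$ whose middle term is again $F$-acyclic. The strategy is to manufacture such $F$-acyclic objects from the very data witnessing the $F$-acyclicity of $C'$: the colimit description of $\bfr\!F({-},C')$ supplies a quasi-isomorphism $C'\to I$ with $F(I)$ computing $\bfR\!F(C')=F(C')$, and the robust closure properties of $\scrC$ established above allow one to extract from $I$ the required $F$-acyclic resolution of the extension, checking at each stage via the triangle argument that the objects produced remain $F$-acyclic. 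This resolution construction is the delicate heart of the lemma; once it is in place, hypothesis~(ii) holds and, through Theorem~\ref{thm:der-cat-fully-exact-subcats}, so does the full faithfulness of $\D^{+}(\scrC)\to\D^{+}(\scrA)$ that dovetails with the first half.
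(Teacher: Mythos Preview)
The paper does not give its own proof of this lemma; it merely records the statement with a reference to Keller. So there is no ``paper's proof'' to compare against, and your proposal has to stand on its own.

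Your first direction is sound: cofinality of the $\scrC$-valued quasi-isomorphisms under hypothesis~(ii), combined with exactness of $F|_{\scrC}$, makes the defining colimit constant and identifies $\bfR\!F(C)$ with $F(C)$.

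In the converse, however, your two-out-of-three argument has a real gap. You write that one compares ``the triangle $F(C')\to F(A)\to F(C'')\to$'' with the $\bfR\!F$-triangle, but $F(C')\to F(A)\to F(C'')$ is \emph{not} a distinguished triangle in $\D(\scrB)$ unless you already know that $F$ sends the short exact sequence to a short exact sequence --- which is precisely what you are trying to prove. The morphism of triangle functors $Q_{\scrB}FI\Rightarrow\bfR\!F\,Q_{\scrA}I$ lives on $\scrS\subset\K(\scrA)$, where the relevant distinguished triangle is $C'\to A\to\cone(C'\to A)\to\Sigma C'$, not $C'\to A\to C''\to\Sigma C'$. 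Applying the natural transformation therefore yields a morphism of triangles with third vertex $F(\cone(C'\to A))=\cone\bigl(F(C')\to F(A)\bigr)$, and the vertical map there factors through $F(C'')$ via the very comparison map whose invertibility is in question. So neither ``$C',C''\in\scrC\Rightarrow A\in\scrC$'' nor ``$C',A\in\scrC\Rightarrow C''\in\scrC$'' follows from two-out-of-three as stated; you need an additional argument (for instance, a careful analysis of the cohomology of $\bfR\!F(A)$ together with the identification $H^{0}\bfR\!F\cong F$ on suitable objects) to close the circle. Your treatment of condition~(ii) is likewise only a pointer to Keller rather than an argument; since this is, as you say, the delicate heart of the lemma, the proposal does not yet constitute a proof of the converse.
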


Now let $\scrC$ be a fully exact subcategory of $\scrA$ consisting of
$F$-acyclic objects and suppose that $\scrC$ satisfies conditions~(i)
and~(ii) of Theorem~\ref{thm:der-cat-fully-exact-subcats}. 
By these assumptions, the inclusion $\scrC \to \scrA$ induces an
equivalence $\D^{+}{(\scrC)} \to \D^{+}{(\scrA)}$. As the
restriction of $F$ to $\scrC$ is exact, it yields a triangle 
functor $F: \D^{+}{(\scrC)} \to  \D^{+}{(\scrB)}$. To choose a
quasi-inverse for the canonical functor $\D^{+}{(\scrC)} \to
\D^{+}{(\scrA)}$ amounts to choosing for each complex $A \in
\K^{+}{(\scrA)}$ a quasi-isomorphism $s:A \to C$ with $C \in
\K^{+}{(\scrC)}$ by~\cite[1.6]{MR907948}, a proof of which is given
in~\cite[6.7]{MR1102982}. As we have just seen, $C$ is $F$-split, hence $s$
yields an isomorphism $\bfR\!F(A) \to F(C) \cong \bfR\!F(C)$. Such a
quasi-isomorphism $A \to C$ exists by the construction in the proof of
Theorem~\ref{thm:bounded-above-qi-cx-proj} and our assumptions.

The admittedly concise \emph{r\'{e}sum\'{e}} given here provides the
basic toolkit for treating derived functors. We refer to
Keller~\cite[\S\S{}13--15]{MR1421815} for a much more thorough and
general discussion and precise statements of the composition formula
$\bfR\!F \circ \bfR\!G \cong \bfR\!(FG)$ and adjunction formul\ae{} of
left and right derived functors of adjoint pairs of functors,

\section{Projective and Injective Objects}
\label{sec:proj-inj-obj}

\begin{Def}
  \label{def:projective-injective}
  An object $P$ of an exact category $\scrA$ is called
  \emph{projective} if the represented functor
  $\Hom_{\scrA}{(P,{-})}: \scrA \to \Ab$ is exact. An object
  $I$ of an exact category is called \emph{injective} if the
  corepresented functor $\Hom_{\scrA}{({-},I)}: \scrA^{\opp} \to \Ab$
  is exact.
\end{Def}

\begin{Rem}
  The concepts of projectivity and injectivity are dual to each other
  in the sense that $P$ is projective in $\scrA$ if and only if $P$ is
  injective in $\scrA^{\opp}$. For our purposes it is therefore
  sufficient to deal with projective objects. 
\end{Rem}

\begin{Prop}
  \label{prop:projectivity}
  An object $P$ of an exact category is projective if and only if any
  one of the following conditions holds:
  \begin{enumerate}[(i)]
    \item
      For all admissible epics $A \epi A''$ and all morphisms
      $P \to A''$ there exists a solution to the lifting problem
      \[
      \xymatrix{
        & P \ar@{.>}[dl]_{\exists} \ar[dr] \\
        A \ar@{->>}[rr] & & A''
      }
      \]
      making the diagram above commutative.
      
    \item
      The functor $\Hom_{\scrA}{(P,{-})}: \scrA \to \Ab$ sends
      admissible epics to surjections.
      
    \item
      Every admissible epic $A \epi P$ splits (has a right inverse).
  \end{enumerate}
\end{Prop}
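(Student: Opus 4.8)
The plan is to exploit the fact that a representable functor is automatically left exact, so that projectivity collapses to a single surjectivity condition and the three listed conditions become near-tautological reformulations. First I would record the purely formal observation that for \emph{any} object $P$ and any short exact sequence $A' \xrightarrow{i} A \xrightarrow{p} A''$, applying $\Hom_{\scrA}(P,-)$ yields an exact sequence of abelian groups $\Hom_{\scrA}(P,A') \xrightarrow{i_{*}} \Hom_{\scrA}(P,A) \xrightarrow{p_{*}} \Hom_{\scrA}(P,A'')$: injectivity of $i_{*}$ is just that $i$ is monic, and exactness in the middle is a verbatim transcription of the universal property of $i$ as a kernel of $p$. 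Hence $\Hom_{\scrA}(P,-)$ is exact---that is, $P$ is projective---precisely when $p_{*}$ is surjective for every admissible epic $p$ (every admissible epic sits in a short exact sequence, and conversely the right-hand map of any short exact sequence is an admissible epic). This is exactly condition~(ii), and since surjectivity of $p_{*}$ asserts that every $f \colon P \to A''$ factors as $f = pg$ for some $g \colon P \to A$, it is a literal restatement of the lifting property~(i). Thus the chain projective $\Leftrightarrow$ (i) $\Leftrightarrow$ (ii) costs nothing beyond the left-exactness remark.

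Next I would close the loop through~(iii). The implication (ii)~$\Rightarrow$~(iii) is immediate: for an admissible epic $p \colon A \epi P$, surjectivity of $p_{*}\colon \Hom_{\scrA}(P,A) \to \Hom_{\scrA}(P,P)$ applied to $1_{P}$ produces an $s$ with $ps = 1_{P}$, i.e.\ a splitting. For (iii)~$\Rightarrow$~(i) I would invoke the pull-back axiom. Given an admissible epic $p \colon A \epi A''$ and a morphism $f \colon P \to A''$, form the pull-back of $p$ along $f$,
\[
\xymatrix{
  B \ar[r]^{g} \ar@{->>}[d]_{q} \ar@{}[dr]|{\text{PB}} & A \ar@{->>}[d]^{p} \\
  P \ar[r]^{f} & A'',
}
\]
so that $q$ is an admissible epic by [E2$^{\opp}$]. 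By~(iii) there is a section $s \colon P \to B$ with $qs = 1_{P}$, and then $gs \colon P \to A$ lifts $f$, since $p\,gs = f\,qs = f$. This yields (iii)~$\Rightarrow$~(i) and completes the cycle projective $\Leftrightarrow$ (i) $\Leftrightarrow$ (ii) $\Rightarrow$ (iii) $\Rightarrow$ (i).

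I do not expect any serious obstacle; the argument is essentially formal. The only genuine input beyond the universal property of kernels is the pull-back axiom~[E2$^{\opp}$] used in the step (iii)~$\Rightarrow$~(i), which is what lets one manufacture an admissible epic \emph{onto} $P$ out of an arbitrary lifting problem. The conceptual point worth isolating is that left-exactness of $\Hom_{\scrA}(P,-)$ holds for \emph{every} object, so the entire content of projectivity resides in right-exactness, i.e.\ in surjectivity on admissible epics.
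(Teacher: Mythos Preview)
Your proof is correct and follows essentially the same route as the paper: both first note that $\Hom_{\scrA}(P,-)$ is automatically left exact so that projectivity reduces to condition~(ii) (equivalently~(i)), obtain (ii)~$\Rightarrow$~(iii) by lifting $1_{P}$, and prove (iii)~$\Rightarrow$~(i) by pulling back the given admissible epic along $f$ and splitting the resulting admissible epic onto $P$.
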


\begin{proof}
  Since $\Hom_{\scrA}{(P,{-})}$ transforms exact sequences to left
  exact sequences in $\Ab$ for all objects $P$ 
  (see the proof of Corollary~\ref{cor:represented-functor-sheaf}), 
  it is clear that
  conditions~(i) and~(ii) are equivalent to the projectivity of $P$. If
  $P$ is projective and $A \epi P$ is an admissible epic then 
  $\Hom_{\scrA}{(P,A)} \epi \Hom_{\scrA}{(P,P)}$ is surjective, and
  every pre-image of $1_{P}$ is a splitting map of
  $A \epi P$. Conversely, let us prove that condition~(iii) implies
  condition~(i): given a lifting problem as in~(i), form the following
  pull-back diagram
  \[
  \xymatrix{
    D \ar[d]_{f'} \ar@{->>}[r]^{a'} \ar@{}[dr]|{\text{PB}}
    & P \ar[d]^{f} \\
    A \ar@{->>}[r]^{a} & A''.
  }
  \]
  By hypothesis, there exists a right inverse $b'$ of $a'$
  and $f'b'$ solves the lifting problem because 
  $a f' b' = f a' b' = f$.
\end{proof}

\begin{Cor}
  If $P$ is projective and $P \to A$ has a right inverse then $A$ is
  projective. 
\end{Cor}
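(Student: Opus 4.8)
The plan is to use the characterization of projectivity from Proposition~\ref{prop:projectivity}, exploiting the fact that $A$ is a \emph{retract} of the projective object $P$. By hypothesis there is a morphism $r\colon P \to A$ together with a right inverse $s\colon A \to P$, so that $rs = 1_{A}$. The slogan is simply that a retract of a projective object is again projective, and the verification is a one-line lifting argument.

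First I would invoke condition~(i) of Proposition~\ref{prop:projectivity}: to prove that $A$ is projective it suffices to solve every lifting problem against an admissible epic. So fix an admissible epic $e\colon X \epi X''$ and an arbitrary morphism $\varphi\colon A \to X''$. Precomposing with the retraction gives a morphism $\varphi r\colon P \to X''$. Since $P$ is projective, condition~(i) applied to $P$ furnishes a morphism $\psi\colon P \to X$ with $e\psi = \varphi r$. Now set $\chi = \psi s\colon A \to X$; then $e\chi = e\psi s = \varphi r s = \varphi$, so $\chi$ solves the original lifting problem. This establishes condition~(i) for $A$, hence $A$ is projective.

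There is no genuine obstacle here---the content is entirely in having the section $s$ available to transport a lift from $P$ back to $A$. Alternatively, one could argue directly from condition~(iii): given an admissible epic $e\colon X \epi A$, lift the retraction $r\colon P \to A$ through $e$ (using that $\Hom_{\scrA}{(P,{-})}$ sends $e$ to a surjection, condition~(ii) for $P$) to obtain $\rho\colon P \to X$ with $e\rho = r$, whence $\rho s\colon A \to X$ is a section of $e$ because $e\rho s = rs = 1_{A}$. Either route is immediate; the statement is really just the observation that the defining lifting property of projectives passes to retracts.
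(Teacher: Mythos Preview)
Your proof is correct and follows exactly the approach the paper intends: the paper merely states that this is a trivial consequence of condition~(i) in Proposition~\ref{prop:projectivity}, and your lifting argument via the section $s$ spells this out precisely.
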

\begin{proof}
  This is a trivial consequence of condition~(i) in
  Proposition~\ref{prop:projectivity}.
\end{proof}

\begin{Rem}
  If $\scrA$ is weakly idempotent complete, the above corollary
  amounts to the familiar ``direct summands of projective objects are
  projective'' in abelian categories.
\end{Rem}

\begin{Cor}
  A sum $P = P' \oplus P''$ is projective if and only if
  both $P'$ and $P''$ are projective. \qed
\end{Cor}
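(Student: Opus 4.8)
The plan is to deduce everything from condition~(ii) of Proposition~\ref{prop:projectivity}, which characterizes projectivity of an object $Q$ by the requirement that the represented functor $\Hom_{\scrA}{(Q,{-})}$ send every admissible epic to a surjection of abelian groups. The key structural input is the elementary fact, valid in any additive category, that the biproduct $P = P' \oplus P''$ yields a natural isomorphism
\[
\Hom_{\scrA}{(P' \oplus P'', A)} \;\cong\; \Hom_{\scrA}{(P', A)} \oplus \Hom_{\scrA}{(P'', A)}
\]
induced by the two inclusions $P', P'' \to P$, and that this isomorphism is natural in the variable $A$.

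Carrying this out, I would fix an arbitrary admissible epic $A \epi A''$ and consider the induced homomorphism $\Hom_{\scrA}{(P,A)} \to \Hom_{\scrA}{(P,A'')}$. By naturality of the displayed isomorphism, this map is identified with the direct sum of the two homomorphisms $\Hom_{\scrA}{(P',A)} \to \Hom_{\scrA}{(P',A'')}$ and $\Hom_{\scrA}{(P'',A)} \to \Hom_{\scrA}{(P'',A'')}$. Now a homomorphism of abelian groups which is a direct sum $\varphi' \oplus \varphi''$ is surjective if and only if each of $\varphi'$ and $\varphi''$ is surjective. Applying this observation to every admissible epic and invoking Proposition~\ref{prop:projectivity}~(ii) in both readings gives the equivalence: $P' \oplus P''$ is projective precisely when both $P'$ and $P''$ are.

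There is essentially no obstacle here, since the statement is a formal consequence of additivity; the only thing to verify is the compatibility of the $\Hom$-decomposition with the induced maps, which is immediate from naturality. For completeness I would remark that the forward implication can alternatively be obtained from the preceding corollary: each projection $P \epi P'$ admits the corresponding inclusion as a right inverse, so $P'$ (and symmetrically $P''$) is projective as soon as $P$ is. The backward implication can likewise be seen directly through the lifting criterion~(i): given $P \to A''$ and an admissible epic $A \epi A''$, one lifts the two restrictions $P', P'' \to A''$ separately and recombines the lifts using the universal property of the coproduct $P = P' \oplus P''$. I would keep the functorial argument as the main proof, as it dispatches both directions simultaneously and without appeal to the lifting construction.
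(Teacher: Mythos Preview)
Your argument is correct and is precisely the kind of reasoning the paper leaves implicit (the statement is marked \qed\ with no proof given). The functorial argument via the natural isomorphism $\Hom_{\scrA}(P'\oplus P'',-)\cong\Hom_{\scrA}(P',-)\oplus\Hom_{\scrA}(P'',-)$ together with Proposition~\ref{prop:projectivity}~(ii) is the cleanest way to handle both directions at once, and your alternative remarks on the forward direction via the preceding corollary and the backward direction via the lifting criterion are also valid.
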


More generally:

\begin{Cor}
  Let $\{P_{i}\}_{i \in I}$ be a family of objects for which the
  coproduct $P = \coprod_{i \in I} P_{i}$ exists in $\scrA$. 
  The object $P$ is projective if and only if each $P_{i}$ is
  projective. \qed
\end{Cor}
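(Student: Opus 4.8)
The statement to prove is: for a family $\{P_{i}\}_{i \in I}$ whose coproduct $P = \coprod_{i \in I} P_{i}$ exists, $P$ is projective if and only if each $P_{i}$ is projective. I would use characterization~(i) from Proposition~\ref{prop:projectivity}, namely the lifting property against admissible epics, as this is the condition that interacts most cleanly with the universal property of the coproduct.

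For the forward direction, suppose $P$ is projective. Each structural morphism $\iota_{i}: P_{i} \to P$ admits a retraction, since the coproduct inclusion into a direct sum has a canonical left inverse (the projection $P \to P_{i}$ that kills the other summands). Thus each $P_{i}$ is a retract of the projective object $P$, and by the Corollary immediately preceding this one---``if $P$ is projective and $P \to A$ has a right inverse then $A$ is projective''---each $P_{i}$ is projective. I would need to be slightly careful here: that earlier corollary is phrased for a morphism $P \to A$ with a right inverse, exhibiting $A$ as a retract of $P$; here $P_i$ is the retract, so I apply it with the roles arranged as retraction $P \to P_i$ and section $P_i \to P$. This is purely formal.

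For the converse, suppose each $P_{i}$ is projective and let $a: A \epi A''$ be an admissible epic together with a morphism $f: P \to A''$. Precomposing with the inclusions gives morphisms $f \iota_{i}: P_{i} \to A''$, and by projectivity of each $P_{i}$ (condition~(i)) there is a lift $g_{i}: P_{i} \to A$ with $a g_{i} = f \iota_{i}$. The universal property of the coproduct then assembles the family $\{g_{i}\}$ into a unique morphism $g: P \to A$ with $g \iota_{i} = g_{i}$ for all $i$. It remains to check $a g = f$: both $ag$ and $f$ are morphisms out of the coproduct $P$, and they agree after precomposition with every $\iota_{i}$, since $(ag)\iota_{i} = a g_{i} = f \iota_{i}$; by the uniqueness clause of the coproduct's universal property, $ag = f$. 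Hence $P$ satisfies condition~(i) and is projective.

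The only conceptual point worth flagging---the mild ``obstacle''---is that the argument uses the full universal property of the coproduct (existence \emph{and} uniqueness of the induced map) rather than any finiteness, so it genuinely generalizes the finite biproduct case handled in the previous corollary; no additive or exactness subtleties enter beyond what Proposition~\ref{prop:projectivity}~(i) already supplies. I expect the entire proof to be short enough that it can be marked \qed without further comment, exactly as the finite case was.
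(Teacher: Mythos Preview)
Your proof is correct and is exactly the argument the paper leaves implicit (the statement carries a bare \qed\ with no proof). The only point you might make explicit is that for an arbitrary coproduct the projection $\pi_{j}: P \to P_{j}$ is obtained from the universal property applied to the family $(\delta_{ij} 1_{P_{j}})_{i}$, so each $\iota_{j}$ really does have a left inverse; this is where the existence of zero morphisms is used, and it removes any worry about the infinite case.
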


\begin{Rem}
  The dual of the previous result is that a product (if it
  exists) is injective if and only if each of its factors is injective.
\end{Rem}

\begin{Def}
  \label{def:enough-projectives}
  An exact category $\scrA$ is said to have \emph{enough projectives}
  if for every object $A \in \scrA$ there exists a projective object
  $P$ and an admissible epic $P \epi A$.
\end{Def}

\begin{Exer}[{Heller~\cite[5.6]{MR0100622}}]
  Assume that $\scrA$ has enough projectives. Prove that $A' \to A \to
  A''$ is short exact if and only if
  \[
  \Hom_{\scrA}{(P,A')} \mono \Hom_{\scrA}{(P,A)} \epi
  \Hom_{\scrA}{(P,A'')}
  \]
  is short exact for all projective objects $P$.

  \emph{Hint:} For sufficiency prove first that $A' \to A$ is a
  monomorphism, then prove that it is a kernel of $A \to A''$ and finally
  apply the obscure axiom~\ref{prop:obscure-axiom}. 
  In all three steps use that there are enough projectives.
\end{Exer}
 
\begin{Exer}[{Heller~\cite[5.6]{MR0100622}}]
  Assume that $\scrA$ is weakly idempotent complete and has enough
  projectives. Prove that the sequence
  \[
  A_{n} \to A_{n-1} \to \cdots \to A_{1} \to A_{0} \to 0
  \]
  is an exact sequence of admissible morphisms if and only if for all
  projectives $P$ the sequence
  \[
  \Hom_{\scrA}{(P, A_{n})} \to \Hom_{\scrA}{(P,A_{n-1})} \to \cdots 
  \to \Hom_{\scrA}{(P,A_{1})} \to \Hom_{\scrA}{(P,A_{0})} \to 0
  \]
  is an exact sequence of abelian groups.
\end{Exer}

\section{Resolutions and Classical Derived Functors}
\label{sec:resolutions}

\begin{Def}
  A \emph{projective resolution} of the object $A$ is a positive
  complex $P_{\bullet}$ with projective components together with a morphism
  $P_{0} \to A$ such that the \emph{augmented complex}
  \[
  \cdots \to P_{n+1} \to P_{n} \to \cdots \to P_{1} \to P_{0}
  \to A
  \]
  is exact.
\end{Def}

\begin{Prop}[Resolution Lemma]
  \label{prop:resolution-lemma}
  If $\scrA$ has enough projectives then every object
  $A \in \scrA$ has a projective resolution.
\end{Prop}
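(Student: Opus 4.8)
The statement is that in an exact category $\scrA$ with enough projectives, every object $A$ admits a projective resolution. The plan is to build the augmented complex one step at a time, using the hypothesis ``enough projectives'' at each stage to cover an appropriate object by a projective, and splicing the resulting admissible epics together into a single long exact sequence of admissible morphisms (in the sense of Definition~\ref{def:ex-seq-adm-mor}). The essential input is that every admissible epic has a kernel which is again an admissible monic, so each covering produces a short exact sequence whose kernel can be covered at the next step.

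\textbf{The inductive construction.}
First I would invoke ``enough projectives'' (Definition~\ref{def:enough-projectives}) to obtain a projective $P_{0}$ and an admissible epic $p_{0}\colon P_{0}\epi A$. Since $p_{0}$ is an admissible epic, it has a kernel, say an admissible monic $K_{0}\mono P_{0}$, so that $K_{0}\mono P_{0}\epi A$ is short exact. Now I would iterate: suppose inductively that we have constructed projectives $P_{0},\dots,P_{n}$ together with an exact augmented complex ending in the short exact sequence $K_{n}\mono P_{n}\epi K_{n-1}$, where $K_{n}$ is the kernel of the map $P_{n}\to K_{n-1}$. Applying ``enough projectives'' to $K_{n}$ yields a projective $P_{n+1}$ and an admissible epic $q_{n+1}\colon P_{n+1}\epi K_{n}$. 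Composing $q_{n+1}$ with the admissible monic $K_{n}\mono P_{n}$ produces the differential $d_{n+1}\colon P_{n+1}\to P_{n}$, which is an admissible morphism with image $K_{n}$. Its kernel $K_{n+1}\mono P_{n+1}$ (the kernel of the admissible epic $q_{n+1}$) continues the induction. This gives a positive complex $P_{\bullet}$ with projective components and a map $P_{0}\to A$.

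\textbf{Verifying exactness and identifying the obstacle.}
It remains to check that the augmented complex is exact in the sense required, i.e.\ that each differential $d_{n+1}$ is admissible and that consecutive arrows splice to give short exact sequences. By construction $d_{n+1}$ factors as $P_{n+1}\epi K_{n}\mono P_{n}$, so it is admissible with $K_{n}$ simultaneously the image of $d_{n+1}$ and the kernel of $d_{n}$ (respectively the kernel of $p_{0}$ when $n=0$). This is exactly the condition that $Z^{n}=K_{n}$ makes each $K_{n+1}\mono P_{n+1}\epi K_{n}$ short exact, which is the definition of acyclicity of the augmented complex. The one point requiring care is that at every stage the relevant map really is an admissible epic with an admissible-monic kernel; this is guaranteed because kernels of admissible epics are admissible monics (the defining kernel-cokernel pair), so no extra hypothesis beyond the existence of enough projectives is needed. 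I do not expect any genuine obstacle here: the argument is a direct transcription of the classical module-theoretic construction, with ``surjection'' replaced by ``admissible epic'' and the automatic exactness of kernels in abelian categories replaced by the axiom that admissible epics come with admissible-monic kernels. The only mild subtlety is bookkeeping---ensuring the splicing is done so that the resulting single complex has the correct differentials $d_{n+1}=(\text{incl})\circ q_{n+1}$ and that $d_{n}d_{n+1}=0$, which is immediate since $d_{n+1}$ lands in $K_{n}=\ker d_{n}$.
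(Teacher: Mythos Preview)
Your proof is correct and follows essentially the same approach as the paper's: both proceed by induction, at each stage covering the kernel of the previous admissible epic by a projective and splicing the resulting short exact sequences together. The only difference is notational (the paper writes $A_{n}$ for your $K_{n}$), and your write-up is slightly more explicit about why the splicing yields an exact augmented complex in the sense of Definition~\ref{def:ex-seq-adm-mor}.
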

\begin{proof}
  This is an easy induction.
  Because $\scrA$ has enough projectives, there exists a projective
  object $P_0$ and an admissible epic $P_0 \epi A$ with $P_{0}$.
  Choose an admissible monic $A_0 \mono P_0$ such that
  $A_0 \mono P_0 \epi A$ is exact. Now choose a projective
  $P_1$ and an admissible epic $P_1 \epi A_0$. Continue with
  an admissible monic $A_1 \mono P_1$ such that $A_1 \mono P_1 \epi A_0$
  is exact, and so on. One thus obtains a sequence
  \[
  \xymatrix@R=0.5pc{
    & & A_1 \ar@{ >->}[dr] \\
    \cdots & P_2 \ar@{->>}[ur] \ar[rr] & & 
    P_1 \ar@{->>}[dr] \ar[rr] & & 
    P_0 \ar@{->>}[r] & A \\
    & & & & A_0 \ar@{ >->}[ur]
  }
  \]
  which is exact by construction, so $P_{\bullet} \to A$ is 
  a projective resolution.
\end{proof}

\begin{Rem}
  \label{rem:enough-p-obj-give-res}
  The defining concept of projectivity is not used in the previous
  proof. That is, we have proved:
  If $\scrP$ is a class in $\scrA$ such that for each object
  $A \in \scrA$ there is an admissible epic $P \epi A$ with $P \in
  \scrP$ then each
  object of $\scrA$ has a $\scrP$-resolution $P_{\bullet} \epi A$.
\end{Rem}

Consider a morphism $f: A \to B$ in $\scrA$.
Let $P_{\bullet}$ be a complex of projectives with $P_n = 0$ for 
$n < 0$ and let $\alpha: P_0 \to A$ be a morphism such that the composition
$P_1 \to P_0 \to A$ is zero
[e.g. $P_{\bullet} \to A$ is a projective resolution of $A$].
Let $Q_{\bullet} \xrightarrow{\beta} B$ be a resolution (not
necessarily projective).

\begin{Thm}[Comparison Theorem]
  \label{thm:comparison-theorem}
  Under the above hypotheses there exists a chain map
  $f_\bullet: P_\bullet \to Q_\bullet$
  such that the following diagram commutes:
  \[
  \xymatrix{
    \cdots \ar[r] & 
    P_2 \ar[r] \ar[d]_{\exists f_2} &
    P_1 \ar[r] \ar[d]_{\exists f_1} &
    P_0 \ar[r]^\alpha \ar[d]_{\exists f_0} &
    A \ar[d]_{f} \\
    \cdots \ar[r] &
    Q_2 \ar[r] &
    Q_1 \ar[r] &
    Q_0 \ar@{->>}[r]^\beta &
    B.
  }
  \]
  Moreover, the \emph{lift} $f_\bullet$ of $f$ is unique up to
  homotopy equivalence.
\end{Thm}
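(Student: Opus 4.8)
The plan is to prove existence of the lift $f_\bullet$ by induction on degree, using projectivity of the $P_n$ together with the exactness of the target resolution $Q_\bullet \to B$, and then to prove uniqueness up to homotopy by applying the same lifting idea to the difference of two lifts.

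Let me think about this carefully. We have a complex of projectives $P_\bullet$ (with $P_n = 0$ for $n<0$) and a map $\alpha: P_0 \to A$ with $P_1 \to P_0 \to A$ zero. On the other side we have a genuine resolution $Q_\bullet \xrightarrow{\beta} B$, meaning the augmented complex $\cdots \to Q_1 \to Q_0 \xrightarrow{\beta} B$ is exact (acyclic in the sense of admissible morphisms). And $f: A \to B$.

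**Existence.** The base case: I need $f_0: P_0 \to Q_0$ with $\beta f_0 = f\alpha$. Since $\beta: Q_0 \to B$ is an admissible epic and $P_0$ is projective, the lifting property (Proposition on projectivity, condition (i)) applied to the map $f\alpha: P_0 \to B$ gives $f_0$ with $\beta f_0 = f\alpha$.

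Inductive step: suppose $f_0, \dots, f_{n}$ are constructed with the square relations $d_{n} f_{n} = f_{n-1} d_{n}$ (where for $n=0$ the "boundary below" is $\beta$ and we read the relation as $\beta f_0 = f\alpha$, appropriately). I want $f_{n+1}: P_{n+1} \to Q_{n+1}$ with $d_{n+1} f_{n+1} = f_n d_{n+1}$. The key is: the map $f_n d_{n+1}: P_{n+1} \to Q_n$ lands in the image of $d_{n+1}: Q_{n+1} \to Q_n$. Let me verify. Since $Q_\bullet \to B$ is exact, $d_{n+1}: Q_{n+1} \to Q_n$ is an admissible morphism whose image equals $\ker$ of the next differential (or of $\beta$ at the bottom). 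So I need to check $f_n d_{n+1}$ composed with the "next" map downstairs is zero, so that it factors through the image $Z_n Q \mono Q_n$, and then use that $Q_{n+1} \epi Z_n Q$ is an admissible epic to lift via projectivity of $P_{n+1}$.

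Let me be more careful about the "next map downstairs." The resolution is exact, so each differential factors $Q_{n+1} \epi Z_{n+1}Q \mono Q_n$ (I'm using the image objects). I need: the composite $P_{n+1} \xrightarrow{f_n d_{n+1}} Q_n$ factors through the admissible monic $Z_{n+1}Q \mono Q_n$ (the image of $d_{n+1}$). To see this factorization, I apply the universal property of that kernel: I must show $f_n d_{n+1}$ is killed by the relevant cokernel/next differential. Concretely, precompose downstairs: the map $Q_n \to Q_{n-1}$ (or $\beta$ when $n=0$) annihilates $f_n d_{n+1}$, because $d_n f_n d_{n+1} = f_{n-1} d_n d_{n+1} = 0$ using the induction hypothesis and $d_n d_{n+1}=0$ in $P_\bullet$. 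Hence $f_n d_{n+1}$ factors through the image object $Z_{n+1}Q$, and since $Q_{n+1} \epi Z_{n+1}Q$ is an admissible epic with $P_{n+1}$ projective, I lift to get $f_{n+1}$.

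**Uniqueness up to homotopy.** Given two lifts $f_\bullet, g_\bullet$ of $f$, set $\phi_\bullet = f_\bullet - g_\bullet$; this is a chain map lifting the zero map $0: A \to B$ (that is, $\beta \phi_0 = 0$). I want to construct a contracting homotopy $h_n: P_n \to Q_{n+1}$ with $\phi_n = d_{n+1} h_n + h_{n-1} d_n$. Again by induction: at the bottom, $\beta \phi_0 = 0$ means $\phi_0: P_0 \to Q_0$ factors through the image $Z_1 Q = \ker\beta \mono Q_0$; projectivity of $P_0$ lifts along $Q_1 \epi Z_1 Q$ to give $h_0$ with $d_1 h_0 = \phi_0$. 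Inductively, assuming $h_0,\dots,h_{n-1}$ built, I consider $\phi_n - h_{n-1} d_n: P_n \to Q_n$; a short computation using the chain-map relation for $\phi$ and the induction hypothesis shows this composite is killed by $d_n$ downstairs, hence factors through the image $Z_{n+1}Q \mono Q_n$, and projectivity of $P_n$ lifts it along $Q_{n+1}\epi Z_{n+1}Q$ to produce $h_n$.

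**The main obstacle**, and the one genuinely exact-categorical point (as opposed to the formal module-theoretic bookkeeping), is the factorization step: verifying that $f_n d_{n+1}$ (resp. $\phi_n - h_{n-1}d_n$) factors through the admissible monic image of the relevant differential of $Q_\bullet$. This requires the admissibility of the differentials of the resolution $Q_\bullet$ and the analysis of admissible morphisms (Remark on the analysis, and Definition of exact sequences of admissible morphisms), so that the image $Z_{n+1}Q \mono Q_n$ really is a kernel through which a map annihilated by $d_n$ must factor, and so that $Q_{n+1} \epi Z_{n+1}Q$ is an admissible epic along which projectives lift. I would organize these two inductions in parallel, remarking that they are formally identical once the factorization lemma is in hand.
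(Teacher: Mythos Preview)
Your proposal is correct and follows essentially the same approach as the paper: both prove existence and uniqueness by induction, using the admissible epic--monic factorizations of the differentials of $Q_\bullet$ (your $Z_{n+1}Q$ is the paper's $Q'_{n+1}$) together with projectivity of the $P_n$. The one point you leave slightly implicit---that ``$d_n$ kills $f_n d_{n+1}$'' suffices to factor through the image $Z_{n+1}Q \mono Q_n$ because $Z_nQ \mono Q_{n-1}$ is monic and $Z_{n+1}Q$ is a kernel of $Q_n \epi Z_nQ$---is exactly what the paper spells out, but you correctly flag it as the main obstacle.
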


\begin{proof}
  It is convenient to put $P_{-1} = A$,
  $Q_{0}' = Q_{-1} = B$ and $f_{-1} = f$. 
  
  \emph{Existence:}
  The question of existence of $f_0$ is the lifting problem given by the map
  $f\alpha: P_0 \to B$ and the admissible epic $\beta: Q_0 \epi B$. This
  problem has a solution by projectivity of $P_0$.
  
  Let $n \geq 0$ and suppose by induction that there are morphisms
  $f_n: P_n \to Q_n$ and $f_{n-1}: P_{n-1} \to Q_{n-1}$ such that
  $d f_{n} = f_{n-1} d$.  Consider the following diagram:
  \[
  \xymatrix@R=0.8pc{
    P_{n+1} \ar@{.>}[dd]_{\exists f_{n+1}}
    \ar@{.>}[dr]^{\exists! f_{n+1}'} \ar[rr]^{d} & &
    P_{n} \ar[rr]^{d} \ar[dd]^{f_{n}} & &
    P_{n-1} \ar[dd]^{f_{n-1}} \\
    & {Q_{n+1}'} \ar@{ >->}[dr] \\
    Q_{n+1} \ar@{->>}[ur] \ar[rr]^{d} & &
    Q_{n} \ar@{->>}[dr] \ar[rr]^{d} & &
    Q_{n-1} \\
    & & & {Q_{n}'} \ar@{ >->}[ur]
  }
  \]
  By induction the right hand square is commutative, so the
  morphism $P_{n+1} \to Q_{n-1}$ is zero because the morphism
  $P_{n+1} \to P_{n-1}$ is zero.  The morphism $P_{n+1} \to Q_{n}'$ is
  zero as well because $Q_{n}' \mono Q_{n-1}$ is monic. Since
  $Q_{n+1}' \mono Q_{n} \epi Q_{n}'$ is exact, there exists a
  unique morphism $f_{n+1}': P_{n+1} \to Q_{n+1}'$ making the upper
  right triangle in the left hand square commute. Because $P_{n+1}$ 
  is projective and $Q_{n+1} \epi Q_{n+1}'$ is an admissible epic, there 
  is a morphism $f_{n+1}: P_{n+1} \to Q_{n+1}$ such that the left hand
  square commutes. This settles the existence of $f_{\bullet}$.
  
  \emph{Uniqueness:}
  Let $g_{\bullet}: P_{\bullet} \to Q_{\bullet}$ be another
  lift of $f$ and put $h_\bullet = f_\bullet - g_\bullet$. We will
  construct by induction a chain contraction $s_n: P_{n-1} \to Q_n$
  for $h$.
  For $n \leq 0$ we put $s_n = 0$. For $n \geq 0$ assume by induction
  that there are morphisms $s_{n-1}, s_n$ such that
  $h_{n-1} = d s_n + s_{n-1} d$. Because of this assumption
  and the fact that $h$ is a chain map, we have
  $d(h_{n} - s_{n} d) = h_{n-1} d - (h_{n-1} - s_{n-1}d)d = 0$
  so the following diagram commutes
  \[
  \xymatrix@R=0.8pc{
    & & P_n \ar[dd]^{h_n - s_{n}d} 
    \ar@{.>}[dl]_{\exists! s_{n+1}'} \ar@/_2pc/@{.>}[ddll]_{\exists s_{n+1}}
    \ar[ddrr]^0\\
    & {Q_{n+1}'} \ar@{ >->}[dr] \\
    Q_{n+1} \ar@{->>}[ur] \ar[rr]^d & &
    Q_{n} \ar[rr]^{d} \ar@{->>}[dr] & & Q_{n-1} \\
    & & & {Q_{n}'} \ar@{ >->}[ur]
  }
  \]
  and we get a morphism $s_{n+1}: P_{n} \to Q_{n+1}$ such that
  $ds_{n+1} = h_{n} - s_{n}d$ as in the existence proof.
\end{proof}


\begin{Cor}
  \label{cor:proj-res-htpy-equiv}
  Any two projective resolutions of
  an object $A$ are chain homotopy equivalent. \qed
\end{Cor}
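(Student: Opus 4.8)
The plan is to derive this directly from the Comparison Theorem~\ref{thm:comparison-theorem}, exploiting the fact that a projective resolution plays \emph{both} roles required by that theorem: its components are projective (so it may serve as the source $P_\bullet$ from which lifts are constructed), and it is a resolution (so it may serve as the target $Q_\bullet$, where no projectivity is needed). Concretely, let $P_\bullet \to A$ and $P'_\bullet \to A$ be two projective resolutions of $A$.

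First I would apply the Comparison Theorem to the identity $1_A \colon A \to A$ in each of the two directions. Taking $P_\bullet$ as source and $P'_\bullet$ as target resolution yields a chain map $\varphi \colon P_\bullet \to P'_\bullet$ lifting $1_A$; exchanging the roles yields a chain map $\psi \colon P'_\bullet \to P_\bullet$ lifting $1_A$. I claim that $\varphi$ and $\psi$ are mutually inverse up to homotopy. Indeed, the composite $\psi\varphi \colon P_\bullet \to P_\bullet$ is a lift of $1_A \circ 1_A = 1_A$, and so is the identity chain map $1_{P_\bullet}$. Applying the uniqueness clause of the Comparison Theorem with $P_\bullet$ in \emph{both} roles (source and target resolution) and with $f = 1_A$, I conclude $\psi\varphi \simeq 1_{P_\bullet}$. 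By the symmetric argument $\varphi\psi \simeq 1_{P'_\bullet}$, so $\varphi$ is a chain homotopy equivalence with homotopy inverse $\psi$, which is precisely the assertion.

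The only point that needs care — and the place where one could slip — is the bookkeeping in the uniqueness step: one must check that $P_\bullet$ genuinely satisfies the hypotheses of the Comparison Theorem when used as the \emph{target} resolution $Q_\bullet$, namely that its augmentation $P_0 \to A$ is an admissible epic with exact augmented complex, which holds by the definition of a projective resolution. No projectivity of the target is invoked, so there is no circularity. Everything else is a formal consequence of the existence-and-uniqueness statement already proved, so I do not expect any genuine obstacle beyond this verification.
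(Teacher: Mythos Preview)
Your argument is correct and is precisely the standard deduction the paper intends: the corollary is marked with \qed\ and left as an immediate consequence of the Comparison Theorem~\ref{thm:comparison-theorem}, via exactly the two lifts of $1_A$ and the uniqueness-up-to-homotopy clause that you spell out.
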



\begin{Cor}
  \label{cor:no-mor-proj-cxes-to-ac-cxes}
  Let $P_{\bullet}$ be a right bounded complex of projectives and let
  $A_{\bullet}$ be an acyclic complex. Then
  $\Hom_{\K{(\scrA)}}{(P_{\bullet},A_{\bullet})} = 0$. \qed
\end{Cor}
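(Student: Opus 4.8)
The plan is to show that every chain map $f_{\bullet}\colon P_{\bullet}\to A_{\bullet}$ is chain homotopic to zero, so that its class vanishes in $\K{(\scrA)}$ and the whole Hom-group is zero. Since $P_{\bullet}$ is right bounded I may, after applying a power of the translation functor $\Sigma$ (an automorphism of $\Ch{(\scrA)}$ that preserves homotopy classes and acyclicity), assume $P_{n}=0$ for $n<0$. Under this normalization the assertion is precisely the \emph{uniqueness} clause of the Comparison Theorem~\ref{thm:comparison-theorem}, applied with the zero morphism in the role of $f$.

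To bring it into that form I would truncate $A_{\bullet}$ into a genuine resolution. Writing the acyclic differentials as $A_{n}\epi Z_{n-1}\mono A_{n-1}$ with short exact columns $Z_{n}\mono A_{n}\epi Z_{n-1}$, the positive complex $Q_{\bullet}$ with $Q_{n}=A_{n}$ for $n\geq 0$, augmented by the admissible epic $A_{0}\epi Z_{-1}$, is a resolution of $B=Z_{-1}$: exactness at $A_{0}$ says exactly that the image of $A_{1}\to A_{0}$ is the kernel of $A_{0}\epi Z_{-1}$, and exactness in higher degrees is acyclicity. The restriction of $f_{\bullet}$ to nonnegative degrees is then a lift of the zero morphism $0\to Z_{-1}$, since $d^{A}_{0}f_{0}=f_{-1}d^{P}_{0}=0$ because $P_{-1}=0$, and cancelling the monic $Z_{-1}\mono A_{-1}$ shows that $f_{0}$ composed with the augmentation vanishes. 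As the zero chain map is another such lift, Theorem~\ref{thm:comparison-theorem} yields a homotopy $s_{n}\colon P_{n}\to A_{n+1}$ with $f_{n}=d^{A}_{n+1}s_{n}+s_{n-1}d^{P}_{n}$ in degrees $\geq 0$; extending by $s_{n}=0$ where $P_{n}=0$ gives $f_{\bullet}\simeq 0$ as maps $P_{\bullet}\to A_{\bullet}$.

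Equivalently—and this is perhaps cleaner to record—one avoids the truncation and simply reruns the dimension-shifting induction of the \emph{Uniqueness} half of the proof of Theorem~\ref{thm:comparison-theorem} with $h_{\bullet}=f_{\bullet}$. Having built $s_{n-1}$, the morphism $f_{n}-s_{n-1}d^{P}_{n}$ is killed by $d^{A}_{n}$ (using the chain-map identity $d^{A}_{n}f_{n}=f_{n-1}d^{P}_{n}$, the inductive relation for $f_{n-1}$, and $d^{P}d^{P}=0$), hence factors through $Z_{n}=\ker(A_{n}\epi Z_{n-1})\mono A_{n}$; since $P_{n}$ is projective (Proposition~\ref{prop:projectivity}), this factorization lifts along the admissible epic $A_{n+1}\epi Z_{n}$ supplied by acyclicity to the required $s_{n}$.

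The one point needing care—and the real content—is getting the induction started, and this is exactly where right boundedness enters: without $P_{n}=0$ for $n\ll 0$ there is no base case, and indeed the statement fails, as a two-sided unbounded complex of projectives (e.g. the periodic acyclic complex $\cdots\xrightarrow{x}R\xrightarrow{x}R\xrightarrow{x}\cdots$ over $R=k[x]/x^{2}$) need not be null-homotopic. Everything else is the per-degree lifting against the admissible epics $A_{n+1}\epi Z_{n}$, which is routine once projectivity is available.
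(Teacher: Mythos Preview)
Your proposal is correct and is exactly the approach the paper intends: the corollary is stated with \qed\ immediately after the Comparison Theorem~\ref{thm:comparison-theorem}, so the paper's ``proof'' is simply that this is the uniqueness clause of that theorem applied to the zero morphism, which is precisely what you carry out (with the appropriate truncation of $A_{\bullet}$ to produce a resolution of $Z_{-1}$). Your alternative direct induction is just the uniqueness argument of Theorem~\ref{thm:comparison-theorem} unwrapped, and your remark on the necessity of right boundedness is a welcome addition.
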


In order to deal with derived functors on the level of the derived
category, one needs to sharpen both the resolution lemma and the
comparison theorem.

\begin{Thm}[{\cite[4.1, Lemma, b)]{MR1052551}}]
  \label{thm:bounded-above-qi-cx-proj}
  Let $\scrA$ be an exact category with enough projectives. For every
  right bounded complex $A$ over $\scrA$ exists a right bounded
  complex with projective components $P$ and
  a quasi-isomorphism $P \xrightarrow{\alpha} A$.
\end{Thm}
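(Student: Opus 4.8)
The statement is invariant under the translation functor $\Sigma$, so I would begin by shifting and assuming $A^n = 0$ for $n > 0$. The plan is then to produce a complex $P$ with projective components, also concentrated in degrees $\le 0$, together with a chain map $\alpha \colon P \to A$ whose mapping cone $\cone(\alpha)$ is genuinely \emph{acyclic}. Since an acyclic complex is homotopy equivalent to itself, such an $\alpha$ is automatically a quasi-isomorphism, which is what we want.

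Recall that $\cone(\alpha)^n = P^{n+1} \oplus A^n$ with differential $d_\alpha^n = \mat{-d_P^{n+1} & 0 \\ \alpha^{n+1} & d_A^n}$. Writing $C^n = \cone(\alpha)^n$, I would construct the objects $P^m$, the differentials $d_P^m$ and the components $\alpha^m$ by descending induction on $m$, maintaining as inductive hypothesis that $C$ has been made acyclic in all degrees $> m-1$ and that we are given the corresponding cycle object, an admissible monic $Z^m \mono C^m$ which is a kernel of $d_\alpha^m$. The base case $m = 0$ is trivial: $C^0 = A^0$ (as $P^1 = 0$) and $Z^0 = A^0$, while $C^n = 0$ for $n > 0$.

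For the inductive step I would exploit that $A$ is a complex: $d_\alpha^m \mat{0 \\ d_A^{m-1}} = \mat{0 \\ d_A^m d_A^{m-1}} = 0$, so $\mat{0 \\ d_A^{m-1}} \colon A^{m-1} \to C^m$ factors through the kernel $Z^m \mono C^m$ as a map $\beta \colon A^{m-1} \to Z^m$. As $\scrA$ has enough projectives (Definition~\ref{def:enough-projectives}), I choose an admissible epic $q \colon P^m \epi Z^m$ with $P^m$ projective. Writing the inclusion $Z^m \mono C^m = P^{m+1} \oplus A^m$ in components as $\mat{\zeta \\ \eta}$, I \emph{define} $d_P^m = -\zeta q$ and $\alpha^m = \eta q$. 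A direct check then shows that the inclusion of $Z^m$ composed with $\mat{q & \beta} \colon P^m \oplus A^{m-1} \to Z^m$ reproduces exactly the required cone differential $d_\alpha^{m-1} = \mat{-d_P^m & 0 \\ \alpha^m & d_A^{m-1}}$, the off-diagonal relations $\zeta\beta = 0$ and $\eta\beta = d_A^{m-1}$ being precisely the defining property of $\beta$. Setting $Z^{m-1} = \Ker{\mat{q & \beta}}$ advances the induction.

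The crux --- and the only place any care is needed --- is to verify that the ``mixed'' map $\mat{q & \beta} \colon P^m \oplus A^{m-1} \epi Z^m$ is an admissible epic, for only then is $d_\alpha^{m-1}$ an admissible epic onto $Z^m$ followed by the admissible monic $Z^m \mono C^m$, i.e.\ only then is $C$ acyclic in the new degree. It is tempting to argue that $\mat{q & \beta}$ is a retraction and invoke that retractions split, but that needs weak idempotent completeness, which is \emph{not} hypothesised. Instead I would factor $\mat{q & \beta} = \mat{1 & \beta}\mat{q & 0 \\ 0 & 1}$: the right-hand factor is a direct sum of the admissible epics $q$ and $1_{A^{m-1}}$ (Proposition~\ref{prop:sum-exact}), and $\mat{1 & \beta} = \mat{1 & 0}\mat{1 & \beta \\ 0 & 1}$ is the split projection $\mat{1 & 0}$ (admissible by Lemma~\ref{lem:split-sequences-exact}) composed with an isomorphism, so admissibility of the composite follows from [E1$^{\opp}$]. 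Finally, two bookkeeping facts come for free: $d_\alpha^m d_\alpha^{m-1} = d_\alpha^m (Z^m \mono C^m)\,\mat{q & \beta} = 0$ because $Z^m \mono C^m$ is a kernel of $d_\alpha^m$, and unpacking this single identity delivers both $d_P^{m+1} d_P^m = 0$ and the chain-map relation $\alpha^{m+1} d_P^m = d_A^m \alpha^m$ --- so $P$ really is a complex and $\alpha$ really is a chain map, with no separate verification required.
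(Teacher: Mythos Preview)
Your proof is correct and is essentially the same construction as the paper's, just packaged ``cone-first'' rather than ``pull-back-first'': your cycle objects $Z^m$ coincide with the paper's $B_{-m}$ (the pull-back of $P_n \epi B_n$ along $A_{n+1} \to B_n$), and your direct factorization $\mat{q & \beta} = \mat{1 & 0}\mat{1 & \beta \\ 0 & 1}\mat{q & 0 \\ 0 & 1}$ is precisely the argument underlying the dual of Proposition~\ref{prop:pushout-exact}, which the paper invokes to obtain the short exact sequences $B_{n+1} \mono P_n \oplus A_{n+1} \epi B_n$. The paper defers the verification that $\cone(\alpha)$ is acyclic to the end, whereas you build acyclicity into the inductive hypothesis; both viewpoints yield the same objects and the same short exact sequences.
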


\begin{proof}
  Renumbering if necessary, we may suppose $A_n = 0$ for $n < 0$. The
  complex $P$ will be constructed by induction. For the inductive formulation
  it is convenient to define $P_{n} = B_{n} = 0$ for $n < 0$. Put $B_0 = A_0$,
  choose an admissible epic $p_{0}': P_{0} \epi B_0$ from a projective
  $P_0$ and define
  $p_{0}'' = d^{A}_0$. Let $B_1$ be the pull-back over $p_{0}'$
  and $p_{0}''$. Consider the following commutative diagram:
  \[
  \xymatrix@!{
    & & & & P_{0} \ar@{->>}[dr]_{p_{0}'} & & 0 \ar@{=}[dr] \\
    & & &
    B_{1} \ar@{->>}[dr]^{i_{0}''} \ar[ur]_{i_{0}'} \ar@{}[rr]|{\text{PB}} & & 
    B_{0} \ar@{=}[dr] \ar[ur] \ar@{}[rr]|{\text{PB}} & &
    0 \\
    A_{3} \ar[rr]_{d^{A}_2} \ar@/^+2pc/[uurrrr]^{0} & &
    A_{2} \ar[rr]_{d^{A}_1} \ar@/^+1.5pc/[uurr]^{0} 
    \ar@{.>}[ur]_{\exists!p_{1}''} & & 
    A_{1} \ar[rr]_{d^{A}_0} \ar[ur]^{p_{0}''}& & 
    A_{0} \ar[ur]
  }
  \]
  The morphism $p_{1}''$ exists by the universal property of the pull-back
  and moreover $p_{1}''d^{A}_2 = 0$ because $d^{A}_1 d^{A}_2 = 0$.

  Suppose by induction that in the following diagram everything is constructed
  except $B_{n+1}$ and the morphisms terminating or issuing from there.
  Assume further that $P_n$ is projective and that $p_{n}''d^{A}_{n+1} = 0$.
  \[
  \xymatrix@1{
    & & & & P_{n} \ar@{->>}[dr]_{p_{n}'} & & P_{n-1} \ar@{->>}[dr]_{p_{n-1}'}
    \\
    & & &
    B_{n+1} \ar@{->>}[dr]^{i_{n}''} \ar[ur]_{i_{n}'} 
    \ar@{}[rr]|{\text{PB}} & & 
    B_{n} \ar@{->>}[dr]^{i_{n-1}''} \ar[ur]_{i_{n-1}'}
    \ar@{}[rr]|{\text{PB}} & &
    B_{n-1} \\
    A_{n+3} \ar[rr]_{d^{A}_{n+2}} & & 
    A_{n+2} \ar[rr]_{d^{A}_{n+1}} 
    \ar@{.>}[ur]_{\exists!p_{n+1}''} & & 
    A_{n+1} \ar[rr]_{d^{A}_{n}} \ar[ur]^{p_{n}''}& & 
    A_{n} \ar[ur]^{p_{n-1}''}
  }
  \]
  As indicated in the diagram, we obtain $B_{n+1}$ by forming the pull-back
  over $p_{n}'$ and $p_{n}''$. We complete the induction by choosing
  an admissible epic $p_{n+1}' : P_{n+1} \epi B_{n+1}$ from a projective
  $P_{n+1}$,
  constructing $p_{n+1}''$ as in the first paragraph and finally noticing that
  $p_{n+1}''d^{A}_{n+2} = 0$.

  The projective complex is given by the $P_n$'s and the
  differential $d^{P}_{n-1} = i_{n-1}' p_{n}'$, which satisfies
  $(d^{P})^2 = 0$ by construction.

  Let $\alpha$ be
  given by $\alpha_{n} = i_{n-1}''p_{n}'$ in degree $n$, manifestly
  a chain map. We claim that $\alpha$ is a quasi-isomorphism.
  The mapping cone of $\alpha$ is seen to be exact using
  Proposition~\ref{prop:pushout-exact}:
  For each $n$ there is an exact sequence
  \[
  B_{n+1} \xrightarrow{i_n = \mat{ -i_{n}' \\ i_{n}''}}  P_n \oplus A_{n+1}
  \xrightarrow{p_{n} = \mat{p_{n}' & p_{n}''}} B_n.
  \]
  We thus obtain an exact complex 
  $C$ with $C_{n} =  P_{n} \oplus A_{n+1}$ in degree~$n$ and differential
  \[
  d^{C}_{n-1} = i_{n-1}p_{n} =
  \mat{
    - i_{n-1}' p_{n}' & - i_{n-1}' p_{n}'' \\
    i_{n-1}'' p_{n}' & i_{n-1}'' p_{n}''
  } =
  \mat{ 
    - d^{P}_{n-1} & 0 \\
    \alpha_{n} & d^{A}_{n}
  }
  \]
  which shows that $C = \cone{(\alpha)}$.
\end{proof}

\begin{Thm}[Horseshoe Lemma]
  \label{thm:horseshoe-lemma}
  A horseshoe can be filled in:
  Suppose we are given a horseshoe diagram
  \[
  \xymatrix{
    \cdots \ar[r] & P_{2}' \ar[r] & P_{1}' \ar[r] & P_{0}'
    \ar@{->>}[r] & A' \ar@{ >->}[d] \\
    & & & & A \ar@{->>}[d] \\
    \cdots \ar[r] & P_{2}'' \ar[r] & P_{1}'' \ar[r] & P_{0}''
    \ar@{->>}[r] & A'',
  } 
  \]
  that is to say, the column is short exact and the horizontal rows
  are projective resolutions of $A'$ and $A''$. Then the direct sums
  $P_{n} = P_{n}' \oplus P_{n}''$ assemble to a projective resolution
  of $A$ in such a way that the horseshoe can be embedded into a
  commutative diagram with exact rows and columns
  \[
  \xymatrix{
    \cdots \ar[r]  & P_{2}' \ar[r] \ar@{ >->}[d] &
    P_{1}' \ar[r] \ar@{ >->}[d] & P_{0}' \ar@{ >->}[d]
    \ar@{->>}[r] & A' \ar@{ >->}[d] \\
    \cdots \ar[r] & P_{2} \ar[r] \ar@{->>}[d] & P_{1} \ar[r] \ar@{->>}[d]
    & P_{0} \ar@{->>}[r] \ar@{->>}[d] & A \ar@{->>}[d] \\
    \cdots \ar[r] & P_{2}'' \ar[r] & P_{1}'' \ar[r] & P_{0}''
    \ar@{->>}[r] & A''.
  }
  \]
\end{Thm}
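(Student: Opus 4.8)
The plan is to construct $P_\bullet$ together with the whole diagram one degree at a time, by iterating a single \emph{horseshoe step}, which I isolate first. Suppose we are given a short exact sequence $C' \mono C \epi C''$, with admissible monic $i : C' \mono C$ and admissible epic $p : C \epi C''$, together with admissible epics $\pi' : P' \epi C'$ and $\pi'' : P'' \epi C''$ out of projectives; write $K' \mono P' \epi C'$ and $K'' \mono P'' \epi C''$ for the two induced short exact sequences of syzygies. The step will produce an admissible epic $\phi : P' \oplus P'' \epi C$ sitting in a morphism of short exact sequences whose outer columns are the two given syzygy sequences and whose middle column is the split sequence, together with a short exact sequence $K' \mono K \epi K''$ of the three kernels. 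Starting from $(C',C,C'') = (A',A,A'')$ and $(P',P'') = (P_0',P_0'')$ and feeding the resulting kernel sequence (whose outer terms are the next syzygies $A_1', A_1''$, already equipped with their projective covers $P_1' \epi A_1'$ and $P_1'' \epi A_1''$) back into the step, the induction runs and manufactures all the data.

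For the step itself, projectivity of $P''$ and Proposition~\ref{prop:projectivity} applied to the admissible epic $p$ yield a lift $\sigma : P'' \to C$ with $p\sigma = \pi''$. Set $\phi = \mat{i\pi' & \sigma}$. Using $pi = 0$ and $p\sigma = \pi''$ one checks that the diagram
\[
\xymatrix{
  P' \ar@{ >->}[r]^-{\mat{1\\0}} \ar[d]_{\pi'} &
  P' \oplus P'' \ar[d]^{\phi} \ar@{->>}[r]^-{\mat{0&1}} &
  P'' \ar[d]^{\pi''} \\
  C' \ar@{ >->}[r]^-{i} & C \ar@{->>}[r]^-{p} & C''
}
\]
commutes, its top row being split short exact. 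Since $\pi'$ and $\pi''$ are admissible epics, the Five Lemma~\ref{cor:five-lemma} in its admissible-epic version shows that $\phi$ is an admissible epic; let $K \mono P' \oplus P''$ be its kernel. The universal property of kernels produces canonical maps $K' \to K$ and $K \to K''$ (the relevant composites into $C$ and into $C''$ vanish), and with these the three short exact columns $K' \mono P' \epi C'$, $K \mono P'\oplus P'' \epi C$ and $K'' \mono P'' \epi C''$, together with the two short exact rows (split middle, given bottom), satisfy the hypotheses of the $3 \times 3$-lemma~\ref{cor:3x3-lemma}, case~(i). Its conclusion is exactly that $K' \mono K \epi K''$ is short exact, completing the step.

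It remains to assemble the global object. Put $P_n = P_n' \oplus P_n''$ and let $d_n^P : P_n \to P_{n-1}$ be the composite of the admissible epic $\phi_n : P_n \epi A_n$ from the $n$-th step with the kernel inclusion $A_n \mono P_{n-1}$ produced by the $(n-1)$-st step, while $P_0 \epi A$ is the augmentation $\phi_0$. By construction each $A_{n+1} \mono P_n \epi A_n$ is short exact, so the augmented complex is exact and $P_\bullet \to A$ is a resolution; its components are projective because direct sums of projectives are projective. Finally the split inclusions $\mat{1\\0}$ and projections $\mat{0&1}$ furnish the desired columns: the compatibility of each step's $3 \times 3$ diagram with the splitting $P_n = P_n' \oplus P_n''$ forces the differential to take the block form $\mat{d^{P'} & \ast \\ 0 & d^{P''}}$, which is precisely the assertion that $\mat{1\\0} : P_\bullet' \to P_\bullet$ and $\mat{0&1} : P_\bullet \to P_\bullet''$ are chain maps. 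Thus the original horseshoe is embedded in a commutative diagram with exact rows and columns.

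All the conceptual content lives in the single step, where projectivity supplies the lift $\sigma$ and the Five Lemma together with the $3 \times 3$-lemma handle the exactness. The main obstacle is therefore not any individual exactness statement but the \emph{coherence} across degrees: one must check that the maps produced at successive stages (the kernel inclusions $A_{n+1} \mono P_n$ and the epics $\phi_n$) are compatible with the splittings, so that the differential is genuinely block upper triangular and the columns are honest chain maps. This is routine, since every newly induced map factors through a monic kernel inclusion, so two candidate maps agreeing after post-composition with that monic necessarily coincide.
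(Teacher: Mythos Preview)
Your proof is correct and follows essentially the same route as the paper's: lift $\pi''$ along $p$ using projectivity, invoke the Five Lemma~\ref{cor:five-lemma} to see that the resulting map $\phi$ is an admissible epic, apply the $3\times 3$-lemma~\ref{cor:3x3-lemma} to obtain the short exact sequence of kernels, and iterate. Your write-up is in fact more explicit than the paper's about the block form of the differential and the coherence across degrees, but the underlying argument is the same.
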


\begin{Rem}
  All the columns except the rightmost one are split exact. However,
  the morphisms $P_{n+1} \to P_{n}$ are \emph{not} the sums of the
  morphisms $P_{n+1}' \to P_{n}'$ and $P_{n+1}'' \to P_{n}''$. This
  only happens in the trivial case that the sequence $A' \mono A \epi
  A''$ is already split exact.
\end{Rem}

\begin{proof}
  This is an easy application of the five lemma~\ref{cor:five-lemma}
  and the $3 \times 3$-lemma~\ref{cor:3x3-lemma}. 
  By lifting the morphism 
  $\varepsilon'': P_{0}'' \to A''$ over the admissible epic
  $A \epi A''$ we obtain a morphism $\varepsilon: P_{0} \to A$ and 
  a commutative diagram
  \[
  \xymatrix{
    \Ker{\varepsilon'} 
    \ar@{ >->}[r] \ar@{.>}[d] &
    P_{0}' \ar@{->>}[r]^{\varepsilon'} \ar@{ >->}[d]^{\mat{1 \\ 0}} &
    A' \ar@{ >->}[d] \\
    \Ker{\varepsilon}
    \ar@{ >.>}[r] \ar@{.>}[d] &
    P_{0} \ar[r]^{\varepsilon} \ar@{->>}[d]^{\mat{0 & 1}} &
    A \ar@{->>}[d] \\
    \Ker{\varepsilon''} \ar@{ >->}[r] &
    P_{0}'' \ar@{->>}[r]^{\varepsilon''}  &
    A''.
  }
  \]
  It follows from the five lemma that $\varepsilon$ is actually an
  admissible epic, so its kernel exists. The two vertical dotted
  morphisms exist since the second and the third column are short
  exact. Now the $3 \times 3$-lemma implies that the dotted column is
  short exact. Finally note that $P_{1}' \to P_{0}'$ and $P_{1}''
  \to P_{0}''$ factor over admissible epics to $\Ker{\varepsilon'}$ and
  $\Ker{\varepsilon''}$ and proceed by induction.
\end{proof}

\begin{Rem}
  In concrete situations it may be useful to remember that only the
  projectivity of $P_{n}''$ is used in the proof.
\end{Rem}

\begin{Rem}[Classical Derived Functors]
  Using the results of this section, the theory of classical 
  derived functors, see 
  e.g. Cartan-Eilenberg~\cite{MR1731415}, 
  Mac~Lane~\cite{MR0156879}, Hilton-Stammbach~\cite{MR1438546} or 
  Weibel~\cite{MR1269324}, is easily adapted to the following situation:
  
  Let $(\scrA, \scrE)$ be an exact category with enough
  projectives and let $F: \scrA \to \scrB$ be an additive functor
  to an abelian category. By the resolution
  lemma~\ref{prop:resolution-lemma} a projective 
  resolution $P_{\bullet} \epi A$ exists for every object $A \in \scrA$
  and is well-defined up to homotopy equivalence by the comparison
  theorem (Corollary~\ref{cor:proj-res-htpy-equiv}). It follows that for
  two projective resolutions $P_{\bullet} \epi A$ and
  $Q_{\bullet} \epi A$ the complexes $F(P_{\bullet})$ and
  $F(Q_{\bullet})$ are chain homotopy equivalent. Therefore it makes
  sense to define the \emph{left derived functors} of $F$ as
  \[
  L_{i}F (A) := H_{i} (F(P_{\bullet})).
  \]
  Let us indicate why $L_{i}F(A)$ is a functor. First observe that
  a morphism $f: A \to A'$ extends uniquely up to chain
  homotopy equivalence to a chain map $f_{\bullet}: P_{\bullet} \to
  P_{\bullet}'$ if $P_{\bullet} \epi A$ and $P_{\bullet}' \epi A'$ are
  projective resolutions of $A$ and $A'$. From this uniqueness it
  follows easily that $L_{i}F(fg) = L_{i}F(f) L_{i}F(g)$ and
  $L_{i}F(1_{A}) = 1_{L_{i}F(A)}$ as desired. Using the horseshoe
  lemma~\ref{thm:horseshoe-lemma} one proves that a short exact
  sequence $A' \mono A \epi A''$ yields a long exact sequence 
  \[
  \cdots \to L_{i+1}F(A'') \to L_{i}F(A') \to L_{i}F(A) \to
  L_{i}F(A'') \to L_{i-1}F(A') \to \cdots
  \]
  and that $L_{0}F$ sends exact sequences to right exact sequences in
  $\scrB$ so that the $L_{i}F$ are a universal
  $\delta$-functor. Moreover, $L_{0}F$ is characterized by being the
  best right exact approximation to $F$ and the $L_{i}F$ measure the
  failure of $L_{0}F$ to be exact. In particular, if $F$ sends
  exact sequences to right exact sequences then $L_{0}F \cong F$ and
  if $F$ is exact, then in addition $L_{i}F = 0$ if $i > 0$.
\end{Rem}

\begin{Rem}
  By the discussion in section~\ref{sec:total-der-fct}, the assumption
  that $(\scrA,\scrE)$ has enough projectives is unnecessarily
  restrictive. In order for the classical 
  left derived functor of $F:\scrA \to
  \scrB$ to exist, it suffices to assume that there is a fully exact
  subcategory $\scrC \subset \scrA$ satisfying the duals of the
  conditions in Theorem~\ref{thm:der-cat-fully-exact-subcats} with the
  additional property that $F$ restricted to $\scrC$ is exact (see
  Lemma~\ref{lem:cat-of-f-acyclic-obj}). These conditions ensure that
  the total derived functor $\bfL\!F: \D^{-}{(\scrA)} \to
  \D^{-}{(\scrB)}$ exists and thus it makes sense
  to define $L_{i}F(A) = H_{i}{(\bfL\!F (A))}$, where the object $A
  \in \scrA$ is considered as a complex concentrated in degree
  zero. More explicitly, choose a $\scrC$-resolution
  $C_{\bullet} \to A$ and let $L_{i}F(A) := H_{i}{(F(C_{\bullet}))}$.
  It is not difficult to check that the $L_{i}F$ are a universal
  $\delta$-functor: They form a $\delta$-functor as $\bfL\!F$ is a
  triangle functor and $H_{\ast}:\D^{-}{(\scrB)} \to \scrB$ 
  sends distinguished triangles to long exact sequences; this
  $\delta$-functor is universal because it is effa\c{c}able,
  as $L_{i}F(C) = 0$ for $i > 0$.
\end{Rem}

\begin{Exer}[{Heller~\cite[6.3, 6.5]{MR0100622}}]
  Let $(\scrA,\scrE)$ be an exact category and consider the exact
  category $(\scrE,\scrF)$ as described in
  Exercise~\ref{exer:ses-exact}.
  Prove that an exact sequence
  $P' \mono P \epi P''$ is projective in $(\scrE,\scrF)$ if $P'$ and
  $P''$ (and hence $P$) are projective in
  $(\scrA,\scrE)$. If $(\scrA,\scrE)$ has
  enough projectives then so has $(\scrE,\scrF)$ and every projective
  is of the form described before.
\end{Exer}

\section{Examples and Applications}
\label{sec:examples}

It is of course impossible to give an exhaustive list of examples. We
simply list some of the popular ones.

\subsection{Additive Categories}

Every additive category $\scrA$ is exact with respect to the class
$\scrE_{\min}$ of split exact sequences, i.e., the sequences isomorphic to 
\[
A \xrightarrow{\mat{1 \\ 0}} A \oplus B \xrightarrow{\mat{0 & 1}} B
\]
for $A,B \in \scrA$. Every object $A \in \scrA$ is both projective and
injective with respect to this exact structure.

\subsection{Quasi-Abelian Categories}

We have seen in Section~\ref{sec:quasi-ab-cats} that quasi-abelian
categories are exact with respect to the class $\scrE_{\max}$ 
of all kernel-cokernel pairs. Evidently, this class of examples
includes in particular all abelian categories. There is an abundance
of non-abelian quasi-abelian
categories arising in functional analysis:

\begin{Exm}[{Cf. e.g. \cite[IV.2]{mythesis}}]
  Let $\Ban$ be the category of Banach spaces and bounded linear
  maps over the field $k$ of real or complex numbers. It has kernels
  and cokernels---the cokernel of a morphism $f: A
  \to B$ is given by $B/\overline{\Im{f}}$. It is an easy consequence
  of the open mapping theorem that $\Ban$ is quasi-abelian. Notice
  that the forgetful functor $\Ban \to \Ab$ is exact and
  reflects exactness, it preserves monics but fails to preserve epics
  (morphisms with dense range). The ground field $k$ is projective and
  by Hahn-Banach it also is injective. More generally, it
  is easy to see that for each set $S$ the space $\ell^{1}{(S)}$
  is projective and $\ell^{\infty}{(S)}$ is injective. Since every
  Banach space $A$ is isometrically isomorphic to a quotient of
  $\ell^{1}{(B_{\leq 1}{A})}$ and to a subspace of
  $\ell^{\infty}{(B_{\leq 1} A^{\ast})}$ there are enough of both,
  projective and injective objects in $\Ban$.
\end{Exm}

\begin{Exm}
  Let $\Fre$ be the category of completely metrizable topological
  vector spaces (Fr\'echet spaces) and continuous linear maps.
  Again, $\Fre$ is quasi-abelian by
  the open mapping theorem (the proof of Theorem~2.3.3 in Chapter~IV.2
  of~\cite{mythesis} applies \emph{mutatis mutandis}),
  and there are exact functors
  $\Ban \to \Fre$ and $\Fre \to \Ab$. It is still true that $k$ is
  projective, but $k$ fails to be injective
  (Hahn-Banach breaks down). 
\end{Exm}

\begin{Exm}
  Consider the category $\Pol$ of polish abelian groups (i.e., second
  countable and completely metrizable topological groups) and
  continuous homomorphisms. From the open mapping theorem---which is a
  standard consequence of Pettis' theorem (cf. 
  e.g.~\cite[(9.9), p.~61]{MR1321597}) 
  stating that for a non-meager set $A$ in $G$ the
  set $A^{-1}A$ is a neighborhood of the identity---it follows that
  $\Pol$ is quasi-abelian (again one easily adapts the proof of
  Theorem~2.3.3 in Chapter~IV.2 of~\cite{mythesis}).
\end{Exm}

Further functional analytic examples are discussed in detail e.g. in
Rump~\cite{MR1749013} and
Schneiders~\cite{MR1779315}. Rump~\cite{rump-counterexample} gives a
rather long list of examples.

\subsection{Fully Exact Subcategories}

Recall from section~\ref{sec:der-cats-fully-exact-subcats} that a
\emph{fully exact subcategory} $\scrB$ of an exact category $\scrA$ is a
full subcategory $\scrB$ which is closed under extensions
and equipped with the exact structure formed by the sequences which
are exact in $\scrA$ (see Lemma~\ref{lem:fully-exact-subcats-are-exact}).

\begin{Exm}
  By the embedding theorem~\ref{thm:embedding-thm},
  every small exact category is a fully
  exact subcategory of an abelian category.
\end{Exm}

\begin{Exm}
  The full subcategories of projective or injective objects of an
  exact category $\scrA$ are fully exact. The induced exact structures
  are the split exact structures.
\end{Exm}

\begin{Exm}
  Let $\prtens$ be the projective tensor product of Banach spaces. A
  Banach space $F$ is \emph{flat} if $F \prtens {-}$ is exact. It is
  well-known that the flat Banach spaces are precisely the
  $\scrL_{1}$-spaces of Lindenstrauss-Pe\l{}czy\'n{}ski.
  The category of flat Banach spaces is a fully exact subcategory of 
  $\Ban$. The exact
  structure is the pure exact structure consisting of the
  short sequences whose Banach dual sequences are split exact, 
  see~\cite[Ch.~IV.2]{mythesis} for further information and references.
\end{Exm}

\subsection{Frobenius Categories}
\label{sec:frobenius-cats}
An exact category is said to be \emph{Frobenius} provided that it has
enough projectives and injectives and, moreover, the classes of
projectives and injectives coincide~\cite{MR0116045}. Frobenius
categories $\scrA$ give rise to \emph{algebraic triangulated
  categories} (see \cite[3.6]{MR2275593}) by
passing to the \emph{stable category} $\underline{\scrA}$ of
$\scrA$. By definition, $\underline{\scrA}$ is the category consisting
of the same objects as $\scrA$ and in which a morphism of $\scrA$ is
identified with zero if it factors over an injective object. It is not
hard to prove that $\underline{\scrA}$ is additive and it
has the structure of a triangulated category as follows:

The translation functor is obtained by choosing for each object $A$ a
short exact sequence $A \mono I(A) \epi \Sigma(A)$
where $I(A)$ is injective. The assignment $A \mapsto \Sigma(A)$
induces an auto-equivalence of $\underline{\scrA}$. Given a morphism
$f:A \to B$ in $\scrA$ consider the push-out diagram
\[
\xymatrix{
  A \ar@{ >->}[r] \ar[d]_{f} \ar@{}[dr]|{\text{PO}} & 
  I(A) \ar[d] \ar@{->>}[r] & \Sigma(A) \ar@{=}[d] \\
  B \ar@{ >->}[r] & C(f) \ar@{->>}[r] & \Sigma(A)
}
\]
and call the sequence $A \to B \to C(f) \to \Sigma(A)$ a
\emph{standard triangle}. The class $\Delta$ of
\emph{distinguished triangles} consists of the triangles 
which are isomorphic in $\underline{\scrA}$ to
(the image of) a standard triangle.

\begin{Thm}[{Happel~\cite[2.6, p.16]{MR935124}}]
  \label{thm:happel}
  The triple $(\underline{\scrA}, \Sigma, \Delta)$ is a triangulated
  category.
\end{Thm}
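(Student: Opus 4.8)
The plan is to verify Verdier's axioms (TR1)--(TR4) for the triple $(\underline{\scrA}, \Sigma, \Delta)$, after first disposing of the preliminaries about the stable category and the translation functor. Maps factoring through an injective object form an ideal in $\scrA$ (a composite of such a map with anything again factors through an injective), so $\underline{\scrA}$ is a well-defined additive category. The translation $\Sigma$ is well-defined up to canonical isomorphism on objects: given two conflations $A \mono I \epi \Sigma A$ and $A \mono I' \epi \Sigma' A$ with $I,I'$ injective, injectivity of $I'$ lets me extend $1_A$ to a morphism of conflations, and the induced map $\Sigma A \to \Sigma' A$ is an isomorphism in $\underline{\scrA}$ because the two composites differ from the identities by morphisms factoring through injectives. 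The same lifting procedure makes $\Sigma$ a functor on $\underline{\scrA}$. The crucial use of the \emph{Frobenius} hypothesis enters here: since projectives and injectives coincide, I can dually define a loop functor $\Omega$ from conflations $\Omega A \mono P \epi A$ with $P$ projective (equivalently injective), and $\Sigma$, $\Omega$ are mutually quasi-inverse, so $\Sigma$ is an auto-equivalence of $\underline{\scrA}$.

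Next, the technical heart is a single bridging lemma: every conflation $A \mono B \epi C$ of $\scrA$ gives rise to a distinguished triangle $A \to B \to C \to \Sigma A$, and conversely the standard triangle over a morphism $f$ is built from such a conflation. Concretely, in the defining push-out of $A \mono I(A)$ along $f$, Proposition~\ref{prop:pushout-exact} shows the lower edge $B \mono C(f) \epi \Sigma A$ is again a conflation whose cokernel is $\Sigma A$ by Remark~\ref{rem:cokernels-in-push-outs}, while the Noether isomorphism~\ref{lem:c/b=(c/a)/(b/a)} and the $3 \times 3$-lemma~\ref{cor:3x3-lemma} let me compare $C(f)$ with the genuine cokernel of $f$ when $f$ is admissible. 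With this in hand, (TR1) is immediate: closure under isomorphism holds by definition of $\Delta$; the triangle $A \xrightarrow{1_A} A \to 0 \to \Sigma A$ is distinguished because $C(1_A) = I(A)$ is injective and hence zero in $\underline{\scrA}$; and every morphism sits in a standard triangle by construction.

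For (TR3) I would, given $a$ and $b$ making the left square commute, extend $a$ along the injective embedding $A \mono I(A)$ using injectivity of $I(A')$, then invoke the universal property of the defining push-out to produce the fill-in $c \colon C(f) \to C(f')$, checking the remaining squares commute in $\underline{\scrA}$. The two genuinely delicate axioms are (TR2) and (TR4). For rotation (TR2) I would use the bridging lemma: the rotated triangle is the one associated to the conflation $B \mono C(f) \epi \Sigma A$, and the point requiring care is the identification of its connecting morphism $\Sigma A \to \Sigma B$ with $-\Sigma f$, including the sign, which I would extract by a diagram chase through a second push-out along $B \mono I(B)$ and the definition of $\Sigma f$; the inverse rotation then follows formally using that $\Sigma$ is an auto-equivalence with quasi-inverse $\Omega$.

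I expect the octahedral axiom (TR4) to be the main obstacle. Given composable $A \xrightarrow{u} B \xrightarrow{v} C$, I would realize the three standard triangles by iterated push-outs along a common injective embedding and assemble a large commutative diagram of conflations, then read off the fourth triangle and verify --- via repeated appeals to Proposition~\ref{prop:pushout-exact}, Proposition~\ref{prop:pb-adm-monic-adm-monic}, and the $3 \times 3$-lemma~\ref{cor:3x3-lemma} --- that all faces commute in $\underline{\scrA}$ and that the constructed triangle is again standard. The difficulty there is primarily organizational: keeping track of the many push-out squares and checking commutativity only up to morphisms factoring through injectives, rather than any single conceptually hard step.
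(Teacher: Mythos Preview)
The paper does not prove this theorem; it merely states it with a citation to Happel~\cite[2.6, p.16]{MR935124} and moves on to an example. So there is nothing to compare your proposal against on the paper's side.

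That said, your outline is a faithful sketch of the standard proof as carried out by Happel. The well-definedness of $\Sigma$ via injective hulls, the quasi-inverse $\Omega$ via projective covers (this is exactly where the Frobenius hypothesis is essential), the bridging lemma relating conflations to distinguished triangles, and the verification of (TR1)--(TR4) by push-out diagrams and the $3 \times 3$-lemma --- all of this is the right architecture. Your identification of the sign in (TR2) and the bookkeeping in (TR4) as the genuinely delicate points is accurate. If you want to actually write out the details, Happel's original or Keller's appendix to \cite{MR935124}-style expositions are the models to follow; the tools from this paper (Proposition~\ref{prop:pushout-exact}, Remark~\ref{rem:cokernels-in-push-outs}, the $3 \times 3$-lemma~\ref{cor:3x3-lemma}) are indeed the right ingredients for carrying out those verifications in the exact-category setting.
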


\begin{Exm}
  Consider the category $\Ch{(\scrA)}$ of complexes over the additive
  category $\scrA$ equipped with the degreewise
  split exact sequences. It turns out that $\Ch{(\scrA)}$ is a
  \emph{Frobenius category}. The complex $I(A)$ introduced in the
  proof of Lemma~\ref{lem:acyclic-complexes-thick} is injective. It is
  not hard to verify that the stable category
  $\underline{\Ch{(\scrA)}}$ coincides with the homotopy category
  $\K{(\scrA)}$ and that the triangulated structure provided by
  Happel's theorem~\ref{thm:happel} is the same as the
  one mentioned in Remark~\ref{rem:htpy-cat-triangulated}
  (see also Exercise~\ref{exer:comparing-dist-triangles}).
\end{Exm}

The reader may consult Happel~\cite{MR935124} for further information,
examples and applications.

\subsection{Further Examples}

\begin{Exm}[Vector bundles]
  Let $X$ be a scheme. The category of algebraic vector bundles over
  $X$, i.e., the category of locally free and coherent
  $\scrO_{X}$-modules, is an exact category with the exact structure
  consisting of the locally split short exact sequences.
\end{Exm}

\begin{Exm}[Chain complexes]
  If $(\scrA,\scrE)$ is an exact category then the category of chain
  complexes $\Ch{(\scrA)}$ is an exact category with respect to the
  exact structure $\Ch{(\scrE)}$ of short sequences of complexes which
  are exact in each degree, see Lemma~\ref{lem:cxes-exact}.
\end{Exm}

\begin{Exm}[Diagram Categories]
  Let $(\scrA, \scrE)$ be an exact category and let $\scrD$ be a small
  category. The category $\scrA^{\scrD}$ of functors $\scrD \to \scrA$
  is an exact category with the exact structure $\scrE^{\scrD}$. 
  The verification of the axioms of an exact category
  for $(\scrA^{\scrD}, \scrE^{\scrD})$ is straightforward, as
  limits and colimits in $\scrA^{\scrD}$ are
  formed pointwise, see e.g.~Borceux~\cite[2.15.1, p. 87]{MR1291599}.
\end{Exm}

\begin{Exm}[Filtered Objects]
  Let $(\scrA, \scrE)$ be an exact category. A (bounded)
  \emph{filtered object} $A$ in $\scrA$ is a sequence of admissible
  monics in $\scrA$
  \[
  A = (\xymatrix{
    \cdots \ar@{ >->}[r] & A^{n} \ar@{ >->}[r]^-{i^{n}_{A}} & 
    A^{n+1} \ar@{ >->}[r] & \cdots
  })
  \]
  such that $A^{n} = 0$ for $n \ll 0$ and that $i^{n}_{A}$ is an
  isomorphism for $n \gg 0$. A \emph{morphism}~ $f$ from the filtered
  object $A$ to the filtered object~$B$ is
  a collection of morphisms $f^{n}: A^{n} \to B^{n}$ in $\scrA$ satisfying
  $f^{n+1}i^{n}_{A} = i^{n}_{B} f^{n}$. Thus there is a category
  $\scrF\!\scrA$ of filtered objects. It follows from 
  Proposition~\ref{prop:sum-exact} that $\scrF\!\scrA$ is additive.
  The $3 \times 3$-lemma~\ref{cor:3x3-lemma} implies that the class
  $\scrF\!\scrE$ consisting of the pairs of morphisms $(i,p)$ of
  $\scrF\!\scrA$ such that $(i^{n},p^{n})$ is in $\scrE$ for each
  $n$ is an exact structure on $\scrF\!\scrA$. Notice that
  for a nonzero abelian category $\scrA$ the category of filtered
  objects $\scrF\!\scrA$ is \emph{not} abelian.
\end{Exm}

\begin{Exm}
  Paul Balmer~\cite{MR2181829} (following Knebusch) 
  gives the following definition:
  An \emph{exact category with duality} is a triple
  $(\scrA,\ast,\varpi)$ consisting of an exact category $\scrA$, a
  contravariant and exact endofunctor $\ast$ on $\scrA$ together with 
  a natural isomorphism $\varpi: \id_{\scrA} \Rightarrow \ast \circ
  \ast$ satisfying $\varpi_{M}^{\ast} \varpi_{M^{\ast}} =
  \id_{M^{\ast}}$ for all $M \in \scrA$.
  There are natural notions of symmetric spaces and isometries of
  symmetric spaces, (admissible) Lagrangians of a symmetric space and
  hence of metabolic spaces
  If $\scrA$ is
  essentially small it makes sense to speak of the set 
  ${\rm MW}(\scrA, \ast, \varpi)$ of isometry classes of
  symmetric spaces and the subset ${\rm NW}{(\scrA,\ast,\varpi)}$ 
  of isometry classes of metabolic spaces and both turn out to be
  abelian monoids with respect to the \emph{orthogonal sum} of
  symmetric spaces. The \emph{Witt group} is
  ${\rm W}{(\scrA, \ast, \varpi)} =
  {\rm MW}{(\scrA,\ast,\varpi)}/{\rm NW}{(\scrA,\ast,\varpi)}$.
  In case $\scrA$ is the category
  of vector bundles over a scheme $(X, \scrO_{X})$ and $\ast =
  \Hom_{\scrO_{X}}({-}, \scrO_{X})$ is the usual duality functor, one
  obtains the classical Witt group of a scheme.

  Extending these considerations to the level of the derived category 
  leads to \emph{Balmer's triangular Witt groups}
  which had a number of striking applications to the theory of
  quadratic forms and $K$-theory, we refer the interested reader to
  Balmer's survey~\cite{MR2181829}. For
  a beautiful link to algebraic $K$-theory we refer to
  Schlichting~\cite{schlichting}. 
\end{Exm}

\subsection{Higher Algebraic $K$-Theory}

Let $(\scrA,\scrE)$ be a small exact category.
The \emph{Grothendieck group} $K_{0}(\scrA,\scrE)$
is defined to be the quotient of the free (abelian) group
generated by the isomorphism classes of objects of $\scrA$ modulo the
relations $[A] = [A'][A'']$ for each short exact sequence
$A' \mono A \epi A''$ in $\scrE$. This generalizes the $K$-theory of a
ring, where $(\scrA,\scrE)$ is taken to be the category of finitely
generated projective modules over $R$ with the split exact
structure. If $(\scrA,\scrE)$ is the category of algebraic vector
bundles over a scheme $X$ then by definition
$K_{0}{(\scrA,\scrE)}$ is the (na\"\i{}ve) Grothendieck group 
$K_{0}{(X)}$ of the scheme (cf. \cite[3.2, p.~313]{MR1106918}).

Quillen's landmark paper~\cite{MR0338129} introduces today's
definition of higher algebraic $K$-theory and proves its basic
properties. Exact
categories enter via the $Q$-construction, which we outline briefly. 
Given a small exact category
$(\scrA, \scrE)$ one forms a new category $Q\scrA$:
The objects of $Q\scrA$ are the objects of $\scrA$ and
$\Hom_{Q\scrA}(A, A')$ is defined to be the set of 
equivalence classes of diagrams
\[
\xymatrix{
  A & B \ar@{->>}[l]_{p} \ar@{ >->}[r]^{i} & A',
}  
\]
in which $p$ is an admissible epic and $i$ is an admissible
monic, where two
diagrams are considered equivalent if there is an isomorphism of
such diagrams inducing the identity on $A$ and $A'$. The composition
of two morphisms $(p,i)$, $(p',i')$ is given by the following 
construction: form the
pull-back over $p'$ and $i$ so that by
Proposition~\ref{prop:pb-adm-monic-adm-monic} there is a diagram
\[
\xymatrix@R=0.3pc{
  & & B'' \ar@{.>>}[dl]_{q} \ar@{ >.>}[dr]^{j'} \ar@{}[dd]|{\text{PB}} \\
  & B \ar@{->>}[dl]_{p} \ar@{ >->}[dr]^{i} & & 
  B' \ar@{->>}[dl]_{p'} \ar@{ >->}[dr]^{i'} 
  \\
  A & & A' & & A''
}
\]
and put $(p',i') \circ (p,i) = (pq, i'j')$. This is easily checked to
yield a category and it is not hard to make sense of the statement that the
morphisms $A \to A'$ in $Q\scrA$ correspond to the 
different ways that $A$ arises as an admissible
subquotient of $A'$.

Now any small category $\scrC$ gives rise to a simplicial set
$N\scrC$, called the \emph{nerve of $\scrC$}
whose $n$ simplices are given by sequences of composable
morphisms 
\[
C_{0} \to C_{1} \to \cdots \to C_{n},
\]
where the $i$-th face map is obtained by deleting the object $C_{i}$
and the $i$-th degeneracy map is obtained by replacing $C_{i}$ by
$1_{C_{i}}: C_{i} \to C_{i}$. The \emph{classifying space}
$B\scrC$ of $\scrC$ is the geometric realization of the nerve
$N\scrC$.

Quillen proves the fundamental result that
\[
K_{0}(\scrA,\scrE) \cong \pi_{1}(B(Q\scrA), 0)
\]
which motivates the definition
\[
K_{n}{(\scrA,\scrE)} := \pi_{n+1} (B(Q\scrA), 0).
\]
Obviously, an exact functor $F:(\scrA,\scrE) \to (\scrA',\scrE')$
yields a functor $Q\scrA \to Q\scrA'$ and hence a homomorphism
$F_{\ast}: K_{n}{(\scrA,\scrE)} \to K_{n}(\scrA',\scrE')$
which is easily seen to depend only on the isomorphism class of $F$. 

We do not discuss $K$-theory any further and recommend the lecture of
Quillen's original article~\cite{MR0338129} and Srinivas's
book~\cite{MR1382659} expanding on Quillen's article. For a good overview
over many topics of current interest we refer to the
handbook of $K$-theory~\cite{MR2182598}.

\appendix

\section{The Embedding Theorem}
\label{sec:the-embedding-theorem}

For abelian categories, one has the Freyd-Mitchell embedding theorem, 
see~\cite{MR0166240} and \cite{MR0167511}, allowing one to prove
diagram lemmas in abelian categories ``by chasing elements''. 
In order to prove diagram lemmas in exact categories, a similar
technique works. More precisely, one has:

\begin{Thm}[{\cite[A.7.1, A.7.16]{MR1106918}}]
  \label{thm:embedding-thm}
  Let $(\scrA, \scrE)$ be a small exact category.
  \begin{enumerate}[(i)]
    \item
      There is an abelian category $\scrB$ and a fully faithful
      exact functor $i: \scrA \to \scrB$ that reflects
      exactness. Moreover, $\scrA$ is closed under extensions in
      $\scrB$. 

    \item
      The category $\scrB$ may canonically be chosen to be the
      category of left exact functors $\scrA^{\opp} \to \Ab$ and $i$
      to be the Yoneda embedding $i(A) = \Hom_{\scrA}{({-},A)}$.

    \item 
      Assume moreover that $\scrA$ is weakly idempotent complete.
      If $f$ is a morphism in $\scrA$ and $i(f)$ is epic
      in $\scrB$ then $f$ is an admissible epic.
  \end{enumerate}
\end{Thm}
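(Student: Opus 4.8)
The plan is to realize $\scrB$ as a category of sheaves and to let the exact structure supply a Grothendieck topology. First I would topologize the small category $\scrA$ by declaring a sieve to be covering precisely when it contains an admissible epic. The axioms for a Grothendieck topology then read off the exact structure almost verbatim: the maximal sieve covers by [E$0^{\opp}$], base change of a covering is a covering because pull-backs of admissible epics exist and are again admissible epics ([E$2^{\opp}$]), and the local-character axiom reduces to closure of admissible epics under composition ([E$1^{\opp}$]). Setting $\scrB$ equal to the category of sheaves of abelian groups on this site, $\scrB$ is a reflective subcategory of the Grothendieck abelian category of presheaves $\scrA^{\opp}\to\Ab$ (a legitimate category since $\scrA$ is small) with an exact sheafification reflector, and is therefore itself a Grothendieck abelian category. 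This produces the ambient abelian category of~(i).

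Next I would identify sheaves with left exact functors, which is~(ii). For a generating cover $p\colon B\epi A$ with kernel $k\colon K\mono B$, the kernel pair $B\times_A B$ exists by [E$2^{\opp}$] and splits canonically as $B\oplus K$, so the equalizer sheaf condition $F(A)=\mathrm{eq}\bigl(F(B)\rightrightarrows F(B\times_A B)\bigr)$ collapses to exactness of $0\to F(A)\to F(B)\to F(K)$; thus the sheaves are exactly the left exact functors. The Yoneda functor $i(A)=\Hom_{\scrA}({-},A)$ then lands in $\scrB$ because the topology is subcanonical: under the splitting above the two kernel-pair projections differ by $k$, and since $p$ is a cokernel of $k$ it is precisely the coequalizer of its kernel pair, so every representable presheaf is a sheaf. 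Full faithfulness of $i$ is the ordinary Yoneda lemma.

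Exactness of $i$ is now quick: applied to a short exact sequence $A'\mono A\epi A''$, left exactness of $\Hom$ exhibits $i(A')$ as the kernel of $i(A)\to i(A'')$, while $i(A)\to i(A'')$ is an epimorphism of sheaves because $A\epi A''$ is by construction a covering; hence $i$ carries short exact sequences to short exact sequences. The genuinely delicate points are closure under extensions and reflection of exactness. I would treat closure first, showing that every extension $i(A')\mono F\epi i(A'')$ in $\scrB$ is representable; the cleanest route is to identify $\mathrm{Ext}^{1}_{\scrB}(i(A''),i(A'))$ with the Yoneda Ext-group of $\scrA$, so that [E2] and Proposition~\ref{prop:pushout-exact} realize every class by an honest short exact sequence of $\scrA$. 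Reflection of exactness then follows from closure under extensions together with full faithfulness (equivalently, via Lemma~\ref{lem:fully-exact-subcats-are-exact} and the recognition principle for extension-closed subcategories). I expect this comparison of $\mathrm{Ext}$-groups to be the \emph{main obstacle}; everything preceding it is bookkeeping with the topology.

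Finally, for~(iii) the topology does all the work. An epimorphism of sheaves is a local surjection, so the hypothesis that $i(f)$ is epic for $f\colon A\to B$ says that the universal section $1_B$ of $i(B)$ becomes globally hit after passage to a cover: there is an admissible epic $e\colon B'\epi B$ and a morphism $g\colon B'\to A$ with $fg=e$. Since $\scrA$ is weakly idempotent complete and the composite $fg=e$ is an admissible epic, Proposition~\ref{prop:weakly-split-obscure-axiom} applies verbatim to conclude that $f$ itself is an admissible epic.
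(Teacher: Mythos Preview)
Your setup matches the paper's exactly: the Grothendieck topology generated by admissible epics, the identification of sheaves with left exact functors, the subcanonicality argument, and the exactness of $i$ all proceed as you describe. Part~(iii) is also correct and coincides with the paper's argument.

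The gap is in your treatment of closure under extensions. You propose to compute $\mathrm{Ext}^{1}_{\scrB}(i(A''),i(A'))$ independently and identify it with Yoneda $\mathrm{Ext}$ in $\scrA$, but you give no method for this computation; absent one, the proposal is circular, since surjectivity of the comparison map \emph{is} closure under extensions. Your reference to Lemma~\ref{lem:fully-exact-subcats-are-exact} for reflection of exactness is also off: that lemma only says an extension-closed subcategory carries \emph{some} exact structure, not that it agrees with the given $\scrE$. What actually works is the Ext-isomorphism route---injectivity from full faithfulness, surjectivity from closure---so reflection does follow once closure is in hand, but via that argument rather than the lemma you cite.

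The paper's key observation is that you already hold the missing tool. The ``local surjection'' principle you invoke for~(iii)---an epimorphism of sheaves onto a representable $i(A)$ is hit, after refining along an admissible epic $A'\epi A$, by a map from $i(A')$---works verbatim when the source is an arbitrary sheaf, not just a representable (this is Lemma~\ref{lem:epics-onto-representables-compose-to-adm-epics}). With this in hand the paper proves reflection of exactness \emph{first}: if $i(A')\to i(A)\to i(A'')$ is exact in $\scrB$, the lemma produces an admissible epic $A_0\epi A''$ factoring through $A\to A''$, and since $A\to A''$ already has a kernel, the obscure axiom (Proposition~\ref{prop:obscure-axiom}) makes it an admissible epic---no weak idempotent completeness needed. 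Closure under extensions then follows by a short pullback-and-pushout argument using the same lemma together with Proposition~\ref{prop:exact-functors-push-out-pull-back}. So the ``main obstacle'' dissolves once you reuse your own argument for~(iii) in the earlier steps.
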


\begin{Rem}
  In order for~(iii) to hold it is necessary to assume weak idempotent
  completeness of $\scrA$. Indeed, if $\scrA$ fails to be weakly
  idempotent complete, there must be a retraction $r$ without
  kernel. By definition there exists $s$ such that $rs = 1$, but then
  $i(rs)$ is epic, so $i(r)$ is epic as well.
\end{Rem}

\begin{Rem}
  Let $\scrB$ be an abelian category and assume that $\scrA$ is a full
  subcategory which is closed under extensions, i.e., $\scrA$ is fully
  exact subcategory of $\scrB$
  in the sense of Definition~\ref{def:fully-exact-subcat}. Then, by 
  Lemma~\ref{lem:fully-exact-subcats-are-exact},
  $\scrA$ is an exact category with respect to the class $\scrE$ of
  short sequences in $\scrA$ which are exact in $\scrB$. This is a
  basic recognition principle of exact categories, for many examples
  arise in this way. The embedding
  theorem provides a partial converse to this recognition principle.
\end{Rem}

\begin{Rem}
  Quillen states in \cite[p.~``92/16/100'']{MR0338129}: 
  \begin{quote}
    Now suppose given an exact category $\scrM$. Let $\scrA$ be the
    additive category of additive contravariant functors from $\scrM$
    to abelian groups which are left exact, i.e. carry
    [an exact sequence
    $ M' \mono M \epi M''$]
    to an exact sequence
    \[
    0 \to F(M'') \to F(M) \to F(M').
    \]
    (Precisely, choose a universe containing $\scrM$, and let $\scrA$
    be the category of left exact functors whose values are abelian
    groups in the universe.) Following well-known ideas
    (e.g.~\cite{MR0232821}), one can prove $\scrA$ is an abelian
    category, that the Yoneda functor $h$ embeds $\scrM$ as a full
    subcategory of $\scrA$ closed under extensions, and finally that a
    [short] sequence [\ldots] 
    is in $\scrE$ if and only if $h$
    carries it into an exact sequence in $\scrA$. The details will be
    omitted, as they are not really important for the sequel.
  \end{quote}

  Freyd stated a similar theorem in \cite{MR0146234}, again without
  proof, and with the additional assumption of idempotents completeness,
  since he uses Heller's axioms. The first proof
  published is in Laumon~\cite[1.0.3]{MR726427}, relying on
  the Grothendieck-Verdier theory of sheaf\mbox{}ification
  \cite{MR0354652}. 
  The sheaf\mbox{}ification approach was also used and further
  refined by Thomason~\cite[Appendix~A]{MR1106918}.
  A quite detailed sketch of the proof alluded to by Quillen is given in
  Keller~\cite[A.3]{MR1052551}.
\end{Rem}

The proof given here is due to Thomason~\cite[A.7]{MR1106918} 
amalgamated with the proof in Laumon~\cite[1.0.3]{MR726427}. 
We also take the opportunity to fix a
slight gap in Thomason's argument (our Lemma~\ref{lem:dir-cat},
compare with the first sentence after \cite[(A.7.10)]{MR1106918}).
Since Thomason fails to spell out the nice
sheaf-theoretic interpretations of his construction and since
referring to SGA~4 seems rather brutal, we use the terminology of the more
lightweight Mac~Lane-Moerdijk~\cite[Chapter~III]{MR1300636}. 
Other good introductions to
the theory of sheaves may be found in Artin~\cite{artin-grothtop} or
Borceux~\cite{MR1315049}, for example.

\subsection{Separated Presheaves and Sheaves}

Let $(\scrA,\scrE)$ be a small exact category. For each object
$A \in \scrA$, let
\[
\scrC_{A} = \{(p': A' \epi A)\,: \,A' \in \scrA\}
\]
be the set of admissible epics onto $A$. The elements of $\scrC_{A}$
are the \emph{coverings} of $A$.

\begin{Lem}
  \label{lem:basis-topology}
  The family $\{\scrC_{A}\}_{A \in \scrA}$ is a
  \emph{basis for a Grothendieck topology} $J$ on
  $\scrA$:
  \begin{enumerate}[(i)]
    \item
      If $f: A \to B$ is an isomorphism then $f \in \scrC_{B}$.
      
    \item
      If $g: A \to B$ is arbitrary and $(q': B' \epi B) \in \scrC_{B}$
      then the pull-back
      \[
      \xymatrix{
        A' \ar[r] \ar@{->>}[d]_{p'} \ar@{}[dr]|{\text{\emph{PB}}} &
        B' \ar@{->>}[d]^{q'} \\
        A \ar[r]^{g} & B
      }
      \]
      yields a morphism $p' \in \scrC_{A}$. \emph{(``Stability under
        base-change'')} 

    \item
      If $(p: B \epi A) \in \scrC_{A}$ and $(q: C \epi B) \in \scrC_{B}$
      then $pq \in \scrC_{A}$. \emph{(``Transitivity'')}
  \end{enumerate}
  In particular, $(\scrA,J)$ is a \emph{site}.
\end{Lem}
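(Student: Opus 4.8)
The plan is to verify the three conditions directly, each being an immediate translation of an axiom or an already-established fact about admissible epics. The whole statement is essentially a repackaging of the closure properties of $\scrC_A$, so no genuinely new argument is required; the only care needed is to match each condition to the correct axiom.

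First I would dispatch condition~(i): by Remark~\ref{rem:isos-adm-mono-adm-epic} every isomorphism is an admissible epic, so if $f: A \to B$ is an isomorphism it is in particular an admissible epic onto $B$, whence $f \in \scrC_{B}$ by the definition of $\scrC_{B}$. Next, condition~(ii) is nothing but axiom~[E$2^{\opp}$]: the pull-back of the admissible epic $q': B' \epi B$ along the arbitrary morphism $g: A \to B$ exists and its pull-back leg $p': A' \to A$ is again an admissible epic, so $p' \in \scrC_{A}$. Finally, condition~(iii) is exactly axiom~[E$1^{\opp}$], closure of admissible epics under composition: if $p: B \epi A$ and $q: C \epi B$ are admissible epics then so is $pq: C \epi A$, hence $pq \in \scrC_{A}$.

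The only point that merits a sentence of explanation, and the closest thing to an obstacle here, is ensuring that these three conditions are precisely the axioms for a \emph{basis} of a Grothendieck topology (in the sense of Mac~Lane--Moerdijk), as opposed to the axioms for a sieve-theoretic topology; once that bookkeeping is agreed upon, the verifications are each a one-liner. I would therefore open the proof by recalling the definition of a basis (a \emph{pretopology}) and then carry out the three checks in the order (i), (ii), (iii) as above, concluding that $(\scrA, J)$ is a site, where $J$ is the topology generated by the basis $\{\scrC_{A}\}_{A \in \scrA}$.
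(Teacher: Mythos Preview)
Your proposal is correct and takes essentially the same approach as the paper, which simply declares the lemma ``obvious from the definition'' and cites Mac~Lane--Moerdijk; you have merely unpacked which axiom or remark is responsible for each of the three conditions.
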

\begin{proof}
  This is obvious from the definition, 
  see~\cite[Definition~2, p.~111]{MR1300636}.
\end{proof}

The Yoneda functor $y: \scrA \to \Ab^{\scrA^{\opp}}$ associates to
each object $A \in \scrA$ the presheaf (of abelian groups)
$y(A) = \Hom_{\scrA}{({-},A)}$. In general, a \emph{presheaf} is the
same thing as a functor
$G: \scrA^{\opp} \to \Ab$, which we will assume to
be additive except in the next lemma. We will see shortly that $y(A)$ is in
fact a \emph{sheaf} on the site $(\scrA,J)$.

\begin{Lem}
  \label{lem:separated-sheafs}
  Consider the site $(\scrA,J)$ and let
  $G: \scrA^{\opp} \to \Ab$ be a functor.
  \begin{enumerate}[(i)]
    \item
      The presheaf $G$ is \emph{separated} if and only if for each
      admissible epic $p$ the morphism $G(p)$ is monic.
 
    \item
      The presheaf $G$ is a \emph{sheaf} if and only if for each
      admissible epic $p: A \epi B$ the diagram
      \[
      \xymatrix{
        G(B) \ar[r]^-{G(p)} &
        G(A) \ar@<-1ex>[r]_-{d^{0} = G(p_{0})}
        \ar@<1ex>[r]^-{d^{1} = G(p_{1})} &
        G(A \times_{B} A)
      }
      \]
      is an \emph{equalizer} (difference kernel), where
      $p_{0}, p_{1} : A \times_{B} A \epi A$ denote the two
      projections.
      In other words, the presheaf $G$ is a sheaf if and only if for
      all admissible epics $p: A \epi B$ the diagram
      \[
      \xymatrix{
        G(B) \ar[r]^-{G(p)} \ar[d]^-{G(p)} & G(A) \ar[d]^-{d^{1}} \\
        G(A) \ar[r]^-{d^{0}} & G(A \times_{B} A)
      }
      \]
      is a pull-back.
  \end{enumerate}
\end{Lem}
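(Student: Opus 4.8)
The plan is to reduce both assertions to the single-morphism covers furnished by the basis $\{\scrC_A\}$ of Lemma~\ref{lem:basis-topology} and then to unwind the abstract definitions. Since $J$ is the Grothendieck topology generated by this basis, a presheaf is separated (respectively a sheaf) for $J$ if and only if it satisfies the separatedness (respectively the sheaf) axiom for every basic covering family, i.e. for every singleton $\{p\colon A \epi B\}$; this is the standard reduction to a generating basis, see Mac~Lane--Moerdijk~\cite[Chapter~III]{MR1300636}. It therefore suffices to translate the two axioms for such singletons into the stated conditions on the maps $G(p)$.

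For~(i), the separatedness axiom for the cover $\{p\colon A \epi B\}$ asks that an element of $G(B)$ be determined by its restriction along $p$; concretely, the restriction map $G(B) \to G(A)$, namely $G(p)$, must be injective, that is monic. Running over all admissible epics $p$ gives exactly the asserted criterion. If one prefers to argue directly with sieves: two sections of $G(B)$ that agree after applying $G(p)$ agree on the entire sieve generated by $p$, because every morphism in that sieve factors through $p$ and $G$ is a functor; conversely any covering sieve contains an admissible epic, so separatedness forces $G(p)$ monic.

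For~(ii), I would first unwind the sheaf axiom for the singleton $\{p\colon A \epi B\}$. The pullback $A \times_{B} A$ exists by [E$2^{\opp}$] (it is the pullback of $p$ along itself), and a matching family for this cover is precisely an element $s \in G(A)$ whose two restrictions to $A \times_{B} A$ coincide, i.e. $d^{0}s = d^{1}s$ with $d^{i} = G(p_{i})$; an amalgamation is an $x \in G(B)$ with $G(p)x = s$. Thus unique amalgamation for all such matching families is exactly the assertion that
\[
G(B) \xrightarrow{\,G(p)\,} G(A) \underset{d^{1}}{\overset{d^{0}}{\rightrightarrows}} G(A \times_{B} A)
\]
is an equalizer, which is the first form of~(ii); the relevant square commutes because $p\,p_{0} = p\,p_{1}$ gives $d^{0}G(p) = d^{1}G(p)$.

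It remains to identify the equalizer condition with the pullback condition, and this is the one point that is not purely formal. The key observation is that the diagonal $\Delta \colon A \to A \times_{B} A$ satisfies $p_{0}\Delta = p_{1}\Delta = 1_{A}$, so $r := G(\Delta)$ is a common retraction of $d^{0}$ and $d^{1}$. Consequently, if $d^{0}a_{1} = d^{1}a_{2}$ then applying $r$ forces $a_{1} = a_{2}$, so the pullback of the pair $(d^{0},d^{1})$ is carried isomorphically onto the equalizer of $(d^{0},d^{1})$. Hence the commutative square in~(ii) is a pullback if and only if $G(p)$ is the equalizer of $d^{0}$ and $d^{1}$, completing the equivalence. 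I expect the main obstacle to be exactly this last step: for an arbitrary parallel pair the equalizer and the pullback differ, and the argument genuinely uses the reflexivity supplied by $\Delta$; the other subtlety worth checking carefully is the legitimacy of the reduction to basic covers, for which I would lean on \cite[Chapter~III]{MR1300636}.
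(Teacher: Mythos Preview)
Your proposal is correct and follows the same approach as the paper, which simply records that (i) and (ii) are obtained by making the definitions explicit and cites Mac~Lane--Moerdijk~\cite[p.~129 and Proposition~1{[bis]}, p.~123]{MR1300636}. You in fact give more detail than the paper: in particular, your use of the diagonal $\Delta\colon A \to A\times_{B}A$ to produce a common retraction of $d^{0}$ and $d^{1}$, and hence to identify the pullback square with the equalizer, is exactly the point the paper leaves implicit in the phrase ``In other words''.
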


\begin{proof}
  Again, this is obtained by making the definitions
  explicit. Point~(i) is the definition, \cite[p.~129]{MR1300636}, and
  point~(ii) is \cite[Proposition~1{[bis]}, p.~123]{MR1300636}.
\end{proof}

The following lemma shows that the sheaves on the site $(\scrA,J)$
are quite familiar gadgets. 

\begin{Lem}
  \label{lem:sheaf=left-exact}
  Let $G: \scrA^{\opp} \to \Ab$ be an additive functor. The following
  are equivalent:
  \begin{enumerate}[(i)]
    \item
      The presheaf $G$ is a sheaf on the site $(\scrA, J)$.
      
    \item
      For each admissible epic $p: B \epi C$ the sequence
      \[
      0 \xrightarrow{} G(C) \xrightarrow{G(p)} 
      G(B) \xrightarrow{d^{0}-d^{1}} G(B \times_{C} B)
      \]
      is exact.

    \item
      For each short exact sequence $A \mono B \epi C$ in $\scrA$ 
      the sequence 
      \[
      0 \xrightarrow{} G(C) \xrightarrow{} G(B) \xrightarrow{} G(A)
      \]
      is exact, i.e., $G$ is \emph{left exact}.
  \end{enumerate}
\end{Lem}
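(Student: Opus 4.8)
The plan is to obtain (i)$\Leftrightarrow$(ii) as a direct reformulation of the sheaf criterion, and to prove (ii)$\Leftrightarrow$(iii) by computing the kernel pair $B \times_{C} B$ explicitly and reading off that the two difference kernels coincide.

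For (i)$\Leftrightarrow$(ii), I would simply invoke Lemma~\ref{lem:separated-sheafs}~(ii): the presheaf $G$ is a sheaf exactly when, for every admissible epic $p\colon B \epi C$, the fork $G(C) \xrightarrow{G(p)} G(B) \rightrightarrows G(B \times_{C} B)$ is an equalizer. Since $G$ takes values in $\Ab$, the equalizer of two homomorphisms $d^{0}, d^{1}$ is nothing but the kernel of their difference $d^{0} - d^{1}$. Hence the equalizer condition says precisely that $G(p)$ is monic with image $\Ker{(d^{0} - d^{1})}$, that is, that $0 \to G(C) \xrightarrow{G(p)} G(B) \xrightarrow{d^{0}-d^{1}} G(B \times_{C} B)$ is exact. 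This is verbatim condition~(ii), so no further argument is needed.

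The substance lies in (ii)$\Leftrightarrow$(iii). I would fix a short exact sequence $A \xrightarrow{i} B \xrightarrow{p} C$; because every admissible epic occurs as the deflation of such a sequence and conversely, the two conditions range over the same data. The first step is to put the kernel pair into normal form. The diagonal $\Delta\colon B \to B \times_{C} B$ is a section of the epic $p_{0}$ (which is admissible by~[E2$^{\opp}$]), and there is a unique $j\colon A \to B \times_{C} B$ with $p_{0}j = 0$ and $p_{1}j = i$, since $pi = 0$ makes it land in the pull-back. Pulling the short exact sequence $A \mono B \epi C$ back along $p$ shows, via the dual of Proposition~\ref{prop:pushout-exact}, that $A \mono B \times_{C} B \epi B$ is short exact with these maps; consequently $\sigma = \mat{\Delta & j}\colon B \oplus A \to B \times_{C} B$ is an isomorphism satisfying $p_{0}\sigma = \mat{1 & 0}$ and $p_{1}\sigma = \mat{1 & i}$.

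With this normal form the remainder is formal. Applying the additive functor $G$ and using $G(B \oplus A) = G(B) \oplus G(A)$, one computes $G(p_{0}\sigma) = \mat{1 \\ 0}$ and $G(p_{1}\sigma) = \mat{1 \\ G(i)}$, so that $G(\sigma)\,(d^{0} - d^{1}) = \mat{0 \\ -G(i)}$. As $G(\sigma)$ is an isomorphism, $d^{0}-d^{1}$ and $G(i)$ have the same kernel inside $G(B)$, while $G(i)\,G(p) = G(pi) = 0$. Therefore the sequence $0 \to G(C) \xrightarrow{G(p)} G(B) \xrightarrow{d^{0}-d^{1}} G(B \times_{C} B)$ is exact if and only if $0 \to G(C) \xrightarrow{G(p)} G(B) \xrightarrow{G(i)} G(A)$ is exact, which is exactly the equivalence of~(ii) and~(iii). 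The only genuine obstacle is the bookkeeping in establishing the isomorphism $B \times_{C} B \cong B \oplus \Ker{p}$ with the correct matrix entries for the two projections; once this description of the kernel pair is secured, both remaining implications drop out immediately.
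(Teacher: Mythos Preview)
Your proof is correct and follows essentially the same approach as the paper: both arguments hinge on the splitting $B \times_{C} B \cong B \oplus A$ coming from the diagonal section of one projection together with $\Ker{p_{0}} = A$. The only cosmetic difference is that the paper routes (i)$\Leftrightarrow$(iii) through the pull-back square formulation of Lemma~\ref{lem:separated-sheafs}(ii), obtaining the ``doubled'' sequence $0 \to G(C) \to G(B) \oplus G(B) \to G(B \times_{C} B)$ and then invoking stability of left exactness under direct summands, whereas you work directly with the difference $d^{0}-d^{1}$ and read off $\Ker(d^{0}-d^{1}) = \Ker G(i)$; your route is marginally more streamlined but the substance is identical.
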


\begin{proof}
  By Lemma~\ref{lem:separated-sheafs}~(ii) we have that $G$ is a sheaf if
  and only if the sequence
  \[
  0 \xrightarrow{} G(C) \xrightarrow{\mat{G(p) \\ G(p)}}
  G(B) \oplus G(B) \xrightarrow{\mat{G(p_{0}) & & -G(p_{1})}} 
  G(B \times_{C} B)
  \]
  is exact. Since $p_{1}: B \times_{C} B \epi B$ is a split epic with
  kernel $A$, there is an isomorphism
  $B \times_{C} B \to A \oplus B$ and it is easy to check that
  the above sequence is isomorphic to 
  \[
  0 \xrightarrow{} G(C) \xrightarrow{} G(B) \oplus G(B) \xrightarrow{}
  G(A) \oplus G(B).
  \]
  Because left exact sequences are stable under taking direct sums
  and passing to direct summands, (i) is equivalent to (iii).
  That~(i) is equivalent to~(ii) is obvious by
  Lemma~\ref{lem:separated-sheafs}~(ii).
\end{proof}

\begin{Cor}[{\cite[A.7.6]{MR1106918}}]
  \label{cor:represented-functor-sheaf}
  The represented functor $y(A) = \Hom_{\scrA}{({-},A)}$ is a sheaf
  for every object $A$ of $\scrA$.
\end{Cor}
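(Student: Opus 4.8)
The plan is to reduce the entire statement to Lemma~\ref{lem:sheaf=left-exact}, which already identifies the sheaves on the site $(\scrA,J)$ with the left exact additive functors $\scrA^{\opp} \to \Ab$. Since $\Hom_{\scrA}$ is biadditive, the represented functor $y(A) = \Hom_{\scrA}{({-},A)}$ is additive, so by the equivalence of conditions~(i) and~(iii) of Lemma~\ref{lem:sheaf=left-exact} it suffices to verify that $y(A)$ is left exact. Concretely, I would fix an arbitrary short exact sequence $X' \xrightarrow{i} X \xrightarrow{p} X''$ in $\scrA$ and check that
\[
0 \to \Hom_{\scrA}{(X'',A)} \xrightarrow{p^{\ast}} \Hom_{\scrA}{(X,A)} \xrightarrow{i^{\ast}} \Hom_{\scrA}{(X',A)}
\]
is exact, where $p^{\ast}$ and $i^{\ast}$ denote precomposition with $p$ and $i$ respectively.

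Both exactness statements are nothing but a reformulation of the defining property of the kernel-cokernel pair $(i,p)$. First, $p$ is a cokernel of $i$, hence in particular epic; thus $p^{\ast}g = gp = 0$ forces $g = 0$, which gives the injectivity of $p^{\ast}$ and exactness at $\Hom_{\scrA}{(X'',A)}$. Second, exactness at $\Hom_{\scrA}{(X,A)}$ asserts that the kernel of $i^{\ast}$ coincides with the image of $p^{\ast}$: a morphism $g: X \to A$ lies in the kernel of $i^{\ast}$ precisely when $gi = 0$, and the universal property of the cokernel $p$ says exactly that every such $g$ factors uniquely as $g = fp = p^{\ast}f$ for a unique $f: X'' \to A$. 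This is literally the statement that $p$ is a cokernel of $i$, so no further argument is needed.

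In this argument there is no genuine obstacle: all the real work---the translation of the sheaf condition into left exactness via the identification $B \times_{C} B \cong A \oplus B$---has already been carried out in Lemma~\ref{lem:sheaf=left-exact}, and what remains is the elementary categorical fact that a representable functor $\Hom_{\scrA}{({-},A)}$ converts cokernels into kernels. The only point that genuinely deserves attention is the bookkeeping of variance: $y(A)$ is \emph{contravariant} in its argument, so the admissible epic $p$ appearing in the chosen short exact sequence induces the \emph{monomorphism} $p^{\ast}$ at the left-hand end of the sheaf sequence, exactly matching the left exact shape $0 \to G(C) \to G(B) \to G(A)$ demanded in condition~(iii) of Lemma~\ref{lem:sheaf=left-exact}.
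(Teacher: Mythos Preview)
Your proposal is correct and follows essentially the same approach as the paper: invoke Lemma~\ref{lem:sheaf=left-exact} to reduce to left exactness of $\Hom_{\scrA}{({-},A)}$, then verify this using that the admissible epic is epic (injectivity of $p^{\ast}$) and is a cokernel of the admissible monic (exactness in the middle). The paper's proof is more terse but the content is identical.
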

\begin{proof}
  Given an exact sequence $B' \mono B \epi B''$ we need to prove that 
  \[
  0 \xrightarrow{} \Hom_{\scrA}{(B'', A)} \xrightarrow{}
  \Hom_{\scrA}{(B, A)} \xrightarrow{} \Hom_{\scrA}{(B', A)}
  \]
  is exact. That the sequence is exact at $\Hom_{\scrA}{(B,A)}$
  follows from the fact that $B \epi B''$ is a cokernel of
  $B' \mono B$. That the sequence is exact at $\Hom_{\scrA}{(B'', A)}$ 
  follows from the fact that $B \epi B''$ is epic.
\end{proof}

\subsection{Outline of the Proof}
Let now $\scrY$ be the category of additive functors
$\scrA^{\opp} \to \Ab$ and let $\scrB$ be the category of (additive)
sheaves on the site $(\scrA,J)$. Let $j_{\ast}: \scrB \to \scrA$ be
the inclusion. By
Corollary~\ref{cor:represented-functor-sheaf}, the Yoneda functor $y$
factors as
\[
\xymatrix{
  \scrA \ar[dr]_{y} \ar[r]^{i} & \scrB \ar[d]^{j_{\ast}} \\
  & \scrY
}
\]
via a functor $i : \scrA \to \scrB$. We will prove that the 
category $\scrB = \Sheaves{(A,J)}$ is abelian and
we will check that the functor $i$ has the
properties asserted in the embedding theorem.

The category $\scrY$ is a Grothendieck abelian category (there is a
generator, small products and coproducts exist and
filtered colimits are exact)---as a
functor category, these properties are inherited from $\Ab$, as limits
and colimits are taken pointwise. The crux of the proof of the
embedding theorem is to show
that $j_{\ast}$ has a left adjoint $j^{\ast}$ such that
$j^{\ast}j_{\ast} = \id_{\scrB}$, namely sheaf\mbox{}ification. As
soon as this is established, the rest will be relatively painless.

\subsection{Sheaf\mbox{}ification}

The goal of this section is to construct the sheaf\mbox{}ification
functor on the site $(\scrA,J)$ and to prove its basic properties. We
will construct an endo\-functor $L: \scrY \to \scrY$ which associates to
each presheaf a separated presheaf and to each separated presheaf a
sheaf. The sheaf\mbox{}ification functor will then be given by
$j^{\ast} = LL$.

We need one more concept from the theory of sites:

\begin{Lem}
  Let $A \in \scrA$. A covering $p'': A'' \epi A$ is a
  \emph{refinement} of the covering $p': A' \epi A$ if
  and only if there exists a morphism $a: A'' \to A'$ such that
  $p' a = p''$.
\end{Lem}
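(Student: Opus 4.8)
The plan is to reduce the statement to the definition of refinement of covering sieves in the sense of Mac~Lane--Moerdijk~\cite[Chapter~III]{MR1300636}. Recall that a single covering $p: A' \epi A$ in $\scrC_{A}$ generates a covering sieve $S_{p}$ on $A$, namely the set of all morphisms $X \to A$ which factor through $p$, and that by definition $p''$ is a refinement of $p'$ precisely when $S_{p''} \subseteq S_{p'}$. Thus everything comes down to translating this inclusion of sieves into the asserted factorization condition, and the whole argument is a direct unwinding of the definitions.

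First I would prove the implication from factorization to refinement. Suppose there is a morphism $a: A'' \to A'$ with $p' a = p''$. If $g: X \to A$ lies in $S_{p''}$, say $g = p'' h$ for some $h: X \to A''$, then $g = p'(a h)$ factors through $p'$, so $g \in S_{p'}$. Hence $S_{p''} \subseteq S_{p'}$, which is exactly the assertion that $p''$ refines $p'$.

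For the converse I would assume that $p''$ refines $p'$, i.e.\ that $S_{p''} \subseteq S_{p'}$, and read off the factorization. The morphism $p''$ itself belongs to $S_{p''}$, since $p'' = p'' \circ 1_{A''}$, and therefore $p'' \in S_{p'}$. By the very definition of $S_{p'}$ this means that $p''$ factors through $p'$, so there is a morphism $a: A'' \to A'$ with $p' a = p''$, as required.

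The argument presents no real difficulty; the only point that merits attention is to fix once and for all which of the two a~priori possible directions the refinement relation runs in, so that the factorization $p' a = p''$ is attached to the correct covering. Once the convention is pinned down---a refinement is the covering with the \emph{smaller} generated sieve---both inclusions above are immediate, and no appeal to the exact structure $\scrE$ beyond the existence of the coverings is needed.
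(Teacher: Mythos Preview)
Your proof is correct and essentially follows the same line as the paper: both treat the lemma as a direct specialization of the definitions in Mac~Lane--Moerdijk, with the paper simply citing the relevant page while you actually spell out the sieve-inclusion argument. The only cosmetic difference is that the paper points to the notion of a \emph{matching family} on that page rather than to refinement of sieves, but in the present singleton-cover setting these unwind to the same factorization condition, and your explicit verification is in any case more informative than the bare citation.
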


\begin{proof}
  This is the specialization of a
  \emph{matching family} as given in \cite[p.~121]{MR1300636} 
  in the present context.
\end{proof}

By definition, refinement gives the structure of a filtered category
on $\scrC_{A}$ for each $A \in \scrA$. More precisely, let $\scrD_{A}$
be the following category: the objects are the coverings $(p': A' \epi A)$
and there exists at most one morphism between any two objects of
$\scrD_{A}$: there exists a morphism
$(p': A' \epi A) \to (p'': A'' \epi A)$ in $\scrD_{A}$ if and only if
there exists $a: A'' \to A'$ such that $p'a = p''$. To see that
$\scrD_{A}$ is filtered, let $(p':A' \epi A)$ and $(p'': A'' \epi A)$
be two objects and put $A''' = A' \times_{A} A''$, so there
is a pull-back diagram
\[
\xymatrix{
  A''' \ar@{->>}[d]^{a} \ar@{->>}[r]^{a'} \ar@{}[dr]|{\text{PB}} &
  A'' \ar@{->>}[d]^{p''} \\
  A' \ar@{->>}[r]^{p'} & A.
}
\]
Put $p''' = p'a = p''a'$, so the object
$(p''' : A''' \epi A)$ of $\scrD_{A}$ is
a common refinement of $(p':A' \epi A)$ and $(p'':A'' \epi A)$.

\begin{Lem}
  \label{lem:dir-cat}
  Let $A_{1}, A_{2} \in \scrA$ be any two objects.
  \begin{enumerate}[(i)]
    \item
      There is a functor
      $Q: \scrD_{A_{1}} \times \scrD_{A_{2}} \to
      \scrD_{A_{1} \oplus A_{2}},\;
      (p_{1}', p_{2}') \mapsto (p_{1}' \oplus p_{2}')$.

    \item
      Let $(p':A' \epi A_{1} \oplus A_{2})$
      be an object of $\scrD_{A_{1} \oplus A_{2}}$ and for $i=1,2$ let 
      \[
      \xymatrix{
        A_{i}' \ar[r] \ar@{}[dr]|{\text{\emph{PB}}}
        \ar@{->>}[d]^{p_{i}'} &
        A' \ar@{->>}[d]^{p'} \\
        A_{i} \ar[r] & A_{1} \oplus A_{2}
      }
      \]
      be a pull-back diagram in which the bottom arrow is the
      inclusion. This construction defines a functor
      \[
      P: \scrD_{A_{1} \oplus A_{2}} \longrightarrow
      \scrD_{A_{1}} \times \scrD_{A_{2}},
      \quad p' \longmapsto (p_{1}', p_{2}').
      \]
      
    \item
      There are a natural transformation
      $\id_{\scrD_{A_{1} \oplus A_{2}}} \Rightarrow PQ$
      and a natural isomorphism 
      $QP \cong \id_{\scrD_{A_{1}} \times \scrD_{A_{2}}}$. In particular,
      the images of $P$ and $Q$ are cofinal.
  \end{enumerate}
\end{Lem}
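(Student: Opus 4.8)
The plan is to deduce (i) and (ii) directly from the exact structure, and then to exploit that each $\scrD_A$ is a preorder (at most one morphism between any two coverings), so that naturality of the transformations below, as well as the isomorphism $QP \cong \id$, comes for free once the relevant morphisms of coverings are exhibited. For (i), I would note that given coverings $p_i' : A_i' \epi A_i$ the direct sum $p_1' \oplus p_2' : A_1' \oplus A_2' \epi A_1 \oplus A_2$ is again an admissible epic by Proposition~\ref{prop:sum-exact}, hence a covering, and that a pair of refinements $a_i : A_i'' \to A_i'$ assembles into the refinement $a_1 \oplus a_2$; this makes $Q$ a functor. For (ii), I would use that each inclusion $A_i \mono A_1 \oplus A_2$ is an admissible monic (Lemma~\ref{lem:split-sequences-exact}), so that the pull-back $p_i'$ of the admissible epic $p'$ along it is again an admissible epic by axiom~[E$2^{\opp}$], and that a refinement $b : B' \to A'$ over $A_1 \oplus A_2$ restricts, by the universal property of the defining pull-backs, to refinements $b_i : B_i' \to A_i'$.

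Next I would establish the isomorphism $QP \cong \id$ (apply $Q$, then $P$). Fixing $(p_1', p_2')$, write $r_1 : R_1 \epi A_1$ for the first component of $QP(p_1',p_2')$, i.e.\ the pull-back of $p_1' \oplus p_2'$ along $\iota_1 : A_1 \mono A_1 \oplus A_2$, and let $\mat{\pi_1 \\ \pi_2} : R_1 \to A_1' \oplus A_2'$ be its other leg. The pull-back identity $(p_1' \oplus p_2')\mat{\pi_1 \\ \pi_2} = \iota_1 r_1$ forces $p_1' \pi_1 = r_1$, so $\pi_1$ is a morphism $(r_1) \to (p_1')$ in $\scrD_{A_1}$; conversely the universal property of $R_1$ applied to the commuting legs $\mat{1 \\ 0} : A_1' \to A_1' \oplus A_2'$ and $p_1' : A_1' \to A_1$ yields a morphism $(p_1') \to (r_1)$. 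Because $\scrD_{A_1}$ is a preorder these are mutually inverse, so $r_1 \cong p_1'$; the second coordinate is symmetric. Since a product of preorders is again a preorder, this objectwise isomorphism is automatically natural, giving $QP \cong \id_{\scrD_{A_1} \times \scrD_{A_2}}$.

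Finally I would produce the natural transformation $\id \Rightarrow PQ$ (apply $P$, then $Q$) and read off cofinality. Given $p' : A' \epi A_1 \oplus A_2$ with $P(p') = (p_1', p_2')$ and pull-back legs $q_i : A_i' \to A'$ satisfying $p' q_i = \iota_i p_i'$, one has $PQ(p') = p_1' \oplus p_2'$, and $c = \mat{q_1 & q_2} : A_1' \oplus A_2' \to A'$ satisfies $p' c = \mat{\iota_1 p_1' & \iota_2 p_2'} = p_1' \oplus p_2'$, which is precisely a morphism $(p') \to PQ(p')$ in $\scrD_{A_1 \oplus A_2}$; naturality is again automatic. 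This morphism refines every covering of $A_1 \oplus A_2$ by one of product form, so---$\scrD_{A_1 \oplus A_2}$ being filtered---the image of $Q$ is cofinal, while $QP \cong \id$ makes $P$ essentially surjective and hence its image cofinal as well. The one step demanding real care, and the one that repairs the gap noted before the statement, is this last cofinality claim: it asserts that an \emph{arbitrary} covering of $A_1 \oplus A_2$, and not merely a split one, is refined by a covering in the image of $Q$, which is exactly what the map $c$ delivers.
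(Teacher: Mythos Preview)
Your proof is correct and follows essentially the same approach as the paper's: both use Proposition~\ref{prop:sum-exact} for $Q$, axiom~[E$2^{\opp}$] and the pull-back universal property for $P$, and the biproduct universal property for the natural transformation. You supply considerably more detail than the paper's terse argument---in particular you spell out the refinement map $c = \mat{q_1 & q_2}$ explicitly and draw the cofinality conclusions, which the paper leaves implicit---and your parenthetical clarifications ``(apply $Q$, then $P$)'' helpfully disambiguate the composition order, which is stated inconsistently between the lemma and its proof in the paper.
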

\begin{proof}
  That $P$ is a functor follows from its construction and the
  universal property of pull-back diagrams in conjunction with
  axiom~[E2$^{\opp}$].
  That $Q$ is well-defined follows from Proposition~\ref{prop:sum-exact} and
  that $PQ \cong \id_{\scrD_{A_{1}} \times \scrD_{A_{2}}}$ is easy to check.
  That there is a natural transformation
  $\id_{\scrD_{A_{1} \oplus A_{2}}} \Rightarrow QP$ follows from the
  universal property of products.
\end{proof}

Let $(p'': A'' \epi A)$ be a
refinement of $(p': A' \epi A)$, and let $a : A'' \to A'$ be such that 
$p'a = p''$. By the universal property of pull-backs, $a$ yields a
unique morphism $A'' \times_{A} A'' \to A' \times_{A} A'$ which we
denote by $a \times_{A} a$. Hence, for every
additive functor $G: \scrA^{\opp} \to \Ab$, we
obtain a commutative diagram in~$\Ab$:
\[
\xymatrix{
  \Ker{(d^{0} - d^{1})} \ar@{.>}[d]^{\exists!} \ar[r] &
  G(A') \ar[r]^-{d^{0} - d^{1}} \ar[d]^{G{(a)}} & 
  G(A' \times_{A} A') \ar[d]^{G(a \times_A a)} \\
  \Ker{(d^{0} - d^{1})} \ar[r] &
  G(A'') \ar[r]^-{d^{0} - d^{1}} & 
  G(A'' \times_{A} A'').
}
\]
The next thing to observe is that the dotted morphism does not depend
on the choice of $a$. Indeed, if $\tilde{a}$ is another morphism such that
$p' \tilde{a} = p''$, consider the diagram
\[
\xymatrix{
  A''  \ar@/_1.2pc/[ddr]_-{a} \ar@/^1.2pc/[drr]^-{\tilde{a}} 
  \ar@{.>}[dr]^{\exists! b} \\
  & A' \times_{A} A' \ar[r]^-{p_{1}'} \ar[d]^-{p_{0}'}
  \ar@{}[dr]|{\text{PB}} & A' \ar[d]^{p'}
  \\
  & A' \ar[r]^{p'} & A   
}
\]
and $b: A'' \to A' \times_{A} A'$ is such that
\[
G(b) (d^{0} - d^{1}) = G(b) G(p_0') - G(b) G(p_1') = G(a) - G(\tilde{a}),
\]
so 
$G(a) - G(\tilde{a}) = 0$ on $\Ker{(d^{0} - d^{1})}$.

For $G: \scrA^{\opp} \to \Ab$, we
put $\ell G(p': A' \epi A) :=
\Ker{(G(A') \xrightarrow{d^{0} - d^{1}} G(A' \times_{A} A'))}$ and we
have just seen that this
defines a functor $\ell G: \scrD_{A} \to \Ab$.

\begin{Lem}
  \label{lem:functoriality-L}
  Define
  \[
  LG(A) = \varinjlim_{\scrD_{A}} \ell G(p':A' \epi A).
  \]
  \begin{enumerate}[(i)]
    \item
     $LG$ is an additive contravariant functor in $A$.
    \item
     $L$ is a covariant functor in $G$.
  \end{enumerate}
\end{Lem}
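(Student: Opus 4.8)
The plan is to prove (ii) first, as it is purely formal, and then (i), where the real work is the action on morphisms and the additivity, the latter being exactly what Lemma~\ref{lem:dir-cat} is designed to supply. Throughout I use that each $\scrD_{A}$ is filtered, so that $\varinjlim_{\scrD_{A}}\ell G$ exists in $\Ab$ and is computed by the usual filtered-colimit formula; I also use that a cofinal functor out of a filtered category computes colimits and that filtered colimits in $\Ab$ commute with finite direct sums.

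For (ii), a natural transformation $\phi\colon G\Rightarrow G'$ commutes with $d^{0}-d^{1}$ by naturality, since $d^{0}$ and $d^{1}$ are $G$ applied to the two projections $A'\times_{A}A'\epi A'$. Hence for every covering $(p'\colon A'\epi A)$ the square
\[
\xymatrix{
  \ell G(p') \ar[r] \ar[d] & G(A') \ar[d]^{\phi_{A'}} \\
  \ell G'(p') \ar[r] & G'(A')
}
\]
induces a unique map $\ell G(p')\to\ell G'(p')$, and these assemble into a natural transformation of functors $\scrD_{A}\to\Ab$. Taking colimits gives $L\phi_{A}\colon LG(A)\to LG'(A)$, compatibility with the transition maps makes $L\phi$ natural in $A$, and $L(\psi\circ\phi)=L\psi\circ L\phi$, $L(\id)=\id$ hold pointwise. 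This settles~(ii).

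For the functoriality part of (i), I would first fix a chosen pullback for every cospan, so that pulling coverings back along a morphism $g\colon A\to B$ gives a functor $g^{\ast}\colon\scrD_{B}\to\scrD_{A}$ between the filtered (indeed posetal) categories. If $(q'\colon B'\epi B)$ has chosen pullback $(p'\colon A'\epi A)$ with top arrow $\tilde g\colon A'\to B'$, then $G(\tilde g)$ carries $\ell G(q')$ into $\ell G(p')$ because the induced map $A'\times_{A}A'\to B'\times_{B}B'$ intertwines the two coequalizer maps. These maps form a natural transformation from $\ell G$ on $\scrD_{B}$ to $(\ell G)\circ g^{\ast}$, and composing with the canonical comparison $\varinjlim_{\scrD_{B}}(\ell G\circ g^{\ast})\to\varinjlim_{\scrD_{A}}\ell G$ defines $LG(g)$. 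Here $LG(\id)=\id$ is immediate, while $LG(g\circ h)=LG(h)\circ LG(g)$ follows because $h^{\ast}g^{\ast}$ and $(gh)^{\ast}$ differ only by the canonical isomorphism of iterated pullbacks, a morphism of the relevant $\scrD$ that induces the identity on the filtered colimit; the same principle shows independence of all auxiliary choices, since two pullbacks of one cospan are canonically isomorphic through a morphism of $\scrD_{A}$.

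The additivity of $LG$ is where Lemma~\ref{lem:dir-cat} enters, and I would prove it by showing $LG$ preserves finite biproducts (which is equivalent to additivity for a functor between additive categories). For coverings $p_{i}'\colon A_{i}'\epi A_{i}$ one has $(A_{1}'\oplus A_{2}')\times_{A_{1}\oplus A_{2}}(A_{1}'\oplus A_{2}')\cong(A_{1}'\times_{A_{1}}A_{1}')\oplus(A_{2}'\times_{A_{2}}A_{2}')$, so additivity of $G$ splits $d^{0}-d^{1}$ for $p_{1}'\oplus p_{2}'$ as a direct sum and yields a natural isomorphism $\ell G(p_{1}'\oplus p_{2}')\cong\ell G(p_{1}')\oplus\ell G(p_{2}')$, i.e.\ $\ell G\circ Q\cong\ell G(p_{1}')\oplus\ell G(p_{2}')$ on $\scrD_{A_{1}}\times\scrD_{A_{2}}$. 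Since the image of $Q$ is cofinal by Lemma~\ref{lem:dir-cat}~(iii) and $\scrD_{A_{1}}\times\scrD_{A_{2}}$ is filtered, the colimit defining $LG(A_{1}\oplus A_{2})$ is computed along $Q$, whence
\[
LG(A_{1}\oplus A_{2})\cong\varinjlim_{\scrD_{A_{1}}\times\scrD_{A_{2}}}\bigl(\ell G(p_{1}')\oplus\ell G(p_{2}')\bigr)\cong LG(A_{1})\oplus LG(A_{2}).
\]
Applying $Q$ to the biproduct inclusions and projections identifies this isomorphism with $LG$ of those structure maps, so $LG$ is additive. The main obstacle is the bookkeeping in~(i)---checking functoriality and choice-independence of $LG(g)$---but this is routine once one observes that every ambiguity is mediated by a morphism of the filtered category $\scrD_{A}$; part~(ii) and the colimit manipulations in the additivity argument are formal.
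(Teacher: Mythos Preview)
Your proof is correct and follows essentially the same approach as the paper's: both define $LG$ on morphisms via the pullback functor $g^{\ast}\colon\scrD_{B}\to\scrD_{A}$, both invoke Lemma~\ref{lem:dir-cat} for additivity, and both handle~(ii) by passing natural transformations through kernels and then colimits. The paper's argument is much terser (``immediate from going through the definitions''), whereas you spell out in particular the additivity step---identifying $\ell G(p_{1}'\oplus p_{2}')\cong\ell G(p_{1}')\oplus\ell G(p_{2}')$ and then using cofinality of $Q$ to compute the colimit---which is precisely what the paper's one-line appeal to Lemma~\ref{lem:dir-cat} intends.
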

\begin{proof}
  This is immediate from going through the definitions:
  
  To prove~(i), let $f: A \to B$ be an arbitrary morphism. 
  Lemma~\ref{lem:basis-topology}~(ii) shows that by taking
  pull-backs we obtain a functor
  \[
  \scrD_{B} \xrightarrow{f^{\ast}} \scrD_{A}
  \]
  which, by passing to the colimit, induces a unique
  morphism $LG(B) \xrightarrow{LG(f)} LG(A)$
  compatible with $f^{\ast}$. 
  From this uniqueness, we deduce $LG(fg) = LG(g) LG(f)$. The
  additivity of $LG$ is a consequence of Lemma~\ref{lem:dir-cat}.
  
  To prove~(ii), let $\alpha: F \Rightarrow G$ be a natural
  transformation between two (additive) presheaves. Given an object
  $A \in \scrA$, we obtain a morphism between the colimit
  diagrams defining $LF(A)$ and $LG(A)$ and we denote the unique resulting 
  map by $L(\alpha)_{A}$. Given a morphism
  $f: A \to B$, there is a commutative diagram
  \[
  \xymatrix{
    LF(B) \ar[d]^{L(\alpha)_{B}} \ar[r]^{LF(f)} &
    LF(A) \ar[d]^{L(\alpha)_{A}} \\
    LG(B) \ar[r]^{LG(f)} & LG(A),
  }
  \]
  as is easily checked. The uniqueness in the definition of
  $L(\alpha)_{A}$ implies that for each $A \in \scrA$
  the equation
  \[
  L(\alpha \circ \beta)_{A} =
  L(\alpha)_{A} \circ L(\beta)_{A}
  \]
  holds. The reader in need of more
  details may consult~\cite[p.~206f]{MR1315049}.
\end{proof}

\begin{Lem}[{\cite[A.7.8]{MR1106918}}]
  \label{lem:L-additive-finite-limits}
  The functor $L: \scrY \to \scrY$ has the following properties:
  \begin{enumerate}[(i)]
    \item
      It is additive and preserves finite limits.

    \item
      There is a natural transformation
      $\eta: \id_{\scrY} \Rightarrow L$.
  \end{enumerate}
\end{Lem}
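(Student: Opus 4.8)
The plan is to exploit two facts already in place: finite limits and colimits in $\scrY$ are computed objectwise (as in any functor category with values in the abelian category $\Ab$), and for each $A$ the indexing category $\scrD_{A}$ of coverings is filtered. Since limits in $\scrY$ are pointwise, it suffices to analyse, for each fixed $A \in \scrA$, the functor $G \mapsto LG(A)$, which by definition is the filtered colimit $\varinjlim_{\scrD_{A}}$ of the functors
\[
G \;\longmapsto\; \ell G(p':A' \epi A) = \Ker\bigl(G(A') \xrightarrow{d^{0}-d^{1}} G(A' \times_{A} A')\bigr).
\]
For a fixed covering $(p':A' \epi A)$ the assignment $G \mapsto \ell G(p')$ is the kernel of the natural transformation $d^{0}-d^{1}$ between the evaluation functors $G \mapsto G(A')$ and $G \mapsto G(A' \times_{A} A')$. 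These evaluation functors preserve all limits (limits in $\scrY$ being pointwise), and a kernel is a finite limit, so $G \mapsto \ell G(p')$ preserves finite limits.

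First I would treat~(i). Additivity on morphisms is immediate, since both the kernel construction $\ell({-})(p')$ and the filtered colimit are additive operations, whence $L(\alpha + \beta) = L\alpha + L\beta$; that $LG$ genuinely lands in $\scrY$, i.e.\ is additive in $A$, is exactly Lemma~\ref{lem:dir-cat}, already used in Lemma~\ref{lem:functoriality-L}. For the preservation of finite limits, let $i \mapsto G_{i}$ be a finite diagram in $\scrY$ with limit $G = \lim_{i} G_{i}$. Because each $\ell({-})(p')$ preserves finite limits, for every covering $p'$ we have $\ell G(p') \cong \lim_{i} \ell G_{i}(p')$. Applying $\varinjlim_{\scrD_{A}}$ and invoking that filtered colimits commute with finite limits in $\Ab$ gives
\[
LG(A) = \varinjlim_{\scrD_{A}} \lim_{i} \ell G_{i}(p') \;\cong\; \lim_{i} \varinjlim_{\scrD_{A}} \ell G_{i}(p') = \bigl(\lim_{i} L G_{i}\bigr)(A).
\]
Since this holds for every $A$ and limits in $\scrY$ are pointwise, $L$ preserves finite limits; in particular it preserves finite products, which re-confirms additivity.

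For~(ii), I would construct $\eta$ from the \emph{identity covering}. By [E0$^{\opp}$] the morphism $1_{A}:A \epi A$ is an admissible epic, hence an object of $\scrD_{A}$. Its self-pullback satisfies $A \times_{A} A \cong A$ with both projections equal to $1_{A}$, so $d^{0} = d^{1}$ and therefore $\ell G(1_{A}) = \Ker(0) = G(A)$. I then define $(\eta_{G})_{A}: G(A) = \ell G(1_{A}) \to LG(A)$ to be the colimit structure map at the object $1_{A} \in \scrD_{A}$. Naturality in $A$ holds because the pullback of $1_{B}$ along any $f:A \to B$ is $1_{A}$, so the transition maps defining $LG(f)$ are compatible with the identity-covering structure maps; naturality in $G$ follows from the description of $L(\alpha)$ through the colimit structure maps in Lemma~\ref{lem:functoriality-L}.

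The step demanding the most care is the interchange $\varinjlim_{\scrD_{A}} \lim_{i} \cong \lim_{i} \varinjlim_{\scrD_{A}}$: it rests on $\scrD_{A}$ being filtered (established just before Lemma~\ref{lem:dir-cat}) together with the exactness of filtered colimits in $\Ab$, and one must verify that the canonical comparison morphism is genuinely an isomorphism, not merely a natural map. Everything else is a routine unwinding of the colimit definition of $L$.
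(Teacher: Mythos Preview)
Your argument is correct and, for part~(i), essentially identical to the paper's: both reduce to the fact that kernels and filtered colimits in $\Ab$ commute with finite limits, with limits in $\scrY$ computed pointwise.

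For part~(ii) your construction is a mild variant. The paper defines, for \emph{every} covering $p':A'\twoheadrightarrow A$, a map $\tilde\eta_{p'}:G(A)\to\ell G(p')$ as the factorisation of $G(p')$ through the kernel (using $p'p_0=p'p_1$), checks these form a cone, and then passes to the colimit. You instead observe that $1_A$ is itself an object of $\scrD_A$ with $\ell G(1_A)=G(A)$ and take the single colimit structure map there. These agree: since every $p'$ refines $1_A$ via $p'$ itself, the transition map $\ell G(1_A)\to\ell G(p')$ is exactly the paper's $\tilde\eta_{p'}$, so your structure map at $1_A$ and the paper's colimit-of-cone map coincide. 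Your route is a touch slicker in that it avoids verifying cone compatibility, at the cost of making the naturality checks slightly less transparent; the paper's route makes the relationship to arbitrary coverings explicit, which is convenient for the subsequent element-level lemmas.
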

\begin{proof} 
  That $L$ preserves finite limits follows from the fact that filtered colimits
  and kernels in $\Ab$ commute with finite limits, as limits
  in $\scrY$ are formed pointwise, see also
  \cite[Lemma~3.3.1]{MR1315049}. Since $L$ preserves finite
  limits, it preserves in particular finite products, hence it is
  additive. This settles point~(i).

  For each $(p':A' \epi A) \in \scrD_{A}$ the morphism 
  $G(p'): G(A) \to G(A')$ factors uniquely over
  \[
  \tilde{\eta}_{p'}:G(A) \to \Ker{(G(A') \to G(A' \times_{A} A'))}.
  \]
  By passing to the colimit over $\scrD_{A}$, this induces a morphism
  $\tilde{\eta}_{A}: G(A) \to LG(A)$
  which is clearly natural in $A$. In other words, the $\tilde{\eta}_{A}$
  yield a natural transformation 
  $\eta_{G}: G \Rightarrow LG$, i.e., a morphism in $\scrY$. We
  leave it to the reader to check that the 
  construction of $\eta_{G}$ is compatible with natural
  transformations $\alpha: G \Rightarrow F$ so that the
  $\eta_{G}$ assemble to yield a
  natural transformation $\eta: \id_{\scrY} \Rightarrow L$,
  as claimed in point~(ii).
\end{proof}


\begin{Lem}[{\cite[A.7.11, (a), (b), (c)]{MR1106918}}]
  \label{lem:technical-lemma-elements}
  Let $G \in \scrY$ and let $A \in \scrA$.
  \begin{enumerate}[(i)]
    \item
      For all $x \in LG(A)$ there exists an admissible epic
      $p' : A' \epi A$ and $y \in G(A')$ such that
      $\eta(y) = LG(p')(x)$ in $LG(A')$.

    \item
      For all $x \in G(A)$, we have $\eta(x) = 0$ in $LG(A)$ if and
      only if there exists an admissible epic $p': A' \epi A$ such
      that $G(p')(x) = 0$ in $G(A')$.

    \item
      We have $LG = 0$ if and only if for all $A \in \scrA$ and all
      $x \in G(A)$ there exists an admissible epic $p': A' \epi A$
      such that $G(p')(x) = 0$.
 
  \end{enumerate}
\end{Lem}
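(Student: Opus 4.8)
The plan is to read all three statements directly off the filtered colimit defining $LG$, so first I would record the dictionary that does all the work. By construction $LG(A) = \varinjlim_{\scrD_{A}} \ell G$ is a \emph{filtered} colimit (the filteredness of $\scrD_{A}$ was checked just before Lemma~\ref{lem:dir-cat}), so every element is the class of some cocycle $y \in \ell G(p':A' \epi A) \subseteq G(A')$, i.e.\ some $y \in G(A')$ with $d^{0}y = d^{1}y$ in $G(A' \times_{A} A')$, and such a class vanishes if and only if $y$ restricts to $0$ along some refinement. The morphism $(p':A'\epi A) \to (p'':A''\epi A)$ of $\scrD_{A}$ attached to $a:A''\to A'$ with $p'a = p''$ induces on $\ell G$ the map $G(a)$ (this is how $\ell G$ was made into a functor on $\scrD_{A}$). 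Two facts are then immediate: the trivial covering $1_{A}$ is \emph{initial} in $\scrD_{A}$ (the unique morphism $1_{A}\to p'$ is attached to $a=p'$), and $\ell G(1_{A}) = \Ker{(G(A)\xrightarrow{0}G(A))} = G(A)$, so that $\eta_{A}(x)$ is nothing but the colimit class of $x$ viewed in $\ell G(1_{A})=G(A)$.

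Granting this, part~(ii) is just the vanishing criterion: $\eta(x)=0$ holds precisely when the class of $x\in\ell G(1_{A})$ dies at some later stage $p':A'\epi A$, and the structure map $\ell G(1_{A})\to\ell G(p')$ attached to $a=p'$ is $G(p')$; hence $\eta(x)=0$ if and only if $G(p')(x)=0$ in $G(A')$ for some admissible epic $p'$. For part~(i), I would represent $x\in LG(A)$ by a cocycle $y\in\ell G(p':A'\epi A)$, then pull the covering $p'$ back along $p':A'\to A$ to obtain the covering $\pi_{0}:A'\times_{A}A'\epi A'$ of $A'$; by the description of $LG$ on morphisms (Lemma~\ref{lem:functoriality-L}) the element $LG(p')(x)$ is represented over this covering by $G(\pi_{1})(y)$, where $\pi_{0},\pi_{1}$ are the two projections. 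On the other hand, transporting $\eta_{A'}(y)$ from the initial covering $1_{A'}$ to the same covering $\pi_{0}$ yields the representative $G(\pi_{0})(y)$. Since $\pi_{0},\pi_{1}$ are exactly the projections defining $d^{0},d^{1}$, the cocycle condition $d^{0}y=d^{1}y$ reads $G(\pi_{0})(y)=G(\pi_{1})(y)$, so the two classes agree: $LG(p')(x)=\eta_{A'}(y)$, which is~(i).

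Finally, part~(iii) would follow from~(i) and~(ii) together with the vanishing criterion. If $LG=0$ then $\eta(x)\in LG(A)=0$ for every $x\in G(A)$, and~(ii) furnishes the required admissible epic; this is the ``only if'' direction. For the converse, I would take any $w\in LG(A)$ and represent it by a cocycle $y\in\ell G(q:U\epi A)$; the hypothesis applied to $y\in G(U)$ yields an admissible epic $p'':V\epi U$ with $G(p'')(y)=0$, the composite $qp'':V\epi A$ refines $q$ by transitivity (Lemma~\ref{lem:basis-topology}), and the associated structure map $\ell G(q)\to\ell G(qp'')$ is $G(p'')$, which sends $y$ to $0$; hence $w=0$ and $LG(A)=0$ for all $A$. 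The only step requiring genuine care is the bookkeeping in~(i)---matching the representative $G(\pi_{1})(y)$ coming from the functoriality of $L$ against the representative $G(\pi_{0})(y)$ coming from $\eta$, and recognizing their equality as precisely the cocycle condition; everything else is the standard filtered-colimit dictionary.
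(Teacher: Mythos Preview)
Your proof is correct and follows the same approach as the paper—unfolding the filtered-colimit description of $LG$—where the paper says only that (i) and (ii) are ``immediate from the definitions'' and that (iii) ``follows from~(i) and~(ii)''. Your direct argument for the ``if'' direction of~(iii), killing a representative cocycle at a refinement, is in fact slightly cleaner than literally chaining (i) and (ii): the latter route yields only $LG(p')(w)=0$ for some admissible epic $p'$, and concluding $w=0$ from this would require injectivity of $LG(p')$, i.e.\ separatedness of $LG$, which is established only in the subsequent Proposition~\ref{prop:LG-separated-LLG-sheaf}.
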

\begin{proof}
  Points~(i) and~(ii) are immediate from the definitions. Point~(iii)
  follows from~(i) and~(ii).
\end{proof}

\begin{Lem}[{\cite[Lemma~2, p.~131]{MR1300636}, %
    \cite[A.7.11, (d), (e)]{MR1106918}}]
  \label{lem:separated-sheaves-via-eta}
  Let $G \in \scrY$.
  \begin{enumerate}[(i)]
    \item
      The presheaf $G$ is separated if and only if 
      $\eta_{G}: G \to LG$ is monic.
 
    \item
      The presheaf $G$ is a sheaf if and only if $\eta_{G}: G \to LG$
      is an isomorphism.
  \end{enumerate}
\end{Lem}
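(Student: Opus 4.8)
The plan is to reduce both statements to pointwise assertions in $\Ab$ and then feed the elementwise description of $LG$ from Lemma~\ref{lem:technical-lemma-elements} into the characterizations of separatedness and of the sheaf condition recorded in Lemma~\ref{lem:separated-sheafs}. Since $\scrY$ is a functor category with values in $\Ab$ and both limits and colimits are computed pointwise, a morphism in $\scrY$ is monic (resp.\ an isomorphism) if and only if it is so in each degree; hence $\eta_{G}$ is monic (resp.\ an isomorphism) if and only if every component $\eta_{A}\colon G(A) \to LG(A)$ is injective (resp.\ bijective). The one translation to keep at hand is that, by Lemma~\ref{lem:technical-lemma-elements}~(ii), an element $x \in G(A)$ satisfies $\eta_{A}(x) = 0$ precisely when $G(p')(x) = 0$ for some admissible epic $p'\colon A' \epi A$.

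For part~(i) I would argue directly. By Lemma~\ref{lem:separated-sheafs}~(i), $G$ is separated if and only if $G(p)$ is monic for every admissible epic $p$, while by the reduction above $\eta_{G}$ is monic if and only if, for every $A$ and every $x \in G(A)$, the existence of an admissible epic $p'\colon A' \epi A$ with $G(p')(x) = 0$ forces $x = 0$. The equivalence is then immediate: if $G$ is separated and $G(p')(x) = 0$, monicity of $G(p')$ gives $x = 0$; conversely, given an admissible epic $p$ and $x$ with $G(p)(x) = 0$, taking $p' = p$ shows $x = 0$, so $G(p)$ is monic.

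For part~(ii) note first that an isomorphism is monic, so if $\eta_{G}$ is an isomorphism then $G$ is separated by part~(i). For the implication from sheaf to isomorphism I would use that, writing $LG(A) = \varinjlim_{\scrD_{A}} \ell G(p')$ as in Lemma~\ref{lem:functoriality-L}, the sheaf condition of Lemma~\ref{lem:separated-sheafs}~(ii) says exactly that each factorization $\tilde{\eta}_{p'}\colon G(A) \to \ell G(p') = \Ker{(d^{0} - d^{1})}$ is an isomorphism ($G(p')$ being monic with image the equalizer). These factorizations are compatible with the transition maps of the filtered system $\ell G\colon \scrD_{A} \to \Ab$ (this compatibility is what lets them induce $\eta_{A}$ on the colimit), so all transition maps are isomorphisms. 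A filtered colimit of a system of isomorphisms has invertible structure maps; applying this with the covering $1_{A}\colon A \epi A$, for which $\ell G(1_{A}) = G(A)$ and $\tilde{\eta}_{1_{A}} = 1$, identifies $\eta_{A}$ with the corresponding structure map and so with an isomorphism.

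The converse implication in part~(ii) is where the only real work lies. Assume $\eta_{G}$ is an isomorphism; $G$ is already separated, so each $G(p)$ is monic and it remains to show that for an admissible epic $p\colon A \epi B$ every $u \in \ell G(p) \subseteq G(A)$ lies in the image of $G(p)$, which establishes the equalizer condition. I would push $u$ into $LG(B)$ along the structure map for the covering $p$, pull the result back through the isomorphism $\eta_{B}$ to an element $z \in G(B)$, and set $w = G(p)(z) - u \in \ell G(p)$; a short computation shows that $w$ dies under the structure map $\ell G(p) \to LG(B)$. The obstacle is that filteredness only provides \emph{some} refinement $p'' = pb$ of $p$, realized by an a priori arbitrary morphism $b\colon A'' \to A$, along which $G(b)(w) = 0$, and separatedness is useless against a map that need not be an admissible epic. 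The remedy, and the key step, is to replace this refinement by the pullback $Q = A \times_{B} A''$: the projection $q\colon Q \epi A$ is an admissible epic by~[E2$^{\opp}$], it factors the transition map $(p) \to (p'')$ through $(p) \to (r)$ with $r = pq$ by uniqueness of morphisms in $\scrD_{B}$, and functoriality of $\ell G$ then yields $G(q)(w) = 0$. Since $q$ is an admissible epic and $G$ is separated, $G(q)$ is monic, whence $w = 0$ and $u = G(p)(z)$, as required.
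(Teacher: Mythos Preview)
Your argument is correct and matches the paper's intentions. For~(i) the paper invokes exactly the same ingredient you do, Lemma~\ref{lem:technical-lemma-elements}~(ii). For~(ii) the paper says only that it ``follows from the definitions'' (deferring to the cited references), so you have supplied what the paper omits; in particular, the pullback replacement in the converse direction of~(ii) is the real content and your handling of it is right.

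One small wording slip: you write that the pullback ``factors the transition map $(p) \to (p'')$ through $(p) \to (r)$'', but what you actually use (and what holds) is the other factorization, namely that the arrow $(p) \to (r)$ in $\scrD_{B}$ equals the composite $(p) \to (p'') \to (r)$, since $r = p'' q'$ exhibits $r$ as a refinement of $p''$. By uniqueness of morphisms in $\scrD_{B}$ this composite agrees with the direct arrow $(p) \to (r)$ witnessed by the admissible epic $q$, and functoriality of $\ell G$ then gives $G(q)(w) = 0$ as you claim. The conclusion is unaffected.
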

\begin{proof}
  Point~(i) follows from Lemma~\ref{lem:technical-lemma-elements}~(ii) and
  point~(ii) follows from the definitions.
\end{proof}

\begin{Prop}[{\cite[A.7.12]{MR1106918}}]
  \label{prop:LG-separated-LLG-sheaf}
  Let $G \in \scrY$.
  \begin{enumerate}[(i)]
    \item
      The presheaf $LG$ is separated.

    \item
      If $G$ is separated then $LG$ is a sheaf.
  \end{enumerate}
\end{Prop}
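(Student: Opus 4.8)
The plan is to work throughout with the explicit filtered-colimit description of $LG$ from Lemma~\ref{lem:functoriality-L}, writing an element of $LG(A)$ as a class $[p'\colon A'\epi A,\ \xi]$ with $\xi\in\ell G(p')$, and exploiting that in a filtered colimit over $\scrD_{A}$ two such classes agree precisely when their restrictions agree on a common refinement. By Lemma~\ref{lem:separated-sheafs}, separatedness of a presheaf $H$ is equivalent to $H(p)$ being monic for every admissible epic $p$, so both assertions reduce to statements about single admissible epics. Throughout, $\mathrm{pr}_{0},\mathrm{pr}_{1}\colon A\times_{B}A\to A$ and $\pi_{0},\pi_{1}\colon A'\times_{B}A'\to A'$ denote the two projections, and $\eta=\eta_{G}\colon G\Rightarrow LG$ is the unit from Lemma~\ref{lem:L-additive-finite-limits}(ii).

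For part~(i) I would take an admissible epic $p\colon A\epi B$ and an element $x=[p'\colon B'\epi B,\ \xi]$ of $LG(B)$ with $LG(p)(x)=0$, and show $x=0$. By Lemma~\ref{lem:functoriality-L} the value $LG(p)$ is computed by pulling the covering $p'$ back along $p$ (using~[E2$^{\opp}$]), so $LG(p)(x)=[A\times_{B}B'\epi A,\ G(\pi)\xi]$ with $\pi\colon A\times_{B}B'\to B'$ the projection and $\rho\colon A\times_{B}B'\epi A$ the admissible epic. Vanishing in the filtered colimit over $\scrD_{A}$ yields a covering $c\colon C\epi A$ refining $\rho$ via some $b\colon C\to A\times_{B}B'$ with $\rho b=c$ and $G(\pi b)\xi=G(b)G(\pi)\xi=0$. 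Then $d:=pc\colon C\epi B$ is an admissible epic by~[E1$^{\opp}$], and the pull-back relation $p'\pi=p\rho$ gives $p'(\pi b)=p\rho b=d$; thus $\pi b$ exhibits $d$ as a refinement of $p'$ on which $\xi$ restricts to $0$, whence $x=[p',\xi]=0$. This proves $LG(p)$ monic, i.e.\ $LG$ is separated. (One could route this through Lemma~\ref{lem:technical-lemma-elements}, but the colimit bookkeeping is what actually closes the argument.)

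For part~(ii), assume $G$ separated; then by part~(i) $LG$ is separated, so amalgamations are unique and only their existence remains. Given an admissible epic $p\colon A\epi B$ and a matching element $x\in LG(A)$, meaning $LG(\mathrm{pr}_{0})x=LG(\mathrm{pr}_{1})x$ in $LG(A\times_{B}A)$, I would invoke Lemma~\ref{lem:technical-lemma-elements}(i) to obtain an admissible epic $q\colon A'\epi A$ and a section $y\in G(A')$ with $\eta_{A'}(y)=LG(q)(x)$. The crucial step is to check that $y$ is compatible for the composite covering $pq\colon A'\epi B$, i.e.\ that $G(\pi_{0})y=G(\pi_{1})y$, so that $y\in\ell G(pq)$. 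Applying $LG(q\times_{B}q)$ to the matching identity and using $\mathrm{pr}_{i}\circ(q\times_{B}q)=q\pi_{i}$ together with naturality of $\eta$ rewrites each side as $\eta_{A'\times_{B}A'}(G(\pi_{i})y)$; since $G$ is separated, $\eta_{A'\times_{B}A'}$ is monic by Lemma~\ref{lem:separated-sheaves-via-eta}(i), so the unit cancels and $G(\pi_{0})y=G(\pi_{1})y$ follows. Hence $\tilde{x}:=[pq\colon A'\epi B,\ y]$ is a well-defined element of $LG(B)$.

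Finally I would verify $LG(p)(\tilde{x})=x$, avoiding a direct computation of $LG(p)$ by restricting along $q$: as $LG$ is separated and $q$ an admissible epic, $LG(q)$ is monic, so it suffices to show $LG(pq)(\tilde{x})=LG(q)(x)$. The right side equals $\eta_{A'}(y)=[\id_{A'},y]$, while pulling $pq$ back along $pq$ gives $LG(pq)(\tilde{x})=[\pi_{0},\ G(\pi_{1})y]$; the compatibility $G(\pi_{0})y=G(\pi_{1})y$ just established identifies this with the restriction of $[\id_{A'},y]$ along the refinement $\pi_{0}$, hence with $\eta_{A'}(y)$. Uniqueness from part~(i) then upgrades $LG$ to a sheaf. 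I expect the genuine obstacle to be the compatibility step for $y$: it is exactly there that the separatedness of $G$ is indispensable, since it is precisely what allows one to strip off the unit $\eta$ and pass from an identity holding in $LG$ to the required identity in $G$.
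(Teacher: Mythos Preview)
Your proof is correct and follows essentially the same approach as the paper's: both parts proceed by the same filtered-colimit bookkeeping, and in part~(ii) both arguments hinge on exactly the step you identify as crucial---using separatedness of $G$ to cancel $\eta$ and deduce $G(\pi_{0})y=G(\pi_{1})y$. Your account is in fact slightly more complete than the paper's, which simply asserts that the class $[pq,y]$ ``represents $x$'' without the explicit verification you give in your final paragraph; your check via restriction along $q$ and the monicity of $LG(q)$ is a clean way to close that gap.
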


\begin{proof}
  Let us prove~(i) by applying
  Lemma~\ref{lem:separated-sheafs}~(i), so
  let $x \in LG(A)$ and let $b:B \epi A$ be an admissible
  epic for which $LG(b)(x) = 0$. We have to prove that then $x = 0$
  in $LG(A)$. By the definition of $LG(A)$, we know that $x$ is
  represented by some
  $y \in \Ker{(G(A') \xrightarrow{d^{0} - d^{1}} G(A'\times_{A} A'))}$ 
  for some admissible epic $(p':A' \epi A)$ in $\scrD_{A}$.
  Since $LG(b)(x) = 0$ in $LG(B)$, we know that the image of $y$ 
  in 
  \[
  \Ker{(G(A' \times_{A} B) \xrightarrow{d^{0} - d^{1}} G((A'
    \times_{A} B) \times_{B} (A' \times_{A} B)))}
  \]
  is equivalent to zero in the filtered colimit over $\scrD_{B}$
  defining $LG(B)$. Therefore there exists a morphism 
  $D \to A' \times_{A} B$ in $\scrA$ such that its composite with the
  projection onto $B$ is an admissible epic $D \epi B$. By
  Lemma~\ref{lem:technical-lemma-elements}~(ii), it follows that $y$
  maps to zero in $G(D)$. Now the composite $D \epi B \epi A$ is in
  $\scrD_{A}$ and hence $y$ is equivalent to zero in the filtered colimit
  over $\scrD_{A}$ defining $LG(A)$. Thus, $x = 0$ in $LG(A)$ as
  required.

  Let us prove~(ii). 
  If $G$ is a separated presheaf, we have to check
  that for every admissible epic $B \epi A$ the diagram
  \[
  \xymatrix{
    LG(A) \ar[r] &
    LG(B) \ar@<-1ex>[r]_>>>>>{d^{0} = G(p_{0})}
        \ar@<1ex>[r]^>>>>>{d^{1} = G(p_{1})} &
    LG(B \times_{A} B)
  }
  \]
  is a difference kernel. By~(i) $LG$ is separated, so
  $LG(A) \to LG(B)$ is monic, and it remains to prove that
  every element $x \in LG(B)$ with $(d^{0} - d^{1})x = 0$ is in the
  image of $LG(A)$. By Lemma~\ref{lem:technical-lemma-elements}~(i)
  there is an admissible epic $q:C \epi B$ and $y \in G(C)$ such
  that $\eta(y) = LG(q)(x)$. It follows that
  $\eta G(p_{0}) (y) = \eta G(p_{1})(y)$ in $LG(C \times_{A} C)$. Now,
  $G$ is separated, so $\eta: G \Rightarrow LG$ is
  monic by Lemma~\ref{lem:separated-sheaves-via-eta},
  and we conclude from this that
  $G(p_{0})(y) = G(p_{1})(y)$ in $G(C \times_{A} C)$. In
  other words,
  $y \in \Ker{(G(C) \xrightarrow{d^{0} - d^{1}} G(C \times_{A} C))}$
  yields a class in $LG(A)$ representing $x$.
\end{proof}

\begin{Cor}
  \label{cor:LG=0-iff-LLG=0}
  For a presheaf $G \in \scrY$ we have $LG = 0$ if and only if
  $LLG = 0$.
\end{Cor}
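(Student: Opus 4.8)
The plan is to exploit the two halves of Proposition~\ref{prop:LG-separated-LLG-sheaf} together with the characterization of separatedness in Lemma~\ref{lem:separated-sheaves-via-eta}, and the whole argument should be short. The forward implication is immediate: since $L$ is additive by Lemma~\ref{lem:L-additive-finite-limits}~(i), it carries the zero presheaf to the zero presheaf, so $LG = 0$ gives $LLG = L(0) = 0$ at once.

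For the converse I would argue as follows. By Proposition~\ref{prop:LG-separated-LLG-sheaf}~(i) the presheaf $LG$ is \emph{always} separated, regardless of what $G$ is. Lemma~\ref{lem:separated-sheaves-via-eta}~(i), applied to $LG$, then tells me that the unit $\eta_{LG}: LG \to L(LG) = LLG$ is a monomorphism in the abelian category $\scrY$. Under the standing hypothesis $LLG = 0$ this exhibits $LG$ as a subobject of the zero object, and a monic with zero target forces its source to vanish (applying monicity to the pair $\id_{LG}$ and the zero map yields $\id_{LG} = 0$). Hence $LG = 0$, as desired.

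I expect no genuine obstacle here; the only thing one must get right is the bookkeeping of which properties of $L$ hold \emph{unconditionally}. The crucial observation is that separatedness of $LG$ is automatic—this is precisely what makes $\eta_{LG}$ monic without any hypothesis on $G$—so that the vanishing of $LLG$ can be transported back along $\eta_{LG}$ to $LG$. One could instead run the converse on ``elements'' via the criterion of Lemma~\ref{lem:technical-lemma-elements}~(iii), but the monomorphism argument is cleaner and avoids any diagram chase.
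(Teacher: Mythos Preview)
Your proof is correct and follows essentially the same route as the paper: additivity of $L$ for the forward direction, and for the converse the observation that $LG$ is always separated (Proposition~\ref{prop:LG-separated-LLG-sheaf}~(i)), so $\eta_{LG}$ is monic by Lemma~\ref{lem:separated-sheaves-via-eta}~(i), forcing $LG = 0$ when $LLG = 0$.
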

\begin{proof}
  Obviously $LG = 0$ entails $LLG = 0$ as $L$ is additive by 
  Lemma~\ref{lem:L-additive-finite-limits}. Conversely,
  as $LG$ is separated by
  Proposition~\ref{prop:LG-separated-LLG-sheaf}, it follows that
  the morphism $\eta_{LG}: LG \to LLG$ is monic by
  Lemma~\ref{lem:separated-sheaves-via-eta}~(i), so if $LLG = 0$ we
  must have $LG = 0$.
\end{proof}

\begin{Def}
  The \emph{sheaf\mbox{}ification functor} is $j^{\ast} = LL: \scrY \to \scrB$.
\end{Def}

\begin{Lem}
  The sheaf\mbox{}ification functor $j^{\ast}: \scrY \to \scrB$
  is left adjoint to the
  inclusion functor $j_{\ast}: \scrB \to \scrY$ and
  satisfies $j^{\ast}j_{\ast} \cong \id_{\scrB}$. Moreover,
  sheaf\mbox{}ification is exact.
\end{Lem}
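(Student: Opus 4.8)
The plan is to establish the three assertions in turn, drawing on the properties of $L$ accumulated above, and to treat the adjunction as the heart of the matter.

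First I would record that $j^{\ast}$ really lands in $\scrB$: for any presheaf $G$ the presheaf $LG$ is separated and hence $LLG = j^{\ast}G$ is a sheaf by Proposition~\ref{prop:LG-separated-LLG-sheaf}. The candidate unit is the natural transformation $u_{G} = \eta_{LG}\circ\eta_{G}\colon G \to LLG = j_{\ast}j^{\ast}G$, assembled from two instances of $\eta$ (Lemma~\ref{lem:L-additive-finite-limits}); one checks at once that $u$ is natural, since naturality of $\eta$ gives $u_{H}\circ f = LL(f)\circ u_{G}$ for every $f\colon G\to H$. The key observation, to be used repeatedly, is that whenever $F$ is a sheaf the map $\eta_{F}\colon F\to LF$ is an isomorphism by Lemma~\ref{lem:separated-sheaves-via-eta}; consequently $LF$ is again a sheaf, $\eta_{LF}$ is an isomorphism as well, and therefore $u_{F}\colon F\to LLF$ is an isomorphism. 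In particular $j^{\ast}j_{\ast}F = LLF \cong F$ naturally in the sheaf $F$, which already yields the isomorphism $j^{\ast}j_{\ast}\cong\id_{\scrB}$ and suggests defining the counit by $\varepsilon_{F} = u_{F}^{-1}$.

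Next I would prove the adjunction $j^{\ast}\dashv j_{\ast}$ by verifying that $u_{G}$ is universal among morphisms from $G$ into a sheaf, that is, that for every sheaf $F$ precomposition with $u_{G}$ is a bijection
\[
\Hom_{\scrB}(j^{\ast}G, F) \xrightarrow{\ \cong\ } \Hom_{\scrY}(G, j_{\ast}F).
\]
\emph{Existence} of a factorization is formal: given $f\colon G\to F$, naturality of $u$ gives $LL(f)\circ u_{G} = u_{F}\circ f$, so $\bar{f} = u_{F}^{-1}\circ LL(f)$ satisfies $\bar{f}\circ u_{G} = f$. The subtle point, and the step I expect to be the main obstacle, is \emph{uniqueness}: I must show that a morphism of presheaves $\delta\colon LLG\to F$ into a sheaf with $\delta\circ u_{G} = 0$ is already zero. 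Here I would argue at the level of sections via Lemma~\ref{lem:technical-lemma-elements}: every section of $LLG$ over $A$ becomes, after restriction along a suitable admissible epic $A'\epi A$, equal to $\eta_{LG}$ of a section of $LG$, and likewise every section of $LG$ becomes $\eta_{G}$ of a section of $G$; feeding these two successive ``local lifts'' into $\delta$ and exploiting $\delta\circ u_{G} = 0$ together with separatedness of $F$ (so that $F(p')$ is monic, Lemma~\ref{lem:separated-sheaves-via-eta}) forces $\delta$ to vanish. This is precisely where the element-theoretic Lemma~\ref{lem:technical-lemma-elements} and the hypothesis that $F$ is a sheaf do the real work; the triangle identities then follow mechanically from the established universal property.

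Finally I would deduce exactness of $j^{\ast}$. Right-exactness is automatic once the adjunction is in hand, since a left adjoint preserves all colimits, in particular cokernels. For left-exactness I would note that $j_{\ast}$ is a fully faithful right adjoint, so finite limits in $\scrB$ are computed pointwise as in $\scrY$, while $j^{\ast} = LL$ preserves finite limits because $L$ does by Lemma~\ref{lem:L-additive-finite-limits}~(i). Hence $j^{\ast}$ carries kernels to kernels and cokernels to cokernels, i.e.\ it is exact; Corollary~\ref{cor:LG=0-iff-LLG=0} may be invoked along the way to see that vanishing of $j^{\ast}G$ is already detected by $LG = 0$.
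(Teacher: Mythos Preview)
Your proposal is correct and follows the paper's overall blueprint: define the unit $u_{G}=\eta_{LG}\eta_{G}$ from two applications of $\eta$, read off $j^{\ast}j_{\ast}\cong\id_{\scrB}$ from Lemma~\ref{lem:separated-sheaves-via-eta}~(ii), and deduce exactness from ``left adjoint $\Rightarrow$ preserves cokernels'' together with the finite-limit preservation of $L$ in Lemma~\ref{lem:L-additive-finite-limits}~(i). Where you diverge is in the verification of the adjunction itself. The paper simply writes down the counit $\lambda_{B}=(\eta_{LB}\eta_{B})^{-1}$ and declares the two triangle identities to be \emph{manifest}; the first one ($j_{\ast}\lambda_{B}\circ\varrho_{j_{\ast}B}=\id$) really is, since it is a map composed with its inverse, but for the second one ($\lambda_{j^{\ast}Y}\circ j^{\ast}\varrho_{Y}=\id$) one needs to know that $j_{\ast}j^{\ast}(\varrho_{Y})=\varrho_{j_{\ast}j^{\ast}Y}$, and naturality of $\varrho$ only gives that these agree after precomposition with $\varrho_{Y}$. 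Your element-level uniqueness argument via Lemma~\ref{lem:technical-lemma-elements}~(i) and separatedness of $F$ is exactly what closes this gap: it shows that maps out of $LLG$ into a sheaf are determined by their restriction along $u_{G}$, hence precomposition with $\varrho_{Y}$ is injective on maps into sheaves. So your route is a genuine (and cleaner) alternative at this one step; the price is the explicit section-chasing, the gain is that nothing is left implicit.
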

\begin{proof}
  By Lemma~\ref{lem:separated-sheaves-via-eta}~(ii)
  the morphism $\eta_{G}: G \to LG$ is an isomorphism if
  and only if $G$ is a sheaf, so
  it follows that $j^{\ast} j_{\ast} \cong \id_{\scrB}$.
  
  Let $Y \in \scrY$ be a presheaf and let $B \in \scrB$ be a sheaf.
  The natural transformation $\eta: \id_{\scrY} \Rightarrow L$ gives 
  us on the one hand a natural transformation
  \[
  \varrho_{Y} = \eta_{LY} \eta_{Y} : Y \longrightarrow LLY
  = j_{\ast}j^{\ast}Y
  \]
  and on the other hand a natural isomorphism
  \[
  \lambda_{B} = (\eta_{LB} \eta_{B})^{-1} : 
  j^{\ast}j_{\ast}B = LLB \longrightarrow B.
  \]
  Now the compositions
  \[
  j_{\ast}B \xrightarrow{\varrho_{j_{\ast}B}}
  j_{\ast}j^{\ast}j_{\ast}B \xrightarrow{j_{\ast}\lambda_{B}}
  j_{\ast}B \qquad \text{and} \qquad
  j^{\ast}Y \xrightarrow{j^{\ast}\varrho_{Y}}
  j^{\ast}j_{\ast}j^{\ast} Y \xrightarrow{\lambda_{j^{\ast}Y}} 
  j^{\ast}Y
  \]
  are manifestly equal to $\id_{j_{\ast}B}$ and $\id_{j^{\ast}Y}$ so
  that $j^{\ast}$ is indeed left adjoint to $j_{\ast}$. In
  particular $j^{\ast}$ preserves cokernels.
  That $j^{\ast}$ preserves kernels follows from the
  fact that $L: \scrY \to \scrY$ has this property by
  Lemma~\ref{lem:L-additive-finite-limits}~(i) and the fact that
  $\scrB$ is a full subcategory of $\scrY$. Therefore $j^{\ast}$ is exact.
\end{proof}

\begin{Rem}
  It is an illuminating exercise to prove
  exactness of $j^{\ast}$ directly by going through the definitions.
\end{Rem}

\begin{Lem}
  The category $\scrB$ is abelian.
\end{Lem}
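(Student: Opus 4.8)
The plan is to use that $\scrB$ is a full reflective subcategory of the Grothendieck abelian category $\scrY$ whose reflector $j^{\ast} = LL$ is \emph{exact}, satisfies $j^{\ast}j_{\ast} \cong \id_{\scrB}$, and is left adjoint to the fully faithful inclusion $j_{\ast}$. \textbf{First} I would check that $\scrB$ is additive: the zero presheaf is a sheaf, and since $j_{\ast}$ is a right adjoint it preserves products, so a finite product of sheaves (computed pointwise in $\scrY$) is again a sheaf. Thus $\scrB$ inherits from $\scrY$ both its biproducts and the abelian group structure on its hom-sets, making it a full additive subcategory.

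\textbf{Second} I would produce kernels and cokernels. For a morphism $f \colon B \to B'$ of sheaves I take the kernel $\Ker f$ and the cokernel $\Coker f$ computed in $\scrY$. That $\Ker f$ is again a sheaf follows from Lemma~\ref{lem:L-additive-finite-limits}: since $L$ preserves finite limits and the units $\eta_{B}, \eta_{B'}$ are isomorphisms (Lemma~\ref{lem:separated-sheaves-via-eta}), naturality of $\eta$ forces the unit at $\Ker f$ to be an isomorphism as well, so $\Ker f$ is a sheaf and serves as the kernel in $\scrB$. The cokernel in $\scrB$ is instead $j^{\ast}(\Coker f)$: applying $j^{\ast}$ (a left adjoint, hence preserving cokernels) and using $j^{\ast}j_{\ast} \cong \id_{\scrB}$ yields the required universal property among sheaves.

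\textbf{Third}, and this is the heart of the matter, I would verify the last abelian axiom, namely that the canonical comparison $\Coim f \to \Im f$ is an isomorphism in $\scrB$; here exactness of $j^{\ast}$ is indispensable. The idea is to transport the coimage--image factorization across $j^{\ast}$. Applying the exact functor $j^{\ast}$ to the short exact sequence $0 \to \Im f \to B' \to \Coker f \to 0$ of $\scrY$ and using $j^{\ast}j_{\ast} \cong \id_{\scrB}$ identifies the image of $f$ formed in $\scrB$ with $j^{\ast}(\Im f)$, and dually the coimage formed in $\scrB$ with $j^{\ast}(\Coim f)$ (using that $j^{\ast}$ preserves kernels, by exactness, and cokernels, as a left adjoint). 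Since these constructions are natural, the comparison $\Coim f \to \Im f$ in $\scrB$ is precisely $j^{\ast}$ applied to the canonical comparison in $\scrY$, which is an isomorphism because $\scrY$ is abelian; hence its $j^{\ast}$-image is an isomorphism too.

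The \textbf{main obstacle} I anticipate is precisely this third step: a morphism that is monic or epic in $\scrB$ need not be so in $\scrY$, so the naive route ``every mono is a kernel, every epi is a cokernel'' by factoring in $\scrY$ and reflecting would collapse to showing that a map which is simultaneously monic and epic is invertible---exactly the failure that separates quasi-abelian from abelian categories. Channeling the argument instead through the identifications of $\Im f$ and $\Coim f$ in $\scrB$ as $j^{\ast}$ of their counterparts in $\scrY$ sidesteps this pitfall, since it never asserts invertibility of a bimorphism but merely carries a genuine isomorphism across the exact reflector $j^{\ast}$. The remaining points (naturality of the comparison maps, and that hom-groups and biproducts are inherited) are routine.
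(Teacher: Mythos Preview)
Your proposal is correct and follows essentially the same line as the paper: both arguments observe that $\scrB$ is additive, that $j^{\ast}$ preserves kernels and cokernels while $j^{\ast}j_{\ast}\cong\id_{\scrB}$, and then obtain the analysis of a morphism $f$ in $\scrB$ by applying $j^{\ast}$ to the analysis of $j_{\ast}f$ in the abelian category $\scrY$. Your write-up is considerably more explicit about why the presheaf kernel is already a sheaf and why the $\scrB$-image and $\scrB$-coimage are identified with $j^{\ast}$ of their $\scrY$-counterparts, but the underlying idea is the same.
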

\begin{proof}
  It is clear that $\scrB$ is additive.
  The sheaf\mbox{}ification functor $j^{\ast} = LL$ preserves kernels by 
  Lemma~\ref{lem:L-additive-finite-limits}~(i) and as a left adjoint it
  preserves cokernels. To prove $\scrB$
  abelian, it suffices to check that every morphism $f: A \to B$ has an
  analysis
  \[
  \xymatrix@R=0.5pc{
    & A \ar@{->>}[dr] \ar[rrr]^{f} & & & B \ar@{->>}[dr] &
    \\
    \Ker{(f)} \ar@{ >->}[ur] & & \Coim{(f)} \ar[r]^{\cong} & \Im{(f)}
    \ar@{ >->}[ur] & & \Coker{(f)}.
  }
  \]
  Since $j^{\ast}$ preserves kernels and cokernels and
  $j^{\ast}j_{\ast} \cong \id_{\scrB}$ such an analysis can be obtained
  by applying $j^{\ast}$ to an analysis of $j_{\ast}f$ in $\scrY$.
\end{proof}

\subsection{Proof of the Embedding Theorem}

Let us recapitulate: one half of the axioms of an exact structure yields
that a small exact category $\scrA$ becomes a \emph{site}
$(\scrA, J)$. We denoted the Yoneda category of contravariant functors
$\scrA \to \Ab$ by $\scrY$ and the Yoneda embedding
$A \mapsto \Hom{({-},A)}$ by $y: \scrA \to \scrY$. We have shown that
the category $\scrB$ of sheaves on the site $(A,J)$ is abelian, being
a full reflective subcategory of $\scrY$ with sheaf\mbox{}ification
$j^{\ast}: \scrY \to \scrB$ as reflector (left adjoint). 
Following Thomason, we denoted the inclusion $\scrB \to \scrY$ by
$j_{\ast}$. Moreover, we have shown that the Yoneda embedding takes
its image in $\scrB$, so we obtained a commutative diagram of
categories
\[
\xymatrix{
  \scrA \ar[r]^-{i} \ar[dr]_-{y} & \scrB \ar[d]^{j_{\ast}} \\
  & \scrY,
}
\]
in other words $y = j_{\ast}i$. By the Yoneda lemma, $y$ is
fully faithful and $j_{\ast}$ is fully faithful, hence $i$ is fully
faithful as well. This settles the first part of the following lemma:

\begin{Lem}
  \label{lem:embedding-exact}
  The functor $i:\scrA \to \scrB$ is fully faithful and exact.
\end{Lem}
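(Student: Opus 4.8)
Since $y = j_{\ast} i$ with both the Yoneda embedding $y$ and the inclusion $j_{\ast}$ fully faithful, the functor $i$ is automatically fully faithful, as the discussion preceding the lemma already records; the real content is therefore the \emph{exactness} of $i$. Concretely, given a short exact sequence $A' \xrightarrow{u} A \xrightarrow{v} A''$ in $\scrA$, the plan is to show that $i(A') \to i(A) \to i(A'')$ is short exact in the abelian category $\scrB$. I would exploit the identification $i \cong j^{\ast}y$, which follows from $y = j_{\ast}i$ together with $j^{\ast}j_{\ast} \cong \id_{\scrB}$, and push everything through the \emph{exact} sheafification functor $j^{\ast} = LL$.

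First I would work in the presheaf category $\scrY$, where kernels and cokernels are computed pointwise. Form the presheaf cokernel $Q = \operatorname{coker}_{\scrY}(y(u))$, so that $0 \to y(A') \to y(A) \to Q \to 0$ is exact in $\scrY$ and $y(v)$ factors as $y(A) \epi Q \xrightarrow{\phi} y(A'')$. The key is that $\phi$ becomes an isomorphism after sheafification, and for this I would check two pointwise properties of $\phi$. On the one hand $\phi$ is a pointwise monomorphism: if $f \colon B \to A$ satisfies $vf = 0$ then $f$ factors through the kernel $u = \ker v$, so its class in $Q(B)$ vanishes---this is precisely the left-exactness of representables recorded in Corollary~\ref{cor:represented-functor-sheaf}. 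On the other hand the presheaf cokernel of $\phi$, which coincides pointwise with $\operatorname{coker}(v_{\ast}) = \Hom(-,A'')/\operatorname{im}(v_{\ast})$, is \emph{locally zero}: given $x \colon B \to A''$, pulling the admissible epic $v$ back along $x$ produces, by axiom~[E2$^{\opp}$], an admissible epic (hence a covering) $p \colon P \epi B$ together with a lift $\tilde{x} \colon P \to A$ satisfying $v\tilde{x} = xp$, so the class of $x$ dies on $P$. By Lemma~\ref{lem:technical-lemma-elements}~(iii) and Corollary~\ref{cor:LG=0-iff-LLG=0} this forces $j^{\ast}(\operatorname{coker}\phi) = 0$.

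Applying the exact functor $j^{\ast}$ to $0 \to Q \xrightarrow{\phi} y(A'') \to \operatorname{coker}\phi \to 0$ then exhibits $j^{\ast}\phi$ as monic with vanishing cokernel, hence an isomorphism; applying $j^{\ast}$ to $0 \to y(A') \to y(A) \to Q \to 0$ and using $j^{\ast}y \cong i$ yields a short exact sequence $0 \to i(A') \to i(A) \to j^{\ast}Q \to 0$. Splicing in the isomorphism $j^{\ast}Q \cong i(A'')$ delivers the desired short exact sequence $i(A') \to i(A) \to i(A'')$, so $i$ is exact. I expect the only genuinely substantive point to be the local surjectivity of $v$ after sheafification---the pull-back/covering argument above---since the remaining manipulations are formal consequences of the exactness of $j^{\ast}$, the pointwise computation of (co)kernels in $\scrY$, and the identifications $i \cong j^{\ast}y$ and $j^{\ast}j_{\ast} \cong \id_{\scrB}$.
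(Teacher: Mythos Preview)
Your proof is correct and follows essentially the same route as the paper's: both identify $i \cong j^{\ast}y$, use exactness of $j^{\ast}$ to reduce the question to the presheaf level, and then invoke the pull-back of an admissible epic (axiom~[E2$^{\opp}$]) together with Lemma~\ref{lem:technical-lemma-elements}~(iii) and Corollary~\ref{cor:LG=0-iff-LLG=0} to show that the relevant presheaf cokernel sheafifies to zero. The only difference is organizational: the paper first records that $i$ is left exact and then separately shows $i(p)$ is epic for each admissible epic $p$, while you package both steps into the single factorization $y(A) \epi Q \hookrightarrow y(A'')$; the substantive computation is identical.
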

\begin{proof}
  By the above discussion, it remains to prove exactness.
  
  Clearly, the Yoneda embedding sends exact sequences in $\scrA$ to
  left exact sequences in $\scrY$. Sheaf\mbox{}ification
  $j^{\ast}$ is exact and since $j^{\ast} j_{\ast} \cong \id_{\scrB}$, we have
  that $j^{\ast} y = j^{\ast} j_{\ast} i \cong i$ is left exact as
  well. It remains to prove that for each
  admissible epic $p: B \epi C$ the morphism $i(p)$ is epic. 
  By Corollary~\ref{cor:LG=0-iff-LLG=0}, it suffices to prove that
  $G = \Coker{y(p)}$ satisfies $LG = 0$, because 
  $\Coker{i(p)} = j^{\ast}\Coker{y(p)} = LLG = 0$ then implies that
  $i(p)$ is epic. To this end we use the criterion in
  Lemma~\ref{lem:technical-lemma-elements}~(iii), so let
  $A \in \scrA$ be any object and $x \in G(A)$. We have an exact
  sequence 
  $\Hom{(A,B)} \xrightarrow{y(p)_{A}} \Hom{(A,C)} \xrightarrow{q_{A}} G(A)
  \xrightarrow{} 0$,
  so $x = q_{A}(f)$ for some morphism $f: A \to C$. Now form the pull-back
  \[
  \xymatrix{
    A' \ar@{->>}[r]^{p'} \ar@{}[dr]|{\text{PB}} \ar[d]^{f'} &
    A \ar[d]^{f} \\
    B \ar@{->>}[r]^{p} & C
  }
  \]
  and observe that $G(p')(x) = G(p')(q_{A}(f)) = q_{A'}(fp') =
  q_{A'}(pf') = 0$.
\end{proof}

\begin{Lem}[{\cite[A.7.15]{MR1106918}}]
  \label{lem:epics-onto-representables-compose-to-adm-epics}
  Let $A \in \scrA$ and $B \in \scrB$ and suppose there is an epic
  $e: B \epi i(A)$. There exist $A' \in \scrA$ and $k: i(A') \to B$
  such that $ek: A' \to A$ is an admissible epic.
\end{Lem}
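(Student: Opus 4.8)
The plan is to mimic the proof of exactness in Lemma~\ref{lem:embedding-exact}: reduce the statement to a local surjectivity property of the epimorphism $e$ and then feed the resulting covering into the Yoneda lemma. First I would regard $e: B \to i(A)$ as a morphism in $\scrY$ via the inclusion $j_{\ast}$ and let $G = \Coker{(j_{\ast}e)}$ be its cokernel in the presheaf category $\scrY$. Since cokernels in $\scrY$ are computed pointwise, $G(U) = \Coker{(e_{U}: B(U) \to i(A)(U))}$ for every $U \in \scrA$, with cokernel projections $q_{U}$. As $j^{\ast}$ is a left adjoint it preserves cokernels, and $j^{\ast}j_{\ast} \cong \id_{\scrB}$, so the cokernel of $e$ in $\scrB$ is $j^{\ast}G = LLG$. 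Because $e$ is epic in the abelian category $\scrB$, we have $LLG = 0$, whence $LG = 0$ by Corollary~\ref{cor:LG=0-iff-LLG=0}.

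Next I would extract a covering from the identity of $A$. Applying the criterion of Lemma~\ref{lem:technical-lemma-elements}~(iii) to the class $x = q_{A}(1_{A}) \in G(A)$, where $1_{A} \in i(A)(A) = \Hom_{\scrA}{(A,A)}$, we obtain an admissible epic $p': A' \epi A$ with $G(p')(x) = 0$. By naturality of $q$ together with the identification $i(A)(p')(1_{A}) = p' \in i(A)(A') = \Hom_{\scrA}{(A',A)}$, this reads $q_{A'}(p') = 0$; in other words $p'$ lies in the image of $e_{A'}: B(A') \to i(A)(A')$. I then choose $b \in B(A')$ with $e_{A'}(b) = p'$.

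Finally I would convert $b$ into the desired morphism. By the Yoneda lemma $B(A') \cong \Hom_{\scrY}{(y(A'), B)}$, and since $y(A') = j_{\ast}i(A')$ and $j_{\ast}$ is fully faithful, $B(A') \cong \Hom_{\scrB}{(i(A'), B)}$; let $k: i(A') \to B$ correspond to $b$. Chasing the Yoneda isomorphism shows that $e \circ k$ corresponds to $e_{A'}(b) = p'$, and under the full faithfulness of $i$ (Lemma~\ref{lem:embedding-exact}) the element $p' \in \Hom_{\scrA}{(A',A)}$ corresponds precisely to $i(p')$. Hence $ek = i(p')$ with $p': A' \epi A$ an admissible epic, as required.

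The only genuinely delicate point is the bookkeeping in this last paragraph: one must verify that the three identifications---Yoneda for $B(A')$, the adjunction isomorphism $\Hom_{\scrY}{(j_{\ast}i(A'), B)} \cong \Hom_{\scrB}{(i(A'),B)}$, and the full faithfulness of $i$---are mutually compatible, so that a section $b$ with $e_{A'}(b) = p'$ really yields a $k$ with $ek = i(p')$. Everything else is a formal consequence of the sheafification machinery already in place; the translation of ``epic in $\scrB$'' into the elementwise local surjectivity furnished by Lemma~\ref{lem:technical-lemma-elements} is exactly the mechanism already exploited in Lemma~\ref{lem:embedding-exact}.
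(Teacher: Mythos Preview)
Your proof is correct and follows essentially the same route as the paper: form the presheaf cokernel $G$ of $j_{\ast}e$, deduce $LG = 0$ from $LLG = 0$, apply Lemma~\ref{lem:technical-lemma-elements}~(iii) to the class of $1_{A}$ to obtain the covering $p'$, and then read off $k$ via Yoneda. The paper compresses your final paragraph into the single observation that $G(A') \cong \Hom_{\scrA}{(A',A)}/\Hom_{\scrB}{(i(A'),B)}$, so that $G(p')(x)=0$ immediately says $p'$ lifts along $e$; your more explicit Yoneda bookkeeping is a faithful unpacking of exactly this.
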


\begin{proof}
  Let $G$ be the cokernel of $j_{\ast}e$ in $\scrY$. 
  Then we have $0 = j^{\ast}G = LLG$ because
  $j^{\ast}j_{\ast}e \cong e$ is
  epic. By Corollary~\ref{cor:LG=0-iff-LLG=0} it follows that $LG = 0$
  as well. Now observe that
  $G(A) \cong \Hom{(A,A)}/\Hom{(i(A), B)}$ and let 
  $x \in G(A)$ be the class of $1_{A}$. From 
  Lemma~\ref{lem:technical-lemma-elements}~(iii) we conclude 
  that there is an
  admissible epic $p':A' \epi A$ such that $G(p')(x) = 0$ in 
  $G(A') \cong \Hom{(A',A)} / \Hom{(i(A'),B)}$. But this means that
  the admissible epic $p'$ factors as $ek$ for some $k \in
  \Hom{(i(A'),B)}$ as claimed.
\end{proof}

\begin{Lem}
  \label{lem:embedding-reflects-exactness}
  The functor $i$ reflects exactness.
\end{Lem}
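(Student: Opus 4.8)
The plan is to start from a composable pair $\sigma = (A' \xrightarrow{f} A \xrightarrow{g} A'')$ in $\scrA$ whose image $i(\sigma)$ is short exact in the abelian category $\scrB$, and to recover that $(f,g) \in \scrE$. Since $\scrB$ is abelian, short exactness of $i(\sigma)$ means precisely that $i(f)$ is a kernel of $i(g)$, that $i(g)$ is a cokernel of $i(f)$, and in particular that $i(g)$ is epic. The first thing I would record is that full faithfulness of $i$ (Lemma~\ref{lem:embedding-exact}) reflects the kernel property: given any $t \colon T \to A$ in $\scrA$ with $gt = 0$, the morphism $i(t)$ satisfies $i(g)i(t) = i(gt) = 0$, hence factors uniquely through $i(f) = \ker i(g)$, and full faithfulness turns this into a unique factorization of $t$ through $f$. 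Thus $f$ is a kernel of $g$ in $\scrA$; in particular $g$ admits a kernel.

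The crucial step is to promote ``$i(g)$ is epic'' to ``$g$ is an admissible epic''. Here I would invoke Lemma~\ref{lem:epics-onto-representables-compose-to-adm-epics} applied to the epic $i(g) \colon i(A) \epi i(A'')$: it produces an object $D \in \scrA$ and a morphism $i(D) \to i(A)$, which by full faithfulness is $i(h)$ for some $h \colon D \to A$, such that the composite $gh \colon D \to A''$ is an admissible epic. Now $g$ has a kernel (by the previous paragraph) and the admissible epic $gh$ factors through $g$ via $h$, so the dual of the obscure axiom (Proposition~\ref{prop:obscure-axiom} read in $\scrA^{\opp}$) applies and yields that $g$ is an admissible epic. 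I expect this to be the main obstacle, or at least the only non-formal point: one cannot simply cite Theorem~\ref{thm:embedding-thm}(iii) to conclude that an epic in $\scrB$ comes from an admissible epic, since that statement requires weak idempotent completeness, which is not assumed here. The trick is that reflecting the kernel gives $g$ a kernel for free, which is exactly the extra input the obscure axiom needs.

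Once $g$ is an admissible epic it fits into a short exact sequence $\ker g \mono A \epi A''$ in $\scrE$, so $g$ is a cokernel of its own kernel. Since $f$ and $\ker g$ are both kernels of $g$, they differ by a unique isomorphism over $A$; as isomorphisms are admissible monics (Remark~\ref{rem:isos-adm-mono-adm-epic}) and these are closed under composition by~[E1], the morphism $f$ is an admissible monic, and the same isomorphism shows that $g$ is a cokernel of $f$. Hence $(f,g)$ is a kernel-cokernel pair lying in $\scrE$ (which is closed under isomorphism), i.e.\ $\sigma \in \scrE$, so $i$ reflects exactness. The remaining verifications (uniqueness in the factorizations, and that the additive functor $i$ carries $0$ to $0$) are routine and I would leave them implicit.
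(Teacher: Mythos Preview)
Your proof is correct and follows essentially the same route as the paper's: reflect the kernel property via full faithfulness, then use Lemma~\ref{lem:epics-onto-representables-compose-to-adm-epics} to factor an admissible epic through $g$, and finish with the dual of the obscure axiom (Proposition~\ref{prop:obscure-axiom}) using that $g$ now has a kernel. Your remark that part~(iii) of Theorem~\ref{thm:embedding-thm} cannot be invoked here is well taken and identifies exactly why the obscure axiom is the right tool.
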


\begin{proof}
  Suppose $A \xrightarrow{m} B \xrightarrow{e} C$ is a sequence in
  $\scrA$ such that 
  $i(A) \xrightarrow{i(m)} i(B) \xrightarrow{i(e)} i(C)$
  is short exact in $\scrB$. In particular, $i(m)$ is a kernel of
  $i(e)$. Since $i$ is fully faithful, it follows that $m$
  is a kernel of $e$ in $\scrA$, hence we are done as soon as we can
  show that $e$ is an admissible epic. Because $i(e)$ is epic, 
  Lemma~\ref{lem:epics-onto-representables-compose-to-adm-epics}
  allows us to find $A' \in \scrA$
  and $k: i(A') \to i(B)$ such that $ek$ is an admissible epic and
  since $e$ has a kernel we conclude by the dual of
  Proposition~\ref{prop:obscure-axiom}.
\end{proof}

\begin{Lem}
  \label{lem:embedding-closed-under-extensions}
  The essential image of $i:\scrA \to \scrB$ is closed
  under extensions.
\end{Lem}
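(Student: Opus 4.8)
The plan is to take a short exact sequence $i(A') \mono S \epi i(A'')$ in $\scrB$ with $A', A'' \in \scrA$ and to manufacture an object $A \in \scrA$ together with an isomorphism $i(A) \cong S$. The strategy is to realize $S$, up to isomorphism, as the image under $i$ of a push-out formed inside $\scrA$, and then to identify it with $S$ by means of the five lemma.

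First I would apply Lemma~\ref{lem:epics-onto-representables-compose-to-adm-epics} to the epimorphism $g: S \epi i(A'')$: this yields an object $B'' \in \scrA$ and a morphism $k: i(B'') \to S$ such that $g k = i(q)$ for some admissible epic $q: B'' \epi A''$. Let $j: K'' \mono B''$ be a kernel of $q$, so that $K'' \mono B'' \epi A''$ is short exact in $\scrA$ and, since $i$ is exact by Lemma~\ref{lem:embedding-exact}, $i(K'') \mono i(B'') \epi i(A'')$ is short exact in $\scrB$. Now $g k \, i(j) = i(q)\, i(j) = i(qj) = 0$, and $f: i(A') \to S$ is a kernel of $g$; hence $k\, i(j)$ factors uniquely as $f\, i(a)$ for a morphism $a: K'' \to A'$ in $\scrA$, using that $i$ is fully faithful.

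Next I would form the push-out of the admissible monic $j: K'' \mono B''$ along $a: K'' \to A'$ in $\scrA$. By Proposition~\ref{prop:pushout-exact} together with Remark~\ref{rem:cokernels-in-push-outs} this produces an admissible monic $A' \mono A$ whose cokernel is again $A''$, so we obtain a short exact sequence $A' \mono A \epi A''$ in $\scrA$ and, applying $i$, a short exact sequence $i(A') \mono i(A) \epi i(A'')$ in $\scrB$. Because $i$ preserves push-outs along admissible monics (Proposition~\ref{prop:exact-functors-push-out-pull-back}), the object $i(A)$ is the push-out of $i(A') \leftarrow i(K'') \to i(B'')$ in $\scrB$. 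The morphisms $f: i(A') \to S$ and $k: i(B'') \to S$ agree on $i(K'')$ precisely by the choice of $a$, so the universal property of the push-out yields a morphism $\phi: i(A) \to S$.

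Finally I would verify that $\phi$ is a morphism of short exact sequences inducing the identity on $i(A')$ and on $i(A'')$: commutativity on the left, $\phi \circ (i(A') \mono i(A)) = f$, holds by construction, while commutativity on the right follows because both $g\phi$ and the canonical epic $i(A) \epi i(A'')$ restrict to $0$ on $i(A')$ and to $i(q)$ on $i(B'')$, hence coincide by the push-out property. The five lemma (Corollary~\ref{cor:five-lemma}, applicable since $\scrB$ is abelian and therefore exact) then forces $\phi$ to be an isomorphism, so $S \cong i(A)$ lies in the essential image of $i$. The main obstacle is the bookkeeping of this last comparison—producing $\phi$ and checking its compatibility with both sequences—whereas the rest is a direct application of the push-out axiom and of the embedding lemmas already at our disposal.
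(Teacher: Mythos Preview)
Your argument is correct and follows essentially the same approach as the paper: both proofs use Lemma~\ref{lem:epics-onto-representables-compose-to-adm-epics} to produce an admissible epic $q: B'' \epi A''$ whose image factors through $S$, extract a morphism $a: K'' \to A'$ from the kernel via full faithfulness of $i$, form the push-out $A = A' \cup_{K''} B''$ in $\scrA$, and invoke Proposition~\ref{prop:exact-functors-push-out-pull-back} to compare $i(A)$ with $S$. The only cosmetic difference is in the final identification: the paper pulls back $S \epi i(A'')$ along $i(q)$ to obtain a split epic onto $i(B'')$, thereby exhibiting $S$ \emph{directly} as the cokernel of $i(K'') \mono i(A') \oplus i(B'')$, whereas you produce the comparison map $\phi$ from the push-out universal property and conclude by the five lemma---both routes are entirely standard.
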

\begin{proof}
  Consider a short exact sequence $i(A) \mono G \epi i(B)$ in $\scrB$,
  where $A,B \in \scrA$. By
  Lemma~\ref{lem:epics-onto-representables-compose-to-adm-epics} we
  find an admissible epic $p:C \epi B$ such that $i(p)$ factors
  over $G$. Now consider the pull-back diagram
  \[
  \xymatrix{
    D \ar@{->>}[d] \ar@{->>}[r] \ar@{}[dr]|{\text{PB}} & G \ar@{->>}[d] \\
    i(C) \ar@{->>}[r]^-{i(p)} & i(B)
  }
  \]
  and observe that $D \epi i(C)$ is a split epic because $i(p)$
  factors over $G$. Therefore we have isomorphisms
  $D \cong i(A) \oplus i(C) \cong i(A \oplus C)$. If $K$ is a kernel
  of $p$ then $i(K)$ is a kernel of $D \epi G$, so we
  obtain an exact sequence
  \[
  \xymatrix{
    i(K) \ar@{ >->}[r]^-{\mat{i(a) \\ i(c)}} &
    i(A) \oplus i(C)  \ar@{->>}[r] & G,
  }
  \]
  where $c = \ker{p}$, which shows that $G$ is the push-out
  \[
  \xymatrix{
    i(K) \ar@{ >->}[r]^-{i(c)} \ar[d]_-{i(a)} \ar@{}[dr]|{\text{PO}}&
    i(C) \ar[d] \\
    i(A) \ar@{ >->}[r] & G.
  }
  \]
  Now $i$ is exact by Lemma~\ref{lem:embedding-exact} and hence
  preserves push-outs along admissible monics by 
  Proposition~\ref{prop:exact-functors-push-out-pull-back}, so $i$
  preserves the push-out $G' = A \cup_{K} C$ of $a$ along the
  admissible monic $c$ and
  thus $G$ is isomorphic to $i(G')$.
\end{proof}

\begin{proof}[Proof of the Embedding Theorem~\ref{thm:embedding-thm}]
  Let us summarize what we know:
  the embedding $i:\scrA \to \scrB$ is fully faithful and exact by
  Lemma~\ref{lem:embedding-exact}. It reflects exactness by
  Lemma~\ref{lem:embedding-reflects-exactness} and its image is closed
  under extensions in $\scrB$ by
  Lemma~\ref{lem:embedding-closed-under-extensions}. This settles
  point~(i) of the theorem.

  Point~(ii) is taken care of by Lemma~\ref{lem:sheaf=left-exact} and
  Corollary~\ref{cor:represented-functor-sheaf}. 

  It remains to prove~(iii). Assume that $\scrA$ is weakly idempotent 
  complete. We claim that every
  morphism $f: B \to C$ such that $i(f)$ is epic is in fact an
  admissible epic. Indeed, by
  Lemma~\ref{lem:epics-onto-representables-compose-to-adm-epics} we
  find a morphism $k: A \to B$ such that $fk: A \epi C$ is an
  admissible epic and we conclude by
  Proposition~\ref{prop:weakly-split-obscure-axiom}.
\end{proof}

\section{Heller's Axioms}
\label{sec:hellers-axioms}

\begin{Prop}[Quillen]
  Let $\scrA$ be an additive category and let
  $\scrE$ be a class of kernel-cokernel pairs in $\scrA$. The pair
  $(\scrA, \scrE)$ is a weakly idempotent complete exact category
  if and only if $\scrE$ satisfies Heller's axioms:
  \begin{enumerate}[(i)]
    \item
      Identity morphisms are both admissible monics and
      admissible epics;
      
    \item
      The class of admissible monics and the class of admissible epics
      are closed under composition;

    \item 
      Let $f$ and $g$ be composable morphisms.
      If $gf$ is an admissible monic then so is $f$ and 
      if $gf$ is an admissible epic then so is $g$;

    \item 
      Assume that all rows and the second two columns of the
      commutative diagram
      \[
      \xymatrix{
        A' \ar@{ >->}[r]^{f'} \ar[d]^{a} &
        B' \ar@{->>}[r]^{g'} \ar@{ >->}[d]^{b} &
        C' \ar@{ >->}[d]^{c} \\
        A \ar@{ >->}[r]^{f} \ar[d]^{a'} &
        B \ar@{->>}[r]^{g} \ar@{->>}[d]^{b'} &
        C \ar@{->>}[d]^{c'} \\
        A'' \ar@{ >->}[r]^{f''} &
        B'' \ar@{->>}[r]^{g''}  &
        C''
      }
      \]
      are in $\scrE$ then the first column is also in $\scrE$.
  \end{enumerate}
\end{Prop}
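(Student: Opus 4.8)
Overall I would split the proof into the two implications, treat the direction "exact and weakly idempotent complete $\Rightarrow$ Heller's axioms'' as a translation of results already available, and concentrate the work on the converse. For the forward direction, axiom~(i) is precisely [E0]$+$[E0$^{\opp}$] and axiom~(ii) is [E1]$+$[E1$^{\opp}$]. Axiom~(iii) is the conjunction of Proposition~\ref{prop:weakly-split-obscure-axiom}(ii) and its dual: weak idempotent completeness is exactly what makes "$gf$ an admissible epic $\Rightarrow g$ an admissible epic'' (and dually for monics) true. For axiom~(iv) I would apply the $3\times3$-lemma (Corollary~\ref{cor:3x3-lemma}(i)) to the \emph{transpose} of Heller's diagram: transposing turns the three rows into three short exact columns and turns the second and third columns into the middle and bottom rows, both short exact, so case~(i) of the $3\times3$-lemma yields that the remaining (top) row of the transpose, i.e.\ the first column of the original diagram, is short exact.

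For the converse, assume (i)--(iv), with $\scrE$ understood to be closed under isomorphism. First I would record the bookkeeping: from $1=u^{-1}u$ and (iii) every isomorphism is an admissible monic, and dually an admissible epic; from $1_{C}=rs$ and (iii) every retraction is an admissible epic, hence has a kernel, so Lemma~\ref{lem:weakly-idempotent-complete} gives that $\scrA$ is weakly idempotent complete. Axioms [E0]/[E0$^{\opp}$] and [E1]/[E1$^{\opp}$] are (i) and (ii) verbatim. The split sequence $A\xrightarrow{\mat{1\\0}}A\oplus B\xrightarrow{\mat{0&1}}B$ lies in $\scrE$, reproving Lemma~\ref{lem:split-sequences-exact} from the new axioms: $\mat{1&0}\mat{1\\0}=1_{A}$ is an admissible monic, so $\mat{1\\0}$ is one by (iii), its cokernel is $\mat{0&1}$, and closure under isomorphism places the pair in $\scrE$.

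The heart of the matter is [E2] (with [E2$^{\opp}$] strictly dual). Given an admissible monic $i\colon A'\mono A$ with cokernel $p$ and an arbitrary $f\colon A'\to B'$, I would first see that $\mat{i\\-f}\colon A'\to A\oplus B'$ is an admissible monic by factoring it as the isomorphism $\mat{1&0\\-f&1}$ composed with $\mat{i\\0}=\mat{1\\0}\circ i$, the latter a composite of admissible monics by the split-sequence step and (ii). Let $\mat{g'&j}$ be a cokernel of $\mat{i\\-f}$; the square with legs $i,f,g',j$ is then a push-out, since the push-out property is equivalent to $\mat{g'&j}$ being this cokernel (the elementary half of Proposition~\ref{prop:pushout-exact}, which uses no more than universal properties). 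The push-out yields $p'\colon B\to A''$ with $p'g'=p$ and $p'j=0$; it is a cokernel of $j$, and since $p=p'g'$ is an admissible epic, (iii) forces $p'$ to be an admissible epic as well. It remains to show $j$ is an admissible monic, and for this I would assemble the commutative $3\times3$ diagram whose rows are $A'\mono A'\oplus B'\epi B'$, the exact sequence $A'\xrightarrow{\mat{i\\-f}}A\oplus B'\xrightarrow{\mat{g'&j}}B$, and $0\mono A''\xrightarrow{=}A''$, and whose first two columns are the pair $(1_{A'},0)$ and the pair $\bigl(\mat{i&0\\-f&1},\,\mat{p&0}\bigr)$; the second of these is in $\scrE$ because $\mat{p&0}=p\circ\mat{1&0}$ is an admissible epic and $\mat{i&0\\-f&1}$ is a kernel of it up to the source isomorphism $\mat{1&0\\-f&1}$. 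Axiom~(iv) should then force the third column $B'\xrightarrow{j}B\xrightarrow{p'}A''$ into $\scrE$, giving $j$ admissible and exhibiting the push-out as required for [E2].

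The step I expect to be the main obstacle is exactly this last application of (iv). As stated, axiom~(iv) concludes the \emph{first} column of a $3\times3$ diagram, whereas the target above is the \emph{third} column, into which all the structural maps of the push-out naturally point; there is no reindexing of the diagram that keeps every row a short exact sequence and simultaneously moves the target to the first column. Since (i)--(iii) are manifestly self-dual, the clean resolution is to deploy axiom~(iv) in its dual orientation (equivalently, to apply it inside $\scrA^{\opp}$), which is precisely the "recover the third column'' form needed here; justifying this self-dual use of (iv)---together with the attendant kernel/cokernel identifications and the verification that $c'c=0$ forces the relevant complex condition---is the delicate point on which the whole converse turns. Once the dual form of (iv) is secured, the remaining verifications (commutativity of the diagram, the isomorphism placing the middle column in $\scrE$, and the routine dualization yielding [E2$^{\opp}$]) are mechanical.
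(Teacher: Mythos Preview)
Your approach is essentially the same as the paper's: both directions proceed as you outline, and for the converse both arguments reduce [E2] to showing that a certain $\mat{\cdot\\\cdot}$ map is an admissible monic, take its cokernel to build the push-out, and then assemble a $3\times3$ diagram whose first two columns and all three rows are already in $\scrE$, needing the \emph{dual} of~(iv) to conclude the third column is exact. The specific $3\times3$ diagrams differ only in orientation (the paper puts the split row as $A'\mono A\oplus A'\epi A$ with middle column $\mat{1&a\\0&f'}$, you use $A'\mono A'\oplus B'\epi B'$ with middle column $\mat{i&0\\-f&1}$), but the content is the same.

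Two points where the paper goes slightly further than your proposal. First, the paper does not assume closure of $\scrE$ under isomorphism but \emph{derives} it from~(iv): if $\sigma=(A'\to A\to A'')$ is isomorphic to a sequence in $\scrE$, stack the two with the isomorphisms as vertical maps and pad with a trivial third row to get a $3\times3$ diagram to which~(iv) applies. Second, the paper actually proves the dual of~(iv) that you correctly flag as the crux (following Heller): given the diagram with exact rows and exact first two columns, axiom~(iii) shows $c'$ is an admissible epic (since $c'g=g''b'$), hence has a kernel $D$; the map $gb\colon B'\to C$ factors through $D$, and replacing $C'$ by $D$ in the original diagram gives a $3\times3$ array to which~(iv) itself applies, yielding $A'\mono B'\epi D$ exact; comparing with the original top row forces $C'\cong D$, and closure under isomorphism finishes. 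This is short enough that you could simply insert it where you write ``justifying this self-dual use of~(iv)\ldots is the delicate point.''
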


\begin{proof}
  Note that~(i) and~(ii) are just axioms [E$0$], [E$1$] and their
  duals.
  
  For a weakly idempotent complete exact category
  $(\scrA,\scrE)$, point~(iii) is proved in
  Proposition~\ref{prop:weakly-split-obscure-axiom} and point~(iv) 
  follows from the $3 \times 3$-lemma~\ref{cor:3x3-lemma}.
  
  Conversely, assume that $\scrE$ has properties~(i)--(iv) and let us
  check that $\scrE$ is an exact structure.
  
  By properties~(i) and~(iii) an isomorphism is both an admissible
  monic and an admissible epic since by definition $f^{-1}f = 1$ and
  $ff^{-1} = 1$. If the short sequence $\sigma = (A' \to A \to A'')$ is
  isomorphic to the short exact sequence $B' \mono B \epi B''$ then
  property~(iv) tells us that $\sigma$ is short exact. Thus, $\scrE$
  is closed under isomorphisms. 

  Heller proves \cite[Proposition~4.1]{MR0100622} that~(iv) implies
  its dual, that is: if the commutative diagram in~(iv) has exact rows
  and both $(a,a')$ and $(b,b')$ belong to $\scrE$ then
  so does $(c,c')$.\footnote{Indeed, by~(iii) $c'$ is an admissible epic and so
    it has a kernel $D$. Because $c'gb = 0$, there is a morphism 
    $B' \to D$ and replacing $C'$ by $D$ in the diagram of~(iv) 
    we see that $A' \mono B' \epi D$ is short exact. 
    Therefore $C' \cong D$ and we
    conclude by the fact that $\scrE$ is closed under isomorphisms.} It
  follows that Heller's axioms are self-dual.
  
  Let us prove that [E$2$] holds---the remaining axiom~[E$2^{\opp}$]
  will follow from the dual argument. Given the diagram
  \[
  \xymatrix{
    A' \ar@{ >->}[r]^{f'} \ar[d]^{a} & B' \\
    A
  }
  \]
  we want to construct its push-out $B$ and prove that the morphism $A
  \to B$ is an admissible monic. 
  Observe that $\mat{a \\ f'}: A' \to A \oplus B'$ is the composition 
  \[
  \xymatrix{
    A' \ar@{ >->}[r]^-{\mat{0 \\ 1}} &
    A \oplus A' \ar[r]_-{\cong}^-{\mat{1 & a \\ 0 & 1}} &
    A \oplus A' \ar@{ >->}[r]^-{\mat{1 & 0 \\ 0 & f'}} &
    A \oplus B'.
  }
  \]
  By~(iii) split exact sequences belong to $\scrE$, and the proof of
  Proposition~\ref{prop:sum-exact} shows that the direct sum of
  two sequences in $\scrE$ also belongs to $\scrE$. Therefore
  $\mat{a \\ f'}$ is an admissible monic and it has a cokernel
  $\mat{-f & b}: A \oplus B' \epi B$. It follows that the
  left hand square in the diagram
  \[
  \xymatrix{
    A' \ar@{ >->}[r]^-{f'} \ar[d]^-{a} \ar@{}[dr]|{\text{BC}} & 
    B' \ar[d]^-{b} \ar@{->>}[r]^-{g'} & C' \ar@{=}[d] \\
    A \ar[r]^-{f} & B \ar[r]^{g} & C'
  }
  \]
  is bicartesian. Let $g': B' \epi C'$ be a cokernel of $f'$ and let $g$
  be the morphism $B \to C'$ such that $gf = 0$ and $gb = g'$.
  Now consider the commutative diagram
  \[
  \xymatrix{
    A' \ar@{ >->}[r]^-{\mat{0 \\ 1}} \ar@{=}[d] & 
    A \oplus A' \ar@{->>}[r]^-{\mat{-1 & 0}}
    \ar@{ >->}[d]^-{\mat{1 & a \\ 0 & f'}} & A \ar[d]^{f} \\
    A' \ar@{ >->}[r]^-{\mat{a \\ f}} &
    A \oplus B' \ar@{ ->>}[r]^-{\mat{-f & b}} \ar@{->>}[d]^{\mat{0 & g'}} & 
    B \ar[d]^{g} \\
    & C' \ar@{=}[r] & C'
  }
  \]
  in which the rows are exact and the first two columns are exact. It
  follows that the third column is exact and hence $f$ is an
  admissible monic.
  
  Now that we know that $(\scrA,\scrE)$ is an exact category, we
  conclude from~(iii) and
  Proposition~\ref{prop:weakly-split-obscure-axiom} that
  $\scrA$ must be weakly idempotent complete.
\end{proof}

\section*{Acknowledgments}

I would like to thank Paul Balmer for introducing me to exact categories
and Bernhard Keller for answering numerous questions via email or via his
excellent articles. Part of the paper was written
in November 2008 during a conference at the Erwin--Schr\"odinger--Institut
in Vienna. The final version of this paper was prepared at the
Forschungsinstitut f\"ur Mathematik of the ETH Z\"urich. I would like
to thank Marc Burger and his team for their support and for
providing excellent working conditions. 
Matthias K\"unzer and an anonymous referee
made numerous valuable suggestions which led to substantial
improvements of the text. 
Finally, I am very grateful to Ivo Dell'Ambrogio for his
detailed comments on various preliminary versions of this paper.

\bibliographystyle{amsplain}
\bibliography{bibliography}

\def\cprime{$'$} \def\cprime{$'$}
\providecommand{\bysame}{\leavevmode\hbox to3em{\hrulefill}\thinspace}
\providecommand{\MR}{\relax\ifhmode\unskip\space\fi MR }
\providecommand{\MRhref}[2]{%
  \href{http://www.ams.org/mathscinet-getitem?mr=#1}{#2}
}
\providecommand{\href}[2]{#2}
\begin{thebibliography}{10}

\bibitem{artin-grothtop}
M.~Artin, \emph{Grothendieck topologies}, Mimeographed Notes, Harvard
  University, Cambridge, Massachusetts, 1962.

\bibitem{MR2181829}
Paul Balmer, \emph{Witt groups}, Handbook of {$K$}-theory. {V}ol. 1, 2,
  Springer, Berlin, 2005, pp.~539--576. \MR{MR2181829 (2006h:19004)}

\bibitem{barr-exact}
M.~Barr, \emph{Exact categories}, Lecture Notes in Mathematics, vol. 236,
  Springer, 1973.

\bibitem{MR751966}
A.~A. Be{\u\i}linson, J.~Bernstein, and P.~Deligne, \emph{Faisceaux pervers},
  Analysis and topology on singular spaces, I (Luminy, 1981), Ast\'erisque,
  vol. 100, Soc. Math. France, Paris, 1982, pp.~5--171. \MR{MR751966
  (86g:32015)}

\bibitem{MR1291599}
Francis Borceux, \emph{Handbook of categorical algebra. 1}, Encyclopedia of
  Mathematics and its Applications, vol.~50, Cambridge University Press,
  Cambridge, 1994. \MR{MR1291599 (96g:18001a)}

\bibitem{MR1313497}
\bysame, \emph{Handbook of categorical algebra. 2}, Encyclopedia of Mathematics
  and its Applications, vol.~51, Cambridge University Press, Cambridge, 1994.
  \MR{MR1313497 (96g:18001b)}

\bibitem{MR1315049}
\bysame, \emph{Handbook of categorical algebra. 3}, Encyclopedia of Mathematics
  and its Applications, vol.~52, Cambridge University Press, Cambridge, 1994.
  \MR{MR1315049 (96g:18001c)}

\bibitem{MR882000}
A.~Borel, P.-P. Grivel, B.~Kaup, A.~Haefliger, B.~Malgrange, and F.~Ehlers,
  \emph{Algebraic {$D$}-modules}, Perspectives in Mathematics, vol.~2, Academic
  Press Inc., Boston, MA, 1987. \MR{MR882000 (89g:32014)}

\bibitem{MR1721403}
A.~Borel and N.~Wallach, \emph{Continuous cohomology, discrete subgroups, and
  representations of reductive groups}, Mathematical Surveys and Monographs,
  vol.~67, American Mathematical Society, Providence, RI, 2000. \MR{MR1721403
  (2000j:22015)}

\bibitem{MR0140556}
David~A. Buchsbaum, \emph{A note on homology in categories}, Ann. of Math. (2)
  \textbf{69} (1959), 66--74. \MR{MR0140556 (25 \#3974)}

\bibitem{mythesis}
Theo B{\"u}hler, \emph{On the algebraic foundation of bounded cohomology},
  Ph.D. thesis, ETH Z\"urich, 2008.

\bibitem{bungesthesis}
Marta Bunge, \emph{Categories of set-valued functors}, Ph.D. thesis, University
  of Pennsylvania, 1966.

\bibitem{MR0188267}
M.~C.~R. Butler and G.~Horrocks, \emph{Classes of extensions and resolutions},
  Philos. Trans. Roy. Soc. London Ser. A \textbf{254} (1961/1962), 155--222.
  \MR{MR0188267 (32 \#5706)}

\bibitem{MR1731415}
Henri Cartan and Samuel Eilenberg, \emph{Homological algebra}, Princeton
  Landmarks in Mathematics, Princeton University Press, Princeton, NJ, 1999.
  \MR{MR1731415 (2000h:18022)}

\bibitem{MR1608305}
Peter Dr{\"a}xler, Idun Reiten, Sverre~O. Smal{\o}, and {\O}yvind Solberg,
  \emph{Exact categories and vector space categories}, Trans. Amer. Math. Soc.
  \textbf{351} (1999), no.~2, 647--682, With an appendix by B. Keller.
  \MR{MR1608305 (99f:16001)}

\bibitem{MR0146234}
Peter Freyd, \emph{Relative homological algebra made absolute}, Proc. Nat.
  Acad. Sci. U.S.A. \textbf{49} (1963), 19--20. \MR{MR0146234 (26 \#3756)}

\bibitem{MR0166240}
\bysame, \emph{Abelian categories. {A}n introduction to the theory of
  functors}, Harper's Series in Modern Mathematics, Harper \& Row Publishers,
  New York, 1964. \MR{MR0166240 (29 \#3517)}

\bibitem{MR0209333}
\bysame, \emph{Representations in abelian categories}, Proc. Conf. Categorical
  Algebra (La Jolla, Calif., 1965), Springer, New York, 1966, pp.~95--120.
  \MR{MR0209333 (35 \#231)}

\bibitem{MR0206069}
\bysame, \emph{Splitting homotopy idempotents}, Proc. {C}onf. {C}ategorical
  {A}lgebra ({L}a {J}olla, {C}alif., 1965), Springer, New York, 1966,
  pp.~173--176. \MR{MR0206069 (34 \#5894)}

\bibitem{MR2182598}
Eric~M. Friedlander and Daniel~R. Grayson (eds.), \emph{Handbook of
  {$K$}-theory. {V}ol. 1, 2}, Springer-Verlag, Berlin, 2005. \MR{MR2182598
  (2006e:19001)}

\bibitem{MR1239447}
P.~Gabriel and A.~V. Ro{\u\i}ter, \emph{Representations of finite-dimensional
  algebras}, Algebra, {VIII}, Encyclopaedia Math. Sci., vol.~73, Springer,
  Berlin, 1992, With a chapter by B. Keller, pp.~1--177. \MR{MR1239447
  (94h:16001b)}

\bibitem{MR0232821}
Pierre Gabriel, \emph{Des cat\'egories ab\'eliennes}, Bull. Soc. Math. France
  \textbf{90} (1962), 323--448. \MR{MR0232821 (38 \#1144)}

\bibitem{MR1950475}
Sergei~I. Gelfand and Yuri~I. Manin, \emph{Methods of homological algebra},
  Springer Monographs in Mathematics, Springer-Verlag, Berlin, 2003.
  \MR{MR1950475 (2003m:18001)}

\bibitem{MR935124}
Dieter Happel, \emph{Triangulated categories in the representation theory of
  finite-dimensional algebras}, London Mathematical Society Lecture Note
  Series, vol. 119, Cambridge University Press, Cambridge, 1988. \MR{MR935124
  (89e:16035)}

\bibitem{MR1093462}
A.~Ya. Helemskii, \emph{The homology of {B}anach and topological algebras},
  Mathematics and its Applications (Soviet Series), vol.~41, Kluwer Academic
  Publishers Group, Dordrecht, 1989. \MR{MR1093462 (92d:46178)}

\bibitem{MR0100622}
Alex Heller, \emph{Homological algebra in abelian categories}, Ann. of Math.
  (2) \textbf{68} (1958), 484--525. \MR{MR0100622 (20 \#7051)}

\bibitem{MR0116045}
\bysame, \emph{The loop-space functor in homological algebra}, Trans. Amer.
  Math. Soc. \textbf{96} (1960), 382--394. \MR{MR0116045 (22 \#6840)}

\bibitem{MR1438546}
P.~J. Hilton and U.~Stammbach, \emph{A course in homological algebra}, Graduate
  Texts in Mathematics, vol.~4, Springer-Verlag, New York, 1997. \MR{MR1438546
  (97k:18001)}

\bibitem{MR0080654}
G.~Hochschild, \emph{Relative homological algebra}, Trans. Amer. Math. Soc.
  \textbf{82} (1956), 246--269. \MR{MR0080654 (18,278a)}

\bibitem{MR0147577}
G.~Hochschild and G.~D. Mostow, \emph{Cohomology of {L}ie groups}, Illinois J.
  Math. \textbf{6} (1962), 367--401. \MR{MR0147577 (26 \#5092)}

\bibitem{MR2329311}
Norbert Hoffmann and Markus Spitzweck, \emph{Homological algebra with locally
  compact abelian groups}, Adv. Math. \textbf{212} (2007), no.~2, 504--524.
  \MR{MR2329311 (2009d:22006)}

\bibitem{MR0417787}
B.~E. Johnson, \emph{Introduction to cohomology in {B}anach algebras}, Algebras
  in analysis (Proc. Instructional Conf. and NATO Advanced Study Inst.,
  Birmingham, 1973), Academic Press, London, 1975, pp.~84--100. \MR{MR0417787
  (54 \#5835)}

\bibitem{MR0238927}
Max Karoubi, \emph{Alg\`ebres de {C}lifford et {$K$}-th\'eorie}, Ann. Sci.
  \'Ecole Norm. Sup. (4) \textbf{1} (1968), 161--270. \MR{MR0238927 (39 \#287)}

\bibitem{MR2182076}
Masaki Kashiwara and Pierre Schapira, \emph{Categories and sheaves},
  Grundlehren der Mathematischen Wissenschaften [Fundamental Principles of
  Mathematical Sciences], vol. 332, Springer-Verlag, Berlin, 2006.
  \MR{MR2182076 (2006k:18001)}

\bibitem{MR1321597}
Alexander~S. Kechris, \emph{Classical descriptive set theory}, Graduate Texts
  in Mathematics, vol. 156, Springer-Verlag, New York, 1995. \MR{MR1321597
  (96e:03057)}

\bibitem{MR1052551}
Bernhard Keller, \emph{Chain complexes and stable categories}, Manuscripta
  Math. \textbf{67} (1990), no.~4, 379--417. \MR{MR1052551 (91h:18006)}

\bibitem{MR1102982}
\bysame, \emph{Derived categories and universal problems}, Comm. Algebra
  \textbf{19} (1991), no.~3, 699--747. \MR{MR1102982 (92b:18010)}

\bibitem{MR1421815}
\bysame, \emph{Derived categories and their uses}, Handbook of algebra, Vol.\
  1, North-Holland, Amsterdam, 1996, pp.~671--701. \MR{MR1421815 (98h:18013)}

\bibitem{MR2275593}
\bysame, \emph{On differential graded categories}, International {C}ongress of
  {M}athematicians. {V}ol. {II}, Eur. Math. Soc., Z\"urich, 2006, pp.~151--190.
  \MR{MR2275593 (2008g:18015)}

\bibitem{MR907948}
Bernhard Keller and Dieter Vossieck, \emph{Sous les cat\'egories d\'eriv\'ees},
  C. R. Acad. Sci. Paris S\'er. I Math. \textbf{305} (1987), no.~6, 225--228.
  \MR{MR907948 (88m:18014)}

\bibitem{MR2366951}
Matthias K{\"u}nzer, \emph{Heller triangulated categories}, Homology, Homotopy
  Appl. \textbf{9} (2007), no.~2, 233--320. \MR{MR2366951 (2009b:18017)}

\bibitem{MR726427}
G.~Laumon, \emph{Sur la cat\'egorie d\'eriv\'ee des {$\mathcal{D}$}-modules
  filtr\'es}, Algebraic geometry (Tokyo/Kyoto, 1982), Lecture Notes in Math.,
  vol. 1016, Springer, Berlin, 1983, pp.~151--237. \MR{MR726427 (85d:32022)}

\bibitem{MR0156879}
Saunders Mac~Lane, \emph{Homology}, Die Grundlehren der mathematischen
  Wissenschaften, Bd. 114, Academic Press Inc., Publishers, New York, 1963.
  \MR{MR0156879 (28 \#122)}

\bibitem{MR1712872}
\bysame, \emph{Categories for the working mathematician}, Graduate Texts in
  Mathematics, vol.~5, Springer-Verlag, New York, 1998. \MR{MR1712872
  (2001j:18001)}

\bibitem{MR1300636}
Saunders Mac~Lane and Ieke Moerdijk, \emph{Sheaves in geometry and logic},
  Universitext, Springer-Verlag, New York, 1994, A first introduction to topos
  theory, Corrected reprint of the 1992 edition. \MR{MR1300636 (96c:03119)}

\bibitem{MR0167511}
Barry Mitchell, \emph{The full imbedding theorem}, Amer. J. Math. \textbf{86}
  (1964), 619--637. \MR{MR0167511 (29 \#4783)}

\bibitem{MR1080854}
Amnon Neeman, \emph{The derived category of an exact category}, J. Algebra
  \textbf{135} (1990), no.~2, 388--394. \MR{MR1080854 (91m:18016)}

\bibitem{MR1812507}
\bysame, \emph{Triangulated categories}, Annals of Mathematics Studies, vol.
  148, Princeton University Press, Princeton, NJ, 2001. \MR{MR1812507
  (2001k:18010)}

\bibitem{MR1749013}
Fabienne Prosmans, \emph{Derived categories for functional analysis}, Publ.
  Res. Inst. Math. Sci. \textbf{36} (2000), no.~1, 19--83. \MR{MR1749013
  (2001g:46156)}

\bibitem{MR0338129}
Daniel Quillen, \emph{Higher algebraic {$K$}-theory. {I}}, Algebraic
  $K$-theory, I: Higher $K$-theories (Proc. Conf., Battelle Memorial Inst.,
  Seattle, Wash., 1972), Springer, Berlin, 1973, pp.~85--147. Lecture Notes in
  Math., Vol. 341. \MR{MR0338129 (49 \#2895)}

\bibitem{MR1856638}
Wolfgang Rump, \emph{Almost abelian categories}, Cahiers Topologie G\'eom.
  Diff\'erentielle Cat\'eg. \textbf{42} (2001), no.~3, 163--225. \MR{MR1856638
  (2002m:18008)}

\bibitem{rump-counterexample}
\bysame, \emph{A counterexample to {R}aikov's conjecture}, Bull. London Math.
  Soc. \textbf{40} (2008), 985--994.

\bibitem{schlichting}
Marco Schlichting, \emph{Hermitian ${K}$-theory of exact categories}, preprint,
  to appear in J. of ${K}$-theory (2008), 1--49.

\bibitem{MR1779315}
Jean-Pierre Schneiders, \emph{Quasi-abelian categories and sheaves}, M\'em.
  Soc. Math. Fr. (N.S.) (1999), no.~76, vi+134. \MR{MR1779315 (2001i:18023)}

\bibitem{MR0354652}
SGA4, \emph{Th\'eorie des topos et cohomologie \'etale des sch\'emas. {T}ome 1:
  {T}h\'eorie des topos}, Springer-Verlag, Berlin, 1972, S\'eminaire de
  G\'eom\'etrie Alg\'ebrique du Bois-Marie 1963--1964 (SGA 4), Dirig\'e par M.
  Artin, A. Grothendieck, et J. L. Verdier. Avec la collaboration de N.
  Bourbaki, P. Deligne et B. Saint-Donat, Lecture Notes in Mathematics, Vol.
  269. \MR{MR0354652 (50 \#7130)}

\bibitem{MR1382659}
V.~Srinivas, \emph{Algebraic {$K$}-theory}, Progress in Mathematics, vol.~90,
  Birkh\"auser Boston Inc., Boston, MA, 1996. \MR{MR1382659 (97c:19001)}

\bibitem{MR1106918}
R.~W. Thomason and Thomas Trobaugh, \emph{Higher algebraic {$K$}-theory of
  schemes and of derived categories}, The Grothendieck Festschrift, Vol.\ III,
  Progr. Math., vol.~88, Birkh\"auser Boston, Boston, MA, 1990, pp.~247--435.
  \MR{MR1106918 (92f:19001)}

\bibitem{MR1453167}
Jean-Louis Verdier, \emph{Des cat\'egories d\'eriv\'ees des cat\'egories
  ab\'eliennes}, Ast\'erisque (1996), no.~239, xii+253 pp., with a preface by
  Luc Illusie, edited and with a note by Georges Maltsiniotis. \MR{MR1453167
  (98c:18007)}

\bibitem{MR1269324}
Charles~A. Weibel, \emph{An introduction to homological algebra}, Cambridge
  Studies in Advanced Mathematics, vol.~38, Cambridge University Press,
  Cambridge, 1994. \MR{MR1269324 (95f:18001)}

\bibitem{MR0225854}
Nobuo Yoneda, \emph{On {E}xt and exact sequences}, J. Fac. Sci. Univ. Tokyo
  Sect. I \textbf{8} (1960), 507--576. \MR{MR0225854 (37 \#1445)}

\end{thebibliography}

\end{document}